\newcommand{\slkz}{{\normalfont\textsl{\textsc{kz}}}}
\newcommand{\kz}{{\normalfont\textsc{kz}}}
\newcommand{\mathkz}{{\normalfont\kz}}
\numberwithin{equation}{section}
\definecolor{darkgreen}{rgb}{0,0.45,0}
\theoremstyle{plain}
\newtheorem{thm}[equation]{Theorem}
\newtheorem{cor}[equation]{Corollary}
\newtheorem{lemma}[equation]{Lemma}
\newtheorem{prop}[equation]{Proposition}
\theoremstyle{remark}
\newtheorem{rmk}[equation]{Remark}
\newtheorem{ex}[equation]{Example}
\newtheorem{notation}[equation]{Notation}
\theoremstyle{definition}
\newtheorem{df}[equation]{Definition}
\newcommand{\Cat}{\ensuremath{\mathbf{Cat}}}
\newcommand{\C}{\ensuremath{\mathcal{C}}}
\newcommand{\id}{\ensuremath{\mathrm{id}}}
\DeclareMathOperator{\dom}{dom}
\DeclareMathOperator{\cod}{cod}
\newcommand{\two}{{\mathbf{2}}}
\newcommand{\Sq}{\mathbb{S}\mathrm{q}}
\newcommand{\R}{\ensuremath{\mathbb{\bar{R}_+}}}
\newcommand{\Ord}{\ensuremath{\mathbf{Ord}}}
\newcommand{\Top}{\ensuremath{\mathbf{Top}}}
\newcommand{\lan}[2]{\mathrm{lan}_{#1}{#2}}
\DeclareMathOperator{\lari}{Lari}
\DeclareMathOperator{\rali}{Rali}
\newcommand{\Tzero}{\ensuremath{{\normalfont\textsc{t}}_0}}
\begin{document}
\title{Lax orthogonal factorisations in ordered structures}
\author[M~M~Clementino]{Maria Manuel Clementino}
\address{CMUC, Department of Mathematics\\ Univ.~Coimbra\\3001-501 Coimbra\\ Portugal}
\thanks{Research partially supported by Centro de Matem\'{a}tica da Universidade de
  Coimbra -- UID/MAT/00324/2013, funded by the Portuguese Government
  through FCT/MCTES and co-funded by the European Regional Development Fund
  through the Partnership Agreement PT2020.}
\email{mmc@mat.uc.pt}
\author[I~L\'{o}pez~Franco]{Ignacio L\'{o}pez Franco}
\address{
Department of Mathematics and Applications\\
  {CURE}, Universidad de la Rep\'ublica\\
  Tacuaremb\'o s/n, Maldonado, Uruguay
}
\thanks{The second author was supported by a Research Fellowship of
  Gonville and Caius College, Cambridge, the Department of Mathematics
  and Mathematical Statistics of the University of Cambridge, \textsc{PEDECIBA}, and
  \textsc{SNI--ANII}}
\email{ilopez@cure.edu.uy}
\date\today
\subjclass[2010]{Primary 18D05, 18A32. Secondary 55U35.}
\keywords{
  lax orthogonal factorization system,
  lax idempotent monad,
  order-enriched category,
  weak factorization system,
  reflective factorization system,
  continuous lattice.
}
\begin{abstract}
  We give an account of lax orthogonal factorisation systems on
  order-enriched categories. Among them, we define and characterise the
  \kz-reflective ones, in a way that mirrors the characterisation of reflective
  orthogonal factorisation systems. We use simple monads to construct lax
  orthogonal factorisation systems, such as one on the category of $T_0$
  topological spaces closely related to continuous lattices.
\end{abstract}
\maketitle


\section{Introduction}
\label{sec:introduction}

Weak factorisation systems (\textsc{wfs}s) have been a feature of Homotopy
Theory even before Quillen's definition of model categories and the recognition
of their importance. \textsc{Wfs}s, whose definition can be found in \S
\ref{sec:algebr-weak-fact}, can be described as a pair of classes of morphisms
$(\mathcal{L},\mathcal{R})$ that satisfy three properties. First, each morphism
of the category must be a composition of a morphism from $\mathcal{L}$ followed
by one of $\mathcal{R}$ (in a not necessarily unique way). Secondly, each
$r\in \mathcal{R}$ must have the \emph{right lifting property} with respect to
each $\ell\in\mathcal{L}$; in other words, each commutative square, as
displayed, has a (not necessarily unique) diagonal filler.
\begin{equation}
  \label{eq:145}
  \diagram
  \cdot\ar[r]\ar[d]_\ell&\cdot\ar[d]^r\\
  \cdot\ar[r]\ar@{..>}[ur]&\cdot
  \enddiagram
\end{equation}
Lastly, $(\mathcal{L},\mathcal{R})$ is, in a precise way, maximal. Each one of
Quillen's model categories comes equipped with two \textsc{wfs} (by
definition).


Orthogonal factorisations systems (\textsc{ofs}) arose at
the same time as \textsc{wfs}s and can be described as \textsc{wfs}s in
which the diagonal filler~\eqref{eq:145} not only exists but it is unique. This
makes the factorisation of a morphism $f$ as $f=r\cdot\ell$, with
$\ell\in\mathcal{L}$ and ${r}\in\mathcal{R}$, unique up to unique
isomorphism. Two typical examples of \textsc{ofs}s are the factorisation of a
function as a surjection followed by an injection, and of a continuous map
between topological spaces as a surjection followed by an embedding (ie an
homeomorphism onto its image).

When the ambient category has a terminal object, denoted by $1$, there is a
case of~\eqref{eq:145} of special interest, namely:
\begin{equation}
  \label{eq:147}
  \diagram
  \cdot\ar[r]\ar[d]_\ell&A\ar[d]\\
  \cdot\ar[r]\ar@{..>}[ur]&1
  \enddiagram
\end{equation}
If the unique morphism $A\to 1$ has the right (unique) lifting property with
respect to $\ell$, one says that $A$ is injective with respect (resp.,
orthogonal to) $\ell$. Clearly each \textsc{ofs} $(\mathcal{L},\mathcal{R})$
gives rise to a class of objects that are orthogonal to each member of
$\mathcal{L}$: those objects $A$ such that $A\to 1$ belongs to
$\mathcal{R}$. The extent to which $(\mathcal{L},\mathcal{R})$ is determined by
this class of objects is the subject of study of~\cite{MR779198}. The
\textsc{ofs}s so determined are called \emph{reflective}.

In addition to their widespread use in Homological Algebra,
injective objects play a role in many other areas of Mathematics. For example,
in the category of metric spaces and non-expansive maps, \emph{hyperconvex}
spaces are the objects injective with respect to the family of isometries
(see~\cite{MR0084762} and~\cite{MR0182949}).

There are examples, as those introduced by D.~Scott~\cite{MR0404073}, of
squares~\eqref{eq:147} where the diagonal filler is not unique but there exists
a smallest one (with respect to an ordering between morphisms). The main example
from~\cite{MR0404073} consists of those topological spaces that arise from
endowing \emph{continuous lattices} with the Scott topology. These spaces are
characterised by their injectivity with respect to topological embeddings. In fact, if
$\ell$ is a topological embedding and $A$ is a continuous lattice
in~\eqref{eq:147}, there is a diagonal filler that is the smallest with respect
to the (opposite of) the pointwise specialisation order (see \S
\ref{sec:filter-monads} for more details).

Another example comes from complete lattices, which can be characterised as
those posets that are injective with respect to embeddings of posets. As in the
previous example, in the situation~\eqref{eq:147} where $A$ is a complete
lattice and $\ell$ is a poset embedding, there exists a smallest diagonal
filler.

Motivated by the above examples, one can generalise the existence of a smallest
diagonal filler in the situation~\eqref{eq:147} to the situation of a
commutative square~\eqref{eq:145}. By doing so, one arrives to the notion of lax
orthogonal factorisation system.

The present paper gives an account, in the context of order-enriched categories,
of \emph{lax orthogonal factorisation systems} (\textsc{lofs}), a notion that
sits between \textsc{ofs}s and \textsc{wfs}s.
\begin{center}
  \begin{tabular}{ccccc}
    orthogonal &
    &
      lax orthogonal&
    &
      weak
    \\
    factorisation&
                   \scalebox{1.5}{$\subset$}&
                                                                     factorisation&
                                                                                    \scalebox{1.5}{$\subset$}&
                                                                                                                                      factorisation\\
    system&
    &
      system&
    &
      system
  \end{tabular}
\end{center}
\textsc{Lofs}s were introduced and studied in the context of 2-categories by
the authors in~\cite{clementino15:_lax}. We cover here some of the same material
in the much simpler framework of order-enriched categories and some completely
new results on reflective \textsc{lofs}s, as well as new examples (see below).

In a \textsc{lofs}, the existence of a diagonal filler~\eqref{eq:145} is
replaced by the existence of a smallest diagonal filler. More precisely, there
is a diagonal filler $d$ with the property that $d\leq d'$ for any other
diagonal filler $d'$.
\begin{equation}
  \label{eq:146}
  \xymatrixcolsep{1.7cm}
  \diagram
  \cdot\ar[r]\ar[d]_\ell&\cdot\ar[d]^r\\
  \cdot\ar[r]\ar@/^8pt/[ur]^(.4)d\ar@/_8pt/[ur]_(.6){d'}\urtwocell<\omit>{'\leq}&\cdot
  \enddiagram
\end{equation}
Since morphisms between two objects in an order-enriched category form a poset,
the above property uniquely defines the smallest diagonal filler. There are,
however, advantages in providing these diagonals by means of an algebraic
structure, instead of postulating the existence of a smallest diagonal
filler. This algebraic structure is provided by the \emph{algebraic weak
  factorisation systems} (\textsc{awfs}s), introduced with a different name
in~\cite{MR2283020} and slightly modified in~\cite{MR2506256}; we use the
definition given in the latter.

An \textsc{awfs} on an order-enriched category \C\ consists of a locally monotone
comonad $\mathsf{L}$ and a locally monotone monad $\mathsf{R}$ on $\C^\two$
interrelated by
axioms, and that define a locally monotone functorial factorisation
$f=Rf\cdot Lf$. Inspired by the observation of~\cite{MR2283020} that
\textsc{ofs}s correspond to \textsc{awfs}s whose monad and comonad are
idempotent, we defined in~\cite{clementino15:_lax} \textsc{lofs}s as \textsc{awfs}s whose monad and comonad
are lax idempotent, or Kock-Z{\"o}berlein. We reprise this definition in the
context of order-enriched categories, which enables some simplifications.

A fundamental example of \textsc{lofs} on the order-enriched category of posets
factors each morphism as a left adjoint right inverse (or \textsc{lari})
followed by a split opfibration. This factorisation can be
constructed on any order-enriched category with sufficient (finite) limits, and
plays a similar role for \textsc{lofs}s as the factorisation
isomorphism--morphism (that factors $f$ as $1_{\dom(f)}$ followed by $f$) plays
for \textsc{ofs}s (\S \ref{sec:laris-awfss}).

We introduce \slkz-\emph{reflective} \textsc{lofs}s as those \textsc{lofs}s
$(\mathsf{L},\mathsf{R})$ that are determined by the restriction of the monad
$\mathsf{R}$ on $\C^\two$ to $\C$ (here $\C$ is viewed as the full subcategory
of $\C^\two$ with objects of the form $A\to 1$). We characterise \kz-reflective
\textsc{lofs}s in a way that mirrors the characterisation in~\cite{MR779198} of
reflective \textsc{ofs}s $(\mathcal{L},\mathcal{R})$ as those with the following
property: if $g\cdot f$ and $g$ belong to $\mathcal{L}$, then so does $g$ (\S \ref{sec:reflective-lofss}). For
example, the \textsc{lofs} of \textsc{lari}--split opfibration mentioned above
will be reflective with our definition.

Another contribution of~\cite{MR779198} was the construction of reflective
\textsc{ofs}s from the so-called simple reflections. The morphisms inverted by
them always form a left class of an \textsc{ofs}. We introduce \emph{simple}
monads in the order-enriched context, as those satisfying a certain property that
allows us to build \textsc{lofs}s. After providing sufficient conditions for a
lax idempotent monad to be simple (\S \ref{sec:simple-monads-1}), we recover the example of topological spaces
discussed above in this introduction as a consequence of the simplicity of a
certain monad: the \emph{filter monad}, which associates to each topological
space the space of filters of its open subsets endowed with a natural
topology (\S \ref{sec:filter-monads}). The algebras for the filter monad are precisely the continuous
lattices (with the Scott topology). The induced \textsc{lofs} on ($T_0$)
topological spaces has an associated \textsc{wfs} that was considered
in~\cite{MR2927175}.
We also provide easy-to-verify conditions guaranteeing that a submonad of a
simple lax idempotent monad enjoys these same properties (\S\ref{sec:subm-simple-monads}). When applied to the
filter monad we obtain \textsc{lofs}s closely related to continuous Scott
domains, stably compact spaces and sober spaces.

Another example that we obtain from a simple monad is a \textsc{lofs} on the
order-enriched category of (skeletal) generalised metric spaces \S \ref{sec:metric-spaces}. The restriction
of this \textsc{lofs} to the category of metric spaces yields an \textsc{ofs}
whose left class of morphisms are the dense inclusions. Further examples are
explored in~\cite{1701.05510} in a very general framework that covers, for
example, R.~Lowen's \emph{approach spaces} as well as the examples mentioned
above.

An appendix \S \ref{sec:accessible-awfss} discusses part of the theory of \textsc{lofs}s that can be developed
in the setting of locally presentable categories, where, under mild hypotheses,
there is a reflection between the category of accessible lax idempotent monads
and the category of accessible \textsc{lofs}s.
The appendix demands more knowledge of some parts of Category Theory.

\section{Order-enriched categories and lax idempotent monads}
\label{sec:ord-enrich-categ}

By an \emph{ordered set} we shall mean what is usually called a \emph{poset},
that is, a pair $(X,\leq)$ where $X$ is a set and $\leq$ is a
relation on $X$ that is reflexive, transitive and antisymmetric. Ordered sets
can be identified with small categories with at most one morphism between any
two objects and whose isomorphisms are identity morphisms.

We denote by \Ord\ the category of ordered sets and monotone maps
(functions that
preserve $\leq$). This is a cartesian closed category, with exponential $Y^X$
defined as the set of all order-morphisms $X\to Y$, and endowed with the
pointwise order.

A category enriched in $\Ord$, or $\Ord$-category, is a
locally small category $\C$ whose hom-sets are equipped with an order
structure, and whose composition preserves the inequality: if $g\leq g'$ then
$h\cdot g\leq h\cdot g'$ and $g\cdot f\leq g'\cdot f$, whenever these
compositions are defined. In other words, the composition functions
\begin{equation}
  \label{eq:1}
  \C(Y,Z)\times\C(X,Y)\longrightarrow\C(X,Z)
\end{equation}
are monotone maps.

The category \Ord\ of ordered sets can be
regarded as a full subcategory of the category of small categories \Cat\ by
regarding ordered sets as small categories, as mentioned above. This
means that \Ord-categories can be regarded as 2-categories, but we do not go to
that level of generality.

A locally monotone functor $F\colon\C\to\mathcal{D}$, or \emph{\Ord-functor},
between \Ord-categories is an ordinary functor between the
underlying ordinary categories that is moreover monotone on homs; ie that each
$\C(X,Y)\to\mathcal{D}(FX,FY)$ is a monotone map.

The category of \Ord-categories and \Ord-functors will be denoted by
$\Ord\text-\Cat$. It is a cartesian closed category.

\begin{ex}
  \label{ex:1}
  The category \Ord\ has a canonical structure of an \Ord-category
  $\underline{\Ord}$ whose ordered sets are $\underline{\Ord}(X,Y)=Y^X$.
  Many other categories constructed from \Ord\ are \Ord-enriched, such as the
  categories of join-semilattices, complete lattices, distributive lattices, and
  Heyting algebras.
\end{ex}

\begin{ex}
  \label{ex:2}
  If $X$ is a topological space, define a preorder on $X$ by $x\leq y$ if all the
  neighbourhoods of $y$ are also neighbourhoods of $x$, or, equivalently, denoting by $\mathcal{O}X$ the topology of $X$, $x\in U$ whenever $y\in U$ for every $U\in\mathcal{O}X$; in other words, $x\leq
  y$ if $y\in\overline{\{x\}}$. The opposite of this order is usually called the
  \emph{specialisation order} and was introduced by D.~Scott
  in~\cite{MR0404073}. The preorder $(X,\leq)$ is an ordered set precisely when $X$
  is a \Tzero\ space.

  Any continuous function $f\colon X\to Y$ between topological spaces preserves
  the order $\leq$. The category $\Top_0$ of \Tzero\ topological spaces and continuous maps can be endowed with
  an \Ord-category structure if we define, for any pair $f,g\colon X\to Y$ of
  continuous functions, $f\leq g$ if $f(x)\leq g(x)$ for all $x\in X$.
\end{ex}

\subsection{Full morphisms and locally full functors}
\label{sec:adjunct-betw-ord}
\begin{df}
  \label{df:15}
  \begin{enumerate}
  \item \label{item:15}
    A monotone map $f$ between ordered sets is \emph{full} if
    it reflects inequalities; ie $f(x)\leq f(y)$ implies $x\leq y$.
  \item \label{item:16}
    A locally monotone functor $F\colon\mathcal{A}\to\mathcal{B}$ between
    \Ord-categories is \emph{locally full} if each monotone map
    \begin{equation}
      \label{eq:98}
      F_{A,B}\colon\mathcal{A}(A,B)\longrightarrow\mathcal{B}(FA,FB)
    \end{equation}
    is full.
  \item A morphism $g\colon X\to Y$ in an \Ord-category \C\ is
    \emph{full} if for each $Z\in\C$ the monotone map
    \begin{equation}
      \label{eq:99}
      \C(Z,g)\colon \C(Z,X)\longrightarrow\C(Z,Y)
    \end{equation}
    is full.
  \end{enumerate}
\end{df}

Full morphisms are necessarily monomorphisms; for if $f\colon X\to Y$ is a full monotone morphism of
ordered sets and $f(x)=f(y)$, then we have both $x\leq y$ and $y\leq x$, so
$x=y$.

\begin{lemma}
  \label{l:9}
  Suppose that $F\dashv U\colon \mathcal{B}\to\mathcal{A}$ is an adjunction of
  locally monotone functors between \Ord-categories, with unit $\eta\colon
  1_{\mathcal{A}}\Rightarrow UF$. Then $F$ is locally full if each component
  $\eta_{A}\colon A\to UFA$ is a full morphism.
\end{lemma}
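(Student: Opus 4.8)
The plan is to prove the statement directly from the definitions: I would show that for every pair of objects $A,B\in\mathcal{A}$ the monotone map $F_{A,B}\colon\mathcal{A}(A,B)\to\mathcal{B}(FA,FB)$ reflects inequalities, which is exactly what it means for $F$ to be locally full (Definition~\ref{df:15}). So I fix morphisms $f,g\colon A\to B$ in $\mathcal{A}$ and assume $Ff\leq Fg$ in $\mathcal{B}(FA,FB)$, with the goal of deriving $f\leq g$.

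The idea is to transport this inequality back across the adjunction and then cancel the unit. First I would apply the locally monotone functor $U$ to obtain $UFf\leq UFg$ in $\mathcal{A}(UFA,UFB)$; then, precomposing with $\eta_A$ and using that composition in an \Ord-category is monotone in each variable (axiom~\eqref{eq:1}), I get $UFf\cdot\eta_A\leq UFg\cdot\eta_A$. Naturality of the unit rewrites the two sides as $\eta_B\cdot f$ and $\eta_B\cdot g$ respectively, so the inequality becomes $\eta_B\cdot f\leq\eta_B\cdot g$, i.e.\ $\mathcal{A}(A,\eta_B)(f)\leq\mathcal{A}(A,\eta_B)(g)$. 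By hypothesis $\eta_B$ is a full morphism, which by definition means the monotone map $\mathcal{A}(A,\eta_B)$ is full, hence reflects inequalities; therefore $f\leq g$, as required, and since $A,B,f,g$ were arbitrary this shows $F$ is locally full.

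I expect no step to be a genuine obstacle: each is a one-line use of either the local monotonicity of $U$, the composition axiom~\eqref{eq:1}, naturality of $\eta$, or the definition of a full morphism. The only thing to get right is the routing — transporting $Ff\leq Fg$ to $\eta_B\cdot f\leq\eta_B\cdot g$ via $U$, precomposition with $\eta_A$, and naturality (this is precisely the adjunction transform $Fh\mapsto Uh\cdot\eta_A=\eta_B\cdot h$), so that the problem collapses onto the assumed fullness of $\eta_B$. I also note that the local monotonicity of $F$ is never actually used in the argument: it enters only to make the conclusion ``$F$ is locally full'' well-posed, while the entire computation is driven by $U$ being locally monotone together with the fullness of the unit components.
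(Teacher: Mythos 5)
Your proof is correct and is essentially the paper's own argument, just unpacked element-wise: the paper records the same computation as the commuting triangle $\mathcal{A}(\eta_A,1)\cdot U_{FA,FB}\cdot F_{A,B}=\mathcal{A}(1,\eta_B)$ of hom-posets and then observes that fullness of the diagonal $\mathcal{A}(1,\eta_B)$ forces fullness of the first factor $F_{A,B}$. Your closing remark that only the local monotonicity of $U$ (not of $F$) is actually used is also consistent with the paper's proof, which likewise invokes $U$ only through $U_{FA,FB}$ being monotone.
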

\begin{proof}
  The naturality of $\eta$ is expressed by the commutativity of the following
  diagram.
  \begin{equation}
    \label{eq:100}
    \diagram
    \mathcal{A}(A,B)\ar[r]^-{F_{A,B}}\ar[drr]_-{\mathcal{A}(1,\eta_B)}&
    \mathcal{B}(FA,FB)\ar[r]^{U_{FA,FB}}&
    \mathcal{A}(UFA,UFB)\ar[d]^{\mathcal{A}(\eta_A,1)}\\
    &&\mathcal{A}(A,UFB)
    \enddiagram
  \end{equation}
  If $\eta_B$ is full, the diagonal morphism is full and therefore
  $F_{A,B}$ must be full too.
\end{proof}

\subsection{Order-enriched (co)limits}
\label{sec:order-enrich-limits}

\subsubsection*{Limits}
The category of ordered sets admits the construction of two-dimensional limits,
which will be fundamental for us. We denote by $\two$ the order with two
elements $0\leq 1$. If $X$ is an ordered set, then the exponential $X^\two$ is
\begin{equation}
  \label{eq:2}
  X^\two=\{(x,y)\in X\times X:x\leq y\}\subseteq X\times X
\end{equation}
with the order inherited from $X\times X$. We denote by $d_0$ and $d_1$ the two
projections from $X^\two$ onto $X$. Slightly more involved is the comma-object
of two order morphisms $f\colon X\to Z\leftarrow Y\colon g$
\begin{equation}
  \label{eq:3}
  f\downarrow g=\{(x,y)\in X\times Y:f(x)\leq g(y)\}\subseteq X\times Y
\end{equation}
that can equally well be constructed from $Z^\two$ by taking the limit of the
following diagram.
\begin{equation}
  \label{eq:4}
  X\xrightarrow{f}Z\xleftarrow{d_0}Z^\two\xrightarrow{d_1}Z\xleftarrow{g}Y
\end{equation}

The constructions of the previous paragraphs can be defined in any
\Ord-category \C. If $X\in\C$, then define $X^\two$ as an object equipped with
two morphisms $d_0\leq d_1\colon X^\two\to X$ that induce isomorphisms of orders
\begin{equation}
  \label{eq:5}
  \C(Z,X^\two)\cong \C(Z,X)^\two
\end{equation}
for all $Z\in\C$, in the sense that, for each pair of morphisms $f_0\leq
f_1\colon Z\to X$, there exists a unique morphism $h\colon Z\to X^\two$ such that
$f_0=d_0\cdot h$ and $f_1=d_1\cdot h$. Furthermore, if $k\colon Z\to X^\two$ is
another morphism, then the conjunction of $d_0\cdot h\leq d_0\cdot k$ and
$d_1\cdot h\leq d_1\cdot k$ implies $h\leq k$. This universal property guarantees that
$X^\two$ is unique up to canonical isomorphism.

Similarly, given morphisms $f\colon X\to Z\leftarrow Y\colon g$ in \C, one can
define a comma-object $f\downarrow g$ in \C\ as an object equipped with two
morphisms $d_0$ and $d_1$ as shown
\begin{equation}
  \label{eq:6}
  \xymatrixcolsep{.5cm}
  \xymatrixrowsep{.5cm}
  \diagram
  &f\downarrow g\ar[dl]_{d_0}\ar[dr]^{d_1}\ddtwocell<\omit>{'\leq}&\\
  X\ar[dr]_f&&Y\ar[dl]^g\\
  &Z&
  \enddiagram
\end{equation}
that induce an order-isomorphism
\begin{equation}
  \label{eq:7}
  \C(W,f\downarrow g)\cong \C(W,f)\downarrow\C(W,g)
\end{equation}
for all $W\in\C$. In other words, for each pair of morphisms $h_0\colon W\to X$ and
$h_1\colon W\to Y$ such that $f\cdot h_0\leq g\cdot h_1$, there exists a unique
$h\colon W\to f\downarrow g$ satisfying $d_0\cdot h=h_0$ and $d_1\cdot
h=h_1$. Furthermore, if $k\colon W\to f\downarrow g$ is another morphism, then
the conjunction of $d_0\cdot h\leq d_0\cdot k$ and $d_1\cdot h\leq d_1\cdot k$
implies $h\leq k$.

\subsubsection*{Colimits}
Let $\mathcal{D}$ be an ordinary category. If $D\colon \mathcal{D}\to \C$ is a
functor (ie a diagram in \C), we say that an object $C\in\C$ together with a
natural transformation $\alpha_X\colon D(X)\to C$ is a colimit of $D$ if
\begin{equation}
  \label{eq:156}
  \C(\alpha_X,C')\colon \C(C,C')\longrightarrow \C(D(X),C')
\end{equation}
is a limiting cone in the category \Ord, for all $C'\in\C$. This is the same as
saying that $(C,\alpha)$ is a limit of sets and the bijection
$\C(C,C')\cong\lim \C(D-,C')$ is an isomorphism of posets.

It is not hard to verify that \emph{filtered} colimits in \Ord\ can be
constructed in a completely analogous way to those in the category of
sets. Furthermore, it can easily be verified that filtered colimits
\emph{commute}, or \emph{distribute}, over finite enriched limits in \Ord, in
the sense that the \Ord-functor $\lim\colon [\mathcal{F},\Ord]\to\Ord$ preserves
filtered colimits if $\mathcal{F}$ is finite. For example, the functor
$(-)^\two\colon\Ord\to\Ord$ preserves filtered colimits, as do pullbacks, and
therefore comma-objects preserve colimits (since comma-objects can be
constructed from $(-)^\two$ and pullbacks). This phenomenon is part of the
general theory of locally finitely presentable enriched categories developed
in~\cite{Kelly:StructuresFiniteLimits}.

\subsection{Adjunctions, extensions and liftings}
\label{sec:adjunctions}

An adjunction in an \Ord-category \C\ consists of two morphisms $f\colon X\to Y$
and $g\colon Y\to X$ in opposite directions with inequalities
\begin{equation}
  \label{eq:8}
  1_X\leq g\cdot f\qquad \text{and}\qquad f\cdot g\leq 1_Y.
\end{equation}
Such an adjunction is usually written $f\dashv g$.

By the usual argument, adjoints are unique up to canonical isomorphism, which in our
case, by the antisymmetry of the ordering, means that adjoints are unique. For, if $f\dashv g$ and $f\dashv g'$,
then
\begin{equation}
  g= 1_X\cdot g\leq g'\cdot f\cdot g\leq g'\cdot 1_Y=g'
  \label{eq:10}
\end{equation}
and symmetrically, $g'\leq g$.

A notion related to adjunctions is that of a \emph{left extension}. If $j\colon
X\to Y$ and $f\colon X\to Z$ are morphisms in the \Ord-category \C, we say that
an inequality $f\leq\lan{j}{f}\cdot j$ exhibits $\lan{j}{f}\colon Y\to Z$ as a left
extension of $f$ by $j$ if,
for any other $g\colon Y\to Z$ that satisfies $f\leq g\cdot j$, the
  inequality $\lan{j}{f}\leq g$ holds.
\begin{equation}
  \label{eq:11}
  \diagram
  X\ar[rr]^j\ar[drr]_f&&Y\ar[d]^g\ar@<-20pt>@{}[d]|(.4)\leq\\
  &&Z
  \enddiagram
  \qquad=\quad
  \xymatrixcolsep{1.7cm}
  \diagram
  X\ar[r]^j\ar[dr]_f&Y\ar[d]|{\lan{j}{f}}\ar@/^25pt/[d]^{g}\ar@<17pt>@{}[d]|\leq
  \ar@<-20pt>@{}[d]|(.4)\leq\\
  &Z
  \enddiagram
\end{equation}
This universal property makes $\lan{j}{f}$ unique --~if it exists.

When $j$ has a right adjoint $j^*$, there always exists a left extension
$\lan{j}{f}$, for any $f$: the extension is given by $\lan{j}{f}=f\cdot j^*$.

The notion dual to that of a left extension is called \emph{left lifting}. If
$j\colon X\to Y$ and $f\colon Z\to Y$ are morphisms in \C, we say that an
inequality $f\leq j\cdot \prescript{j}{}{f}$ as depicted exhibits
$\prescript{j}{}{f}$ as a left lifting of $f$ through $j$ if, for any other
morphism $g$, the inequality $f\leq j\cdot g$ implies $\prescript{j}{}{f}\leq g$.
\begin{equation}
  \label{eq:46}
  \diagram
  X\ar@<20pt>@{}[d]|(.4)\geq\ar[rr]^j&&Y\\
  Z\ar[u]^g\ar[urr]_f&
  \enddiagram
  \qquad=\quad
  \xymatrixcolsep{1.7cm}
  \diagram
  X\ar[r]^j\ar@<-17pt>@{}[d]|\geq\ar@<20pt>@{}[d]|(.4)\geq&Y
  \\
  Z\ar[u]|{\prescript{j}{}{f}}\ar[ru]_f\ar@/^25pt/[u]^{g}&
  \enddiagram
\end{equation}

When $j$ has a left adjoint $j^\ell$, then $j^\ell\cdot f$ is a left lifting of
$f$ through $j$.

\subsection{Lax idempotent monads}
\label{sec:lax-idemp-monads}

Before recalling the notion of order-enriched mo\-nad, let us remind the reader
of the definition of a monad on a category. A monad on a category $\mathcal{A}$
is a triple $\mathsf{T}=(T,\eta,\mu)$ where $T$ is an endofunctor of
$\mathcal{A}$ and $\eta\colon 1_{\mathcal{A}}\Rightarrow T\Leftarrow T^2\colon
\mu$ are natural transformations that satisfy the associativity and unit axioms:
\begin{equation}
  \label{eq:13}
  \diagram
  T^3\ar[r]^-{T\mu}\ar[d]_{\mu T}&T^2\ar[d]^\mu\\
  T^2\ar[r]^-\mu&T
  \enddiagram
  \qquad
  \diagram
  T\ar[r]^-{T\eta}\ar[dr]_1&
  T^2\ar[d]^\mu&
  T\ar[l]_{\eta T}\ar[dl]^1\\
  &T&
  \enddiagram
\end{equation}
An algebra for the monad $\mathsf{T}$, or a \emph{$\mathsf{T}$-algebra}, is a
pair $(A,a)$ where $a\colon TA\to A$ is a morphism in $\mathcal{A}$ that
satisfies two axioms:
\begin{equation}
  \label{eq:14}
  \diagram
  T^2A\ar[r]^-{Ta}\ar[d]_{\mu_A}&
  TA\ar[d]^a\\
  TA\ar[r]^-a&
  A
  \enddiagram
  \qquad
  \diagram
  A\ar[r]^{\eta_A}\ar[dr]_1&
  TA\ar[d]^a\\
  &A
  \enddiagram
\end{equation}
A morphism of $\mathsf{T}$-algebras $(A,a)\to (B,b)$ is a morphism $f\colon A\to
B$ in $\mathcal{A}$ that satisfies $b\cdot Tf=f\cdot a$. Algebras and their
morphisms form a category $\mathsf{T}\text-\mathrm{Alg}$, that comes equipped
with a forgetful functor into $\mathcal{A}$.


Let \C\ be an \Ord-category. An order-enriched monad, or \emph{\Ord-monad}, on
\C\ consists of a monad $\mathsf{T}=(T,\eta,\mu)$ on the ordinary category \C\
with the additional requirement that $T$ be an $\Ord$-functor. When the context
is clear, we will refer to \Ord-monads simply as monads.

\begin{df}
  \label{df:2}
  A monad $\mathsf{T}=(T,\eta,\mu)$ on an \Ord-category \C\ is \emph{lax idempotent}, or
  \emph{Kock--Z\"oberlein}, if it satisfies any of the following equivalent conditions.
  \begin{enumerate}
  \item \label{item:3} $T\eta \cdot \mu\leq 1$.
  \item \label{item:4} $1\leq \eta T\cdot \mu$.
  \item \label{item:5} For any $\mathsf{T}$-algebra $a\colon TA\to A$, the
    inequality $1_{TA}\leq \eta_A\cdot a$ holds.
    \item A morphism $l:TA\to A$ defines a $\mathsf{T}$-algebra structure $(A,l)$ if and only if $l\dashv \eta_A$ with $l\cdot\eta_A=1_A$.
  \item \label{item:6} $T\eta\leq \eta T$.
  \item \label{item:7} For any pair of $\mathsf{T}$-algebras $(A,a)$ and
    $(B,b)$ and all morphisms $f\colon A\to B$ in \C, $b\cdot Tf\leq f\cdot a$ holds.
  \item \label{item:8} For any $\mathsf{T}$-algebra $(A,a)$ and any morphism
    $f\colon X\to A$ in \C, the equality $a\cdot Tf\cdot \eta_X=f$ exhibits
    $a\cdot Tf$ as a left extension of $f$ along $\eta_X\colon X\to TX$.
  \end{enumerate}
\end{df}
The equivalences of the above conditions can be found, in the more general case
of 2-categories, in~\cite{Kelly:Prop-like}.
Morphisms $f$ satisfying condition~(\ref{item:7}) are called \emph{lax
  morphisms} of $\mathsf{T}$-algebras, even for a monad $\mathsf{T}$ that is not
lax idempotent; so condition~(\ref{item:7}) says that $\mathsf{T}$ is lax
idempotent if any morphism in \C\ between $\mathsf{T}$-algebras is a lax
morphism of $\mathsf{T}$-algebras.

\begin{df}
  The notion of a lax idempotent comonad $\mathsf{G}=(G,\varepsilon,\delta)$ is
  a dual one: $\mathsf{G}$ is a lax idempotent comonad on \C\ if
  $(G^{\mathrm{op}},\varepsilon^{\mathrm{op}},\delta^{\mathrm{op}})$, the
  corresponding monad on $\C^\mathrm{op}$, is lax idempotent. We only translate
  explicitly condition~(\ref{item:8}) of Definition~\ref{df:2}: for any
  $\mathsf{G}$-coalgebra $a\colon A\to GA$ and any morphism $f\colon A\to X$ in
  \C, the equality $f= \varepsilon_ X\cdot Gf\cdot a$ exhibits $Gf\cdot a$ as a
  left lifting of $f$ through $\varepsilon_X$ (see \S \ref{sec:adjunctions} for
  the definition of left liftings).
  \begin{equation}
    \label{eq:47}
  \diagram
  GX\ar@<20pt>@{}[d]|(.4)\geq\ar[rr]^{\varepsilon_X}&&X\\
  A\ar[u]^{Gf\cdot a}\ar[urr]_f&
  \enddiagram
  \end{equation}
  \label{df:6}
\end{df}
\begin{ex}
  \label{ex:3}
  Given an ordered set $X$, denote by $P(X)$ the set of down-closed subsets
  of $X$, ie the set of those subsets $Y\subseteq X$ satisfying $(x\leq
  y)\wedge(y\in Y) \Rightarrow x\in Y$; the set $P(X)$ is canonically ordered by
  the inclusion of subsets of $X$. We denote by $\eta_X\colon X\to P(X)$ the
  monotone function
  \begin{equation}
    \label{eq:37}
    \eta_X\colon X\longrightarrow P(X)\qquad
    x\mapsto \mathnormal{\downarrow} x=\{y\in X:y\leq x\}.
  \end{equation}
  The assignment $X\mapsto P(X)$ can be extended to a functor whose value on a
  monotone function $f\colon X\to Y$ is
  \begin{equation}
    \label{eq:40}
    P(X)\xrightarrow{f_*}P(Y)\qquad
    f_*(Z)=\{y\in Y:(\exists x\in Z )(y\leq f(x))\}=
    \cup_{x\in Z}\mathnormal{\downarrow}f(x).
  \end{equation}
   Observe that $f_*$ always has a right adjoint $f^*\colon P(Y)\to P(X)$ given by
  \begin{equation}
    f^*(Z)=\{x\in X:\exists z\in Z \text{ such that } f(x)\leq z\}.
  \end{equation}
  Clearly, $f_*\leq g_*$ if $f\leq g$, so $P$ is an \Ord-functor.
  It is well-known that $X\mapsto P(X)$
  defines a monad on $\Ord$, where $P(X)$ is ordered by inclusion, with unit
  $\eta$ and multiplication $\mu$ given by
  \begin{equation}
    \label{eq:38}
    P^2(X)\longrightarrow P(X)\qquad
    \big(\mathcal{U}\subseteq P(X)\big)\mapsto \cup\{Y\in \mathcal{U}\}\subseteq X.
  \end{equation}
  This \Ord-monad on the \Ord-category $\underline{\Ord}$ is lax idempotent, since
  \begin{equation}
    \label{eq:39}
    P\eta_X(Z)=\cup_{x\in Z}\mathnormal{\downarrow}(\mathnormal{\downarrow}x)
    \subseteq\mathnormal\downarrow Z=\eta_{P(X)}(Z).
  \end{equation}
  The \Ord-category $\mathsf{P}\text-\mathrm{Alg}$ is the category of complete
  lattices (posets with arbitrary suprema or joins) with morphisms those
  monotone maps that preserve arbitrary suprema.
\end{ex}
\begin{ex}
  \label{ex:4}
  If $\Top_0$ is the category of \Tzero\ topological spaces and $\underline{\Top}_0$ is the
  associated \Ord-category, with ordering induced by the opposite of the
  specialisation order, as in Example~\ref{ex:2}, there is an endo-\Ord-functor
  $F\colon\underline{\Top}_0\to\underline{\Top}_0$ that sends $X$ to the set $F(X)$
  of filters of open sets of $X$, with topology generated by the subsets
  $U^{\hash}=\{\varphi\in F(X):U\in\varphi\}$, for $U\in\mathcal{O}X$. This is in fact the functor part of the lax idempotent \emph{filter monad} on $\underline{\Top}_0$ that will be studied in Section~\ref{sec:filter-monads}.
\end{ex}

There is a well-known result about algebras for lax idempotent monads on
\Ord-categories (see~\cite{Kock:KZmonads} and~\cite{MR1718976})
that can be summarised by saying that algebras are closed under
retracts. More precisely:
\begin{lemma}
  \label{l:14}
  If $\mathsf{T}=(T,\eta,\mu)$ is a lax idempotent monad on an \Ord-category,
  the following conditions on an object $A$ are equivalent.
  \begin{enumerate}
  \item \label{item:19}
    $A$ admits a (unique) $\mathsf{T}$-algebra structure (we simply say that $A$
    is a $\mathsf{T}$-algebra).
  \item \label{item:20}
    $\eta_A\colon A\to TA$ has a right inverse.
  \item \label{item:21} $A$ is a retract of $TA$.
  \item \label{item:22} $A$ is a retract of a $\mathsf{T}$-algebra.
  \end{enumerate}
\end{lemma}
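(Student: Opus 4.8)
The plan is to establish the four conditions as equivalent through the easy chain $(1)\Rightarrow(2)\Rightarrow(3)\Rightarrow(4)$ together with the two implications $(4)\Rightarrow(2)$ and the substantive $(2)\Rightarrow(1)$; once these are in place the four conditions are mutually equivalent. The first three implications are formal. For $(1)\Rightarrow(2)$, the unit axiom $a\cdot\eta_A=1_A$ for an algebra structure $a\colon TA\to A$ exhibits $a$ as a retraction of $\eta_A$. For $(2)\Rightarrow(3)$, a retraction $r\colon TA\to A$ with $r\cdot\eta_A=1_A$ displays $A$ as a retract of $TA$ via the section $\eta_A$. For $(3)\Rightarrow(4)$ it suffices to recall that $(TA,\mu_A)$ is the free $\mathsf{T}$-algebra on $A$, so a retract of $TA$ is in particular a retract of an algebra.

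For $(4)\Rightarrow(2)$, suppose $A$ is a retract of an algebra $(B,b)$, say $r\cdot s=1_A$ with $s\colon A\to B$ and $r\colon B\to A$. I would set $a:=r\cdot b\cdot Ts\colon TA\to A$ and compute, using the naturality identity $Ts\cdot\eta_A=\eta_B\cdot s$ and the unit axiom $b\cdot\eta_B=1_B$ of $B$,
\begin{equation*}
  a\cdot\eta_A=r\cdot b\cdot Ts\cdot\eta_A=r\cdot b\cdot\eta_B\cdot s=r\cdot s=1_A,
\end{equation*}
so that $a$ is a retraction of $\eta_A$ and $(2)$ holds.

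The crux is $(2)\Rightarrow(1)$, where lax idempotence is indispensable: I would show that any retraction of the unit is automatically an algebra structure. Starting from $r\cdot\eta_A=1_A$, applying the functor $T$ gives $Tr\cdot T\eta_A=1_{TA}$. Condition~(\ref{item:6}) of Definition~\ref{df:2} supplies $T\eta_A\leq\eta_{TA}$, and post-composing with $Tr$ (legitimate since composition is monotone) yields
\begin{equation*}
  1_{TA}=Tr\cdot T\eta_A\leq Tr\cdot\eta_{TA}=\eta_A\cdot r,
\end{equation*}
the last equality being naturality of $\eta$ at $r\colon TA\to A$. Together with $r\cdot\eta_A=1_A$ this says exactly that $r\dashv\eta_A$ with $r\cdot\eta_A=1_A$, so by the adjunction characterisation of algebras in Definition~\ref{df:2} the morphism $r$ is a $\mathsf{T}$-algebra structure on $A$. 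Uniqueness in~(1) then comes for free: every algebra structure is a left adjoint of $\eta_A$, and left adjoints are unique by antisymmetry of the order, as in~\eqref{eq:10}. I expect this implication to be the only real obstacle; the remaining steps are diagram chases, and the whole difficulty is concentrated in recognising that the single inequality $1_{TA}\leq\eta_A\cdot r$ — precisely what lax idempotence provides — upgrades a bare retraction of the unit into an adjunction, and hence into an algebra.
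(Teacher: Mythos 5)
Your proof is correct. Note that the paper itself gives no proof of Lemma~\ref{l:14}: it is quoted as a well-known fact with references to Kock and Escard\'o, so there is nothing to diverge from; your argument is exactly the standard one, and indeed it is the argument the authors themselves deploy when they need it concretely (compare the proof of Proposition~\ref{prop:6}, where a retraction $a$ of $\eta_A$ is upgraded to $a\dashv\eta_A$ and then to an algebra structure, and the use of the lemma in Theorem~\ref{thm:10}). The cycle $(1)\Rightarrow(2)\Rightarrow(3)\Rightarrow(4)\Rightarrow(2)$ plus $(2)\Rightarrow(1)$ does yield all the equivalences, the naturality computations in $(4)\Rightarrow(2)$ are right, and the crux $(2)\Rightarrow(1)$ correctly combines $T\eta\leq\eta T$ (condition~(\ref{item:6}) of Definition~\ref{df:2}), naturality of $\eta$ at $r$, and the adjunction characterisation of algebra structures, with uniqueness following from uniqueness of adjoints. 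One interpretive point you resolved silently but correctly: item~(\ref{item:20}) says ``$\eta_A$ has a right inverse'', and you read it as the existence of $r$ with $r\cdot\eta_A=1_A$, i.e.\ $\eta_A$ is a split monomorphism (a right inverse \emph{of} $r$). That is the only reading under which the lemma is true --- under the other reading ($\eta_A\cdot s=1_{TA}$) it fails already for the down-set monad of Example~\ref{ex:3}, since for a complete lattice $A$ the unit $\eta_A$ is an algebra unit but almost never a split epimorphism --- and it is the reading the paper relies on in Theorem~\ref{thm:10}.
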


Given two monads $\mathsf{S}=(S,\nu,\theta)$ and $\mathsf{T}=(T,\eta,\mu)$ on
the category \C\, we recall that a \emph{monad morphism}
$\varphi:\mathsf{S}\to\mathsf{T}$ is a natural transformation such that, for
every object $X$ of $\C$, the following diagrams commute.
\begin{equation}
  \xymatrixrowsep{.03cm}
  \diagram
  &TSX\ar[dr]^{T\varphi_X}&\\
  SSX\ar[ru]^-{\varphi_{SX}}\ar[dddd]_{\theta_X}\ar[dr]_{S\varphi_X}&&
  TTX\ar[dddd]^{\mu_X}
  \\
  &STX\ar[ur]_{\varphi_{TX}}&
  \\
  \\
  \\
  SX\ar[rr]^-{\varphi_X}&&TX
  \enddiagram
  \qquad
  \xymatrixrowsep{1cm}
  \diagram
  &X\ar[dl]_{\nu_X}\ar[dr]^{\eta_X}&\\
  SX\ar[rr]^-{\varphi_X}&&TX
  \enddiagram
\end{equation}
(There is a more general notion of morphism between monads on different
categories, which we will not need.)

\begin{lemma}
  \label{l:15}
  Let $\mathsf{T}$ and $\mathsf{S}$ be monads on an \Ord-category. Then there is
  at most one monad morphism $\mathsf{T}\to \mathsf{S}$ if $\mathsf{T}$ is lax
  idempotent.
\end{lemma}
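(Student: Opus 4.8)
The plan is to reduce the uniqueness of the monad morphism to the uniqueness of a left extension, exploiting condition~(\ref{item:8}) of Definition~\ref{df:2}. Suppose $\varphi\colon\mathsf{T}\to\mathsf{S}$ is a monad morphism, where $\mathsf{T}=(T,\eta,\mu)$ is lax idempotent and $\mathsf{S}=(S,\nu,\theta)$. I would show that each component $\varphi_X\colon TX\to SX$ is forced to equal the left extension $\lan{\eta_X}{\nu_X}$ of the unit $\nu_X\colon X\to SX$ along $\eta_X\colon X\to TX$. Since this left extension is a feature of \C\ together with the data $(\eta,\nu)$ alone --~and is unique by the universal property recalled in \S\ref{sec:adjunctions}~-- it does not depend on $\varphi$, so any two monad morphisms must agree componentwise.

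The key step is to produce a $\mathsf{T}$-algebra on $SX$ to which condition~(\ref{item:8}) can be applied. A monad morphism $\varphi\colon\mathsf{T}\to\mathsf{S}$ lets one restrict $\mathsf{S}$-algebras to $\mathsf{T}$-algebras; applied to the free $\mathsf{S}$-algebra $(SX,\theta_X)$ this yields the $\mathsf{T}$-algebra $(SX,a)$ with structure map $a=\theta_X\cdot\varphi_{SX}\colon TSX\to SX$. Condition~(\ref{item:8}) of Definition~\ref{df:2}, applied to this algebra and the morphism $f=\nu_X\colon X\to SX$, then says precisely that $a\cdot T\nu_X$ is the left extension of $\nu_X$ along $\eta_X$. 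It remains to identify this composite with $\varphi_X$, which is a short calculation using naturality of $\varphi$ at the morphism $\nu_X$ and one unit law of $\mathsf{S}$:
\begin{equation*}
  a\cdot T\nu_X
  =\theta_X\cdot\varphi_{SX}\cdot T\nu_X
  =\theta_X\cdot S\nu_X\cdot\varphi_X
  =\varphi_X,
\end{equation*}
where the middle equality is naturality $\varphi_{SX}\cdot T\nu_X=S\nu_X\cdot\varphi_X$ and the last uses $\theta_X\cdot S\nu_X=1_{SX}$. Hence $\varphi_X=\lan{\eta_X}{\nu_X}$, as claimed, and two monad morphisms $\varphi,\varphi'$ give two left extensions of the same morphism, which coincide by uniqueness of left extensions.

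The main obstacle is the verification that $(SX,\theta_X\cdot\varphi_{SX})$ really is a $\mathsf{T}$-algebra, since this is where the full strength of the monad-morphism axioms (the compatibility with the multiplications, via the hexagon) is consumed; the unit axiom of the induced algebra follows from the unit compatibility $\varphi_{SX}\cdot\eta_{SX}=\nu_{SX}$ together with $\theta_X\cdot\nu_{SX}=1_{SX}$, while the associativity axiom is the standard fact that a monad morphism restricts algebras. Everything else is formal. Conceptually, this is the statement that lax idempotent (\kz) monads are \emph{property-like}: being an algebra is a property rather than extra structure, and consequently comparison morphisms out of such a monad, when they exist, are uniquely determined.
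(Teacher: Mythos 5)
Your proof is correct and follows essentially the same route as the paper's: both restrict the free $\mathsf{S}$-algebra along $\varphi$ to obtain the $\mathsf{T}$-algebra structure $\theta_X\cdot\varphi_{SX}$ on $SX$, use lax idempotency of $\mathsf{T}$ to see this structure is independent of $\varphi$, and recover $\varphi_X$ by the identical computation $\theta_X\cdot\varphi_{SX}\cdot T\nu_X=\varphi_X$. The only (cosmetic) difference is that you invoke the left-extension characterisation (condition~(\ref{item:8}) of Definition~\ref{df:2}) where the paper phrases the uniqueness via the left-adjoint-to-the-unit characterisation; these are equivalent clauses of the same definition.
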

\begin{proof}
  Suppose that $\varphi_X\colon TX\to SX$ are the components of a monad
  morphism. The morphism
  \begin{equation}
    \psi_X\colon TSX\xrightarrow{\varphi_{SX}}S^2X\xrightarrow{\mu^S_X}SX
  \end{equation}
  is a $\mathsf{T}$-algebra structure on $SX$, and therefore it is uniquely
  defined as the left adjoint to the unit $SX\to TSX$. Therefore,
  $\varphi_X=\psi_X\cdot T(\eta_X^S)$ is uniquely defined.
\end{proof}

\section{Orthogonal factorisations and simple reflections, revisited}
\label{sec:simple-reflections}
In this section we revisit some of the material of Cassidy--H\'{e}bert--Kelly
work on simple reflections~\cite{MR779198} from a slightly different
perspective, more amenable to generalisation.

Suppose that $T\colon\mathcal{A}\to \mathcal{A}$ is a reflection, with unit
$\eta_A\colon A\to TA$, on the category $\mathcal{A}$, which we assume to admit
pullbacks. The corresponding reflective subcategory will be denoted by
$\mathsf{T}\text-\mathrm{Alg}$, as it consists of the algebras for the idempotent monad
$\mathsf{T}$
associated to $T$, whose invertible multiplication we denote by $\mu\colon T^2\Rightarrow T$.

We say that a morphism $f$ in $\mathcal{A}$ is a
\emph{$T$-isomorphism}, or is \emph{$T$-invertible}, if $Tf$ is an isomorphism.

Each morphism $f\colon A\to B$ can be factorised through a pullback
square, as displayed.
\begin{equation}
  \label{eq:15}
  f=Rf\cdot Lf\qquad
  \diagram
  A\ar[rd]_{Lf}\ar@/^10pt/[drr]^{\eta_A}\ar@/_10pt/[ddr]_f&&\\
  &Kf\ar@{}[dr]|{{\mathrm{pb}}}\ar[r]^{q_f}\ar[d]|{Rf}&TA\ar[d]^{Tf}\\
  &B\ar[r]^-{\eta_B}&  TB
  \enddiagram
\end{equation}
\begin{rmk}
  \label{rmk:2}
  The factorisation $f=Rf\cdot Lf$ is \emph{functorial}, in the sense that, if
  $(h,k)\colon f \to g$ is a morphism in the arrow category $\mathcal{A}^\two$,
  then there is a morphism $K(h,k)\colon Kf\to Kg$
  \begin{equation}
    \label{eq:19}
    \diagram
    \cdot\ar[d]_f\ar[r]^h&\cdot\ar[d]^g\\
    \cdot\ar[r]^-k&\cdot
    \enddiagram
    \longmapsto
    \diagram
    \cdot\ar[r]^h\ar[d]_{Lf}&\cdot\ar[d]^{Lg}\\
    Kf\ar[r]^{K(h,k)}\ar[d]_{Rf}&Kg\ar[d]^{Rg}\\
    \cdot\ar[r]^k&\cdot
    \enddiagram
  \end{equation}
  yielding a functor $K\colon\mathcal{A}^\two\to\mathcal{A}$.
\end{rmk}
\begin{rmk}
  \label{rmk:3}
  The assignment that sends a morphism $f\mapsto Lf$ is part of an endofunctor
  on $\mathcal{A}^\two$, given on morphisms by
  \begin{equation}
    \label{eq:20}
    f\xrightarrow{(h,k)}g\quad\longmapsto \quad Lf\xrightarrow{(h,K(h,k))}Lg.
  \end{equation}
  Furthermore, there is a natural transformation $\Phi\colon L\Rightarrow 1$ with components
  \begin{equation}
    \label{eq:21}
    \Phi_f\quad = \quad
    \diagram
    \cdot\ar[d]_{Lf}\ar@{=}[r]&\cdot\ar[d]^f\\
    \cdot\ar[r]^{Rf}&\cdot
    \enddiagram
  \end{equation}
\end{rmk}
\begin{rmk}
  \label{rmk:4}
  The assignment $f\mapsto Rf$ underlies a monad on the arrow category
  $\mathcal{A}^\two$. Its unit and multiplication are given by
  \begin{equation}
    \label{eq:27}
    \Lambda_f\quad=\quad
    \diagram
    \cdot\ar[d]_f\ar[r]^{Lf}&\cdot\ar[d]^{Rf}\\
    \cdot\ar@{=}[r]&\cdot
    \enddiagram
    \qquad
    \Pi_f\quad=\quad
    \diagram
    \cdot\ar[d]_{R^2f}\ar[r]^{\pi_f}&\cdot\ar[d]^{Rf}\\
    \cdot\ar@{=}[r]&\cdot
    \enddiagram
  \end{equation}
  where the morphism $\pi_f\colon KRf\to Kf$ is the unique morphism into
  the pullback $Kf$ such that
  \begin{equation}
    q_f\cdot \pi_f=\mu_{\dom(f)}\cdot Tq_f\cdot q_{Rf} \quad\text{and}\quad
    Rf\cdot\pi_f=RRf.
    \label{eq:28}
\end{equation}
\end{rmk}
One of the contributions of~\cite{MR779198} is to introduce a property on the
reflection $T$ that guarantees that the factorisation $f=Rf\cdot Lf$ is an
orthogonal factorisation system (\textsc{ofs}): the property of being simple.

\begin{df}
  \label{df:1}
  The reflection $\mathsf{T}=(T,\eta)$ is \emph{simple} if $Lf$ is a $T$-isomorphism.
\end{df}

As pointed out in~\cite{MR779198}, if $\mathsf{T}$ is simple then the factorisation
$f=Rf\cdot Lf$ defines an orthogonal factorisation system, with left class of
morphisms that of $T$-isomorphisms. To say only a few words about this fact, any
morphism of the form $Tf$ is orthogonal to $T$-isomorphisms, and so $Rf$, as a
pullback of $Tf$, is also orthogonal to $T$-isomorphisms; together with the
simplicity hypothesis that $Lf$ be a $T$-isomorphism, we obtain an orthogonal
factorisation.

If we denote by $F^T\colon \mathcal{A}\to \mathsf{T}\text-\mathrm{Alg}$ the left adjoint of the
inclusion $\mathsf{T}\text-\mathrm{Alg}\subset\mathcal{A}$, then we can consider the full
subcategory $\mathsf{T}\text-\mathrm{Iso}\subset\mathcal{A}^\two$ whose objects are those
morphisms of $\mathcal{A}$ that are $T$-isomorphisms (equivalently, those morphisms $f$
such that $F^T(f)$ is an isomorphism) as a pullback.
\begin{equation}
  \label{eq:16}
  \diagram
  \mathsf{T}\text-\mathrm{Iso}\ar[r]\ar[d]\ar@{}[dr]|{\mathrm{pb}}&
  \mathrm{Iso}\ar[d]\\
  \mathcal{A}^\two\ar[r]^-{(F^T)^\two}&
  \mathsf{T}\text-\mathrm{Alg}^\two
  \enddiagram
\end{equation}

\begin{lemma}
  \label{l:1}
  The subcategory $\mathsf{T}\text-\mathrm{Iso}\hookrightarrow{}\mathcal{A}^\two$ is
  coreflective if and only if the reflection $\mathsf{T}$ is simple. In this
  case, the associated
  idempotent comonad is given by $f\mapsto Lf$ and has counit
  \begin{equation}
    \label{eq:17}
    \diagram
    \cdot\ar[d]_{Lf}\ar@{=}[r]&\cdot\ar[d]^f\\
    \cdot\ar[r]^-{Rf}&\cdot
    \enddiagram
  \end{equation}
\end{lemma}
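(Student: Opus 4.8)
The engine of the proof is a co-universal property of the pair $(Lf,\Phi_f)$ that holds for \emph{every} idempotent $\mathsf T$, irrespective of simplicity. Concretely, I would first establish: for any $T$-isomorphism $g\colon A'\to B'$ and any morphism $(h,k)\colon g\to f$ in $\mathcal A^\two$, there is a unique $(h,k')\colon g\to Lf$ with $\Phi_f\cdot(h,k')=(h,k)$. Since $\Phi_f=(1,Rf)$, the top component is automatically $h$, so the content is to produce a unique $k'\colon B'\to Kf$ with $q_f\cdot k'\cdot g=\eta_A\cdot h$ and $Rf\cdot k'=k$. Using the pullback description of $Kf$ in \eqref{eq:15}, such a $k'$ amounts to a map $B'\to TA$ whose composite with $g$ is $\eta_A\cdot h$; this map exists and is unique because $TA$ is a $\mathsf T$-algebra and algebras are orthogonal to $T$-isomorphisms (the bijection $\mathcal A(B',TA)\cong\mathcal A(A',TA)$ induced by $g$). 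The compatibility $Tf\cdot(q_fk')=\eta_B\cdot k$ needed to land in the pullback follows from the same orthogonality applied to the algebra $TB$. No use of simplicity is made here.

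For the forward implication, if $\mathsf T$ is simple then $Lf$ lies in $\mathsf{T}\text-\mathrm{Iso}$ by definition, so the co-universal property above is exactly the statement that $\Phi_f\colon Lf\to f$ is the coreflection of $f$ into $\mathsf{T}\text-\mathrm{Iso}$. As $\mathsf{T}\text-\mathrm{Iso}\hookrightarrow\mathcal A^\two$ is full and replete, the existence of these coreflections makes it coreflective; the coreflector is $f\mapsto Lf$ (functorial by Remark~\ref{rmk:3}) and the counit is the square \eqref{eq:17}, so the associated idempotent comonad is $L$ with counit $\Phi$, as claimed.

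For the converse I would argue that coreflectivity forces $Lf$ to be a $T$-isomorphism for every $f$, which is the definition of simplicity. Writing $\gamma_f=(a,b)\colon Cf\to f$ for the coreflection, with $Cf$ a $T$-isomorphism, the co-universal property of the previous paragraph applied to $\gamma_f$ produces a comparison $\psi\colon Cf\to Lf$ with $\Phi_f\cdot\psi=\gamma_f$. The plan is then to show that $Lf$ is a retract, in $\mathcal A^\two$, of the $T$-isomorphism $Cf$; since isomorphisms are closed under retracts and $T$ preserves retracts, $T$-isomorphisms are closed under retracts in $\mathcal A^\two$, and this yields $Lf\in\mathsf{T}\text-\mathrm{Iso}$. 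To build the retraction I would exploit two pieces of concrete structure: the canonical morphism $(1_A,q_f)\colon Lf\to\eta_A$ into the $T$-isomorphism $\eta_A$ (coming from $q_f\cdot Lf=\eta_A$), and a section $t$ of the domain component $a$, obtained by coreflecting the morphism $(1_A,f)\colon 1_A\to f$ (note that $1_A$ is a $T$-isomorphism). The right orthogonality of $Rf$ to $T$-isomorphisms, already used above and recorded after Definition~\ref{df:1}, then supplies the remaining diagonal data.

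The main obstacle is precisely this last step: turning the one-sided comparison $\psi$ into a genuine retraction and thereby proving $Lf\in\mathsf{T}\text-\mathrm{Iso}$. The formal universal properties alone do not suffice, because they only control morphisms \emph{out of} objects of $\mathsf{T}\text-\mathrm{Iso}$, whereas the map required for the retraction issues from $Lf$, which is not yet known to lie in the subcategory; the concrete pullback description of $Kf$ and the orthogonality of $Rf$ are what must be used to close the gap.
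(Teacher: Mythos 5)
Your first paragraph and the forward implication are correct, and in fact more self-contained than the paper's own treatment of that half (the paper simply quotes Cassidy--H\'{e}bert--Kelly: a simple reflection yields an \textsc{ofs}, and the left class of an \textsc{ofs} is coreflective in the arrow category). The converse, however, is a genuine gap, and you say so yourself: you never construct the morphism $Lf\to Cf$ (equivalently $\chi_1\colon Kf\to B''$ with $\psi_1\chi_1=1_{Kf}$) that your retract argument needs. The obstruction you identify is not just an inconvenience; no argument using only the formal facts you have established about $(L,\Phi)$ --- that its coalgebras are exactly the $T$-isomorphisms, and that every map from a $T$-isomorphism into $f$ factors uniquely through $\Phi_f$ --- can possibly close it. A counterexample to that inference: on the chain $0<1<2$, let $P(0)=0$, $P(1)=0$, $P(2)=1$, with counit the inequalities $P(x)\leq x$. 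The objects admitting a coalgebra structure (those with $x\leq P(x)$) form exactly $\{0\}$, a full, replete, coreflective subcategory (coreflector constantly $0$), and every map from the coalgebra $0$ into any $x$ factors, uniquely, through $P(x)$, since $0\leq P(x)$ always; yet $P(2)=1$ does not lie in the subcategory, so $P$ is not the coreflection comonad. Hence coreflectivity of the coalgebras plus your co-universal property do not force $Lf$ into $\mathsf{T}\text{-}\mathrm{Iso}$: the concrete construction of $L$ as a pullback along the unit must enter, which your sketch gestures at (``the pullback description of $Kf$ and the orthogonality of $Rf$'') but never carries out.

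The paper closes this gap by a different idea from your retraction plan: instead of comparing $Lf$ and $Cf$ through morphisms between them, it computes the coreflection outright. It recognises $\mathsf{T}\text{-}\mathrm{Iso}\to\mathcal{A}^\two$ as the pullback \eqref{eq:16} of the coreflective inclusion $\mathrm{Iso}\subset\mathsf{T}\text{-}\mathrm{Alg}^\two$ along $(F^T)^\two$, rewrites this as the pullback \eqref{eq:31} of categories of coalgebras for copointed endofunctors, and invokes the fact that in these circumstances the copointed endofunctor $(G,\Psi)$ of the coreflection is itself given by the corresponding pullback \eqref{eq:32} in the category of endofunctors of $\mathcal{A}^\two$. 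Applying $\dom$ and $\cod$ to that pullback shows that $\dom\Psi_f$ may be taken to be an identity and that the codomain component is precisely the defining pullback \eqref{eq:33} of $Kf$, whence $Gf=Lf$; since $Gf\in\mathsf{T}\text{-}\mathrm{Iso}$ by construction, $\mathsf{T}$ is simple. If you wish to repair your proof, it is this identification of the coreflection with $L$ --- not a retraction of $Lf$ onto $Cf$ built from universal properties --- that is the missing ingredient.
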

\begin{proof}
  If $\mathsf{T}$ is simple, we know that the $T$-isomorphisms are the left class of an
  orthogonal factorisation system, and thus coreflective in
  $\mathcal{A}^\two$. To be more explicit, if $(\mathscr E,\mathscr M)$ is an
  orthogonal factorisation system in $\mathcal{A}$, and $f=m\cdot e$ with
  $e\in\mathscr E$ and $m\in\mathscr M$, then the morphism
  \begin{equation}
    \label{eq:25}
    \diagram
    \cdot\ar[d]_e\ar@{=}[r]&\cdot\ar[d]^f\\
    \cdot\ar[r]^-m&\cdot
    \enddiagram
  \end{equation}
  exhibits $e$ as a coreflection of $f$ into the full subcategory of
  $\mathcal{A}^\two$ defined by $\mathscr{E}$.

  Before moving to proving the converse, we make the observation that, for any
  category $\mathcal{B}$, the full subcategory
  $\mathrm{Iso}\subset\mathcal{B}^\two$ of isomorphisms is coreflective (as
  well as reflective) with coreflection given by $\Upsilon_f\colon
  I(f)=1_{\dom(f)}\to f$
  \begin{equation}
    \label{eq:29}
    \Psi_f
    \quad=\quad
    \diagram
    {\cdot}
    \ar@{=}[r]^-{}\ar[d]_{1_{\dom(f)}}
    &
    {\cdot}
    \ar[d]^{f}
    \\
    {\cdot}
    \ar[r]_-{f}
    &
    {\cdot}
    \enddiagram
  \end{equation}

  To prove the converse, suppose that the inclusion of $\mathsf{T}\text-\mathrm{Iso}$
  into $\mathcal{A}^\two$ is coreflective, with coreflection given by counits
  $\Psi_f\colon Gf \to f$ in $\mathcal{A}^\two$. Then the pullback
  diagram~\eqref{eq:16} can be rewritten in the following form, where the
  categories of coalgebras are those for the respective copointed endofunctors
  $\Psi\colon G\Rightarrow 1$ and $\Upsilon\colon I\Rightarrow 1$.
  \begin{equation}
    \label{eq:31}
    \diagram
    (G,\Psi)\text-\mathrm{Coalg}
    \ar[r]^-{}\ar[d]_{}\ar@{}[dr]|{\mathrm{pb}}
    &
    {(I,\Upsilon)\text-\mathrm{Coalg}}
    \ar[d]^{}
    \\
    {\mathcal{A}^\two}
    \ar[r]^-{(F^T)^\two}
    &
    {\mathsf{T}\text-\mathrm{Alg}^\two}
    \enddiagram
  \end{equation}
  It is well known that, in these circumstances, $(G,\Psi)$ is given by a
  pullback in the category of endofunctors of $\mathcal{A}^\two$
  \begin{equation}
    \label{eq:32}
    \diagram
    G\ar[d]_\Psi\ar[r]
    &
    (U^T)^\two I(F^T)^\two \ar[d]^{(U^T)^\two \Upsilon (F^T)^\two}
    \\
    1_{\mathcal{A}^\two}\ar[r]^-{\eta^\two}
    &
    T^\two
    \enddiagram
  \end{equation}
  If we apply the domain functor $\dom\colon\mathcal{A}^\two\to\mathcal{A}$ to
  this pullback, we obtain that $\dom(\Psi)$ can be taken to be the identity
  transformation, since $\dom(U^T\Upsilon_{F^T(f)})$ is an identity morphism for
  any $f$. If we apply the codomain functor $\cod$ instead, we obtain a pullback square
  \begin{equation}
    \label{eq:33}
    \diagram
    {\cod(Gf)}
    \ar[r]^-{}\ar[d]_{\cod\Psi_f}
    &
    {T\dom f}
    \ar[d]^{Tf}
    \\
    {\cod(f)}
    \ar[r]_-{\eta_{\cod(f)}}
    &
    {T\cod (f)}
    \enddiagram
  \end{equation}
  (we have used that $\cod U^T I(F^T(f))=U^T\cod
  (1_{\dom(F^T(f))})=T\dom(f)$). In other words, $\cod\Psi_f=Rf$ and
  $\cod(Gf)=Kf$ as defined in diagram~\eqref{eq:15}. From here it is
  straightforward to verify that $Gf=Lf$. Therefore $Lf\in \mathsf{T}\text-\mathrm{Iso}$,
  which says that $T$ is a simple reflection, concluding the proof.
\end{proof}

The lemma proved above gives a characterisation of simple reflections, so one
could define simple reflections as those reflections $\mathsf{T}$ on $\mathcal{A}$ such
that the full subcategory $\mathsf{T}\text-\mathrm{Iso}\subset\mathcal{A}^\two$ is
coreflective. The associated idempotent comonad on
$\mathcal{A}^\two$ is given by
$f\mapsto Lf$.

\section{Lax orthogonal factorisations}
\label{sec:lax-orth-fact}
We now proceed to study lax orthogonal factorisation systems on
\Ord-categories. Before that, we briefly recall basic facts on algebraic weak
factorisation systems.
\subsection{Weak factorisation systems}
\label{sec:algebr-weak-fact}
This short section recalls the definition of weak factorisation system, a notion
that appeared as part of Quillen's definition of model category~\cite{MR0223432}.

We say that a morphism \emph{$g$ has the right lifting property with respect to
another $f$}, and that \emph{$f$ has the left lifting property with respect to $g$}, if
every time we have a commutative square as shown, there exists (a not
necessarily unique) diagonal filler.
\begin{equation}
  \label{eq:12}
  \diagram
  \cdot\ar[r]\ar[d]_f&\cdot\ar[d]^g\\
  \cdot\ar@{..>}[ur]\ar[r]&\cdot
  \enddiagram
\end{equation}

A \emph{weak factorisation system} (\textsc{wfs}) in a category consists of two
families of morphisms $\mathcal{L}$ and $\mathcal{R}$ such that:
\begin{itemize}
\item $\mathcal{R}$ consists of those morphisms with the right lifting property
  with respect to each morphism of $\mathcal{L}$.
\item $\mathcal{L}$ consists of those morphisms with the left lifting property
  with respect to each morphism of $\mathcal{R}$.
\item Each morphism in the category is equal to the composition of one element
  of $\mathcal{L}$ followed by one of $\mathcal{R}$.
\end{itemize}

\subsection{Algebraic weak factorisation systems}
\label{sec:algebr-weak-fact-1}

Algebraic weak factorisation systems (\textsc{awfs}s) where first introduced by
M.~Grandis and W.~Tholen in~\cite{MR2283020}, with an extra distributivity condition
later added by R.~Garner in~\cite{MR2506256}. In this section we shall give the
definition of \textsc{awfs}s on order-enriched categories, which is the case we
will need, even though the definitions remain virtually unchanged.
\begin{df}
  \label{df:4}
  An \Ord-functorial factorisation on an \Ord-category \C\ consists of a factorisation
  \begin{equation}
    \label{eq:18}
    \dom\xRightarrow{\lambda}E\xRightarrow{\rho}\cod
  \end{equation}
  in the category of locally monotone functors $\C^\two\to \C$ of the natural
  transformation $\dom\Rightarrow\cod$ with component at $f\in \C^\two$ equal to
  $f\colon\dom(f)\to\cod(f)$. It is important that in this factorisation $E$
  should be a locally monotone functor.

  As in the case of functorial factorisations on ordinary categories, an
  \Ord-func\-to\-rial factorisation as the one described in the previous paragraph
  can be equivalently described as:
  \begin{itemize}
  \item A copointed endo-\Ord-functor $\Phi\colon L\Rightarrow 1_{\C^\two}$ on
    $\C^\two$ with $\dom(\Phi)=1$.
  \item A pointed endo-\Ord-functor $\Lambda\colon 1_{\C^\two}\Rightarrow R$ on
    $\C^\two$ with $\cod(\Lambda)=1$.
  \end{itemize}
  The three descriptions of an \Ord-functorial factorisation are related by:
  \begin{equation}
    \dom(\Lambda_f)=Lf=\lambda_f\qquad \cod(\Phi_f)=Rf=\rho_f.\label{eq:22}
  \end{equation}
\end{df}
\begin{df}
  \label{df:3}
  An \emph{algebraic weak factorisation system}, abbreviated {\textsc{awfs}}, on an
  \Ord-category \C\ consists of a pair $(\mathsf{L},\mathsf{R})$, where
  $\mathsf{L}=(L,\Phi,\Sigma)$ is an \Ord-comonad and
  $\mathsf{R}=(R,\Lambda,\Pi)$ is an \Ord-monad on $\C^\two$, such that $(L,\Phi)$ and
  $(R,\Lambda)$ represent the same \Ord-functorial factorisation on \C\ (ie, the
  equalities~\eqref{eq:22} hold), plus a distributivity condition that we
  proceed to explain.

  The unit axiom $\Pi\cdot (\Lambda R)=1$ of the monad $\mathsf{R}$ implies,
  since $\cod(\Lambda)=1$, that $\cod(\Pi)=1$; dually $\dom(\Sigma)=1$, so these
  transformations have components that look like:
  \begin{equation}
    \label{eq:23}
    \Sigma_f=
    \diagram
    \cdot\ar[d]_{Lf}\ar@{=}[r]&\cdot\ar[d]^{L^2f}\\
    \cdot\ar[r]^{\sigma_f}&\cdot
    \enddiagram
    \quad\text{and}\quad
    \Pi_f=
    \diagram
    \cdot\ar[d]_{R^2f}\ar[r]^{\pi_f}&\cdot\ar[d]^{Rf}\\
    \cdot\ar@{=}[r]&\cdot
    \enddiagram
  \end{equation}
  One can form a transformation
  \begin{equation}
    \label{eq:24}
    \Delta\colon LR\Longrightarrow RL\qquad
    \Delta_f=
    \diagram
    Kf\ar@{..>}[dr]^1\ar[d]_{LRf}\ar[r]^{\sigma_f}\ar[d]&KLf\ar[d]^{RLf}\\
    KRf\ar[r]^-{\pi_f}&Kf
    \enddiagram
  \end{equation}
  The distributivity axiom requires $\Delta$ to be a mixed distributive law between
  the comonad $\mathsf{L}$ and the monad $\mathsf{R}$; this amounts to the
  commutativity of the following diagrams.
  \begin{equation}
    \label{eq:30}
    \diagram
    LR^2\ar[r]^-{\Delta R}\ar[d]_{L\Pi}&
    RLR\ar[r]^-{R\Delta}&
    R^2L\ar[d]^{\Pi L}\\
    LR\ar[rr]^-{\Delta}&&RL
    \enddiagram
    \quad
    \diagram
    LR\ar[d]_{\Sigma R}\ar[rr]^{\Delta}&&
    RL\ar[d]^{R\Sigma}\\
    L^2R\ar[r]^-{L\Delta}&
    LRL\ar[r]^{\Delta L}&
    RL^2
    \enddiagram
  \end{equation}
  (The two axioms of a mixed distributive law that involve the unit of the monad
  and the counit of the comonad automatically hold.)
\end{df}

\begin{ex}
  \label{ex:5}
  Each \textsc{ofs} $(\mathscr{E},\mathscr{M})$ on \C\ gives
  rise (upon choosing an $(\mathscr{E},\mathscr{M})$-facto\-risat\-ion for each
  morphism) to an \textsc{awfs} $(\mathsf{L},\mathsf{R})$, where $\mathsf{L}$ is the
  idempotent comonad associated to the coreflective subcategory
  $\mathscr{E}\subset\C^\two$ and $\mathsf{R}$ is the idempotent monad
  associated to the reflective inclusion
  $\mathscr{M}\subset\C^\two$. Conversely, an \textsc{awfs}
  $(\mathsf{L},\mathsf{R})$ with both $\mathsf{L}$ and $\mathsf{R}$ idempotent
  induces an \textsc{ofs}. This was first shown in~\cite[Thm.~3.2]{MR2283020},
  and \cite[Prop.~3]{MR3393453} further shows that it suffices that either
  $\mathsf{L}$ or $\mathsf{R}$ be idempotent.
\end{ex}

If $(\mathsf{L},\mathsf{R})$ is an \textsc{awfs} on \C, an
$\mathsf{L}$-coalgebra structure on $f$ and an $\mathsf{R}$-algebra structure on
$g$ can be depicted by commutative squares
\begin{equation}
  \label{eq:9}
  \diagram
  \cdot\ar[d]_f\ar@{=}[r]^{\phantom{p}}&\cdot\ar[d]^{Lf}\\
  \cdot\ar[r]^-s&\cdot
  \enddiagram
  \qquad
  \diagram
  \cdot\ar[d]_{Rg}\ar[r]^-p&\cdot\ar[d]^g\\
  \cdot\ar@{=}[r]_{}&\cdot
  \enddiagram
\end{equation}
and the (co)algebra axioms can be written in the following way (where the
morphisms $\sigma_f$ and $\pi_g$ are those described in Definition~\ref{df:3}).
\begin{gather}
  Rf\cdot s=1\qquad K(1,s)\cdot s=\sigma_f\cdot s\\
  p\cdot Lg=1\qquad p\cdot K(p,1)= p\cdot\pi_g
\end{gather}
A morphism of $\mathsf{L}$-coalgebras $(f,s)\to (f',s')$ is a morphism
$(h,k)\colon f\to f'$ in $\C^\two$ that is compatible with the coalgebra
structures in the usual way:
\begin{equation}
  K(h,k)\cdot s=s'\cdot k.
\end{equation}
Similarly, a morphism of $\mathsf{R}$-algebras $(g,p)\to(g',p')$ is a morphism
$(u,v)\colon g\to g'$ such that
\begin{equation}
  p'\cdot K(u,v)=u\cdot p.
\end{equation}
With the obvious composition and identities we obtain categories
$\mathsf{L}\text-\mathrm{Coalg}$ and $\mathsf{R}\text-\mathrm{Alg}$, equipped
with forgetful functors into $\C^\two$. These are \Ord-categories by stipulating
that the ordering of morphisms of (co)algebras is inherited from the ordering of
morphisms in $\C^\two$; as a consequence, the forgetful functors from
$\mathsf{L}\text-\mathrm{Coalg}$ and $\mathsf{R}\text-\mathrm{Alg}$ to $\C^\two$
become $\Ord$-enriched.

\subsection{Underlying {\normalfont\textsc{wfs}s}}
\label{sec:underly-norm}

Each \textsc{awfs} $(\mathsf{L},\mathsf{R})$ (enriched or not) has an
\emph{underlying} \textsl{\textsc{wfs}} $(\mathcal{L},\mathcal{R})$. The class
$\mathcal{L}$ consists of all those morphisms that admit a structure of
coalgebra over the copointed endofunctor $(L,\Phi)$ that underlies $\mathsf{L}$;
similarly, $\mathcal{R}$ consists of all those morphisms that admit a structure
of an algebra over the pointed endofunctor $(R,\Lambda)$ that underlies
$\mathsf{R}$.

\subsection{{\normalfont \textsc{Lari}s} and {\normalfont\textsc{awfs}s}}
\label{sec:laris-awfss}
One of the most important examples of \textsc{awfs}s for us will be provided by
the so-called \textsc{lari}s.
\begin{df}
  \label{df:16}
  A \emph{left adjoint right inverse}, or \textsc{lari}, in an \Ord-category is
  a morphism $f$ that is part of an adjunction $f\dashv g$ with $1=g\cdot f$. In
  the same situation, we say that $g$ is a \emph{right adjoint left inverse}, or
  \textsc{rali}.

  Suppose given another adjunction $f'\dashv g'$ with $1=g'\cdot f'$, and
  morphisms $h$ and $k$ as in the displayed diagram.
  \begin{equation}
    \label{eq:79}
    \xymatrixcolsep{1.5cm}
    \diagram
    X\ar@<-5pt>[d]_f\ar@{}[d]|\dashv\ar@<5pt>@{<-}[d]^g
    \ar[r]^h
    &
    X'\ar@<-5pt>[d]_{f'}\ar@{}[d]|\dashv\ar@<5pt>@{<-}[d]^{g'}
    \\
    Y\ar[r]_k
    &
    Y'
    \enddiagram
  \end{equation}
  We say that $(h,k)$ is a \emph{morphism of} \textsl{\textsc{lari}s} $f\to f'$,
  and that $(h,k)$ is a \emph{morphism of} \textsl{\textsc{rali}s} $g\to g'$, if
  $f'\cdot h=k\cdot f$ and $g'\cdot k=h\cdot g$. With the obvious notion of
  composition, \textsc{lari}s and \textsc{rali}s form categories that come
  equipped with forgetful functors into $\C^\two$. Furthermore, if \C\ is an
  \Ord-category, there are \Ord-categories $\lari(\C)$ and
  $\rali(\C)$ with objects and morphisms described above, and
  ordering between morphisms those of $\C^\two$.
\end{df}

\begin{ex}
  \label{ex:6}
  Consider the \emph{free (split) opfibration monad} $\mathsf{M}$ on \Ord, given on $f\colon
  X\to Y$ by $M(f)$
  \begin{equation}
    Kf=f\downarrow 1_{\cod(f)}=\bigl\{(x,y)\in X\times Y:f(x)\leq y\bigr\}\xrightarrow{Mf} Y
    \qquad
    (x,y)\mapsto y
    \label{eq:42}
  \end{equation}
  with ordering inherited from that of $X\times Y$. Furthermore, $M$ is a
  locally monotone endofunctor of $\Ord^\two$. The category
  $\mathsf{M}\text-\mathrm{Alg}$ of algebras for this monad has objects the
 \emph{(split) opfibrations}, ie monotone functions $f\colon X\to Y$ with a choice for
  each $x\in X$ and $y\in Y$ that satisfy $f(x)\leq y$, of an $x_y\in X$ such
  that: $x\leq x_y$, $f(x_y)=y$, and $(x\leq x')\wedge (f(x')=y)$ implies $x_y\leq
  x'$.
  As an aside comment, we note that there is no difference between the notions
  of an opfibration and of a split opfibration in \Ord\ due to the antisymmetry
  property satisfied by the orderings.

  Any monotone function $f\colon X\to Y$ can be factorised as
  \begin{equation}
    \label{eq:43}
    f:X\xrightarrow{Ef}Kf\xrightarrow{Mf} Y
  \end{equation}
  where $Ef(x)=(x,f(x))\in f\downarrow Y=f\downarrow 1_Y$. This is in fact part of an
  \textsc{awfs}, as we proceed to show. As the functorial factorisation is the
  one just described, the locally monotone endofunctor $E$ of $\Ord^\two$ has a
  copoint $\Phi_f=(1_X,Mf)\colon Ef\to f$. The monotone function $Ef\colon X\to
  f\downarrow Y$ has a right adjoint $r_f\colon f\downarrow Y\to X$, given by
  $r_f(x,y)=x_y$. We can define
  \begin{equation}
    \label{eq:44}
    \sigma_f\colon Kf=f\downarrow Y\longrightarrow KEf= Ef\downarrow Kf
    \qquad
    (x,y)\mapsto (r_f(x,y),(x,y))
  \end{equation}
  and morphisms $\Sigma_f$ that form the comultiplication of a comonad
  $\mathsf{E}=(\mathsf{E},\Phi,\Sigma)$.
  \begin{equation}
    \label{eq:45}
    \Sigma_f\colon {E}f\longrightarrow {E}^2f
    \qquad
    \diagram
    X\ar[d]_{{E}f}\ar@{=}[r]&X\ar[d]^{{E}^2f}\\
    Kf\ar[r]^-{\sigma_f}&K{E}(f)
    \enddiagram
  \end{equation}
  The morphism $ME(f)\colon KEf\to Kf$ is a left adjoint to $\sigma_f$, as can
  be easily verified. Furthermore, $\Phi_{Ef}\dashv \Sigma_f$, which means that
  the comonad $\mathsf{E}$ is lax idempotent. The distributivity axiom of
  \textsc{awfs}s can be verified by hand, or, alternatively, one can appeal to
  Theorem~\ref{thm:12}.

  We conclude with the observation that the endofunctors $E$ and $M$ preserve
  filtered colimits; equivalently, the functor $K\colon\Ord^\two\to\Ord$
  preserves filtered colimits. This is so because $K$ is constructed by menas of
  comma-objects and the comments at the end of \S \ref{sec:order-enrich-limits}.
\end{ex}
\begin{ex}
  \label{ex:11}
  Precisely the same construction can be carried out in any \Ord-category that
  admits comma-objects (see~\S \ref{sec:order-enrich-limits}); for example, in
  any \Ord-category that admits cotensor products with $\two$ and pullbacks. The
  morphism $Mf$ is a projection in the comma-object depicted.
  \begin{equation}
    \label{eq:101}
    \diagram
    Kf\ar[r]^-{r_f}\ar[d]_{Mf}\ar@{}[dr]|{\geq}&X\ar[d]^f\\
    B\ar@{=}[r]&B
    \enddiagram
  \end{equation}
  The left part of the factorisation $Ef\colon X\to Kf$ is the unique morphism
  defined by the conditions
  \begin{equation}
    \label{eq:102}
    Mf\cdot Ef=f \quad\text{and}\quad r_f\cdot Ef=1_{X}.
  \end{equation}
  It is not hard to show that $Ef\dashv r_f$.

  The endo-\Ord-functor $f\mapsto Mf$ is part of the free
  (split) opfibration monad on \C. The endo-\Ord-functor $E$ is part of a
  comonad with counit $\Phi^E_f=(1,Mf)\colon Ef\to f$ and comultiplication
  $\Sigma_f=(1,\sigma_f)\colon Ef\to E^2f$ defined by
  \begin{equation}
    r_{Ef}\cdot \sigma_f=r_f \quad\text{and}\quad MEf\cdot\sigma_f=1_{Kf}.
  \end{equation}

\end{ex}
\begin{lemma}
  \label{l:5}
  Suppose that $\C$ is an \Ord-category with comma-objects and
  $(\mathsf{E},\mathsf{M})$ the {\normalfont\textsl{\textsc{awfs}}} constructed
  in the previous example.
  If $\Phi^{{E}}\colon {E}\Rightarrow 1$ is the underlying copointed endofunctor
  of the comonad $\mathsf{E}$, then:
  \begin{enumerate}
  \item \label{item:17} There is an isomorphism
    $\lari(\C)\cong\mathsf{E}\text-\mathrm{Coalg}$ over $\C^\two$.
  \item \label{item:18} The forgetful functor
  \begin{equation}
    \label{eq:80}
    \mathsf{E}\text-\mathrm{Coalg}\longrightarrow({E},\Phi^{{E}})\text-\mathrm{Coalg}
  \end{equation}
  is an isomorphism.
\end{enumerate}
\end{lemma}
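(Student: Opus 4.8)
The plan is to reduce both isomorphisms to a single elementary correspondence between \textsc{lari} structures and coalgebra structures for the \emph{copointed endofunctor} $(E,\Phi^E)$, and then to pass from the copointed endofunctor to the comonad $\mathsf{E}$ by invoking lax idempotence. First I would unwind, following the general description in~\eqref{eq:9}, what an $(E,\Phi^E)$-coalgebra structure on a morphism $f\colon X\to B$ amounts to. Since $\dom(\Phi^E)=1$, such a structure is a morphism $(1_X,s)\colon f\to Ef$ of $\C^\two$; concretely it is a map $s\colon B\to Kf$ subject to the commutativity $s\cdot f=Ef$ (the condition for $(1_X,s)$ to lie in $\C^\two$) together with the counit axiom $Mf\cdot s=1_B$.

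For the bijection on objects I would, given such an $s$, set $g:=r_f\cdot s\colon B\to X$. The inequality $f\cdot r_f\leq Mf$ defining the comma-object $Kf=f\downarrow 1_B$ yields $f\cdot g=f\cdot r_f\cdot s\leq Mf\cdot s=1_B$, while $g\cdot f=r_f\cdot s\cdot f=r_f\cdot Ef=1_X$ by~\eqref{eq:102} and $s\cdot f=Ef$; hence $f\dashv g$ is a \textsc{lari}. Conversely, from a \textsc{lari} $f\dashv g$ the comma-object universal property~\eqref{eq:7}, applied to the pair $(g,1_B)$ (which satisfies $f\cdot g\leq 1_B$), produces a unique $s\colon B\to Kf$ with $r_f\cdot s=g$ and $Mf\cdot s=1_B$; comparing projections then gives $s\cdot f=Ef$, and the two assignments are visibly mutually inverse. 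Uniqueness of adjoints in an \Ord-category (proved in~\S\ref{sec:adjunctions}) guarantees that $g$, hence $s$, is unique, so these are genuinely structures detected on $f$.

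Turning to morphisms, I would use the functoriality of the comma-object construction, expressed by $r_{f'}\cdot K(h,k)=h\cdot r_f$ and $Mf'\cdot K(h,k)=k\cdot Mf$. Given $(h,k)\colon f\to f'$ in $\C^\two$, composing the coalgebra-morphism equation $K(h,k)\cdot s=s'\cdot k$ with $r_{f'}$ produces $h\cdot g=g'\cdot k$, which is exactly the defining equation of a morphism of \textsc{rali}s in Definition~\ref{df:16}; conversely, if $h\cdot g=g'\cdot k$ then the $r_{f'}$- and $Mf'$-projections of $K(h,k)\cdot s$ and of $s'\cdot k$ coincide (the latter both equal to $k$), so the two maps agree by~\eqref{eq:7}. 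As on both sides the order on morphisms is inherited from $\C^\two$ and the correspondence is the identity on the underlying pairs $(h,k)$, it is automatically an isomorphism of \Ord-categories over $\C^\two$. This establishes $\lari(\C)\cong(E,\Phi^E)\text-\mathrm{Coalg}$.

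It remains to prove part~(\ref{item:18}), which, composed with the previous isomorphism, also yields part~(\ref{item:17}). Here I would appeal to lax idempotence: exactly as in Example~\ref{ex:6} one checks $\Phi^E_{Ef}\dashv\Sigma_f$, so $\mathsf{E}$ is a lax idempotent comonad, and for such a comonad every $(E,\Phi^E)$-coalgebra is automatically an $\mathsf{E}$-coalgebra, the coassociativity axiom $K(1,s)\cdot s=\sigma_f\cdot s$ being forced by the counit axiom; moreover coalgebra morphisms never involve the comultiplication. This is the comonad dual of the standard fact underlying Lemma~\ref{l:14} and Definition~\ref{df:2}. I expect this last step to be the main obstacle: verifying that the counit axiom forces coassociativity. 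Should a self-contained argument be wanted in place of the general \kz\ principle, I would discharge it by comparing the two comma-object projections of $K(1,s)\cdot s$ and $\sigma_f\cdot s$ into $KEf$ using the defining relations $r_{Ef}\cdot\sigma_f=r_f$ and $MEf\cdot\sigma_f=1_{Kf}$ of Example~\ref{ex:11}.
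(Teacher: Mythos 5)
Your proposal is correct and follows essentially the same two-step route as the paper's proof: first an explicit comma-object-projection argument identifying $\lari(\C)$ with $({E},\Phi^{{E}})\text-\mathrm{Coalg}$ (objects via $g=r_f\cdot s$, morphisms via $r_{f'}\cdot K(h,k)=h\cdot r_f$), and then showing that the counit axiom already forces coassociativity, so that~\eqref{eq:80} is an isomorphism. The only variation is in how you justify the second step — you invoke the general lax-idempotence principle that copointed coalgebras are automatically comonad coalgebras, whereas the paper verifies coassociativity directly — but your fallback computation comparing the two projections of $K(1,s)\cdot s$ and $\sigma_f\cdot s$ into $K{E}f$ using $r_{{E}f}\cdot\sigma_f=r_f$ and $M{E}f\cdot\sigma_f=1_{Kf}$ is precisely the paper's argument.
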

\begin{proof}
  This proof follows a direction not suggested by the statement. We shall first
  prove that there is an isomorphism between  $\lari(\C)$ and
  $(E,\Phi^E)\text-\mathrm{Coalg}$ and then show that \eqref{eq:80}~is an
  isomorphism. The reason the lemma is stated in the present form is that this
  form extends to 2-categories~\cite{clementino15:_lax}.

  Suppose given a morphism in $\C^\two$ as depicted.
  \begin{equation}
    \label{eq:95}
    \diagram
    A\ar[d]_f\ar@{=}[r]&A\ar[d]^{Ef}\\
    B\ar[r]^-s&Kf
    \enddiagram
  \end{equation}
  The morphism $s\colon B\to Kf=f\downarrow B$ corresponds to a pair of
  morphisms $r\colon B\to A$ and $u\colon B\to B$ that satisfy $f\cdot r\leq
  u$. The morphisms $r$ and $u$ are the composition of $s$ with, respectively,
  the projections $f\downarrow B\to A$ and $Mf\colon f\downarrow B\to B$. The
  commutativity of~\eqref{eq:95} translates into $r\cdot f=1$ and $u\cdot f=f$.

  Now suppose that \eqref{eq:95}~is a morphism of $(E,\Phi^E)$-coalgebras, ie
  that $Mf\cdot s=1$. By definition of $u$, this is equivalent to saying that
  $u=1$. Therefore, to give an $(E,\Phi^E)$-coalgebra structure on $f$ is
  equivalent to giving a morphism $r\colon B\to A$ such that $f\cdot r\leq 1$
  and $r\cdot f=1$. In other words, an $(E,\Phi^E)$-coalgebra structure on $f$
  is the same as a \textsc{lari} structure on $f$.

  To conclude the proof, we show that any $(E,\Phi^E)$-coalgebra structure
  $(1,s)\colon f\to Ef$ is an $\mathsf{E}$-coalgebra, ie it satisfies the
  coassociativity equality
  \begin{equation}
    \label{eq:103}
    \sigma_f\cdot s=K(1,s)\cdot s.
  \end{equation}
  The codomain of the morphisms at both sides of the equality is $KEf$, so
  \eqref{eq:103}~holds precisely when it does after composing with the
  projections $MEf\colon KEf\to Kf$ and $r_{Ef}\colon KEf\to X$. One of
  these equalities is obvious, since
  \begin{equation}
    MEf\cdot\sigma_f\cdot s=1\cdot s=s = s\cdot 1 = s\cdot Mf\cdot s
    = MEf\cdot K(1,s)\cdot s.
  \end{equation}
  The second equality holds by the following string of equalities, the first of
  which uses the definition of $\sigma_f$ and the last uses $r_{Ef}\cdot
  K(1,s)=r_f$.
  \begin{equation}
    r_{Ef}\sigma_f\cdot s = r_f\cdot s =r_{Ef}\cdot K(1,s)\cdot s.
  \end{equation}
  This completes the proof of the lemma.
\end{proof}

\subsection{Lax orthogonal factorisation systems}
\label{sec:lax-orthogonal-awfss}
\begin{df}
  \label{df:5}
  An \textsc{awfs} $(\mathsf{L},\mathsf{R})$ on an \Ord-category \C\ is a \emph{lax
  orthogonal factorisation system} (abbreviated \textsc{lofs}) if either of the following equivalent conditions holds:
  \begin{itemize}
  \item The comonad $\mathsf{L}$ is lax idempotent.
  \item The monad $\mathsf{R}$ is lax idempotent.
  \end{itemize}
\end{df}
Before proving the equivalence between the above properties we describe more
explicitly what it means for $(\mathsf{L},\mathsf{R})$ to be lax orthogonal.

According to our notation, the unit and multiplication of $\mathsf{R}$ and the
counit and comultiplication of $\mathsf{L}$ are depicted as morphisms in
$\C^\two$ as follows.
\begin{equation}
  \label{eq:56}
  \diagram
  \cdot\ar@{}[dr]|{\Lambda_f}
  \ar[r]^-{Lf}\ar[d]_{f}
  &
  \cdot
  \ar[d]^{Rf}
  \\
  \cdot
  \ar@{=}[r]_-{\phantom{Rf}}
  &
  \cdot
  \enddiagram
  \quad
  \diagram
  \cdot\ar@{}[dr]|{\Pi_f}
  \ar[r]^-{\pi_f}\ar[d]_{R^2f}
  &
  \cdot
  \ar[d]^{Rf}
  \\
  \cdot
  \ar@{=}[r]_-{\phantom{Rf}}
  &
  \cdot
  \enddiagram
  \quad
  \diagram
  \cdot\ar@{}[dr]|{\Phi_f}
  \ar@{=}[r]^-{\phantom{\pi}}\ar[d]_{Lf}
  &
  \cdot
  \ar[d]^{f}
  \\
  \cdot
  \ar[r]_-{Rf}
  &
  \cdot
  \enddiagram
  \quad
  \diagram
  \cdot\ar@{}[dr]|{\Sigma_f}
  \ar@{=}[r]^-{\phantom{Lf}}\ar[d]_{Lf}
  &
  \cdot
  \ar[d]^{L^2f}
  \\
  \cdot
  \ar[r]_-{\sigma_f}
  &
  \cdot
  \enddiagram
\end{equation}
Then, $(\mathsf{L},\mathsf{R})$ is lax orthogonal if and only if any of the
following conditions hold (the equivalence of these conditions will be shown in
Proposition~\ref{prop:1}):
\begin{equation}
  \label{eq:57}
  K(Lf,1)\cdot\pi_f\leq 1\qquad 1\leq LRf\cdot\pi_f
  \qquad 1\leq \sigma_f\cdot RLf \qquad
  \sigma_f\cdot K(1,Rf)\leq 1.
\end{equation}
In terms of $\mathsf{R}$-algebras and $\mathsf{L}$-coalgebras, the lax
idempotency of $(\mathsf{L},\mathsf{R})$ is described as follows. If $(f,s)$ is
an $\mathsf{L}$-coalgebra and $(g,p)$ is an $\mathsf{R}$-algebra, as displayed
below,
\begin{equation}
  \label{eq:58}
  \diagram
  \cdot\ar@{}[dr]|{(f,s)}
  \ar@{=}[r]^-{\phantom{p}}\ar[d]_{f}
  &
  \cdot
  \ar[d]^{Lf}
  \\
  \cdot
  \ar[r]_-{s}
  &
  \cdot
  \enddiagram
  \qquad
  \diagram
  \cdot\ar@{}[dr]|{(g,p)}
  \ar[r]^-{p}\ar[d]_{Rg}
  &
  \cdot
  \ar[d]^g
  \\
  \cdot
  \ar@{=}[r]_-{\phantom{s}}
  &
  \cdot
  \enddiagram
\end{equation}
then the \textsc{awfs} is lax orthogonal if and only if any of the following two
equivalent conditions hold, for all $(f,s)$ and $(g,p)$ (again, the equivalence
will be shown in Proposition~\ref{prop:1}):
\begin{equation}
  \label{eq:59}
  1\leq s\cdot Rf \quad\text{and}\quad 1\leq Lg\cdot p.
\end{equation}

\begin{prop}
  \label{prop:1}
  If $(\mathsf{L},\mathsf{R})$ is an {\normalfont\textsl{\textsc{awfs}}} on an
  \Ord-category \C, then $\mathsf{L}$ is lax idempotent if and only if
  $\mathsf{R}$ is lax idempotent.
\end{prop}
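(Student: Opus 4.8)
The plan is to prove the equivalence by establishing the chain of implications among the four conditions in~\eqref{eq:57}, since each pair of conditions characterises lax idempotency of one of the (co)monads via Definition~\ref{df:2}. By the self-dual nature of the \textsc{awfs} axioms (passing to $\C^{\mathrm{op}}$ swaps $\mathsf{L}$ and $\mathsf{R}$, reversing inequalities), it suffices to prove one of the two implications; the other follows formally. So I would focus on showing, say, that $\mathsf{L}$ lax idempotent implies $\mathsf{R}$ lax idempotent.

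First I would translate the lax idempotency of $\mathsf{L}$ into a usable inequality. By Definition~\ref{df:2}\eqref{item:3} applied to the comonad $\mathsf{L}$ (dualised as in Definition~\ref{df:6}), $\mathsf{L}$ is lax idempotent precisely when $\Sigma\cdot L\Phi \geq 1$, or equivalently in the explicit componentwise form $1\leq \sigma_f\cdot RLf$ from~\eqref{eq:57}. I would then use the \emph{distributivity axiom}~\eqref{eq:30} — the mixed distributive law $\Delta\colon LR\Rightarrow RL$ — to transport this inequality from the $\mathsf{L}$-side to the $\mathsf{R}$-side. The key computational input is the explicit description of $\Delta_f$ in~\eqref{eq:24}, namely that $\Delta_f$ has domain component the identity on $Kf$ and codomain component $\pi_f\cdot\sigma_f = 1$ (equivalently $RLf\cdot\sigma_f$ relates to $LRf$ via the pullback/comma structure). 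Applying the appropriate leg of one of the hexagonal coherence diagrams in~\eqref{eq:30}, together with the local monotonicity of all the functors involved and the compatibility of $\Sigma$ and $\Pi$ with $\Delta$, should convert $1\leq\sigma_f\cdot RLf$ into one of the $\mathsf{R}$-conditions such as $1\leq LRf\cdot\pi_f$.

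The main obstacle, and the step I expect to require the most care, is the precise bookkeeping of which whiskered composite in~\eqref{eq:30} to use and how the distributivity law relates $\sigma_f$, $\pi_f$, $RLf$, and $LRf$ at the level of the object $Kf$ on which they all act. Since $\cod(\Phi)=R$ and $\dom(\Lambda)=L$, the transformations $\sigma_f$ and $\pi_f$ live between the various iterated factorisation objects $Kf$, $KLf$, $KRf$, and the identities $1\leq\sigma_f\cdot RLf$ and $1\leq LRf\cdot\pi_f$ are inequalities between endomorphisms of these objects; matching them requires knowing that $\Delta_f$ is built from exactly the comparison $\sigma_f$ followed by $\pi_f$. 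I would verify that the distributive law forces $\pi_f\cdot\sigma_f=1$ on the relevant component (the dotted identity arrow in~\eqref{eq:24}), and then that applying $LR$ or whiskering by $\Sigma$ turns the $\mathsf{L}$-inequality into the $\mathsf{R}$-inequality. Once one direction is secured in this way, I would invoke the duality remark to obtain the converse without repeating the calculation, completing the proof that the two conditions in Definition~\ref{df:5} are equivalent.
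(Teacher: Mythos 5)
Your reduction by duality and your identification of the componentwise conditions in \eqref{eq:57} match the paper's setup, but the core step --- actually converting $1\leq \sigma_f\cdot RLf$ into $1\leq LRf\cdot \pi_f$ --- is never carried out, and the one concrete claim you make about the distributive law is wrong. In \eqref{eq:24} the domain component of $\Delta_f$ is $\sigma_f$ and the codomain component is $\pi_f$; the dotted identity is the \emph{diagonal} of that square, and it encodes the equalities $RLf\cdot\sigma_f=1=\pi_f\cdot LRf$, which are (co)unit axioms of $\mathsf{L}$ and $\mathsf{R}$ and hold whether or not the distributivity axiom does. Your ``$\pi_f\cdot\sigma_f=1$'' does not even typecheck ($\sigma_f\colon Kf\to KLf$ while $\pi_f\colon KRf\to Kf$); the genuine content of distributivity is the equation $\sigma_f\cdot\pi_f=\pi_{Lf}\cdot K(\sigma_f,\pi_f)\cdot\sigma_{Rf}$ of Remark~\ref{rmk:1}. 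More importantly, whiskering the diagrams \eqref{eq:30} by your inequality and invoking local monotonicity does not yield the $\mathsf{R}$-side condition: those diagrams are equalities, and the obvious composites collapse, via naturality and the unit/counit identities, to tautologies. The needed inequality has to come from using lax idempotency as a \emph{universal property} (Definition~\ref{df:6}), applied to a specific square with a specific filler, which your sketch never identifies.

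That is exactly what the paper's proof supplies: it puts the canonical $\mathsf{L}$-coalgebra structure on the composite $LRf\cdot Lf$, quotes from~\cite{MR3393453} that $(1,\pi_f)\colon LRf\cdot Lf\to Lf$ is a coalgebra morphism (this is where $\Delta$ actually enters), and then recognises \eqref{eq:34} as the left lifting of \eqref{eq:35} through $\Phi_{Rf}$, hence below the competing filler \eqref{eq:36}, giving $K(Lf,1)\cdot\pi_f\leq 1$. If you want to repair your argument, the cheapest route is not via the hexagons at all: apply Definition~\ref{df:6} to the cofree coalgebra $(LRf,\Sigma_{Rf})$ and the morphism $\Phi_{Rf}\colon LRf\to Rf$. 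Since $L\Phi_{Rf}\cdot\Sigma_{Rf}=1_{LRf}$ by a counit axiom, the identity of $LRf$ is the left lifting of $\Phi_{Rf}$ through itself; on the other hand $(1,LRf\cdot\pi_f)\colon LRf\to LRf$ is a well-defined square (because $\pi_f\cdot LRf=1$) satisfying $\Phi_{Rf}\cdot(1,LRf\cdot\pi_f)=\Phi_{Rf}$ (because $Rf\cdot\pi_f=R^2f$), so lax idempotency of $\mathsf{L}$ forces $1_{KRf}\leq LRf\cdot\pi_f$, which is the second condition in \eqref{eq:57}. Note that this uses the lifting property together with the bare monad/comonad axioms, and the distributive law plays no role in it.
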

\begin{proof}
  In this proof we use the following general property of \textsc{awfs}s, whose details can
  be found in~\cite[\S 2.8]{MR3393453}. If $(\mathsf{L},\mathsf{R})$ is an
  \textsc{awfs} on an (ordinary) category and $f$, $g$ are two composable
  morphisms each one of which carries an $\mathsf{L}$-coalgebra structure, then
  their composition $g\cdot f$ carries a canonical $\mathsf{L}$-coalgebra
  structure. We regard morphisms of the form $Lf$ as $\mathsf{L}$-coalgebras
  with structure given by the comultiplication $\Sigma_f=(1,\sigma_f)\colon
  Lf\to L^2f$. Furthermore, we use the following fact, whose proof can be found
  in~\cite[\S 3.1]{MR3393453}: the morphism $(1,\pi_f)$ depicted is a morphism
  of $\mathsf{L}$-coalgebras from $LRf\cdot Lf$ to $Lf$.
  \begin{equation}
    \xymatrixrowsep{.55cm}
    \diagram
    A\ar[d]_{Lf}\ar@{=}[r]&
    A\ar[dd]^{Lf}\\
    Kf\ar[d]_{LRf}&\\
    KRf\ar[r]^-{\pi_f}&
    Kf
    \enddiagram
  \end{equation}

  Assuming that $\mathsf{L}$ is lax idempotent,
  we shall show that $\mathsf{R}$ is lax idempotent by exhibiting an inequality $R\Lambda \cdot \Pi\leq 1$, where $\Lambda$ and $\Pi$ are
  the unit and multiplication of the monad. The converse, namely that
  $\mathsf{L}$ is lax idempotent if $\mathsf{R}$ is so, is not necessary to
  prove, as it follows by a duality argument, more specifically, by taking the
  opposite \Ord-category.

  Let $f\colon A\to B$ be a morphism
  of \C, and consider the composition of the morphisms $(1_A,\pi_f)\colon
  LRf\cdot Lf\to Lf$ with $L\Lambda_f=(Lf,K(Lf,1))\colon Lf\to LRf$, as depicted.
  \begin{equation}
    \label{eq:34}
    \xymatrixrowsep{.55cm}
    \diagram
    A\ar[d]_{Lf}\ar@{=}[r]&
    A\ar[dd]_{Lf}\ar[r]^{Lf}&
    Kf\ar[dd]_{LRf}\\
    Kf\ar[d]_{LRf}&
    &
    \\
    KRf\ar[r]^-{\pi_f}&
    Kf\ar[r]^-{K(Lf,1)}&
    KRf
    \enddiagram
  \end{equation}
  The composition of this diagram with the counit $\Phi_{Rf}=(1,R^2f)$ equals the
  morphism $(Lf,R^2f)\colon LRf\cdot Lf\to Rf$, depicted on the right below, since
  \begin{equation}
    \label{eq:35}
    R^2f\cdot K(Lf,1)\cdot\pi_f= Rf\cdot \pi_f= R^2f.
    \qquad
    \xymatrixrowsep{.55cm}
    \diagram
    A\ar[d]_{Lf}\ar[r]^{Lf}&
    Kf\ar[dd]^{Rf}\\
    Kf\ar[d]_{LRf}&\\
    KRf\ar[r]^{R^2f}&B
    \enddiagram
  \end{equation}
  Since $\mathsf{L}$ is lax idempotent, the $\mathsf{L}$-coalgebra morphism~\eqref{eq:34}
  is a left
  lifting of~\eqref{eq:35} through $\Phi_{Rf}$ (see Definition~\ref{df:6}).

  On the other hand, the morphism in $\C^\two$ depicted below is also equal
  to~\eqref{eq:35} upon composition with the counit $\Phi_{Rf}$
  \begin{equation}
    \label{eq:36}
    \xymatrixrowsep{.55cm}
    \diagram
    A\ar[r]^{Lf}\ar[d]_{Lf}&
    Kf\ar[dd]^{LRf}\\
    Kf\ar[d]_{LRf}&\\
    KRf\ar@{=}[r]&
    KRf
    \enddiagram
  \end{equation}
  and by the universal property of liftings we deduce that \eqref{eq:34}~is less
  or equal than~\eqref{eq:36}, so $K(Lf,1)\cdot \pi_f\leq 1_{KRf}$. It remains
  to prove that this defines an inequality in $\C^\two$ with identity codomain
  component; in other words, that the inequality becomes an equality upon
  composition with $R^2f$. But this holds, since both sides become equal:
  \begin{equation}
    \label{eq:41}
    R^2f\cdot K(Lf,1)\cdot \pi_f=Rf\cdot \pi_f=R^2f,
  \end{equation}
  concluding the proof.
\end{proof}

\begin{ex}
  \label{ex:13}
  The \textsc{awfs} $(\mathsf{E},\mathsf{M})$ of Example~\ref{ex:6}, for which
  $\mathsf{M}$-algebras are opfibrations and $\mathsf{E}$-coalgebras are
  \textsc{lari}s, is lax orthogonal. Indeed, the monad $\mathsf{M}$ is
  well-known to be lax idempotent.
\end{ex}

\subsection{Categories of {\normalfont\textsc{awfs}s}}
\label{sec:categ-norm}

There is a category $\mathbf{AWFS}(\C)$ whose objects are \textsc{awfs}s on the
\Ord-category \C. A morphism $(\mathsf{L},\mathsf{R})\longrightarrow
(\mathsf{L}',\mathsf{R}')$ is a natural family of morphisms $\varphi_f$ that make the
following diagrams commute.
\begin{equation}
  \label{eq:106}
  \diagram
  \cdot\ar[d]_{Lf}\ar@{=}[r]&
  \cdot\ar[d]^{L'f}\\
  Kf\ar[r]^-{\varphi_f}\ar[d]_{Rf}&
  K'f\ar[d]^{R'f}\\
  \cdot\ar@{=}[r]&
  \cdot
  \enddiagram
\end{equation}
Furthermore, the morphisms $(1,\varphi_f)\colon Lf\to L'f$ must form a comonad
morphism $\mathsf{L}\to\mathsf{L}'$, and the morphisms $(\varphi_f,1)\colon
Rf\to R'f$ must form a monad morphism $\mathsf{R}\to\mathsf{R}'$.

There is a full subcategory $\mathbf{LOFS}(\C)$ of $\mathbf{AWFS}(\C)$
consisting of the \textsc{lofs}s.
\begin{lemma}
  \label{l:16}
  $\mathbf{LOFS}(\C)$ is a preorder.
\end{lemma}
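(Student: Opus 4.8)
The plan is to reduce the statement to the uniqueness of monad morphisms out of a lax idempotent monad, which is exactly Lemma~\ref{l:15}. Recall that a morphism of \textsc{awfs}s $(\mathsf{L},\mathsf{R})\to(\mathsf{L}',\mathsf{R}')$ is given by a single natural family $\varphi_f$ making the squares~\eqref{eq:106} commute, subject to the requirement that $(1,\varphi_f)\colon Lf\to L'f$ be a comonad morphism $\mathsf{L}\to\mathsf{L}'$ and $(\varphi_f,1)\colon Rf\to R'f$ be a monad morphism $\mathsf{R}\to\mathsf{R}'$. Thus the whole datum is encoded by the family $\varphi_f$, and in particular it is completely determined by the underlying monad morphism $\mathsf{R}\to\mathsf{R}'$ whose component at $f\in\C^\two$ is $(\varphi_f,1)$.

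First I would observe that, since $(\mathsf{L},\mathsf{R})$ is a \textsc{lofs}, its monad $\mathsf{R}$ on the \Ord-category $\C^\two$ is lax idempotent (Definition~\ref{df:5}, together with Proposition~\ref{prop:1}). Hence Lemma~\ref{l:15}, applied to the monads $\mathsf{R}$ and $\mathsf{R}'$ on $\C^\two$ with $\mathsf{R}$ in the role of the lax idempotent domain, shows that there is \emph{at most one} monad morphism $\mathsf{R}\to\mathsf{R}'$.

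Finally I would conclude as follows. The components of any such monad morphism are precisely the morphisms $(\varphi_f,1)\colon Rf\to R'f$ in $\C^\two$; since their codomain components are fixed to be identities, uniqueness of the monad morphism forces the family $\varphi_f$ to be unique, and therefore the entire \textsc{awfs} morphism $(\mathsf{L},\mathsf{R})\to(\mathsf{L}',\mathsf{R}')$ to be unique. As this holds for every pair of objects of $\mathbf{LOFS}(\C)$, the category $\mathbf{LOFS}(\C)$ is a preorder. I do not expect any genuine obstacle here: the only point requiring a moment's care is the bookkeeping that an \textsc{awfs} morphism is really recovered from the single family $\varphi_f$, so that uniqueness of the underlying monad morphism upgrades to uniqueness of the \textsc{awfs} morphism; the comonad-morphism condition on $(1,\varphi_f)$ then imposes no further freedom.
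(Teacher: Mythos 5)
Your proof is correct and takes essentially the same route as the paper: the paper's own proof also observes that any morphism of \textsc{lofs}s yields a monad morphism $(\varphi_f,1)\colon \mathsf{R}\to\mathsf{R}'$ and then invokes Lemma~\ref{l:15} (applicable since $\mathsf{R}$ is lax idempotent) to conclude there is at most one such, hence at most one \textsc{awfs} morphism. Your additional bookkeeping that the whole morphism is recovered from the single family $\varphi_f$ is exactly the implicit step in the paper's two-line argument.
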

\begin{proof}
  If the morphisms $\varphi_f$ as in~\eqref{eq:106} form a morphism from
  $(\mathsf{L},\mathsf{R})$ to $(\mathsf{L}',\mathsf{R}')$, then the morphisms
  $(\varphi_f,1)\colon Rf\to R'f$ define a morphism of monads. There can only be
  one such, by Lemma~\ref{l:15}.
\end{proof}

\section{Lifting operations}
\label{sec:lifting-operations}
In this section we introduce \textsc{kz} lifting operations and  explain the
motivation behind the definition of lax orthogonal factorisation
systems. Before all
that, we must say something about how lifting operations work in relation to
\textsc{awfs}s on \Ord-categories.
\subsection{Lifting operations on \Ord-categories}
\label{sec:lift-oper-categ}
Suppose that $U\colon\mathcal{A}\to\C^\two\leftarrow\mathcal{B}\colon V$ are
locally monotone functors between \Ord-categories.
A \emph{lifting operation} from $U$ to $V$ can be described as a
choice of a diagonal filler $\phi_{a,b}(h,k)$ for each morphism $(h,k)\colon
Ua\to Vb$ in $\C^\two$.
\begin{equation}
  \label{eq:48}
  \xymatrixcolsep{2cm}
  \diagram
  \cdot\ar[r]^h\ar[d]_{Ua}&\cdot\ar[d]^{Vb}\\
  \cdot\ar[r]_k\ar@{..>}[ur]|{\phi_{a,b}(h,k)}&\cdot
  \enddiagram
\end{equation}
These diagonal fillers must satisfy a naturality condition with respect to
morphisms in $\mathcal{A}$ and $\mathcal{B}$. If $\alpha\colon a'\to a$ and
$\beta\colon b\to b'$ are morphisms in $\mathcal{A}$ and $\mathcal{B}$
respectively, then
\begin{equation}
  \label{eq:50}
  \phi_{a',b'}
  \big(\dom V\beta\cdot h\cdot \dom U\alpha,\cod V\beta\cdot k\cdot\cod
  U\alpha\big)
  = (\dom V\beta)\cdot\phi_{a,b}(h,k)\cdot (\cod U\alpha)
\end{equation}
as depicted in the following diagram.
\begin{equation}
  \label{eq:49}
  \xymatrixcolsep{2cm}
  \diagram
  \cdot\ar[r]^{\dom U\alpha}\ar[d]_{Ua'}&
  \cdot\ar[r]^h\ar[d]_{Ua}&
  \cdot\ar[d]^{Vb}\ar[r]^{\dom V\beta}&
  \cdot\ar[d]^{Vb'}\\
  \cdot\ar[r]_{\cod U\alpha}\ar@{..>}[urrr]&
  \cdot\ar[r]_k\ar@{..>}[ur]
  &
  \cdot\ar[r]_{\cod V\beta}&
  \cdot
  \enddiagram
\end{equation}
So far, the definition of lifting operation is the one given
in~\cite{MR2506256}, but our categories are enriched in \Ord\ and the functors
$U$ and $V$ are locally monotone, so we require that the diagonal filler
satisfies: if $(h,k)$ and $(h',k')\colon Ua\to Vb$ are commutative squares in \C\
with $(h,k)\leq (h',k')$ (ie $h\leq h'$ and $k\leq k'$) then
\begin{equation}
  \phi_{a,b}(h,k)\leq\phi_{a,b}(h',k').\label{eq:51}
\end{equation}

\subsection{Lifting operations from \Ord-functorial factorisations}
\label{sec:lift-oper-from}
The idea of a functorial factorisation $\dom\Rightarrow E\Rightarrow \cod$, as
defined in Definition~\ref{df:4}, is that it induces a canonical lifting
operation between the forgetful \Ord-functors $U$ and $V$
\begin{equation}
  U\colon(L,\Phi)\text-\mathrm{Coalg}\longrightarrow\C^\two
  \longleftarrow(R,\Lambda)\text-\mathrm{Alg}\colon V.\label{eq:148}
\end{equation}
Here $\Phi\colon L\Rightarrow 1_{\C^\two}$ and
$\Lambda\colon 1_{\C^\two}\Rightarrow R$ are, respectively, the copointed
endo-\Ord-functor and the pointed endo-\Ord-functor on $\C^\two$ associated to
the given \Ord-functorial factorisation.

A coalgebra for $(L,\Phi)$ can be depicted as the commutative square on the left
below, while an algebra for $(R,\Lambda)$ is a commutative square on the right
\begin{equation}
  \label{eq:52}
  (f,s)\quad=\quad
  \diagram
  \cdot\ar[d]_f\ar@{=}[r]&\cdot\ar[d]^{Lf}\\
  \cdot\ar[r]^{s}&\cdot
  \enddiagram
  \qquad
  (g,p)\quad=\quad
  \diagram
  \cdot\ar[r]^{p}\ar[d]_{Rg}&\cdot\ar[d]^g\\
  \cdot\ar@{=}[r]&\cdot
  \enddiagram
\end{equation}
satisfying $Rf\cdot s=1$ and $p\cdot Lg=1$. Given a commutative square
$(h,k)\colon f\to g$, there is a canonical diagonal filler
\begin{equation}
  \label{eq:53}
  \phi_{(f,s),(g,p)}(h,k)=p\cdot E(h,k)\cdot s.
\end{equation}
It is immediate to see that these diagonal fillers form a lifting operation from
$U$ to $V$.

\begin{rmk}
  Even though an (\Ord-)functorial factorisation $f=Rf\cdot Lf$ as the one
  discussed in the previous paragraphs yields a lifting operation of
  $(L,\Phi)$-coalgebras against $(R,\Lambda)$-algebras, there is no guarantee
  of being able to find a diagonal filler for a commutative diagram of the
  form
  \begin{equation}
    \label{eq:54}
    \diagram
    \cdot\ar[d]_{Lf}\ar[r]&\cdot\ar[d]^{Rg}\\
    \cdot\ar[r]&\cdot
    \enddiagram
  \end{equation}
  since $Lf$ may not support an $(L,\Phi)$-coalgebra structure, and $Rg$ may not
  support an $(R,\Lambda)$-algebra structure. A natural way of endowing $Lf$ and $Rg$
  with the corresponding structures is to require that $(L,\Phi)$ extends to a
  comonad and $(R,\Lambda)$ extends to a monad; in this way, $Lf$ is a (cofree)
  coalgebra and $Rg$ is a (free) algebra. This one of the reasons for the form that the
  definition of \textsc{awfs} takes (see Definition~\ref{df:3}).\label{rmk:6}
\end{rmk}
There is an useful fact that is worth including at this point, and will be
useful in the proof of Theorem~\ref{thm:1}.
\begin{lemma}
  \label{l:2}
  For any {\normalfont\textsl{\textsc{awfs}}} $(\mathsf{L},\mathsf{R})$, the
  diagonals $\phi_{Lf,Rf}(Lf,Rf)$ are identity morphisms.
  \begin{equation}
    \label{eq:64}
    \diagram
    {\cdot}
    \ar[r]^-{Lf}\ar[d]_{Lf}
    &
    {\cdot}
    \ar[d]^{Rf}
    \\
    {\cdot}
    \ar[r]_-{Rf}\ar[ur]^1
    &
    {\cdot}
    \enddiagram
  \end{equation}
\end{lemma}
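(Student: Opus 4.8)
The plan is to unwind the formula~\eqref{eq:53} for the diagonal filler at the specific coalgebra and algebra named in the statement, and then recognise the resulting composite as a product of two halves, each of which collapses to an identity by a single unit law. First I would pin down which structures are meant. In any \textsc{awfs} the morphism $Lf$ carries its canonical (cofree) $\mathsf{L}$-coalgebra structure, the comultiplication $\Sigma_f=(1,\sigma_f)\colon Lf\to L^2f$, and $Rf$ carries its canonical (free) $\mathsf{R}$-algebra structure, the multiplication $\Pi_f=(\pi_f,1)\colon R^2f\to Rf$. In the notation of~\eqref{eq:52} these are the coalgebra with structure map $s=\sigma_f$ and the algebra with structure map $p=\pi_f$. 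Feeding $s=\sigma_f$, $p=\pi_f$, and $(h,k)=(Lf,Rf)$ into~\eqref{eq:53} yields
\begin{equation}
  \phi_{Lf,Rf}(Lf,Rf)=\pi_f\cdot E(Lf,Rf)\cdot\sigma_f
  =\pi_f\cdot K(Lf,Rf)\cdot\sigma_f,
\end{equation}
a composite $Kf\to KLf\to KRf\to Kf$.

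The key observation I would then make is that the commutative square $(Lf,Rf)\colon Lf\to Rf$ factors through $f$ as $\Lambda_f\cdot\Phi_f$, that is, as the counit $\Phi_f=(1,Rf)\colon Lf\to f$ followed by the unit $\Lambda_f=(Lf,1)\colon f\to Rf$. Applying the functor $E=K$ and using functoriality gives $K(Lf,Rf)=K(Lf,1)\cdot K(1,Rf)$, so that, by associativity of composition,
\begin{equation}
  \phi_{Lf,Rf}(Lf,Rf)
  =\bigl(\pi_f\cdot K(Lf,1)\bigr)\cdot\bigl(K(1,Rf)\cdot\sigma_f\bigr).
\end{equation}
The left factor is exactly what the monad unit law $\Pi_f\cdot R\Lambda_f=1_{Rf}$ asserts on domain components, namely $\pi_f\cdot K(Lf,1)=1_{Kf}$ (recall $R(h,k)=(K(h,k),k)$), and the right factor is what the comonad counit law $L\Phi_f\cdot\Sigma_f=1_{Lf}$ asserts on codomain components, namely $K(1,Rf)\cdot\sigma_f=1_{Kf}$ (recall $L(h,k)=(h,K(h,k))$). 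Hence the whole composite equals $1_{Kf}\cdot 1_{Kf}=1_{Kf}$, as required.

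I expect that the only real care needed is bookkeeping: correctly identifying the canonical cofree/free structures on $Lf$ and $Rf$, and reading off the two scalar identities from the correct (domain vs.\ codomain) components of the unit law of $\mathsf{R}$ and the counit law of $\mathsf{L}$. Once the factorisation $(Lf,Rf)=\Lambda_f\cdot\Phi_f$ is spotted, the argument is a one-line formal calculation, so there is no genuine obstacle; in particular it uses neither the comultiplication and multiplication (co)associativity axioms nor the distributivity axiom, but only the unit and counit laws together with functoriality of $E$.
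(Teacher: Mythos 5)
Your proof is correct and is essentially the paper's own argument: both decompose the square $(Lf,Rf)\colon Lf\to Rf$ as the pasting of the counit square $(1,Rf)$ with the unit square $(Lf,1)$, apply functoriality of $K$ to get $K(Lf,Rf)=K(Lf,1)\cdot K(1,Rf)$, and conclude from the unit law $\pi_f\cdot K(Lf,1)=1$ and the counit law $K(1,Rf)\cdot\sigma_f=1$. The paper merely compresses this into a one-line computation, so there is nothing to add.
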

\begin{proof}
  If we write the commutative square of the statement as a pasting of two
  commutative squares $(1,Rf)$ and $(Lf,1)$, as displayed, we can easily compute
  the diagonal filler.
  \begin{equation}
    \label{eq:65}
    \diagram
    \cdot\ar[d]_{Lf}\ar@{=}[r]&
    \cdot\ar[d]_f\ar[r]^{Lf}&
    \cdot\ar[d]^{Rf}\\
    \cdot\ar[r]^{Rf}&
    \cdot\ar@{=}[r]&
    \cdot
    \enddiagram
  \end{equation}
  \begin{equation}
    \phi_{Lf,Rf}(Lf,Rf)=\pi_f\cdot K(Lf,Rf)\cdot\sigma_f=
    \pi_f\cdot K(Lf,1)\cdot K(1,Rf)\cdot\sigma_f=1\cdot 1=1.\qedhere
  \end{equation}
\end{proof}
\begin{rmk}
  \label{rmk:1}
  As pointed out in~\cite[\S 2.5]{MR3393453}, the commutativity of the two
  diagrams~\eqref{eq:30} that express the fact that $\Delta\colon LR\Rightarrow
  RL$ is a mixed distributive law is equivalent to the requirement that the
  diagonal filler of the displayed square be $\sigma_f\cdot\pi_f$.
  \begin{equation}
    \label{eq:108}
    \diagram
    Kf\ar[d]_{LRf}\ar[r]^-{\sigma_f}&
    KLf\ar[d]^{RLf}\\
    KRf\ar[r]_-{\pi_f}\ar@{..>}[ur]|{\sigma_f\cdot \pi_f}&
    Kf
    \enddiagram
  \end{equation}
\end{rmk}

\subsection{KZ lifting operations}
\label{sec:kz-lift-oper}

In the previous section we saw that each \textsc{awfs} canonically induced a
lifting operation. It is logical to expect that lifting operations that arise
from lax orthogonal \textsc{awfs}s carry extra structure. In this section we
identify this structure.
\begin{df}
  \label{df:7}
  Suppose given a lifting operation $\phi$ from $U\colon\mathcal{A}\to\C^\two$ to
  $V\colon\mathcal{B}\to\C^\two$ on an \Ord-category \C\ as defined in \S
  \ref{sec:lift-oper-categ}. We say that $\phi$ is a \slkz-\emph{lifting
  operation} if, for all $a\in\mathcal{A}$, $b\in\mathcal{B}$ and each
  commutative diagram as on the left, the inequality on the right holds.
  \begin{equation}
    \label{eq:55}
    \diagram
    \cdot\ar[d]_{Ua}\ar[r]^h&\cdot\ar[d]^{Vb}\\
    \cdot\ar[r]_k\ar[ur]^d&\cdot
    \enddiagram
    \quad\Longrightarrow{}\quad
    \phi_{a,b}(h,k)\leq d
  \end{equation}
  In other words, the diagonal filler given by the lifting operation $\phi$ is a
  lower bound of all possible diagonal fillers.
\end{df}
\begin{ex}
  \label{ex:8}
  Consider the \Ord-functor $0\colon \mathbf{1}\to\two$ that includes the
  terminal ordered set as the initial element of the ordered set $\two=(0\leq 1)$.
  There is a bijection between opfibration structures on a morphism $g\colon
  X\to Y$ in \Ord\ and \textsc{kz} lifting operations on $g$ against the
  morphism $0$. To see this, first notice that a commutative square
  \begin{equation}
    \label{eq:72}
    \diagram
    \mathbf{1}\ar[r]\ar[d]_0&X\ar[d]^g\\
    \two\ar[r]\ar@{..>}[ur]&Y
    \enddiagram
  \end{equation}
  is equally well given by an element $x\in X$ and an element $y\in Y$ such that
  $g(x)\leq y$. The existence of a diagonal filler is the existence of an element
  $x_y\in X$ with $x\leq x_y$ and  $g(x_y)=y$. This diagonal filler is a lower
  bound if for any other $x\leq \bar x$ with $g(\bar x)=y$ there is an
  inequality $x_y\leq \bar x$. The element $x_y$ is unique and the assignment
  $(x,y)\mapsto x_y$
  defines a split opfibration structure on $g$.
\end{ex}

\begin{thm}
  \label{thm:1}
  The following conditions are equivalent for an
  {\normalfont\textsl{\textsc{awfs}}} $(\mathsf{L},\mathsf{R})$ on an
  \Ord-category \C.
  \begin{enumerate}
  \item \label{item:1} The {\normalfont\textsl{\textsc{awfs}}} is a
    {\normalfont\textsl{\textsc{lofs}}}.
  \item \label{item:2} The lifting operation from the forgetful functor $U\colon
    \mathsf{L}\text-\mathrm{Coalg}\to\C^\two$ to the forgetful functor
    $V\colon\mathsf{R}\text-\mathrm{Alg}\to\C^\two$ is a
    \slkz-lifting operation.
  \end{enumerate}
\end{thm}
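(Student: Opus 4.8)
The plan is to establish the two implications separately. Throughout I would use the explicit form \eqref{eq:53} of the canonical diagonal filler, $\phi_{(f,s),(g,p)}(h,k)=p\cdot E(h,k)\cdot s$, together with the structural identities that may be read off from \eqref{eq:52}: an $\mathsf{L}$-coalgebra $(f,s)$ satisfies $s\cdot f=Lf$ and $Rf\cdot s=1$, while an $\mathsf{R}$-algebra $(g,p)$ satisfies $p\cdot Lg=1$ and $g\cdot p=Rg$. I would also freely use the functorial identity $g=Rg\cdot Lg$.

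For the implication \ref{item:1} $\Rightarrow$ \ref{item:2} I would deduce the \slkz-inequality from the lax idempotency of the comonad $\mathsf{L}$, taken in the left-lifting form of Definition \ref{df:6}. Fix an $\mathsf{L}$-coalgebra $(f,s)$, an $\mathsf{R}$-algebra $(g,p)$, a commutative square $(h,k)\colon f\to g$ and an arbitrary filler $d$ (thus $d\cdot f=h$ and $g\cdot d=k$), the aim being $\phi_{(f,s),(g,p)}(h,k)\le d$. Applying Definition \ref{df:6} to the coalgebra structure $(1,s)\colon f\to Lf$ and the morphism $(h,k)\colon f\to g$ exhibits $L(h,k)\cdot(1,s)=(h,\,E(h,k)\cdot s)$ as the left lifting of $(h,k)$ through the counit $\Phi_g=(1,Rg)$. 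I would then probe this universal property with the morphism $w=(h,\,Lg\cdot d\cdot Rf\cdot s)\colon f\to Lg$: using $s\cdot f=Lf$ one checks that $w$ lies in $\C^\two$, and using $Rg\cdot Lg=g$, $g\cdot d=k$ and $Rf\cdot s=1$ one computes $\Phi_g\cdot w=(h,k)$. Minimality of the left lifting then yields $E(h,k)\cdot s\le Lg\cdot d\cdot Rf\cdot s$, and composing on the left with $p$ and invoking $p\cdot Lg=1$ and $Rf\cdot s=1$ reduces the right-hand side to $d$, so that $\phi_{(f,s),(g,p)}(h,k)=p\cdot E(h,k)\cdot s\le d$, as wanted.

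For \ref{item:2} $\Rightarrow$ \ref{item:1} I would feed well-chosen fillers into the \slkz-property. By Lemma \ref{l:2} the canonical filler of the square $(Lf,Rf)$, from the cofree coalgebra $Lf$ to the free algebra $Rf$, is the identity. Given any $\mathsf{L}$-coalgebra $(f,s)$, the morphism $s\cdot Rf$ is another filler of this square, since $s\cdot Rf\cdot Lf=s\cdot f=Lf$ and $Rf\cdot(s\cdot Rf)=Rf$; the \slkz-property therefore forces $1\le s\cdot Rf$. Symmetrically, for any $\mathsf{R}$-algebra $(g,p)$ the morphism $Lg\cdot p$ is a filler of the square $(Lg,Rg)$, whence $1\le Lg\cdot p$. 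These are exactly the conditions \eqref{eq:59} (specialising the first to the cofree coalgebra $(Lf,\Sigma_f)$ recovers the inequality $1\le\sigma_f\cdot RLf$ of \eqref{eq:57}), which characterise \textsc{lofs}s by Proposition \ref{prop:1}.

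The step I expect to be the crux is the inequality $E(h,k)\cdot s\le Lg\cdot d\cdot Rf\cdot s$ in the first implication, for this is precisely where lax idempotency enters, and it hinges on choosing the test morphism $w$ so that $\Phi_g\cdot w$ collapses exactly to $(h,k)$, thereby triggering the universal property of the left lifting. An equivalent but more computational route would package the identities $Rf\cdot s=1,\ 1\le s\cdot Rf$ and $p\cdot Lg=1,\ 1\le Lg\cdot p$ as adjunctions $Rf\dashv s$ and $p\dashv Lg$ and transpose the target $\phi\le d$ twice into the claim $E(h,k)\le Lg\cdot d\cdot Rf$ between maps $Kf\to Kg$ with equal boundary; the left-lifting formulation above seems preferable, since it isolates the single place where Definition \ref{df:6} is invoked.
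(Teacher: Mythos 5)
Your proof is correct and takes essentially the same approach as the paper's: both implications turn on Lemma~\ref{l:2} together with a universal-property probe by the morphism built from $Lg\cdot d\cdot Rf$. The only differences are cosmetic duals of the paper's steps: for (1)$\Rightarrow$(2) you invoke the comonad's left-lifting property (Definition~\ref{df:6}) where the paper transposes along the adjunctions $Rf\dashv s$ and $p\dashv Lg$ and uses the monad's left-extension property, and for (2)$\Rightarrow$(1) you obtain the inequalities~\eqref{eq:59} for arbitrary (co)algebras, whereas the paper extracts $1\leq LRf\cdot\pi_f$ from the same square specialised at the free algebra $(Rf,\Pi_f)$.
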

\begin{proof}
  Assume that $(\mathsf{L},\mathsf{R})$ is lax orthogonal, $(f,s)$ is an
  $\mathsf{L}$-coalgebra and $(g,p)$ is an $\mathsf{R}$-algebra. Given a
  diagonal filler $d$ as depicted, we must show $\phi_{(f,s),(g,p)}(h,k)\leq d$.
  \begin{equation}
    \label{eq:60}
    \diagram
    {\cdot}
    \ar[r]^-{h}\ar[d]_{f}
    &
    {\cdot}
    \ar[d]^{g}
    \\
    {\cdot}
    \ar[r]_-{k}\ar[ur]^d
    &
    {\cdot}
    \enddiagram
  \end{equation}
  Using the inequalities $1\leq s\cdot Rf$ and $1\leq Lg\cdot p$
  from~\eqref{eq:59}, we obtain
  \begin{equation}
    \big(
    \phi_{(f,s),(g,p)}(h,k)=p\cdot K(h,k)\cdot s\leq d
    \big)
    \Leftrightarrow
    \big(K(h,k)\leq Lg\cdot d\cdot Rf
    \big).
    \label{eq:62}
  \end{equation}
  There is a morphism $(Lg\cdot d\cdot Rf,k)\colon Rf\to Rg$ in $\C^\two$,
  as shown by the diagram below, which precomposed with the unit
  $\Lambda_f=(Lf,1)\colon f\to Rf$ of $\mathsf{R}$ equals $\Lambda_g\cdot
  (h,k)=(Lg\cdot h,k)\colon f\to Rg$.
  \begin{equation}
    \label{eq:61}
    \diagram
    \cdot\ar[r]^{Rf}\ar[d]_{Rf}&
    \cdot\ar[drr]_k\ar[r]^d&
    \cdot\ar[dr]^{g}\ar[r]^{Lg}&
    \cdot\ar[d]^{Rg}\\
    \cdot\ar[rrr]^k&&&
    \cdot
    \enddiagram
  \end{equation}
  On the other hand, by the lax idempotency of $\mathsf{R}$, we have that
  $K(h,k)$ is a left extension of $\Lambda_g\cdot (h,k)$ along $\Lambda_f$, so
  there exists $K(h,k)\leq Lg\cdot d\cdot Rf$, as desired.

  Conversely, assume that the lifting operation $\phi$ induced by the \textsc{awfs} is
  \textsc{kz}, and consider the commutative square
  \begin{equation}
    \label{eq:63}
    \xymatrixrowsep{.6cm}
    \xymatrixcolsep{1.7cm}
    \diagram
    \cdot\ar@{..>}[dr]^1\ar[rr]^{LRf}\ar[dd]_{LRf}&&\cdot\ar[dd]^{R^2f}\\
    &\cdot\ar[ur]|{LRf}\ar@{..}[dr]|{Rf}&\\
    \cdot\ar[rr]_{R^2f}\ar[ur]_{\pi_f}&&\cdot
    \enddiagram
  \end{equation}
  By Lemma~\ref{l:2}, $\phi$ provides the diagonal filler
  $\phi_{LRf,R^2f}(LRf,R^2f)=1$, so we have an inequality $1\leq LRf\cdot\pi_f$
  as required.
\end{proof}

\begin{thm}
  \label{thm:10}
  Let $(\mathsf{L},\mathsf{R})$ be a {\normalfont\textsl{\textsc{lofs}}} on an
  \Ord-category \C. Then, the following statements about a morphism $f$ of \C\
  are equivalent:
  \begin{enumerate}
  \item \label{item:23} $f$ has an (unique) $\mathsf{R}$-algebra structure
    (we simply say that $f$ is an $\mathsf{R}$-algebra).
  \item \label{item:24} $f$ is injective with respect to
    $\mathsf{L}$-coalgebras, in the sense that any commutative square
    \begin{equation}
      \xymatrixrowsep{.6cm}
      \diagram
      \cdot\ar[d]_\ell\ar[r]&\cdot\ar[d]^f\\
      \cdot\ar[r]&\cdot
      \enddiagram
    \end{equation}
    with $\ell\in\mathsf{L}\text-\mathrm{Coalg}$ has a diagonal filler.
  \item \label{item:25} $f$ admits a (non-necessarily unique)
    $(R,\Lambda)$-algebra structure.
  \item \label{item:26} $f$ is a retract in $\C^\two$ of an $\mathsf{R}$-algebra.
  \end{enumerate}
  The {\normalfont\textsl{\textsc{wfs}}} that underlies
  $(\mathsf{L},\mathsf{R})$ has as left part those morphisms in the image of the
  forgetful functor $\mathsf{L}\text-\mathrm{Coalg}\to\C^\two$ and as right part those
  morphisms in the image of the forgetful functor $\mathsf{R}\text-\mathrm{Alg}\to\C^\two$.
\end{thm}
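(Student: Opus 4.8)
The plan is to derive most of the equivalences from Lemma~\ref{l:14} applied to the lax idempotent monad $\mathsf R$ on $\C^\two$, viewing a morphism $f$ of $\C$ as an object of $\C^\two$. First I would unwind condition~\eqref{item:25}: an $(R,\Lambda)$-algebra structure on $f$ is a morphism $p\colon Rf\to f$ in $\C^\two$ with $p\cdot\Lambda_f=1_f$, and since $\cod(\Lambda)=1$ such a $p$ is forced to have the form $(p_0,1)$ with $p_0\colon Kf\to\dom(f)$ satisfying $f\cdot p_0=Rf$ and $p_0\cdot Lf=1$. Thus \eqref{item:25} says precisely that the unit component $\Lambda_f$ admits a retraction in $\C^\two$, which is exactly the hypothesis figuring in Lemma~\ref{l:14} for the monad $\mathsf R$. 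That lemma then delivers at once the equivalence of \eqref{item:23}, \eqref{item:25} and \eqref{item:26}, including the uniqueness of the $\mathsf R$-algebra structure claimed in \eqref{item:23}.

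It remains to weave the lifting condition~\eqref{item:24} into this chain, which I would do by proving \eqref{item:25}$\Rightarrow$\eqref{item:24} and \eqref{item:24}$\Rightarrow$\eqref{item:25}. For the forward implication, let $(p_0,1)$ be an $(R,\Lambda)$-algebra structure on $f$ and let $\ell$ be an $\mathsf L$-coalgebra; since $\ell$ underlies an $(L,\Phi)$-coalgebra, the canonical lifting operation of \S\ref{sec:lift-oper-from}, namely $\phi(h,k)=p\cdot E(h,k)\cdot s$, produces a diagonal filler for every commutative square from $\ell$ to $f$, which is exactly what \eqref{item:24} asks.

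For the reverse implication \eqref{item:24}$\Rightarrow$\eqref{item:25}, the idea is to test the injectivity of $f$ against the cofree $\mathsf L$-coalgebra on $Lf$, whose structure is the comultiplication $\Sigma_f=(1,\sigma_f)$. The factorisation identity $f=Rf\cdot Lf$ makes the square with top edge $1_{\dom(f)}$, left edge $Lf$, right edge $f$ and bottom edge $Rf$ commute, so injectivity yields a filler $d\colon Kf\to\dom(f)$ with $d\cdot Lf=1$ and $f\cdot d=Rf$. A routine check then shows that $(d,1)\colon Rf\to f$ is an $(R,\Lambda)$-algebra structure, giving \eqref{item:25}. I expect this choice of test square to be the one genuinely creative step; realising that the cofree coalgebra $Lf$ together with the square $(1,Rf)$ is exactly what converts a mere lift into a structure for the pointed endofunctor is the crux, whereas the surrounding identifications are immediate.

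Finally, the statement about the underlying \textsc{wfs} follows as a corollary. By the definition in \S\ref{sec:underly-norm}, its right class $\mathcal R$ consists of the morphisms admitting an $(R,\Lambda)$-algebra structure, that is, those satisfying \eqref{item:25}; by the equivalence \eqref{item:23}$\Leftrightarrow$\eqref{item:25} these are precisely the morphisms in the image of the forgetful functor $\mathsf R\text-\mathrm{Alg}\to\C^\two$. Dualising the entire argument to the lax idempotent comonad $\mathsf L$ (equivalently, running it in $\C^{\mathrm{op}}$, as in the proof of Proposition~\ref{prop:1}) identifies the left class $\mathcal L$ with the image of $\mathsf L\text-\mathrm{Coalg}\to\C^\two$, completing the proof.
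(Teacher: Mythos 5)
Your proof is correct and uses essentially the same ingredients as the paper's: Lemma~\ref{l:14} applied to the lax idempotent monad $\mathsf{R}$ on $\C^\two$ for the equivalence of \eqref{item:23}, \eqref{item:25} and \eqref{item:26}, the canonical lifting operation of \S\ref{sec:lift-oper-from} for injectivity, and the test square $(1,Rf)\colon Lf\to f$ against the cofree coalgebra to convert a filler into an $(R,\Lambda)$-algebra structure, with the underlying-\textsc{wfs} claim handled by the same duality argument. The only difference is cosmetic: the paper chains the implications in a cycle \eqref{item:23}$\Rightarrow$\eqref{item:24}$\Rightarrow$\eqref{item:25}$\Rightarrow$\eqref{item:26}$\Rightarrow$\eqref{item:23}, whereas you cluster \eqref{item:23}$\Leftrightarrow$\eqref{item:25}$\Leftrightarrow$\eqref{item:26} first and then attach \eqref{item:24} via \eqref{item:25}.
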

\begin{proof}
  We have seen in \S \ref{sec:lift-oper-from} that
  \eqref{item:23}~implies~\eqref{item:24}. To prove that
  \eqref{item:24}~implies~\eqref{item:25},
  consider the diagonal filler below, which shows that $(p,1)\colon Rf\to f$ is
  is an $(\mathsf{R},\Lambda)$-algebra structure.
  \begin{equation}
    \diagram
    \cdot\ar[d]_{Lf}\ar@{=}[r]&\cdot\ar[d]^f\\
    \cdot\ar@{..>}[ur]^p\ar[r]_{Rf}&\cdot
    \enddiagram
  \end{equation}
  The implications
  \eqref{item:25}$\Rightarrow$\eqref{item:26}$\Rightarrow$\eqref{item:23} are
  particular instances of part of Lemma~\ref{l:14}, since $\mathsf{R}$ is lax
  idempotent.

  As mentioned in \S \ref{sec:underly-norm}, the underlying \textsc{wfs}
  $(\mathcal{L},\mathcal{R})$ of $(\mathsf{L},\mathsf{R})$ has as right class
  the algebras for the pointed endofunctor $(R,\Lambda)$. Then,
  $f\in\mathcal{R}$ (or, by duality, $f\in\mathcal{L}$) precisely when $f$ is an
  $\mathsf{R}$-algebra (an $\mathsf{L}$-coalgebra).
\end{proof}

\section{Horizontally ordered double categories and {\normalfont\textsc{lofs}s}}
\label{sec:double-categories}

\subsection{Horizontally ordered double categories}
\label{sec:horiz-order-double-1}

Double categories, introduced by
C.~Ehresmann~\cite{MR0152561}, can be succinctly described as internal
categories in the
cartesian category of categories. They consist of an internal graph of categories
and functors $\mathcal{G}_1\rightrightarrows\mathcal{G}_0$ (domain and codomain)
with an identity
functor $\id\colon\mathcal{G}_0\to\mathcal{G}_1$ and a composition functor
$\mathcal{G}_1\times_{\mathcal{G}_0}\mathcal{G}_1\to\mathcal{G}_1$ that satisfy
the usual associativity and identity axioms. The morphisms of $\mathcal{G}_0$
will be represented as horizontal arrows. The objects of $\mathcal{G}_1$ have a
domain and a codomain that are objects of $\mathcal{G}_0$, and will be
represented as vertical morphisms. Morphisms of $\mathcal{G}_1$ will be
represented as squares; for example a morphism $\alpha\colon x \to y$ in
$\mathcal{G}_1$ will be represented as
\begin{equation}
  \diagram
  \cdot\ar[d]_x\ar[r]\ar@{}[dr]|\alpha&\cdot\ar[d]^y\\
  \cdot\ar[r]&\cdot
  \enddiagram
\end{equation}
Objects of $\mathcal{G}_1$, ie vertical arrows, can be vertically composed, as
well as squares as the one above. 

\begin{df}
  \label{df:21}
  A \emph{horizontally ordered double category} is an internal category in the
  cartesian category
  $\Ord\text-\Cat$ of \Ord-categories and \Ord-functors. This means that in a horizontally ordered double category we can speak of
inequalities between horizontal morphisms and between squares.
A \emph{monotone double functor} between two horizontally
  ordered double categories is a double functor that preserves the inequalities
  between horizontal morphisms and between squares.
\end{df}

\begin{ex}
  \label{ex:9}
  Let \C\ be an \Ord-category. The horizontally ordered double category
  $\Sq(\C)$ has underlying graph $\dom$,
  $\cod\colon\C^\two\rightrightarrows\C$, so both horizontal and vertical
  morphisms are morphisms of $\C$, and squares are commutative squares in
  \C. The inequality between horizontal morphisms is the inequality between morphisms of
  \C. One square is less or equal than another, as depicted,
  \begin{equation}
    \label{eq:116}
    \diagram
    \cdot\ar[d]_x\ar[r]^h&\cdot\ar[d]^y\\
    \cdot\ar[r]^k&\cdot
    \enddiagram
    \qquad\leq\qquad
    \diagram
    \cdot\ar[d]_x\ar[r]^u&\cdot\ar[d]^y\\
    \cdot\ar[r]_v&\cdot
    \enddiagram
  \end{equation}
  if and only if $h\leq u$ and $k\leq v$.
\end{ex}
\begin{ex}
  \label{ex:10}
  \textsc{lari}s form a horizontally ordered double category.
  If $f\colon A\to B$ and $g\colon B\to C$ are \textsc{lari}s, with respective
  right adjoints $f^*$ and $g^*$, then their composition $g\cdot f\colon A\to B$
  is also a \textsc{lari} with right adjoint $f^*\cdot g^*$. This composition of
  \textsc{lari}s is clearly associative and has identities, namely the identity
  morphisms.
\end{ex}

\subsection{Lifting operations}
\label{sec:lifting-operations-1}
If $U\colon \mathcal{J}\to\C^\two$ is an \Ord-functor, there is an \Ord-category
$\mathcal{J}^{\pitchfork_{\mathkz}}$ over $\C^\two$ whose objects are morphisms $f$ of $\C$
with a \kz-lifting operation against $U$, ie with a \textsc{rali} structure on each
\begin{equation}
  \label{eq:133}
  \phi_{-,f}\colon
  \C(\cod Uj,\dom f)\longrightarrow\C^\two(Uj,f).
\end{equation}
A morphism is a morphism in $\C^\two$ that is compatible with these
\textsc{rali} structures in the obvious way. The ordering of morphisms is that
of $\C^\two$. The forgetful \Ord-functor
\begin{equation}
  \label{eq:132}
  U^{\pitchfork_{\mathkz}}\colon\mathcal{J}^{\pitchfork_{\mathkz}}\longrightarrow\C^\two
\end{equation}
is injective on objects, since \eqref{eq:133}~can be a \textsc{rali} in a unique way.

The construction
$(\mathcal{J},U)\mapsto(\mathcal{J}^{\pitchfork_{\kz}},U^{\pitchfork_{\mathkz}})$ is
part of a functor
\begin{equation}
  \label{eq:107}
  (-)^{\pitchfork_{\mathkz}}\colon
  (\Cat/\C^\two)^{\mathrm{op}}
  \longrightarrow
  \mathbf{CAT}/\C^\two.
\end{equation}
Explicitly, if $S\colon \mathcal{J}\to \mathcal{I}$ is an \Ord-functor over
$\C^\two$
\begin{equation}
  \xymatrixrowsep{.5cm}
  \diagram
  \mathcal{J}\ar[rr]^-S\ar[dr]_U&&\mathcal{I}\ar[dl]^{V}\\
  &\C^\two&
  \enddiagram
\end{equation}
then there is an \Ord-functor
\begin{equation}
  S^{\pitchfork_{\mathkz}}\colon
  \mathcal{I}^{\pitchfork_{\mathkz}}
  \longrightarrow
  \mathcal{J}^{\pitchfork_{\mathkz}}
\end{equation}
defined by the obvious observation that if the morphism on the left hand side of~\eqref{eq:136} is
a \textsc{rali}, then so is the one on the right hand side, since $Uj=VSj$.
\begin{equation}
  \label{eq:136}
  \C(\cod Vi,\dom f)\longrightarrow\C^\two(Vi,f)
  \qquad
  \C(\cod Uj,\dom f)\longrightarrow\C^\two(Uj,f).
\end{equation}

\begin{prop}
  \label{prop:7}
  Given an \Ord-functor $U\colon \mathcal{J}\to\C^\two$, there is a horizontally
  ordered double category with:
  \begin{itemize}
  \item objects, those of $\C$;
  \item vertical morphisms those morphisms of $\C$ that are objects of
    $\mathcal{J}^{\pitchfork_{\mathkz}}$;
  \item horizontal morphisms, the morphisms of $\C$;
  \item squares, commutative squares in $\C$.
  \end{itemize}
  We denote this horizontally ordered category by
  $\mathcal{J}^{\pitchfork_{\mathkz}}$.
  Moreover, $U$ defines an identity on objects double functor
  $\mathcal{J}^{\pitchfork_{\mathkz}}\to\Sq(\C)$.
\end{prop}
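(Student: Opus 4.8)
The plan is to realise the claimed structure as a full sub-double-category of $\Sq(\C)$ (Example~\ref{ex:9}). Let $\mathcal{V}$ denote the full sub-\Ord-category of $\C^\two$ whose objects are the \emph{vertical morphisms}, that is, those morphisms of $\C$ underlying an object of the \Ord-category $\mathcal{J}^{\pitchfork_{\mathkz}}$; since the \textsc{rali} structure on $\phi_{-,f}$ is unique when it exists, being such a morphism is a property. I set $\mathcal{G}_0=\C$ and $\mathcal{G}_1=\mathcal{V}$, and obtain the source and target functors $\dom,\cod\colon\mathcal{V}\to\C$, the unit $\id\colon\C\to\mathcal{V}$ and the vertical composition $\mathcal{V}\times_\C\mathcal{V}\to\mathcal{V}$ by (co)restricting those of $\Sq(\C)$. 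These are all \Ord-functors as (co)restrictions of \Ord-functors, and the associativity and unit axioms of Definition~\ref{df:21} are inherited from $\Sq(\C)$ because the full inclusion $\mathcal{V}\hookrightarrow\C^\two$ is faithful. Hence the only genuine content is that the vertical morphisms are closed under identities and under composition, so that $\id$ and vertical composition corestrict to $\mathcal{V}$.

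Closure under identities is immediate: a square $(h,k)\colon Uj\to 1_X$ satisfies $h=k\cdot Uj$, so $k$ is its unique filler and $\phi_{-,1_X}$ is an isomorphism of ordered sets, in particular a \textsc{rali}, naturally in $j$; thus every $1_X$ is a vertical morphism. For composition, let $x\colon A\to B$ and $y\colon B\to C$ be vertical, and write $\psi^x$, $\psi^y$ for the smallest-diagonal operations (the left adjoints of $\phi_{-,x}$, $\phi_{-,y}$), which are monotone and natural in $j$. Given a square $(h,k)\colon Uj\to y\cdot x$, I set $d_1=\psi^y_j(x\cdot h,k)$ and $d_2=\psi^x_j(h,d_1)$; here $(x\cdot h,k)$ is a square over $y$ since $y\cdot x\cdot h=k\cdot Uj$, and $d_1\cdot Uj=x\cdot h$ makes $(h,d_1)$ a square over $x$. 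A direct check gives $d_2\cdot Uj=h$ and $y\cdot x\cdot d_2=y\cdot d_1=k$, so $d_2$ fills $(h,k)$.

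The crucial point is that $d_2$ is the \emph{smallest} filler. If $d$ is any filler of $(h,k)$, then $x\cdot d$ fills $(x\cdot h,k)$, so $d_1\leq x\cdot d$ by the defining property of $\psi^y$; since $d$ also fills $(h,x\cdot d)$, monotonicity of $\psi^x$ together with the defining property of $\psi^x$ yields $d_2=\psi^x_j(h,d_1)\leq\psi^x_j(h,x\cdot d)\leq d$. Therefore $(h,k)\mapsto d_2$ is a \kz-lifting operation on $y\cdot x$; its monotonicity in $(h,k)$ and its naturality in $j$ follow formally by composing those of $\psi^x$ and $\psi^y$. Consequently $y\cdot x$ is again a vertical morphism.

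With closure in hand, $\mathcal{J}^{\pitchfork_{\mathkz}}$ is a horizontally ordered double category, and the pair $(1_\C,\mathcal{V}\hookrightarrow\C^\two)$ is a monotone double functor to $\Sq(\C)$: it commutes with all the structure functors by construction, is the identity on objects, and preserves the orders on horizontal morphisms and on squares because $\mathcal{V}$ is a full sub-\Ord-category of $\C^\two$. The expected main obstacle is precisely the smallest-diagonal verification of the previous paragraph, where one must combine the ``smallest filler'' property of each lifting operation with the monotonicity of the outer one; once that is settled, the \Ord-functoriality of the structure maps and the double-category axioms are formal consequences of the corresponding facts for $\Sq(\C)$.
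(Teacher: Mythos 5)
Your proposal is correct and follows essentially the same route as the paper's proof: associativity and unitality are dismissed because being an object of $\mathcal{J}^{\pitchfork_{\mathkz}}$ is a property (the forgetful functor is injective on objects), identities are handled by observing that $k$ is the unique filler of any square into $1_X$, and closure under composition is proved by the very same composite filler $\psi^x_j(h,\psi^y_j(x\cdot h,k))$ with the identical minimality argument (pushing a competing filler $d$ forward to $x\cdot d$, then using monotonicity of the inner operation). The only cosmetic differences are your explicit packaging as a sub-double-category of $\Sq(\C)$ and your explicit remark that monotonicity and naturality in $j$ are inherited by composition, both of which the paper leaves implicit.
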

\begin{proof}
  We have to  prove the following: (a) if $f$ and $g$ are two composable
  morphisms and both are in $\mathcal{J}^{\pitchfork_{\mathkz}}$, then their
  composition $g\cdot f$ is also in
  $\mathcal{J}^{\pitchfork_{\mathkz}}$; (b) this composition is associative; (c)
  that any identity morphism is
  an object of $\mathcal{J}^{\pitchfork_{\mathkz}}$; (d) identity morphisms are
  identities for the composition of part (a).

  The first observation is that (b) and (d) are automatic because
  \eqref{eq:132}~is injective on objects, so we only need to prove (a) and (c).

  (a)~Suppose that $f$ and $g$ are composable objects of
  $\mathcal{J}^{\pitchfork_{\mathkz}}$, with lifting operations that we denote,
  respectively, $\phi_{-,f}$ and $\phi_{-,g}$. If $j\in\mathcal{J}$, then
  $\theta_j(h,k)\coloneq \phi_{j,f}(h,\phi_{j,g}(f\cdot h,k))$ provides a diagonal
  filler for the solid square $(h,k)\colon Uj\to g\cdot f$, as displayed.
  \begin{equation}
    \xymatrixcolsep{4cm}
    \diagram
    \cdot\ar[dd]_{Uj}\ar[r]^h&\cdot\ar[d]^f\\
    &\cdot\ar[d]^g\\
    \cdot\ar[r]_k\ar@{..>}[ur]|{\phi_{j,g}(f\cdot h,k)}
    \ar@{..>}[uur]^{\phi_{j,f}(h,\phi_{j,g}(f\cdot h,k))}&\cdot
    \enddiagram
  \end{equation}
  To prove that the lifting operation $\theta$ is a \textsc{kz}-lifting
  operation we have to prove that $\theta_j(h,k)$ is the least diagonal
  filler. Suppose that $d$ is another diagonal filler of the square. This
  implies that $f\cdot d$ is a diagonal filler of the square
  $(f\cdot h,k)\colon Uj\to g$, and therefore $\phi_{j,g}(f\cdot h,k)\leq f\cdot
  d$. We now have two morphisms in $\C^\two$, namely
  \begin{equation}
    (h,\phi_{j,g}(f\cdot d,k))\leq (h,f\cdot d)\colon Uj\longrightarrow f
  \end{equation}
  from where we obtain the required inequality
  \begin{equation}
    \label{eq:138}
    \theta_j(h,k)=\phi_{j,f}(h,\phi_{j,g}(f\cdot d,k))\leq \phi_{j,f}(h,f\cdot d) \leq d;
  \end{equation}
  the first inequality in~\eqref{eq:138} above arises from the fact that the lifting operation
  $\phi$ is \Ord-enriched (see \S \ref{sec:lift-oper-categ}), while the second
  inequality exists because $d$ is a diagonal filler of $(h,f\cdot d)\colon
  Uj\to f$.

  (c)~It remains to prove that identity morphisms are in
  $\mathcal{J}^{\pitchfork_{\mathkz}}$, for which we note that there is only one
  possible diagonal filler for a square of the form
  \begin{equation}
    \diagram
    \cdot\ar[d]_{Uj}\ar[r]&\cdot\ar[d]^1\\
    \cdot\ar[r]_k&\cdot
    \enddiagram
  \end{equation}
  namely, $k$ itself. This completes the proof.
\end{proof}

Given an \Ord-functor $U\colon\mathcal{J}\to\C^\two$, there is another
\begin{equation}
  \label{eq:75}
  \prescript{\pitchfork_{\mathkz}}{}{U}\colon
  \prescript{\pitchfork_{\mathkz}}{}{\mathcal{J}} \longrightarrow \C^\two
\end{equation}
that is constructed dually to $\mathcal{J}^{\pitchfork_{\mathkz}}$. More
explicitly, $\prescript{\pitchfork_{\mathkz}}{}{\mathcal{J}}$ has objects
$(f,\phi_{f,-})$ where $f\in\C^\two$ and $\phi$ is a \kz-lifting operation from $f$ to
$U$.
\begin{equation}
  \label{eq:73}
  \xymatrixcolsep{1.5cm}
  \diagram
  \cdot\ar[d]_f\ar[r]^h&\cdot\ar[d]^{Uj}\\
  \cdot\ar[r]_-k\ar@{..>}[ur]|{\phi_{f,j}(h,k)}&\cdot
  \enddiagram
\end{equation}
The \kz-lifting operation $\phi_{f,-}$ is a \textsc{rali} structure on the monotone
morphisms $\C(\cod(f),\dom Uj)\to\C^\two(f, Uj)$.
\begin{thm}
  \label{thm:5}
  Suppose given \Ord-functors
  \begin{equation}
    \label{eq:76}
    \mathcal{J}\xrightarrow{\phantom{m}U\phantom{m}}
    \C^\two \xleftarrow{\phantom{m}V\phantom{m}}\mathcal{I}.
  \end{equation}
  There is a bijection between:
  \begin{itemize}
  \item {\normalfont\slkz}-lifting operations from $U$ to $V$;
  \item \Ord-functors $\mathcal{I}\to\mathcal{J}^{\pitchfork_{\mathkz}}$;
  \item \Ord-functors
    $\mathcal{J}\to\prescript{\pitchfork_{\mathkz}}{}{\mathcal{I}}$.
  \end{itemize}
  These correspondences yield a contravariant adjunction in
  $\Ord\text-\Cat/\C^\two$ between $\prescript{\pitchfork_{\mathkz}}{}{(-)}$ and
  $(-)^{\pitchfork_{\mathkz}}$.
\end{thm}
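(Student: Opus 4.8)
The plan is to read all three descriptions off a single piece of data --- a monotone choice of least diagonal filler for every relevant square --- so the crux is a clean reformulation of what a \textsc{rali} structure on the map \eqref{eq:133} actually is; everything after that is transport of naturality. Writing $Uj\colon X\to Y$ and $f\colon A\to B$, the monotone map \eqref{eq:133}, which I write $m_{j,f}$, sends $d\colon Y\to A$ to the commutative square $(d\cdot Uj,\,f\cdot d)\colon Uj\to f$. By Definition~\ref{df:16} a \textsc{rali} structure on $m_{j,f}$ is a left adjoint $\lambda_{j,f}\dashv m_{j,f}$ with $m_{j,f}\cdot\lambda_{j,f}=1$. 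The equation $m_{j,f}\cdot\lambda_{j,f}=1$ says that $\lambda_{j,f}(h,k)$ is a genuine diagonal filler of $(h,k)$; the counit $\lambda_{j,f}\cdot m_{j,f}\leq 1$, evaluated at any filler $d$ of $(h,k)$ --- for which $m_{j,f}(d)=(h,k)$ --- gives $\lambda_{j,f}(h,k)\leq d$, so $\lambda_{j,f}(h,k)$ is the \emph{least} filler; and it is monotone in $(h,k)$ since $\lambda_{j,f}$ is a monotone map. Hence an object of $\mathcal{J}^{\pitchfork_{\mathkz}}$ is exactly a morphism $f$ together with least diagonal fillers for all squares $Uj\to f$, natural in $j$ and monotone in the square; this is precisely a \slkz-lifting operation against $U$ in the sense of Definition~\ref{df:7}, for the single right-hand morphism $f$. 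Dually, objects of $\prescript{\pitchfork_{\mathkz}}{}{\mathcal{I}}$ are morphisms carrying least-filler operations against $V$.

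Granting this dictionary, I would construct the first bijection directly. From a \slkz-lifting operation $\phi$ from $U$ to $V$, for each $i\in\mathcal{I}$ the family $\{\phi_{j,i}\}_{j\in\mathcal{J}}$ is a \slkz-lifting operation of $Vi$ against $U$, hence an object of $\mathcal{J}^{\pitchfork_{\mathkz}}$; I set $Fi$ to be this object and $F\beta=V\beta$ for $\beta\colon i\to i'$. The naturality clause \eqref{eq:50} taken with an identity on the $\mathcal{J}$-side says exactly that $V\beta$ is compatible with the two \textsc{rali} structures, so it is a morphism of $\mathcal{J}^{\pitchfork_{\mathkz}}$; as the order on those morphisms is inherited from $\C^\two$ and $V$ is locally monotone, $F$ is a locally monotone functor over $\C^\two$. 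Conversely a functor $F\colon\mathcal{I}\to\mathcal{J}^{\pitchfork_{\mathkz}}$ over $\C^\two$ has $V=U^{\pitchfork_{\mathkz}}F$, and the least-filler operations carried by the objects $Fi$ assemble into a family $\phi_{j,i}$ whose monotonicity \eqref{eq:51} and lower-bound property \eqref{eq:55} come from the \textsc{rali} structures, whose naturality in $j$ is the object data of $\mathcal{J}^{\pitchfork_{\mathkz}}$, and whose naturality in $i$ is the morphism-compatibility of $F$. These assignments are mutually inverse because on both sides the data is nothing but the same least fillers --- indeed \eqref{eq:132} is injective on objects, so both $F$ and $\phi$ are uniquely determined by $V$ once the fillers exist.

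The bijection with \Ord-functors $\mathcal{J}\to\prescript{\pitchfork_{\mathkz}}{}{\mathcal{I}}$ follows by the same argument with the roles of the two sides exchanged, or formally by passing to $\C^{\mathrm{op}}$, which interchanges $U$ and $V$ and the two orthogonality constructions. To finish I would promote the two bijections to the stated contravariant adjunction: they provide isomorphisms
\[
  \Ord\text-\Cat/\C^\two(\mathcal{I},\mathcal{J}^{\pitchfork_{\mathkz}})
  \;\cong\;
  \Ord\text-\Cat/\C^\two(\mathcal{J},\prescript{\pitchfork_{\mathkz}}{}{\mathcal{I}}),
\]
both identified with the set of \slkz-lifting operations from $U$ to $V$. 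Naturality in $\mathcal{J}$ and in $\mathcal{I}$ is then the observation that precomposing a lifting operation with an \Ord-functor $S$ over $\C^\two$ corresponds on one side to the action of $S^{\pitchfork_{\mathkz}}$ from \eqref{eq:107} and on the other to precomposition of functors into $\prescript{\pitchfork_{\mathkz}}{}{\mathcal{I}}$, exactly as in \eqref{eq:136}; this exhibits the set of lifting operations as the common value of a dual adjunction between $\prescript{\pitchfork_{\mathkz}}{}{(-)}$ and $(-)^{\pitchfork_{\mathkz}}$.

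I expect the only real content to be the reformulation in the first paragraph: correctly matching the equation $m_{j,f}\cdot\lambda_{j,f}=1$, the (vacuous) unit, and the counit inequality of $\lambda_{j,f}\dashv m_{j,f}$ with, respectively, ``$\lambda_{j,f}(h,k)$ fills the square'', an identity, and the \slkz\ lower-bound inequality \eqref{eq:55}. Once \textsc{rali} structures are pinned to monotone least-filler operations, the remaining steps are purely a matter of carrying the naturality conditions \eqref{eq:50} and \eqref{eq:51} across the established correspondences.
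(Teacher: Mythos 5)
The paper states Theorem~\ref{thm:5} without proof, so there is no argument of the authors' to compare yours against; the result is evidently treated as a routine unwinding of the definitions, and your proposal supplies exactly that unwinding, correctly. Your key dictionary is right: in a poset-enriched setting an adjunction is just a pair of monotone maps satisfying the unit and counit inequalities, so a \textsc{rali} structure on~\eqref{eq:133} is the same thing as a monotone assignment of least diagonal fillers, and the remaining steps are, as you say, transport of the naturality conditions \eqref{eq:50} and~\eqref{eq:51} across that identification, with mutual inverseness forced by uniqueness of least fillers and injectivity of the forgetful functors. One point worth making explicit is that you correctly read the objects of $\mathcal{J}^{\pitchfork_{\mathkz}}$ as carrying fillers that are natural in $j$, in accordance with Definition~\ref{df:7}: this is genuinely needed (pointwise least fillers for each $Uj$ separately need not assemble into a natural family), and without it the bijection with \kz-lifting operations from $U$ to $V$ would break down.
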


\subsection{{\normalfont\textsc{lofs}s} and {\normalfont\textsc{kz}} lifting operations}
\label{sec:norm-lift-oper}
Suppose that $(\mathsf{L},\mathsf{R})$ is a \textsc{lofs} on the \Ord-category
\C. There is an \Ord-functor
\begin{equation}
  \label{eq:134}
  \mathsf{R}\text-\mathrm{Alg}\longrightarrow\mathsf{L}\text-\mathrm{Coalg}^{\pitchfork_{\mathkz}}
\end{equation}
introduced in~\cite{clementino15:_lax}, that equips each $\mathsf{R}$-algebra
with its canonical \kz-lifting operation against $\mathsf{L}$-coalgebras (see
Theorem~\ref{thm:1}). Using~\cite[\S 6.3]{MR3393453} one could deduce that
\eqref{eq:134}~is an isomorphism. We prefer, however, to give a self-contained
proof.
\begin{thm}
  \label{thm:9}
  The \Ord-functor~\eqref{eq:134} induced by a
  {\normalfont\textsl{\textsc{lofs}}} $(\mathsf{L},\mathsf{R})$ is an isomorphism.
\end{thm}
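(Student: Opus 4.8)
The plan is to prove that the \Ord-functor~\eqref{eq:134} is bijective on objects and that each of its maps on hom-posets is an order-isomorphism, arguing throughout over $\C^\two$. Both $\mathsf{R}\text-\mathrm{Alg}$ and $\mathsf{L}\text-\mathrm{Coalg}^{\pitchfork_{\mathkz}}$ come with forgetful \Ord-functors to $\C^\two$ that are injective on objects: for the former because $\mathsf{R}$-algebra structures are unique ($\mathsf{R}$ being lax idempotent), and for the latter because a \textsc{rali} structure on $\phi_{-,f}$ is unique when it exists, as noted after~\eqref{eq:132}. Since~\eqref{eq:134} lies over $\C^\two$ and is the identity on underlying squares, it is therefore automatically injective on objects.

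For surjectivity on objects, let $f$ be an object of $\mathsf{L}\text-\mathrm{Coalg}^{\pitchfork_{\mathkz}}$, that is, a morphism of $\C$ equipped with a \slkz-lifting operation $\phi$ against $\mathsf{L}$-coalgebras. Evaluating $\phi$ on the cofree coalgebra $(Lf,\Sigma_f)$ and on the counit square $\Phi_f=(1,Rf)\colon Lf\to f$ produces a morphism $p=\phi_{Lf,f}(1,Rf)$ satisfying $p\cdot Lf=1$ and $f\cdot p=Rf$; in other words $(p,1)\colon Rf\to f$ is an $(R,\Lambda)$-algebra structure on $f$. By the implication \eqref{item:25}$\Rightarrow$\eqref{item:23} of Theorem~\ref{thm:10}, $f$ then carries a unique $\mathsf{R}$-algebra structure, and its image under~\eqref{eq:134} is $f$ endowed with its canonical \slkz-lifting operation. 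As \slkz-lifting operations on a fixed morphism are unique, this image is the object we started with, so~\eqref{eq:134} is bijective on objects.

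It remains to match the hom-posets. Fix $\mathsf{R}$-algebras $(f,p)$ and $(f',p')$. The $\mathsf{R}$-algebra morphisms $f\to f'$ and the morphisms of $\mathsf{L}\text-\mathrm{Coalg}^{\pitchfork_{\mathkz}}$ between their images are both subposets of $\C^\two(f,f')$ with the order inherited from $\C^\two$, and~\eqref{eq:134} is the identity on underlying squares; hence it suffices to show these subsets coincide. A square $(u,v)\colon f\to f'$ is a morphism in the target precisely when it is compatible with the \textsc{rali} structures, which unwinds to the naturality condition~\eqref{eq:50} of the induced lifting operation, namely $u\cdot\phi_{\ell,f}(h,k)=\phi_{\ell,f'}(u\cdot h,v\cdot k)$ for every $\mathsf{L}$-coalgebra $\ell$ and every square $(h,k)\colon\ell\to f$. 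If $(u,v)$ is an $\mathsf{R}$-algebra morphism, so $p'\cdot K(u,v)=u\cdot p$, then the explicit formula $\phi_{\ell,-}(h,k)=p\cdot K(h,k)\cdot s$ from~\eqref{eq:53} together with the functoriality of $K$ yields this naturality immediately.

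Conversely --- and this is the only step that needs care --- suppose $(u,v)$ is natural. Taking $\ell=(Lf,\Sigma_f)$ and $(h,k)=(1,Rf)$ in the naturality equation and recalling $p=\phi_{Lf,f}(1,Rf)$ gives
\[
u\cdot p=\phi_{Lf,f'}(u,v\cdot Rf)=p'\cdot K(u,v\cdot Rf)\cdot\sigma_f .
\]
Since $(u,v\cdot Rf)=(u,v)\cdot\Phi_f$ as squares $Lf\to f'$, functoriality of $K$ gives $K(u,v\cdot Rf)=K(u,v)\cdot K(1,Rf)$, while a counit identity of the comonad $\mathsf{L}$ gives $K(1,Rf)\cdot\sigma_f=1_{Kf}$. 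Substituting, we obtain $u\cdot p=p'\cdot K(u,v)$, which is exactly the $\mathsf{R}$-algebra morphism condition. Thus the two subposets of $\C^\two(f,f')$ agree, so~\eqref{eq:134} is an order-isomorphism on each hom; together with bijectivity on objects this makes it an isomorphism of \Ord-categories. The main obstacle is precisely this fullness step, and it is the counit law $K(1,Rf)\cdot\sigma_f=1_{Kf}$ that makes the extraction of the algebra-morphism equation go through.
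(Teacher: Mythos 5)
Your proof is correct, and its overall skeleton matches the paper's: establish bijectivity on objects and then identify hom-posets, exploiting throughout that both forgetful \Ord-functors into $\C^\two$ are injective on objects and fully faithful on inequalities. The differences are local but worth recording. For the object part, the paper uses the same candidate $p=\phi_{Lg,g}(1,Rg)$ but proves directly that $(p,1)\dashv\Lambda_g$, using the \kz\ property of the \emph{given} lifting operation together with Lemma~\ref{l:2} (the canonical filler of the square $(Lg,Rg)\colon Lg\to Rg$ is the identity), and then concludes via lax idempotency of $\mathsf{R}$; you instead observe that $(p,1)$ is an $(R,\Lambda)$-algebra --- which uses only the filler property of $\phi$, not leastness --- and invoke Theorem~\ref{thm:10} (ultimately Lemma~\ref{l:14}, retract-closure of algebras of lax idempotent monads), so the \kz\ hypothesis on $\phi$ enters only through the uniqueness of \kz-lifting operations when you check that the image of the resulting algebra is the object you started with. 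For fullness, the paper computes with the abstract naturality of lifting operations (in particular, naturality with respect to the coalgebra morphism $L(h,k)=(h,K(h,k))$), whereas you use that, objects having already been matched, the target lifting operations are the canonical ones, so the explicit formula~\eqref{eq:53} together with the counit law $K(1,Rf)\cdot\sigma_f=1_{Kf}$ closes the computation. Both routes are sound; yours leans more on results proved earlier in the paper and on explicit formulas, while the paper's stays entirely within the calculus of lifting operations.
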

\begin{proof}
  Supposing that $(g,\phi_{-,g})$ is a \kz-lifting operation against the
  forgetful \Ord-functor
  $U\colon\mathsf{L}\text-\mathrm{Coalg}\to\C^\two$, we want to construct an
  $\mathsf{R}$-algebra structure on $g\colon A\to B$. There is a \kz-diagonal
  filler $p=\phi_{Lg,g}(1,Rg)$ as depicted below.
  \begin{equation}
    \diagram
    A\ar[d]_{Lg}\ar@{=}[r]&A\ar[d]^g\\
    Kg\ar[r]_-{Rg}\ar@{..>}[ur]^p&B
    \enddiagram
  \end{equation}
  Then $(p,1)\colon Rg\to g$ will be our candidate for an algebra structure. By
  the lax idempotency of $\mathsf{R}$, we only have to show $(p,1)\dashv
  \Lambda_g=(Lg,1)$ (see \S \ref{sec:lax-orthogonal-awfss}). We know that $p\cdot Lg=1$, and it remains to show $1\leq
  Lg\cdot p$. The commutativity the following diagram shows that $Lg\cdot p$ is
  a diagonal filler of the square $(Lg,Rg)\colon Lg\to Rg$.
  \begin{equation}
    \xymatrixrowsep{.5cm}
    \xymatrixcolsep{.5cm}
    \diagram
    A\ar[dd]_{Lg}\ar[dr]_1\ar[rr]^-{Lg}&&Kg\ar[dd]^{Rg}\\
    &A\ar[ur]^{Lg}\ar[dr]^g&\\
    Kg\ar[ur]^p\ar[rr]_-{Rg}&&B
    \enddiagram
  \end{equation}
  The canonical \kz-lifting operation, exhibited in Theorem~\ref{thm:1}, chooses
  the identity morphism as the diagonal filler of the outer square, by
  Lemma~\ref{l:2}, so we deduce $1\leq Lg\cdot p$. This completes the proof that
  $(p,1)\colon Rg\to g$ is an algebra structure.

  The next part of the proof is the verification that the assignment
  \begin{equation}
    \label{eq:135}
    \mathrm{ob}\bigl(
    \mathsf{L}\text-\mathrm{Alg}^{\pitchfork_{\mathkz}}
    \bigr)
    \longrightarrow
    \mathrm{ob}\bigl(\mathsf{R}\text-\mathrm{Alg}\bigr)
  \end{equation}
  constructed in the previous paragraph is an inverse of the effect
  of~\eqref{eq:134} on objects. Both \eqref{eq:134}~and~\eqref{eq:135} commute
  with the injective forgetful assignments from
  $ \mathrm{ob}\bigl( \mathsf{L}\text-\mathrm{Alg}^{\pitchfork_{\mathkz}}
  \bigr)$
  and $ \mathrm{ob}\bigl(\mathsf{R}\text-\mathrm{Alg}\bigr)$ to
  $\mathrm{ob}\bigl(\C^\two \bigr)$. This immediately implies that
  \eqref{eq:135}~is the inverse of~\eqref{eq:134} on objects.

  It remains to prove that \eqref{eq:134}~is fully faithful, in the
  \Ord-enriched sense. Suppose that $(h,k)\colon (f,\phi_{-,f})\to (g,\phi_{-,g})$ is a
  morphism in $\mathsf{L}\text-\mathrm{Coalg}^{\pitchfork_{\mathkz}}$, and let
  $p_f\colon Rf\to f$ and $p_g\colon Rg\to g$ be the associated algebra
  structures. We have the following string of equalities
  \begin{equation}
    h\cdot p_f=h\cdot\phi_{Lf,f}(1,Rf)=\phi_{Lf,g}(h,k\cdot Rf)= \phi_{Lg,g}(1,Rg)\cdot
    K(h,k)=p_g\cdot K(h,k),
  \end{equation}
  which are a result of the definition of lifting operations.
  \begin{equation}
    \diagram
    \cdot\ar[d]_{Lf}\ar@{=}[r]&
    \cdot\ar[r]^-h\ar[d]^f&
    \cdot\ar[d]^g\\
    \cdot\ar[r]_-{Rf}\ar@{..>}[ur]|{p_f}&
    \cdot\ar[r]_-k&
    \cdot
    \enddiagram
    =
    {
      \xymatrixcolsep{2.3cm}
      \diagram
      \cdot\ar[d]_{Lf}\ar[r]^h&
      \cdot\ar[d]^g\\
      \cdot\ar[r]_-{k\cdot Rf}\ar@{..>}[ur]|{\phi_{Lf,g}(Lf,k\cdot Rf)}&
      \cdot
      \enddiagram
    }
    =
    \diagram
    \cdot\ar[d]_{Lf}\ar[r]^h&
    \cdot\ar[d]_{Lg}\ar@{=}[r]&
    \cdot\ar[d]^g\\
    \cdot\ar[r]_{K(h,k)}&
    \cdot\ar@{..>}[ur]|{p_g}\ar[r]_-{Rg}&
    \cdot
    \enddiagram
  \end{equation}
  This shows that \eqref{eq:134}~is full on morphisms. That is faithful and full
  on 2-cells, or inequalities, follows from the fact \eqref{eq:134}~commutes
  with the forgetful \Ord-functors into $\C^\two$ and these forgetful
  \Ord-functors are faithful and full on inequalities.
\end{proof}
\begin{cor}
  \label{cor:4}
  For any {\normalfont\textsl{\textsc{lofs}}} $(\mathsf{L},\mathsf{R})$, the
  \Ord-categories $\mathsf{L}\text-\mathrm{Coalg}$ and
  $\mathsf{R}\text-\mathrm{Alg}$ are the object of the arrow part of horizontally
  ordered categories that we denote by $\mathsf{L}\text-\mathbb{C}\mathrm{oalg}$
  and $\mathsf{R}\text-\mathbb{A}\mathrm{lg}$. Furthermore, the respective
  \Ord-functors into $\C^\two$ are the arrow part of horizontally monotone
  double functors into $\Sq(\C)$.
\end{cor}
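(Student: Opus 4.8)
The plan is to present $\mathsf{R}\text-\mathbb{A}\mathrm{lg}$ as an internal category in $\Ord\text-\Cat$ (Definition~\ref{df:21}) with object part $\C$ and arrow part $\mathsf{R}\text-\mathrm{Alg}$, reading its composition off Proposition~\ref{prop:7} through the isomorphism of Theorem~\ref{thm:9}, and then to obtain $\mathsf{L}\text-\mathbb{C}\mathrm{oalg}$ by duality. Concretely I would take $\mathcal{G}_0=\C$ and $\mathcal{G}_1=\mathsf{R}\text-\mathrm{Alg}$, with $\dom,\cod\colon\mathsf{R}\text-\mathrm{Alg}\to\C$ sending an algebra $(g,p)$ to $\dom g$ and $\cod g$ and a morphism $(u,v)$ to $u$ and $v$; the identity \Ord-functor $\C\to\mathsf{R}\text-\mathrm{Alg}$ sends $X$ to the identity morphism on $X$, which is an $\mathsf{R}$-algebra since it lies in the right class of the underlying \textsc{wfs} (Theorem~\ref{thm:10}), and sends $w$ to the square $(w,w)$. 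The composition \Ord-functor $\mathsf{R}\text-\mathrm{Alg}\times_{\C}\mathsf{R}\text-\mathrm{Alg}\to\mathsf{R}\text-\mathrm{Alg}$ is the crux: on objects it is the vertical composition of algebras, supplied through Theorem~\ref{thm:9} by the composition of objects in part~(a) of the proof of Proposition~\ref{prop:7} (equivalently, by closure of the right class under composition), and on morphisms it is the vertical stacking of squares. With these data the associativity and unit axioms hold because they already hold in $\Sq(\C)$, into which the whole structure maps, and the ordering of squares is inherited from $\C^\two$; thus $\mathsf{R}\text-\mathbb{A}\mathrm{lg}$ is horizontally ordered, and the forgetful \Ord-functor $\mathsf{R}\text-\mathrm{Alg}\to\C^\two$ is the arrow part of a monotone double functor into $\Sq(\C)$, namely the one underlain by $U^{\lperp}$ of Proposition~\ref{prop:7}.

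For $\mathsf{L}\text-\mathrm{Coalg}$ I would argue dually, using the construction $\prescript{\lperp}{}{(-)}$ introduced alongside Theorem~\ref{thm:5} in place of $(-)^{\lperp}$. The analogues of Proposition~\ref{prop:7} and Theorem~\ref{thm:9} for $\prescript{\lperp}{}{(-)}$ follow by applying the stated results in the opposite \Ord-category $\C^{\mathrm{op}}$, under which an $\mathsf{L}$-coalgebra for $(\mathsf{L},\mathsf{R})$ is exactly an algebra for the monad opposite to $\mathsf{R}$; this yields an isomorphism $\mathsf{L}\text-\mathrm{Coalg}\cong\prescript{\lperp}{}{(\mathsf{R}\text-\mathrm{Alg})}$ over $\C^\two$ and hence the horizontally ordered double category $\mathsf{L}\text-\mathbb{C}\mathrm{oalg}$ together with its monotone double functor into $\Sq(\C)$.

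The step I expect to be the main obstacle is checking that the vertical stacking of squares stays inside $\mathsf{R}\text-\mathrm{Alg}$: when two \emph{strict} $\mathsf{R}$-algebra morphisms are stacked over a composable pair of algebras, the resulting square must again be a strict morphism for the composite algebra. The tension is that Proposition~\ref{prop:7} builds its double category with \emph{all} commutative squares as cells, so its arrow part is the full subcategory of $\C^\two$ on the algebra objects, whereas the arrow part demanded by the statement is $\mathsf{R}\text-\mathrm{Alg}$, whose cells are only the compatible squares; one must verify that these compatible cells are closed under the stacking induced by the composition of part~(a) of the proof of Proposition~\ref{prop:7}, and it is here that the full faithfulness of Theorem~\ref{thm:9} in the \Ord-enriched sense does the work. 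Once this closure is established, the remaining axioms and the monotonicity of the forgetful double functors are inherited from $\Sq(\C)$, and the dual case needs only the further remark that the double functor produced over $\C^{\mathrm{op}}$, a priori landing in $\Sq(\C^{\mathrm{op}})$, is reinterpreted as one into $\Sq(\C)$ via the evident horizontal-and-vertical opposite.
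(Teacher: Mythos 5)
Your proposal follows the same route as the paper's proof, which consists entirely of transferring the horizontally ordered double category of Proposition~\ref{prop:7}, applied to $U\colon\mathsf{L}\text-\mathrm{Coalg}\to\C^\two$, along the isomorphism $\mathsf{R}\text-\mathrm{Alg}\cong\mathsf{L}\text-\mathrm{Coalg}^{\lperp}$ of Theorem~\ref{thm:9}, and obtaining the coalgebra half by duality. Your unpacking of the data (domains and codomains, identities via Theorem~\ref{thm:10}, composition of vertical arrows from part~(a) of the proof of Proposition~\ref{prop:7}, axioms inherited through the faithful and injective-on-objects forgetful functors into $\Sq(\C)$) matches what the paper leaves implicit, and you correctly isolate the one nontrivial point that the paper glosses over: Proposition~\ref{prop:7} takes \emph{all} commutative squares as cells, whereas the corollary needs the arrow part to be $\mathsf{R}\text-\mathrm{Alg}$, whose morphisms correspond under Theorem~\ref{thm:9} to the squares compatible with the lifting operations, so one must check that compatible squares are closed under vertical pasting. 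This check has real content: not every commutative square between $\mathsf{R}$-algebras is a strict morphism (for $(\mathsf{E},\mathsf{M})$ on $\Ord$, a square between split opfibrations need not preserve the chosen opcartesian liftings).

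Where your proposal has a genuine gap is in discharging that check: the full faithfulness of Theorem~\ref{thm:9} cannot do this work. Full faithfulness identifies strict $\mathsf{R}$-algebra morphisms with compatible squares, and hence only \emph{translates} the closure statement from one side of the isomorphism to the other; which commutative squares are compatible is exactly what remains to be proved after the translation. The fix is an elementary computation using nothing but the defining compatibility of morphisms in $\mathcal{J}^{\lperp}$ (no lax idempotency and no Theorem~\ref{thm:9} are needed): if $(u,v)\colon(f,\phi_{-,f})\to(f',\phi_{-,f'})$ and $(v,w)\colon(g,\phi_{-,g})\to(g',\phi_{-,g'})$ are compatible, with $f,g$ and $f',g'$ composable, and $\theta_j(h,k)=\phi_{j,f}\bigl(h,\phi_{j,g}(f\cdot h,k)\bigr)$ is the composite lifting operation of Proposition~\ref{prop:7}, then
\begin{equation}
  u\cdot\theta_j(h,k)
  =\phi_{j,f'}\bigl(u\cdot h,\,v\cdot\phi_{j,g}(f\cdot h,k)\bigr)
  =\phi_{j,f'}\bigl(u\cdot h,\,\phi_{j,g'}(f'\cdot u\cdot h,\,w\cdot k)\bigr)
  =\theta'_j(u\cdot h,\,w\cdot k),
\end{equation}
where the first equality is compatibility of $(u,v)$, and the second is compatibility of $(v,w)$ together with $v\cdot f=f'\cdot u$. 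Inserting this verification in place of the appeal to full faithfulness makes your argument complete and in agreement with the paper's. The same remark applies, dually, to your treatment of $\mathsf{L}\text-\mathrm{Coalg}$; note also that under the passage to $\C^{\mathrm{op}}$ it is the monad $\mathsf{L}^{\mathrm{op}}$ on $(\C^{\mathrm{op}})^\two$ whose algebras are the $\mathsf{L}$-coalgebras --- the opposite of $\mathsf{R}$ is a comonad, not a monad.
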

\begin{proof}
  We use the isomorphism of Theorem~\ref{thm:9} to transfer the structure of a
  horizontally ordered double category from
  $\mathsf{L}\text-\mathrm{Coalg}^{\pitchfork_{\mathkz}}$ to
  $\mathsf{R}\text-\mathrm{Alg}$; see Proposition~\ref{prop:7}. The statement
  about $\mathsf{L}$-coalgebras is dual.
\end{proof}

A straightforward modification of~\cite[Thm.~6]{MR3393453} yields the following
theorem.

\begin{thm}
  \label{thm:11}
  A horizontally monotone double functor
  $\mathbb{U}=(U,U_0)\colon\mathbb{D}\to\Sq(\C)$ is isomorphic over $\Sq(\C)$ to
  $\mathsf{R}\text-\mathbb{A}\mathrm{lg}\to\Sq(\C)$ for a
  {\normalfont\textsl{\textsc{lofs}}} $(\mathsf{L},\mathsf{R})$ if and only if
  \begin{itemize}
  \item $U$ is monadic and the induced \Ord-monad is lax idempotent.
  \item for each vertical arrow $f$ in $\mathbb{D}$ the following square is in
    the image of $U$.
    \begin{equation}
      \diagram
      \cdot\ar[d]_f\ar[r]^f&\cdot\ar[d]^1\\
      \cdot\ar@{=}[r]&\cdot
      \enddiagram
    \end{equation}
  \end{itemize}
\end{thm}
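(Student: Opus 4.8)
The plan is to obtain this as an \Ord-enriched refinement of the characterisation of ordinary \textsc{awfs}s recorded in~\cite[Thm.~6]{MR3393453}. That result identifies \textsc{awfs}s on a category $\C$ with double functors into $\Sq(\C)$ that are the identity on objects and on horizontal morphisms, whose vertical-arrow part is monadic, and which are \emph{right-connected}; the right-connectedness is exactly the demand that, for every vertical arrow $f$, the square displayed in the statement belong to the image of $U$. The first step is therefore to record that the construction of~\cite{MR3393453} transports verbatim to the present setting once one replaces functors by \Ord-functors, monadicity by \Ord-monadicity, and double functors by horizontally monotone double functors: from the vertical composition of $\mathbb{D}$ one recovers the multiplication $\Pi$ and comultiplication $\Sigma$, and hence the distributive law $\Delta$ of~\eqref{eq:24}, producing an \Ord-enriched \textsc{awfs} $(\mathsf{L},\mathsf{R})$ whose monad $\mathsf{R}$ is the \Ord-monad induced by $U$, together with an isomorphism $\mathbb{D}\cong\mathsf{R}\text-\mathbb{A}\mathrm{lg}$ over $\Sq(\C)$ (using Corollary~\ref{cor:4} to equip $\mathsf{R}\text-\mathrm{Alg}$ with its double-category structure).

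With this in hand, the only genuinely new point is the passage between lax idempotency of $\mathsf{R}$ and the \textsc{lofs} condition, which is immediate from Definition~\ref{df:5}. For the implication ($\Leftarrow$) I would feed the two bulleted hypotheses into the modified Bourke--Garner construction to produce $(\mathsf{L},\mathsf{R})$ as above; the hypothesis that the induced monad is lax idempotent then says, by Definition~\ref{df:5}, precisely that $(\mathsf{L},\mathsf{R})$ is a \textsc{lofs}, and the isomorphism $\mathbb{D}\cong\mathsf{R}\text-\mathbb{A}\mathrm{lg}$ is the required one. For the implication ($\Rightarrow$) I would start from a \textsc{lofs} $(\mathsf{L},\mathsf{R})$ and check the two conditions for $U\colon\mathsf{R}\text-\mathbb{A}\mathrm{lg}\to\Sq(\C)$: its arrow part $\mathsf{R}\text-\mathrm{Alg}\to\C^\two$ is \Ord-monadic, being the forgetful \Ord-functor of the \Ord-monad $\mathsf{R}$, and the induced monad is lax idempotent by Definition~\ref{df:5}. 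The square condition follows from Theorem~\ref{thm:10}, which gives that the identity $1_{\cod f}$ carries an $\mathsf{R}$-algebra structure, together with Proposition~\ref{prop:7} and Corollary~\ref{cor:4}, by which every commutative square between $\mathsf{R}$-algebras is a square of $\mathsf{R}\text-\mathbb{A}\mathrm{lg}$; hence the displayed square $(Uf,1)\colon f\to 1_{\cod f}$ lies in the image of $U$.

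The main obstacle is not conceptual but lies in the enriched bookkeeping of the first step: one must verify that each ingredient of~\cite[Thm.~6]{MR3393453}---the recovery of $\Sigma$, $\Pi$ and $\Delta$ from the vertical composites, and the verification of the distributive-law axioms~\eqref{eq:30}---is carried out by \Ord-functors and \Ord-natural transformations, so that $(\mathsf{L},\mathsf{R})$ is genuinely an \Ord-enriched \textsc{awfs} and not merely an ordinary one underlying it. This is where the hypothesis that $\mathbb{U}$ is \emph{horizontally monotone} is used: it guarantees that the comparison squares assembling the comonad structure are monotone, so that all the transformations in sight respect the local orders. Once local monotonicity has been checked at each stage, the remaining identities are exactly those of~\cite{MR3393453} and require no change, and the two implications combine to give the stated equivalence.
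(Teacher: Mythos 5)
Your overall route is the paper's own: the paper's entire ``proof'' of Theorem~\ref{thm:11} is the sentence preceding it, namely that the result follows by a straightforward modification of \cite[Thm.~6]{MR3393453}, and your plan --- transport Bourke--Garner's characterisation (identity on objects, monadic arrow part, right-connectedness) to the \Ord-enriched setting, with horizontal monotonicity supplying the local monotonicity, and then translate lax idempotency of the induced monad into the \textsc{lofs} condition via Definition~\ref{df:5} --- is exactly that modification. Both directions are organised as the paper intends, and your identification of the displayed square condition with right-connectedness is correct.

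There is, however, one step in your forward direction that fails as stated: the claim that ``every commutative square between $\mathsf{R}$-algebras is a square of $\mathsf{R}\text-\mathbb{A}\mathrm{lg}$''. If this were true, the arrow part $U\colon \mathsf{R}\text-\mathrm{Alg}\to\C^\two$ would be a full inclusion, and a monadic functor is full only when its monad is idempotent; combined with the monadicity you assert in the same paragraph, your claim would force every \textsc{lofs} to be an \textsc{ofs}. Concretely, for the \textsc{lofs} $(\mathsf{E},\mathsf{M})$ of Example~\ref{ex:6} on \Ord, take $X=\{a\leq b\leq c\}$ with $g\colon X\to\two$ sending $a\mapsto 0$ and $b,c\mapsto 1$, a copy $X'=\{a'\leq b'\leq c'\}$ with $g'\colon X'\to\two$ defined likewise, and $u(a)=a'$, $u(b)=u(c)=c'$: then $(u,1_\two)\colon g\to g'$ is a commutative square between split opfibrations that does not preserve opcartesian liftings (it sends the lift $b$ of $a$ over $1$ to $c'\neq b'$), hence is not a morphism of $\mathsf{M}$-algebras. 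The squares of $\mathsf{R}\text-\mathbb{A}\mathrm{lg}$ are the $\mathsf{R}$-algebra morphisms, equivalently, by Theorem~\ref{thm:9}, the squares compatible with the \kz-lifting operations; the wording of Proposition~\ref{prop:7}, which you cite, is loose on this point and cannot be taken literally, precisely because of Theorem~\ref{thm:11}'s monadicity condition. The repair is two lines: $1_{\cod f}$ carries the algebra structure $(R(1_{\cod f}),1)$, and for any algebra $(f,p)$ the square $(f,1)\colon f\to 1_{\cod f}$ is an algebra morphism, since applying $R$ to the $\C^\two$-morphism $(f,1)\colon f\to 1_{\cod f}$ gives $R(1_{\cod f})\cdot K(f,1)=Rf$, while $(p,1)\colon Rf\to f$ being a morphism of $\C^\two$ gives $f\cdot p=Rf$. (Alternatively, right-connectedness of $\mathsf{R}\text-\mathbb{A}\mathrm{lg}$ is already part of the Bourke--Garner result you are transporting, so it may simply be cited.) With that step replaced, your proposal is the paper's intended argument.
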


We conclude the section with a result on morphisms of \textsc{lofs}s.
\begin{prop}
  \label{prop:9}
  Suppose that $(\mathsf{L},\mathsf{R})$ and $(\mathsf{L}',\mathsf{R}')$ are
  {\normalfont\textsl{\textsc{lofs}}} on the \Ord-category \C, and
  $\varphi_f\colon Kf\to K'f$ a natural family of morphisms. Then, there is a
  bijection between the following sets, which, moreover, can have at most one element.
  \begin{enumerate}[label=(\alph*)]
  \item \label{item:27} Morphisms of {\normalfont\textsl{\textsc{lofs}}s}
    $(\mathsf{L},\mathsf{R}) \longrightarrow (\mathsf{L}',\mathsf{R}')$.
  \item \label{item:28} Comonad morphisms $\mathsf{L}\to\mathsf{L}'$.
  \item \label{item:29} Monad morphisms $\mathsf{R}\to\mathsf{R'}$.
  \end{enumerate}
\end{prop}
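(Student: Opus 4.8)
The plan is to show that a morphism of each of the three kinds is nothing but the single natural family $\varphi_f\colon Kf\to K'f$ subject to conditions that are equivalent, so that the three sets are canonically identified and each is a singleton or empty. First I would record the two forgetful maps out of~\ref{item:27}: by the definition of a morphism of \textsc{awfs}s (diagram~\eqref{eq:106}), a morphism of \textsc{lofs}s is a family $\varphi_f$ for which $(1,\varphi_f)$ is a comonad morphism $\mathsf{L}\to\mathsf{L}'$ and $(\varphi_f,1)$ is a monad morphism $\mathsf{R}\to\mathsf{R}'$; discarding one half gives maps $\ref{item:27}\to\ref{item:28}$ and $\ref{item:27}\to\ref{item:29}$, evidently injective since a morphism is determined by $\varphi_f$. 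For the ``at most one element'' clause, Lemma~\ref{l:15} gives at most one monad morphism $\mathsf{R}\to\mathsf{R}'$ (as $\mathsf{R}$ is lax idempotent), so~\ref{item:29} is a singleton or empty; dually, a comonad morphism $\mathsf{L}\to\mathsf{L}'$ is a monad morphism $(\mathsf{L}')^{\mathrm{op}}\to\mathsf{L}^{\mathrm{op}}$ on $(\C^\two)^{\mathrm{op}}$ whose source is lax idempotent (Definition~\ref{df:6}), so Lemma~\ref{l:15} again bounds~\ref{item:28}; and~\ref{item:27} is a singleton or empty by Lemma~\ref{l:16}.

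It remains to construct the reverse maps, and the key point is that a comonad morphism and a monad morphism with the same underlying $\varphi_f$ determine one another. First I would observe that compatibility of $(1,\varphi_f)$ with the counits $\Phi_f=(1,Rf)$ and $\Phi'_f=(1,R'f)$ forces the domain component to be the identity and yields $R'f\cdot\varphi_f=Rf$, while $(1,\varphi_f)$ being a morphism of $\C^\two$ gives $L'f=\varphi_f\cdot Lf$; these are exactly the two commutativities in~\eqref{eq:106}. Using the hypothesis that $\varphi_f$ is natural, they guarantee that $(\varphi_f,1)\colon Rf\to R'f$ is a well-defined natural transformation with $(\varphi_f,1)\cdot\Lambda_f=(\varphi_f\cdot Lf,1)=\Lambda'_f$, that is, compatible with the units of $\mathsf{R}$ and $\mathsf{R}'$; symmetrically, starting from a monad morphism one recovers the same commutativities and a counit-compatible $(1,\varphi_f)$. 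Thus everything reduces to an upgrading lemma.

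The upgrading lemma states that for a lax idempotent monad $\mathsf{S}=(S,\eta,\mu)$ and any monad $\mathsf{S}'=(S',\eta',\mu')$ on the same \Ord-category, a natural transformation $\psi\colon S\Rightarrow S'$ with $\psi\cdot\eta=\eta'$ is automatically a monad morphism (dually, since $\mathsf{L}'$ is lax idempotent, a counit-compatible transformation into it is a comonad morphism). To prove it I would set $\beta=\mu'\cdot\psi S'$; unit-compatibility and the unit law of $\mathsf{S}'$ give $\beta\cdot\eta S'=1$, exhibiting each $S'A$ as a retract of the free algebra $SS'A$, hence a uniquely structured $\mathsf{S}$-algebra $(S'A,\gamma_A)$ by Lemma~\ref{l:14}. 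The crux is the identification $\beta=\gamma$: Definition~\ref{df:2}\ref{item:5} gives $\beta\le\gamma$ directly, whereas the reverse inequality comes from the fact that $\mu'_A$ is a (necessarily lax, by Definition~\ref{df:2}\ref{item:7}) morphism of the $\mathsf{S}$-algebras $(S'S'A,\gamma_{S'A})\to(S'A,\gamma_A)$, combined with the left-extension formula $\psi_A=\gamma_A\cdot S\eta'_A$ furnished by Definition~\ref{df:2}\ref{item:8}. Once $\beta=\gamma$, the two sides of the multiplication axiom for $\psi$ collapse, by naturality of $\mu$ and $\psi$ and the associativity of $\gamma$, to $\gamma_A\cdot S\psi_A=\beta_A\cdot S\psi_A$, so the axiom holds on the nose. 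Granting the lemma, a comonad morphism produces the monad morphism carrying the same $\varphi_f$ and conversely; these reverse constructions invert the forgetful maps because every structure in sight is pinned down by $\varphi_f$, and the three bijections follow. I expect the main obstacle to be precisely the equality $\beta=\gamma$, where the one-sided inequalities supplied by lax idempotency (conditions~\ref{item:5} and~\ref{item:7}) must be applied in the right order, via the left-extension property~\ref{item:8}, to upgrade to the strict equality that the multiplication law demands.
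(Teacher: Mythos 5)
Your proof is correct, and it reaches the crucial equivalence of \ref{item:28} and \ref{item:29} by a genuinely different route from the paper. Both arguments settle uniqueness with Lemma~\ref{l:16}, Lemma~\ref{l:15} and its dual, and both extract from (co)unit compatibility the same identities $\varphi_f\cdot Lf=L'f$ and $R'f\cdot\varphi_f=Rf$ that pin the (co)monad morphisms down to the forms $(1,\varphi_f)$ and $(\varphi_f,1)$. From there the paper stays inside its lifting-operation framework: the comonad morphism $Q$ induces $Q_*\colon\mathsf{L}\text-\mathrm{Coalg}\to\mathsf{L}'\text-\mathrm{Coalg}$ over $\C^\two$, and applying $(-)^{\pitchfork_{\mathkz}}$ together with the isomorphism $\mathsf{R}\text-\mathrm{Alg}\cong\mathsf{L}\text-\mathrm{Coalg}^{\pitchfork_{\mathkz}}$ of Theorem~\ref{thm:9} yields a functor $\mathsf{R}'\text-\mathrm{Alg}\to\mathsf{R}\text-\mathrm{Alg}$ over $\C^\two$, computed explicitly to send $(p,1)$ to $(p\cdot\varphi_f,1)$, which exhibits $(\varphi_f,1)$ as a monad morphism. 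You instead prove a self-contained ``upgrading lemma'': a unit-compatible natural transformation from a lax idempotent monad to an arbitrary monad is automatically a monad morphism (dually, into a lax idempotent comonad). This lemma is true and your argument for it works: $\beta_A=\mu'_A\cdot\psi_{S'A}$ retracts $\eta_{S'A}$, the unique $\mathsf{S}$-algebra structure $\gamma_A$ on $S'A$ equals $\beta_A$, and then both sides of the multiplication axiom reduce to $\beta_A\cdot S\psi_A$ by naturality of $\mu,\psi$ and associativity. Your route buys elementarity and generality --- it needs only Definition~\ref{df:2} and Lemma~\ref{l:14}, bypasses Theorem~\ref{thm:9} entirely, and sharpens Lemma~\ref{l:15} into the statement that, out of a lax idempotent monad, being a monad morphism is a mere \emph{property} of a unit-compatible transformation --- while the paper's route buys economy inside its own development, reusing machinery already in place and making the induced action on algebras explicit. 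Two small points of precision: Definition~\ref{df:2}~(\ref{item:8}) furnishes $\gamma_{S'A}\cdot S\eta'_{S'A}$ only as a left extension, hence (via $\psi_{S'A}\cdot\eta_{S'A}=\eta'_{S'A}$) only the inequality $\gamma_{S'A}\cdot S\eta'_{S'A}\leq\psi_{S'A}$ rather than the equality you assert; but that inequality is all your chain for $\gamma\leq\beta$ uses, and the equality follows once $\beta=\gamma$ is established. Moreover there is a shortcut to $\beta=\gamma$: applying $S$ to $\beta_A\cdot\eta_{S'A}=1$ and using $S\eta\leq\eta S$ together with naturality of $\eta$ gives $1\leq\eta_{S'A}\cdot\beta_A$, so $\beta_A\dashv\eta_{S'A}$ and $\beta_A$ is itself the algebra structure by Definition~\ref{df:2}.
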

\begin{proof}
  First, there is at most one morphism of the kind in~\ref{item:27},
  \ref{item:28} and~\ref{item:29} by Lemma~\ref{l:16}, Lemma~\ref{l:15} and its
  dual form (ie, the version for comonads). Clearly, if there is a morphism as
  in~\ref{item:27}, then there are morphisms as in~\ref{item:28}
  and~\ref{item:29}, just by definition of morphism of \textsc{awfs}s (\S
  \ref{sec:categ-norm}).

  Suppose there is a morphism of comonads $Q$ from $\mathsf{L}$ to $\mathsf{L}'$,
  with components $Q_f\colon Lf\to L'f$. Due to the counit axiom, $(1,R'f)\cdot
  Q_f=(1,Rf)$, we have that $Q_f$ is of the form $(1,\varphi_f)$ for a morphism
  $\varphi_f\colon Kf\to K'f$. Let
  \begin{equation}
    Q_*\colon
    \mathsf{L}\text-\mathrm{Coalg}
    \longrightarrow
    \mathsf{L}'\text-\mathrm{Coalg}
  \end{equation}
  be the \Ord-functor induced by $Q$; it commutes with the forgetful
  \Ord-functors into $\C^\two$. Applying the
  functor $(-)^{\pitchfork_{\mathkz}}$ to $Q_*$ and employing the
  isomorphisms~\eqref{eq:134} (Theorem~\ref{thm:9}) we obtain an \Ord-functor,
  depicted by a dashed arrow.
  \begin{equation}
    \diagram
    \mathsf{R}'\text-\mathrm{Alg}\ar[d]_\cong \ar@{-->}[r]&
    \mathsf{R}\text-\mathrm{Alg}\ar[d]^\cong\\
    \mathsf{L}'\text-\mathrm{Coalg}^{\pitchfork_{\mathkz}}
    \ar[r]^-{Q_*^{\pitchfork_{\mathkz}}}&
    \mathsf{L}\text-\mathrm{Coalg}^{\pitchfork_{\mathkz}}
    \enddiagram
  \end{equation}
  The vertical isomorphisms were described in the proof of Theorem~\ref{thm:9},
  and this description can be used to describe the dashed arrow. If $(p,1)\colon
  R'f\to f$ is an $\mathsf{R}'$-algebra structure, the associated \kz-lifting
  operation $\phi_{-,f}$ defines a diagonal filler for each commutative square
  \begin{equation}
    \xymatrixcolsep{1.5cm}
    \diagram
    \cdot\ar[d]_\ell\ar[r]^h&\cdot\ar[d]^f\\
    \cdot\ar[r]_k\ar@{..>}[ur]|{\phi_{\ell,f}(h,k)}&\cdot
    \enddiagram
    \qquad
    \phi_{\ell,f}(h,k)=p\cdot K(h,k)\cdot s
  \end{equation}
  for any $\mathsf{L}'$-coalgebra $(1,s)\colon \ell\to L\ell$. Uppon applying
  $Q_*^{\pitchfork_{\mathkz}}$ we obtain a \kz-lifting operation $\psi_{-,f}$ of $f$
  against all $\mathsf{L}$-coalgebras. If $(1,t)\colon g\to Lg$ is an
  $\mathsf{L}$-coalgebra, its image under $Q_*$ is
  \begin{equation}
    g\xrightarrow{(1,t)} Lg\xrightarrow{(1,\varphi_g)} L'g
  \end{equation}
  and therefore $\psi_{g,f}(h,k)$ is the form
  \begin{equation}
    \psi_{g,f}(h,k)=
    \phi_{Q_*g,f}(h,k)=
    p\cdot K'(h,k)\cdot \varphi_g\cdot t =
    p\cdot \varphi_f\cdot K(h,k)\cdot t.
  \end{equation}
  We now obtain the $\mathsf{R}$-algebra structure on $f$ by $\psi_{Lf,f}(1,Rf)$,
  \begin{equation}
    \psi_{Lf,f}(1,Rf)=p\cdot\varphi_{f}\cdot K(1,Rf)\cdot \sigma_f=
    p\cdot\varphi_{f}.
  \end{equation}
  In conclusion, the dashed arrow in page~\pageref{prop:9} represents the
  \Ord-functor that sends an $\mathsf{R}'$-algebra $(p,1)\colon R'f\to f$ to the
  $\mathsf{R}$-algebra $(p\cdot \varphi_f,1)\colon Rf\to f$. This implies that
  $(\varphi_f,1)\colon Rf\to R'f$ is a monad morphism, and the set~\ref{item:29}
  is non-empty.

  We have seen that \ref{item:29}~has a member if \ref{item:28}~has a member. By
  a duality argument, ie by taking the opposite \Ord-category of $\C$, we
  deduce the converse: \ref{item:28}~has a member if
  \ref{item:29}~does. Furthermore, from the construction of the previous
  paragraph, we know that if $(1,\varphi_f)\colon Lf\to L'f$ is a comonad
  morphism, then the monad morphism must be of the form $(\varphi_f,1)\colon
  Rf\to R'f$, and vice versa. Therefore, the existence of a comonad morphism
  $\mathsf{L}\to\mathsf{L}'$, or the existence of a monad morphism
  $\mathsf{R}\to \mathsf{R}'$, are equivalent to the existence of a unique
  $\varphi_f\colon Kf\to K'f$ such that $(1,\varphi_f)\colon Lf\to L'f$ is a
  comonad morphism and $(\varphi_f,1)\colon Rf\to R'f$ is a monad morphism. In
  other words, equivalent to the existence of a unique morphism of
  \textsc{awfs}s $(\mathsf{L},\mathsf{R})\to(\mathsf{L}',\mathsf{R}')$.
\end{proof}

The above proposition is a reminder of the differences that exist between
general \textsc{awfs}s and those enriched over \Ord. In the general case, the
proposition does not hold; see~\cite[Lemma~6.9]{MR2781914}
or~\cite[Prop.~2]{MR3393453}.

\section{The definition of LOFS revisited}
\label{sec:defin-lofs-revis}

Lax orthogonal factorisation systems on \Ord-categories were defined in \S
\ref{sec:lax-orthogonal-awfss} as \Ord-enriched \textsc{awfs}s
$(\mathsf{L},\mathsf{R})$ whose comonad $\mathsf{L}$ is lax idempotent, or
equivalently, by Proposition~\ref{prop:1}, whose monad $\mathsf{R}$ is lax
idempotent. The definition of \textsc{awfs} includes a mixed distributive law
$\Delta\colon LR\Rightarrow RL$, with components
$(\sigma_f,\pi_f)\colon LRf\to RLf$. The axioms of a mixed distributive law in
this case amount to the commutativity of the diagrams in~\eqref{eq:30}, and they
are equivalent, as mentioned in Remark~\ref{rmk:1}, to the requirement that the
diagonal filler of the square below be $\sigma_f\cdot\pi_f$.
\begin{equation}
  \label{eq:109}
  \diagram
  Kf\ar[d]_{LRf}\ar[r]^-{\sigma_f}&
  KLf\ar[d]^{RLf}\\
  KRf\ar[r]_-{\pi_f}\ar@{..>}[ur]|{\sigma_f\cdot \pi_f}&
  Kf
  \enddiagram
\end{equation}
The main result of the section is the following.

\begin{thm}
  \label{thm:12}
  In the definition of {\normalfont\textsl{\textsc{lofs}}}, the distributive law
  axiom is redundant. More precisely, the following suffices to define a
  {\normalfont\textsl{\textsc{lofs}:}} a domain-preserving \Ord-comonad
  $\mathsf{L}$ and a codomain-preserving monad $\mathsf{R}$ on $\C^\two$ that
  define the same \Ord-functorial factorisation $f=Rf\cdot Lf$; both
  $\mathsf{L}$ and $\mathsf{R}$ should be lax idempotent.
\end{thm}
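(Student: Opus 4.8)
The plan is to reduce the whole statement, via Remark~\ref{rmk:1}, to the single equation asserting that the canonical diagonal filler $d$ of the square~\eqref{eq:109} is $\sigma_f\cdot\pi_f$. The essential point is that this filler is available \emph{before} the distributive law is known: it is the one produced by the lifting operation $\phi$ of \S\ref{sec:lift-oper-from} attached to the shared \Ord-functorial factorisation, whose formula $\phi_{(f,s),(g,p)}(h,k)=p\cdot K(h,k)\cdot s$ and whose naturality~\eqref{eq:50} and monotonicity rest only on the functoriality of $K$. Viewing $LRf$ as the cofree $\mathsf{L}$-coalgebra on $Rf$ and $RLf$ as the free $\mathsf{R}$-algebra on $Lf$, I set $d=\phi_{LRf,RLf}(\sigma_f,\pi_f)$, so that $d$ is a genuine diagonal filler; in particular $RLf\cdot d=\pi_f$ and $d\cdot LRf=\sigma_f$.

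First I would record the elementary facts that flow from the two lax idempotencies separately, none of which needs the distributive law. The counit axiom of $\mathsf{L}$ and the unit axiom of $\mathsf{R}$ give $K(1,Rf)\cdot\sigma_f=1$, $\pi_f\cdot K(Lf,1)=1$, $RLf\cdot\sigma_f=1$ and $\pi_f\cdot LRf=1$; lax idempotency of $\mathsf{L}$ supplies $1\leq\sigma_f\cdot RLf$ and $\sigma_f\cdot K(1,Rf)\leq 1$, and lax idempotency of $\mathsf{R}$ supplies $K(Lf,1)\cdot\pi_f\leq 1$ (these are the inequalities of~\eqref{eq:57}, here obtained from each (co)monad on its own). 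One inequality is now immediate: from $RLf\cdot d=\pi_f$ and $1\leq\sigma_f\cdot RLf$ I get $d\leq\sigma_f\cdot RLf\cdot d=\sigma_f\cdot\pi_f$.

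The reverse inequality $\sigma_f\cdot\pi_f\leq d$ is where the work lies, and I expect it to be the main obstacle. The key is an identity extracted from naturality of $\phi$. The morphism $L\Lambda_f\colon Lf\to LRf$ is a morphism of $\mathsf{L}$-coalgebras with underlying square $(Lf,K(Lf,1))$, and $R\Phi_f\colon RLf\to Rf$ is a morphism of $\mathsf{R}$-algebras with underlying square $(K(1,Rf),Rf)$. Substituting these into~\eqref{eq:50} (with $(h,k)=(\sigma_f,\pi_f)$) and simplifying the two arguments on the left by $K(1,Rf)\cdot\sigma_f=1$ and $\pi_f\cdot K(Lf,1)=1$, the left-hand side becomes $\phi_{Lf,Rf}(Lf,Rf)$, which is the identity by Lemma~\ref{l:2}; the right-hand side is $K(1,Rf)\cdot d\cdot K(Lf,1)$. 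Hence
\begin{equation*}
  K(1,Rf)\cdot d\cdot K(Lf,1)=1_{Kf}.
\end{equation*}

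With this the proof closes: inserting the identity between $\sigma_f$ and $\pi_f$, regrouping, and using $\sigma_f\cdot K(1,Rf)\leq 1$ and $K(Lf,1)\cdot\pi_f\leq 1$ with monotonicity of composition gives
\begin{equation*}
  \sigma_f\cdot\pi_f=\bigl(\sigma_f\cdot K(1,Rf)\bigr)\cdot d\cdot\bigl(K(Lf,1)\cdot\pi_f\bigr)\leq d.
\end{equation*}
Antisymmetry of the order on homs then yields $d=\sigma_f\cdot\pi_f$, so the distributive law holds by Remark~\ref{rmk:1}. Since $\mathsf{L}$ and $\mathsf{R}$ induce the same functorial factorisation and now satisfy the distributive law, $(\mathsf{L},\mathsf{R})$ is an \textsc{awfs}, and being built from a lax idempotent comonad and monad it is a \textsc{lofs}.
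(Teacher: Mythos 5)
Your proof is correct and follows essentially the same route as the paper's: both reduce, via Remark~\ref{rmk:1}, to showing the canonical diagonal filler $d$ of the square~\eqref{eq:109} equals $\sigma_f\cdot\pi_f$, and both obtain the key inequality $\sigma_f\cdot\pi_f\leq d$ by naturality of the lifting operation along the coalgebra morphism $L\Lambda_f=(Lf,K(Lf,1))$ and the algebra morphism $R\Phi_f=(K(1,Rf),Rf)$, combined with Lemma~\ref{l:2} and the componentwise lax-idempotency inequalities. The only difference is cosmetic: the paper gets $d\leq\sigma_f\cdot\pi_f$ for free from the \kz-minimality of the filler (noting that the argument of Theorem~\ref{thm:1} needs no distributivity), whereas you derive it directly from $RLf\cdot d=\pi_f$ and $1\leq\sigma_f\cdot RLf$.
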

\begin{proof}
  All we need to show is that $\sigma_f\cdot\pi_f$ is the diagonal filler of the
  square~\eqref{eq:109}. The existence of a \kz-lifting operations for
  $\mathsf{R}$-algebras against $\mathsf{L}$-coalgebras does not depend on the
  distributivity axiom but it only suffices that both $\mathsf{L}$ and
  $\mathsf{R}$ be lax idempotent. Then, we only need to show that
  \begin{equation}
    \label{eq:110}
    \sigma_f\cdot\pi_f\leq d
  \end{equation}
  for the \kz-diagonal filler $d$ of the square~\eqref{eq:109}, for, in this
  case, the inequality is necessarily an equality. There are adjunctions
  $\sigma_f\dashv K(1,Rf)$ and $K(Lf,1)\dashv \pi_f$ since $\mathsf{L}$ and
  $\mathsf{R}$ are lax idempotent. Thus, the inequality~\eqref{eq:110} is equivalent to
  $1\leq K(1,Rf)\cdot d\cdot K(Lf,1)$, due to the inequalities~\eqref{eq:57} of
  \S \ref{sec:lax-orthogonal-awfss}. Consider the following diagram, where
  $(Lf,K(Lf,1))=L(Lf,1)$ is a morphism of $\mathsf{L}$-coalgebras and
  $(K(1,Rf),Rf)=R(1,Rf)$ is a morphism of $\mathsf{R}$-algebras.
  \begin{equation}
    \xymatrixcolsep{1.2cm}
    \diagram
    \cdot\ar[r]^-{Lf}\ar[d]_{Lf}&
    \cdot\ar[r]^-{\sigma_f}\ar[d]_{LRf}&
    \cdot\ar[d]^{RLf}\ar[r]^-{K(1,Rf)}&
    \cdot\ar[d]^{Rf}
    \\
    \cdot\ar[r]_-{K(Lf,1)}&
    \cdot\ar[r]_-{\pi_f}\ar[ur]^d&
    \cdot\ar[r]_-{Rf}&
    \cdot
    \enddiagram
    \qquad
    \diagram
    \cdot\ar[r]^{Lf}\ar[d]_{Lf}&\cdot\ar[d]^{Rf}\\
    \cdot\ar[r]_{Rf}\ar@{..>}[ur]^1&\cdot
    \enddiagram
  \end{equation}
  By the naturality of the diagonal fillers with respect to morphisms of
  $\mathsf{L}$-coalgebras and morphism of $\mathsf{R}$-algebras, we deduce that
  $K(1,Rf)\cdot d\cdot K(Lf,1)$ is the diagonal filler of the square on the
  right hand side, and hence equal to the identity morphism (see
  Lemma~\ref{l:2}). Therefore the inequality~\eqref{eq:110} holds, completing
  the proof.
\end{proof}

We can summarise the theorem above and Proposition~\ref{prop:1} in the following
way: given a domain-preserving \Ord-comonad $\mathsf{L}$ and a
codomain-preserving \Ord-monad $\mathsf{R}$ on $\C^\two$ that induce the same
\Ord-functorial factorisation $f=Rf\cdot Lf$, the following two statements are
equivalent, and when they hold we are in the presence of a \textsc{lofs}.
\begin{itemize}
\item One of $\mathsf{L},\mathsf{R}$ is lax idempotent and the distributive law
  axiom holds.
\item Both $\mathsf{L}$ and $\mathsf{R}$ are lax idempotent.
\end{itemize}

\section{Embeddings with respect to a monad}
\label{sec:embedd-with-resp}

Embeddings with respect to a lax idempotent monad were extensively exploited
in~\cite{MR1641443,MR1718976} and in~\cite{MR1718926}, where topological
embeddings were exhibited as an example (more on this in \S
\ref{sec:filter-monads}). In this section we begin our analysis of the interplay
between these embeddings and \textsc{lofs}s.
\begin{df}
  \label{df:13}
  If $S\colon\C\to \mathcal{B}$ is a locally monotone functor between
  \Ord-categories, an \emph{$S$-embedding} structure on a morphism $f$ in \C\
  is a \textsc{lari} structure in $Sf$ in $\mathcal{B}$. Recall that
  \textsc{lari} structures on a morphism in an \Ord-category are unique, which
  one usually rephrases by saying that being a \textsc{lari} is a
  \emph{property} of a morphism. Therefore, being an $S$-embedding in an \Ord-category
  is a property of morphisms.

  The \Ord-category of $S$-embeddings, denoted by $S\text-\mathrm{Emb}$, is the
  category whose objects are pairs $(f,r)$ where $f$ is a morphism in $\C$ and
  $Sf\dashv r$ is a \textsc{lari} in $\mathcal{B}$. A morphism
  $(f,r)\to(g,t)$ in this category is a morphism $(h,k)\colon f\to g$ in
  $\C^\two$ satisfying $Sh\cdot r=t\cdot Sk$. There is an obvious forgetful
  functor $S\text-\mathrm{Emb}\to\C^\two$ given on objects by $(f,r)\mapsto
  f$. We make $S\text-\mathrm{Emb}$ into an \Ord-category by declaring
  $(h,k)\leq(h',k')$ if this inequality holds in $\C^\two$; this makes the
  forgetful functor $U$ into a locally monotone functor that fits in a pullback
  square.
  \begin{equation}
    \label{eq:141}
    \diagram
    {S\text-\mathrm{Emb}}
    \ar[r]^-{}\ar[d]_{U}
    \ar@{}[dr]|{\mathrm{pb}}
    &
    {\lari(\mathcal{B})}
    \ar[d]^{}
    \\
    {\C^\two}
    \ar[r]_-{S^\two}
    &
    {\mathcal{B}^\two}
    \enddiagram
  \end{equation}
\end{df}
\begin{lemma}
  \label{l:17}
  $S$-embeddings in \C\ are the vertical morphisms of a horizontally ordered
  double category, with objects those of $\C$, horizontal morphisms the
  morphisms of \C\ and squares those commutative squares in \C\ that represent
  morphisms of $S$-embeddings.
  Furthermore, the pullback diagram displayed above is part of a pullback
  diagram of horizontally ordered double categories.
  \begin{equation}
    \label{eq:67}
    \diagram
    {S\text-\mathbb{E}\mathrm{mb}}
    \ar[r]^-{}\ar[d]_{U}
    \ar@{}[dr]|{\mathrm{pb}}
    &
    {\mathbb{L}\mathbf{ari}(\mathcal{B})}
    \ar[d]^{}
    \\
    {\Sq(\C)}
    \ar[r]_-{\Sq(S)}
    &
    {\Sq(\mathcal{B})}
    \enddiagram
  \end{equation}
\end{lemma}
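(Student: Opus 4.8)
The plan is to realise $S\text-\mathbb{E}\mathrm{mb}$ directly as a pullback in the category of horizontally ordered double categories; this settles both assertions at once, since such a pullback is by definition an internal category in $\Ord\text-\Cat$, so the existence claim becomes a special case of the pullback claim. The one general fact I would invoke is that, for any finitely complete category $\mathcal{E}$, the category $\mathrm{Cat}(\mathcal{E})$ of internal categories in $\mathcal{E}$ is again finitely complete, with limits computed ``levelwise'': the object-of-objects and the object-of-arrows of a limit are the corresponding limits in $\mathcal{E}$, and the source, target, identity and composition maps are induced by the relevant universal properties. I shall apply this with $\mathcal{E}=\Ord\text-\Cat$, which is finitely complete (indeed complete and cartesian closed).

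First I would assemble the ingredients. By Example~\ref{ex:9}, $\Sq(\C)$ and $\Sq(\mathcal{B})$ are horizontally ordered double categories and $\Sq(S)$ is a monotone double functor, acting as $S$ on objects and as $S^\two$ on arrows. By Example~\ref{ex:10}, \textsc{lari}s assemble into a horizontally ordered double category $\mathbb{L}\mathbf{ari}(\mathcal{B})$, whose object-of-objects is $\mathcal{B}$ and whose object-of-arrows is $\lari(\mathcal{B})$; the forgetful assignment $\mathbb{L}\mathbf{ari}(\mathcal{B})\to\Sq(\mathcal{B})$ is a monotone double functor, being the identity on objects and the forgetful $\lari(\mathcal{B})\to\mathcal{B}^\two$ on arrows.

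Next I would form the pullback of $\Sq(S)$ against this forgetful double functor inside $\mathrm{Cat}(\Ord\text-\Cat)$ and compute it levelwise. On objects it is the pullback of $\C\xrightarrow{S}\mathcal{B}\xleftarrow{1_{\mathcal{B}}}\mathcal{B}$, which is canonically $\C$; on arrows it is the pullback of $\C^\two\xrightarrow{S^\two}\mathcal{B}^\two\leftarrow\lari(\mathcal{B})$, which is precisely $S\text-\mathrm{Emb}$ by the defining square~\eqref{eq:141}. Thus the underlying graph of the pullback is $S\text-\mathrm{Emb}\rightrightarrows\C$, with source and target the domain and codomain of an $S$-embedding, and with induced identities and composition. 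This produces the desired horizontally ordered double category $S\text-\mathbb{E}\mathrm{mb}$, and exhibits~\eqref{eq:67} as a pullback diagram whose arrow-level part is exactly~\eqref{eq:141}.

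The only point demanding attention is that $\mathbb{L}\mathbf{ari}(\mathcal{B})\to\Sq(\mathcal{B})$ really is a morphism of internal categories, i.e.\ that it preserves vertical composition and identities. This is exactly the content of Example~\ref{ex:10}: the composite of two \textsc{lari}s $f\dashv f^*$ and $g\dashv g^*$ is again a \textsc{lari}, with right adjoint $f^*\cdot g^*$, and identity morphisms are \textsc{lari}s; since the forgetful assignment remembers only the underlying commutative square, it manifestly respects this composition and these identities. Once this is in hand the levelwise computation of pullbacks does the rest, and one reads off as a consistency check that the composite of $S$-embeddings $f,g$ is an $S$-embedding because $S(g\cdot f)=Sg\cdot Sf$ is a composite of \textsc{lari}s, while the squares of $S\text-\mathbb{E}\mathrm{mb}$ are precisely those commutative squares in \C\ whose $S$-image is a morphism of \textsc{lari}s, matching the morphisms of $S$-embeddings of Definition~\ref{df:13}.
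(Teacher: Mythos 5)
Your proposal is correct and takes essentially the same route as the paper: the paper's proof likewise recognises \eqref{eq:67} as a levelwise pullback in $\Ord\text-\Cat$ --- trivially at the level of objects (identity vertical arrows, $\operatorname{ob}S$ horizontally) and by the defining square \eqref{eq:141} at the level of arrows --- and concludes that $S\text-\mathrm{Emb}\rightrightarrows\C$ carries a unique internal category structure making \eqref{eq:67} a pullback of internal categories. Your extra verifications (that $\mathbb{L}\mathbf{ari}(\mathcal{B})\to\Sq(\mathcal{B})$ is a monotone double functor, and that the induced squares and composition match Definition~\ref{df:13}) are details the paper leaves implicit.
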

\begin{proof}
  At the level of \Ord-categories of objects, the square of the statement has
  identity vertical arrows and
  $\operatorname{ob}S\colon\operatorname{ob}\mathcal{C}\to\operatorname{ob}\mathcal{D}$
  as horizontal arrows. Hence, it is a pullback at the level of \Ord-categories
  of objects. At the level of \Ord-categories of arrows, the square is precisely
  the pullback square~\eqref{eq:141}. Therefore,
  $S\text-\mathrm{Emb}\rightrightarrows\mathcal{C}$ has a unique internal
  category structure that makes~\eqref{eq:67} a pullback square of internal
  categories.
\end{proof}
\begin{lemma}
  \label{l:11}
  The forgetful \Ord-functor $S\text-\mathrm{Emb}\to\C^\two$ creates colimits,
  provided that $\C$ has and $S$ preserves colimits.
\end{lemma}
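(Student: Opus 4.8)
The plan is to exploit the pullback square~\eqref{eq:141}, which exhibits the forgetful \Ord-functor $U\colon S\text-\mathrm{Emb}\to\C^\two$ as the pullback of $\lari(\mathcal{B})\to\mathcal{B}^\two$ along $S^\two\colon\C^\two\to\mathcal{B}^\two$. I would first record the standard principle that in a strict pullback of \Ord-functors $\mathcal{A}\xrightarrow{G}\mathcal{B}\xleftarrow{F}\mathcal{C}$, the projection $P=\mathcal{C}\times_{\mathcal{B}}\mathcal{A}\to\mathcal{C}$ creates those colimits that $F$ preserves, provided $G$ creates colimits: given a diagram in $P$ whose image in $\mathcal{C}$ admits a colimit, $F$ sends that colimit to a colimit in $\mathcal{B}$, which coincides with the image under $G$ of the diagram, so creation along $G$ furnishes the unique lift, and this lift lands in $P$ since $P$ is the (strict) pullback. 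In our situation $F=S^\two$ preserves colimits because $S$ does and colimits in the cotensor $\C^\two$ (resp. $\mathcal{B}^\two$) are computed from colimits of domains and codomains (cf.\ \S\ref{sec:order-enrich-limits}). Thus the whole statement reduces to the single fact that $G\colon\lari(\mathcal{B})\to\mathcal{B}^\two$ creates colimits.

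The core step is therefore to prove that $\lari(\mathcal{B})\to\mathcal{B}^\two$ creates colimits, and this is where the main work lies. Let $D$ be a diagram of \textsc{lari}s $f_j\dashv g_j$ (with $g_j\cdot f_j=1$) whose underlying diagram in $\mathcal{B}^\two$ has a colimit $f=\col_j f_j$, with injections $\lambda_j=(a_j,b_j)\colon f_j\to f$; since $\mathcal{B}^\two$-colimits are pointwise, $\dom f=\col_j\dom f_j$ and $\cod f=\col_j\cod f_j$ with injections the $a_j$ and $b_j$. I would define the candidate right adjoint $g\colon\cod f\to\dom f$ by the universal property of $\cod f$ as the unique morphism with $g\cdot b_j=a_j\cdot g_j$ for all $j$; the cocone compatibility needed to define $g$ uses precisely the morphism-of-\textsc{lari}s condition $g_{j'}\cdot\cod D(u)=\dom D(u)\cdot g_j$ together with the fact that the $a_j$ form a cocone. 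One then checks $g\cdot f=1$ and $f\cdot g\le 1$ by testing on the injections: $g\cdot f\cdot a_j=g\cdot b_j\cdot f_j=a_j\cdot g_j\cdot f_j=a_j$, and $f\cdot g\cdot b_j=f\cdot a_j\cdot g_j=b_j\cdot f_j\cdot g_j\le b_j$. Here I use the \Ord-enriched universal property of colimits from \S\ref{sec:order-enrich-limits}: since $\C(\col,Z)\cong\lim_j\C(\cdot,Z)$ is an order-isomorphism, both an equality and an inequality between morphisms out of a colimit may be verified on the injections. This yields $f\dashv g$ with $g\cdot f=1$, i.e.\ a \textsc{lari} structure at the colimit, and each $\lambda_j$ becomes a morphism of \textsc{lari}s by the very equation defining $g$.

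To conclude creation rather than mere lifting, I would note that the lift is unique because a \textsc{lari} structure is a \emph{property} (right adjoints are unique in an \Ord-category), so $g$ is forced; and the universal property in $\lari(\mathcal{B})$ follows because the comparison map $(p,q)\colon f\to h$ induced in $\mathcal{B}^\two$ by any competing cocone of \textsc{lari}-morphisms automatically satisfies the compatibility $p\cdot g=(\text{right adjoint of }h)\cdot q$, again checked on the injections of $\cod f$. Faithfulness of $\lari(\mathcal{B})\to\mathcal{B}^\two$ together with its injectivity on objects then upgrades this to creation, and the enriched (order) part is inherited since the hom-posets of $\lari(\mathcal{B})$ are full subposets of those of $\mathcal{B}^\two$. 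Feeding this into the pullback principle of the first paragraph, with $F=S^\two$ cocontinuous, gives that $U$ creates exactly the colimits that $S$ preserves. I expect the main obstacle to be the construction and verification of the right adjoint $g$ at the colimit: this is the only place where the hypothesis that $S$ preserves colimits is genuinely indispensable, since without $Sf=\col_j Sf_j$ one could not assemble the right inverse/adjoint from the pieces $g_j$.
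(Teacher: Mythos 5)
Your proof is correct and follows essentially the same route as the paper's: both reduce the statement, via the pullback square~\eqref{eq:141}, to the two facts that $S^\two$ preserves the colimits in question (because $S$ does and colimits in arrow categories are pointwise) and that $\lari(\mathcal{B})\to\mathcal{B}^\two$ creates colimits. The only difference is that the paper cites this last fact without proof (it follows from comonadicity of $\lari(\mathcal{B})\to\mathcal{B}^\two$, cf.\ Lemma~\ref{l:5}), whereas you verify it directly by assembling the right adjoint at the colimit from the $g_j$ and checking the required equalities and inequalities on the colimit injections, which is a valid and slightly more self-contained way to obtain the same ingredient.
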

\begin{proof}
  In the pullback diagram~\eqref{eq:141}, the leftmost vertical \Ord-functor
  creates any colimit that is preserved by $S$ (and thus by $S^\two$), since the
  rightmost vertical \Ord-functor creates colimits.
\end{proof}

\begin{df}
  \label{df:19}
  If $\mathsf{T}$ is an \Ord-monad on \C, we shall call
  $F^{\mathsf{T}}$-embeddings \emph{$\mathsf{T}$-embeddings}, and denote the
  \Ord-category $F^{\mathsf{T}}\text-\mathrm{Emb}$ by
  $\mathsf{T}\text-\mathrm{Emb}$.
\end{df}
\begin{lemma}
  \label{l:8}
  Let $\mathsf{T}$ be an \Ord-monad on \C\ and
  $F^{\mathsf{T}}\dashv V^{\mathsf{T}}\colon\mathsf{T}\text-\mathrm{Alg}\to\C$
  the associated Eilenberg-Moore adjunction. If $V^\mathsf{T}$ is locally full, ie if $V^\mathsf{T}f\leq V^\mathsf{T}g$ implies
  $f\leq g$ for parallel morphism of algebras $f$ and $g$, then
  $\mathsf{T}$-embeddings coincide with $T$-embeddings.
\end{lemma}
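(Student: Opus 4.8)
The plan is to regard both $\mathsf{T}\text-\mathbb{E}\mathrm{mb}$ (recall $\mathsf{T}\text-\mathrm{Emb}=F^{\mathsf{T}}\text-\mathrm{Emb}$ by Definition~\ref{df:19}) and $T\text-\mathbb{E}\mathrm{mb}$ as sub-double-categories of $\Sq(\C)$ via their faithful, injective-on-objects forgetful double functors, and to check that they have the same vertical morphisms and the same squares. Since being a \textsc{lari} is a property of a morphism (Definition~\ref{df:16}), so is being an $S$-embedding, and the orders on squares are in both cases inherited from $\C^\two$; everything therefore reduces to the object level, namely to showing that a morphism $f\colon A\to B$ of $\C$ carries a \textsc{lari} structure on $F^{\mathsf{T}}f$ in $\mathsf{T}\text-\mathrm{Alg}$ if and only if it carries one on $Tf=V^{\mathsf{T}}F^{\mathsf{T}}f$ in $\C$. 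The direction $\mathsf{T}$-embedding $\Rightarrow$ $T$-embedding is immediate and uses no hypothesis: $V^{\mathsf{T}}$ is locally monotone, hence preserves adjunctions and left inverses, so $F^{\mathsf{T}}f\dashv r'$ with $r'\cdot F^{\mathsf{T}}f=1$ yields $Tf\dashv V^{\mathsf{T}}r'$ with $V^{\mathsf{T}}r'\cdot Tf=1$.

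For the converse I would start from a \textsc{lari} $Tf\dashv r$ in $\C$ with $r\cdot Tf=1_{TA}$, and first exhibit the right adjoint inside $\mathsf{T}\text-\mathrm{Alg}$ by showing that $r$ is itself a morphism of algebras. The key move is to compare $r$ with its free extension $\tilde r\coloneq\mu_A\cdot T(r\cdot\eta_B)\colon TB\to TA$, which is a morphism of algebras $F^{\mathsf{T}}B\to F^{\mathsf{T}}A$ by construction. Using the naturality identities $\eta_B\cdot f=Tf\cdot\eta_A$ and $Tf\cdot\mu_A=\mu_B\cdot TTf$ together with the unit laws, one computes $\tilde r\cdot Tf=\mu_A\cdot T(r\cdot\eta_B\cdot f)=\mu_A\cdot T\eta_A=1$ and $Tf\cdot\tilde r=\mu_B\cdot T(Tf\cdot r\cdot\eta_B)\leq\mu_B\cdot T\eta_B=1$, the inequality coming from the counit $Tf\cdot r\leq 1$. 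Hence $Tf\dashv\tilde r$ as well, and since adjoints in the \Ord-category $\C$ are unique (\S\ref{sec:adjunctions}) we conclude $\tilde r=r$; in particular $r$ is a morphism of algebras.

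Finally I would lift the whole adjunction to $\mathsf{T}\text-\mathrm{Alg}$. Now $F^{\mathsf{T}}f$ and $r$ are both morphisms of algebras with $r\cdot F^{\mathsf{T}}f=1$, and the counit inequality $F^{\mathsf{T}}f\cdot r\leq 1$ holds in $\C$; it is precisely here that local fullness of $V^{\mathsf{T}}$ is used, to transport this inequality between parallel algebra morphisms back up to $\mathsf{T}\text-\mathrm{Alg}$. This exhibits $F^{\mathsf{T}}f\dashv r$ in $\mathsf{T}\text-\mathrm{Alg}$ with $r\cdot F^{\mathsf{T}}f=1$, so $f$ is a $\mathsf{T}$-embedding, and this assignment is inverse to the one above. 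The matching of squares is then automatic: a commutative $(h,k)$ is a morphism of $\mathsf{T}$-embeddings iff $F^{\mathsf{T}}h\cdot r_f=r_g\cdot F^{\mathsf{T}}k$, which by faithfulness of $V^{\mathsf{T}}$ is equivalent to $Th\cdot V^{\mathsf{T}}r_f=V^{\mathsf{T}}r_g\cdot Tk$, the defining condition for a morphism of $T$-embeddings. I expect the main obstacle to be the identification of $r$ as an algebra morphism: the evident candidate right adjoint lives only laxly over $\mathsf{T}\text-\mathrm{Alg}$, and the nonobvious point is that the strengthened unit $r\cdot Tf=1$ of a \textsc{lari} (rather than a mere adjunction) forces the free extension $\tilde r$ to be again right adjoint to $Tf$, whence $\tilde r=r$.
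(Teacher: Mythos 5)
Your proof is correct. Note that the paper states Lemma~\ref{l:8} without proof (it is immediately followed by the remark about lax idempotent monads), so there is no authorial argument to compare against; your write-up supplies the missing proof, and it does so along the natural lines. The two directions are handled properly: the forward direction uses only that $V^{\mathsf{T}}$ is locally monotone (hence preserves adjunctions and retractions), and the hypothesis of local fullness enters exactly once, to lift the counit inequality $Tf\cdot r\leq 1$ from $\C$ to $\mathsf{T}\text-\mathrm{Alg}$. The genuinely nontrivial step --- that the base right adjoint $r$ of $Tf$ is automatically a morphism of algebras $F^{\mathsf{T}}B\to F^{\mathsf{T}}A$ --- is exactly the order-enriched shadow of doctrinal adjunction, and your mechanism for it is clean and complete: the transpose $\tilde r=\mu_A\cdot T(r\cdot\eta_B)$ is an algebra morphism by construction, your computations $\tilde r\cdot Tf=1$ and $Tf\cdot\tilde r\leq 1$ are right (both use only naturality, the monad laws, local monotonicity of $T$, and the \textsc{lari} equations), and uniqueness of adjoints in an \Ord-category (antisymmetry, as in~\eqref{eq:10}) then forces $\tilde r=r$. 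Two cosmetic points: you conflate $r$ with the algebra morphism it underlies in the final step (harmless, since $V^{\mathsf{T}}$ is faithful --- indeed faithfulness already follows from local fullness plus antisymmetry, so you need not invoke it as a separate property); and the matching of morphisms of embeddings is, as you say, immediate from faithfulness of $V^{\mathsf{T}}$ together with the fact that the right adjoints correspond under $V^{\mathsf{T}}$ by uniqueness of adjoints.
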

For example, the above lemma applies when $\mathsf{T}$ is lax idempotent.

\begin{prop}
  \label{prop:3}
  Let $\mathsf{T}$ be a lax idempotent monad on an \Ord-category with a terminal
  object. The obvious \Ord-functor
  \begin{equation}
    \label{eq:97}
    \mathsf{T}\text-\mathrm{Emb}\longrightarrow
    \prescript{\pitchfork_{\mathkz}}{}{\bigl(\mathsf{T\text-\mathrm{Alg}}/1\bigr)}
  \end{equation}
  is an isomorphism.
\end{prop}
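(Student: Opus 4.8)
In the definition of $\mathsf{T}\text-\mathrm{Emb}=F^{\mathsf{T}}\text-\mathrm{Emb}$ the objects are those morphisms $f$ of \C\ for which $F^{\mathsf{T}}f$ admits a \textsc{lari} structure in $\mathsf{T}\text-\mathrm{Alg}$. The plan is to compare this with the other side through the two forgetful \Ord-functors into $\C^\two$. First I would record that the terminal object $1$ carries a (unique) $\mathsf{T}$-algebra structure $T1\to 1$, so that, over $\C^\two$, the \Ord-functor $\mathsf{T}\text-\mathrm{Alg}/1$ is the family of underlying morphisms $A\to 1$ with $A$ a $\mathsf{T}$-algebra. Since $1$ is terminal, a commutative square from $f\colon X\to Y$ to such an $A\to 1$ is nothing but a morphism $X\to A$, the lower edge being forced; hence a \kz-lifting operation of $f$ against all the $A\to 1$ is exactly a \textsc{rali} structure on the precomposition map $\C(f,A)\colon\C(Y,A)\to\C(X,A)$, \Ord-naturally in the $\mathsf{T}$-algebra $A$.

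Next I would observe that both forgetful \Ord-functors into $\C^\two$ are injective on objects (a \textsc{lari}, resp.\ \textsc{rali}, structure is unique) and are faithful and full on inequalities, the orderings being inherited from $\C^\two$. Consequently the obvious \Ord-functor is an isomorphism as soon as (a)~it induces a bijection on objects and (b)~a morphism $(h,k)$ of $\C^\two$ underlies a morphism on one side precisely when it does on the other.

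For (a) the crucial input is the following elementary fact, which I would prove separately: in any \Ord-category $\mathcal{B}$ a morphism $g$ is a \textsc{lari} if and only if $\mathcal{B}(g,A)$ is a \textsc{rali} \Ord-naturally in $A$. The forward implication is obtained by applying the contravariant hom $\mathcal{B}(-,A)$ to the defining data $g^{*}g=1$, $gg^{*}\leq 1$ of a \textsc{lari} $g\dashv g^{*}$; this yields $\mathcal{B}(g,A)\cdot\mathcal{B}(g^{*},A)=1$ and $\mathcal{B}(g^{*},A)\cdot\mathcal{B}(g,A)\leq 1$, exhibiting $\mathcal{B}(g,A)$ as a \textsc{rali}. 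For the converse I would feed the natural family of left adjoints $\mathcal{B}(P,A)\to\mathcal{B}(Q,A)$ into the \Ord-enriched Yoneda lemma to obtain a morphism $g^{*}$ representing it, and then read off $g^{*}g=1$ and $gg^{*}\leq 1$ from the left-inverse equality and the counit inequality by Yoneda again. Applying this with $\mathcal{B}=\mathsf{T}\text-\mathrm{Alg}$, $g=F^{\mathsf{T}}f$, and transporting along the enriched adjunction isomorphism $\mathsf{T}\text-\mathrm{Alg}(F^{\mathsf{T}}f,A)\cong\C(f,A)$, identifies the \textsc{rali} condition on $\C(f,A)$ with $F^{\mathsf{T}}f$ being a \textsc{lari}; this gives the object bijection, and tracking the construction shows it is realised by the obvious functor (whose least filler of $h\colon X\to A$ is the morphism transposing to $\bar h\cdot r$, where $\bar h\colon F^{\mathsf{T}}X\to A$ is the transpose of $h$ and $F^{\mathsf{T}}f\dashv r$).

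Finally, for (b) I would unwind both compatibility conditions on a square $(h,k)\colon f\to f'$: being a morphism of $\mathsf{T}$-embeddings means $(F^{\mathsf{T}}h,F^{\mathsf{T}}k)$ is a morphism of \textsc{lari}s, i.e.\ $F^{\mathsf{T}}h\cdot r=r'\cdot F^{\mathsf{T}}k$, while being a morphism in $\prescript{\pitchfork_{\mathkz}}{}{(\mathsf{T}\text-\mathrm{Alg}/1)}$ means the associated least-filler operations are natural in $(h,k)$; since $r$ and $r'$ are recovered from those operations through the Yoneda correspondence of step (a), the two conditions coincide, and the isomorphism follows. The main obstacle I anticipate is the converse half of the elementary \textsc{lari}/\textsc{rali} fact, where one must correctly invoke the \Ord-enriched Yoneda lemma to transfer both the equality and the inequality from the hom-level family to the single morphism $g^{*}$; the remaining verifications are bookkeeping with the inherited orderings and the forced lower edges.
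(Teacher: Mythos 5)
Your proposal is correct and follows essentially the same route as the paper's proof: identify objects of $\prescript{\pitchfork_{\mathkz}}{}{\bigl(\mathsf{T}\text-\mathrm{Alg}/1\bigr)}$ as morphisms $f$ with a \textsc{rali} structure on $\C(Y,VA)\to\C(X,VA)$ (the lower edge being forced by terminality), transport along $F^{\mathsf{T}}\dashv V$ to precomposition with $F^{\mathsf{T}}f$, and conclude by the fact that $g$ is a \textsc{lari} iff $\mathcal{B}(g,A)$ is a \textsc{rali} \Ord-naturally in $A$, with the morphism-level and inequality-level identifications handled by Yoneda and the forgetful functors. The only difference is presentational: the paper asserts the \textsc{lari}/\textsc{rali} representability step and leaves local fullness to the reader, whereas you spell both out explicitly via the enriched Yoneda lemma.
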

\begin{proof}
  We define the \Ord-functor~\eqref{eq:97} and show that it is bijective on
  objects at the same time by showing that a morphism $f$ of \C\ is a
  $\mathsf{T}$-embedding if and only if it has a right \kz-lifting operation
  against morphisms $A\to 1$ for all $\mathsf{T}$-algebras $A$.

  The forgetful
  \Ord-functor $V\colon \mathsf{T}\text-\mathrm{Alg}\to\C$ can be composed with the
  inclusion $\C\to\C^\two$ that sends $X$ to $(X\to 1)$, and then consider the
  $\prescript{\pitchfork_{\mathkz}}{}{(-)}$ of the resulting functor into $\C^\two$. An
  object of $
  \prescript{\pitchfork_{\mathkz}}{}{(\mathsf{T\text-\mathrm{Alg}}/1)}$ is a
  morphism $f\colon X\to Y$ of $\C$ with a \textsc{rali} structure on
  \begin{equation}
    \label{eq:115}
    \C(Y,V(A))=\C^\two(f,V1_A)\longrightarrow \C^\two(f,(VA\to 1))=\C(X,VA)
  \end{equation}
  In other words, each morphism $X\to A$ can be extended along $f$ and this
  extension is a left Kan extension.
  \begin{equation}
    \diagram
    X\ar[d]_f\ar[r]&A\\
    Y\ar@{..>}[ur]&
    \enddiagram
  \end{equation}
  The morphism \eqref{eq:115}~can be written as
  \begin{equation}
    \label{eq:139}
    \C(Y,V(A))\cong
    \mathsf{T}\text-\mathrm{Alg}(F^{\mathsf{T}}Y,A)
    \xrightarrow{\mathsf{T}\text-\mathrm{Alg}(F^{\mathsf{T}}f,1)}
    \mathsf{T}\text-\mathrm{Alg}(F^{\mathsf{T}}X,A)
    \cong
    \C(X,V(A))
  \end{equation}
  which has a \textsc{rali} structure, for all $\mathsf{T}$-algebras $A$, if and
  only if $F^{\mathsf{T}}f$ has a \textsc{lari} structure. This defines a
  bijection between the objects of the domain and codomain of~\eqref{eq:97}.

  It remains to define~\eqref{eq:97} on morphisms and to verify that it is
  bijective on these morphisms, and locally full on inequalities. Suppose that
  $f$ and $g$ are $\mathsf{T}$-embeddings. A morphism
  $(h,k)\colon f\to g$ is a morphism in the codomain of~\eqref{eq:97} if it is
  compatible with the \textsc{rali} structures on the morphisms~\eqref{eq:115}
  corresponding to $f$ and $g$; in other words, if $(h,k)$ induces a morphism of
  \textsc{rali}s. This is equivalent to requiring that $(h,k)$ should induce a
  morphism of \textsc{rali}s between the \textsc{rali}s~\eqref{eq:139} that
  correspond to $f$ and $g$. By Yoneda lemma, this means that $(h,k)$ is a
  morphism of $\mathsf{T}$-embeddings. This defines a functor~\eqref{eq:97} that
  is bijective on morphisms.

  It remains to show that \eqref{eq:97}~is locally full on morphisms, but this
  is easy and left to the reader.
\end{proof}

\begin{prop}
  \label{prop:6}
  Let $\mathsf{T}$ be a lax idempotent monad on an \Ord-category with a terminal
  object. The obvious \Ord-functor
  \begin{equation}
    \label{eq:128}
    \mathsf{T}\text-\mathrm{Alg}/1\longrightarrow
    (\mathsf{T}\text-\mathrm{Emb})^{\pitchfork_{\mathkz}}_1
    \subset
    (\mathsf{T}\text-\mathrm{Emb})^{\pitchfork_{\mathkz}}
  \end{equation}
  is an isomorphism between $\mathsf{T}\text-\mathrm{Alg}$ and the fiber of
  $\cod\colon
  (\mathsf{T}\text-\mathrm{Emb})^{\pitchfork_{\mathkz}}\longrightarrow\C$ over
  $1$.
\end{prop}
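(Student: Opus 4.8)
The plan is to prove that the functor is bijective on objects, full and faithful, and an order-isomorphism on hom-posets; since both sides are property-like this reduces to two checks at the level of underlying objects of $\C$. Indeed, the forgetful \Ord-functor $(\mathsf{T}\text-\mathrm{Emb})^{\pitchfork_{\mathkz}}\to\C^\two$ is injective on objects, because a \textsc{rali} structure is unique, so an object of the fiber over $1$ is exactly an object $X$ of $\C$ for which $(X\to 1)$ carries a \kz-lifting operation against $\mathsf{T}$-embeddings; likewise a $\mathsf{T}$-algebra structure on $X$ is unique by Lemma~\ref{l:14}. Unwinding the definitions, such a lifting operation sends each $h\colon A\to X$ (with $j\colon A\to B$ a $\mathsf{T}$-embedding) to the least $d\colon B\to X$ with $h\leq d\cdot j$, that is, to a left extension $\lan{j}{h}$ landing in $X$ that moreover satisfies $\lan{j}{h}\cdot j=h$. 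So it remains to show: (i) $X$ admits these operations for every $\mathsf{T}$-embedding iff $X$ is a $\mathsf{T}$-algebra; and (ii) a morphism $h\colon X\to X'$ of $\C$ is compatible with the lifting operations iff it is a morphism of $\mathsf{T}$-algebras. The matching of orders is immediate, both being inherited from $\C^\two$.

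The engine of (i) is the observation that $\eta_X\colon X\to TX$ is itself a $\mathsf{T}$-embedding: the underlying morphism of $F^{\mathsf{T}}\eta_X$ is $T\eta_X$, and lax idempotency yields $T\eta_X\dashv\mu_X$ with $\mu_X\cdot T\eta_X=1$ (using $T\eta\cdot\mu\leq 1$ from Definition~\ref{df:2}), so $F^{\mathsf{T}}\eta_X$ is a \textsc{lari} with right adjoint $\mu_X$. For the forward implication, lifting $1_X$ against $\eta_X$ produces $a\coloneqq\phi_{\eta_X}(1_X)\colon TX\to X$ with $a\cdot\eta_X=1$, exhibiting $X$ as a retract of $TX$; hence $X$ is a $\mathsf{T}$-algebra by Lemma~\ref{l:14}. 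Conversely, given an algebra $(X,a)$ and a $\mathsf{T}$-embedding $j\colon A\to B$ with $Tj\dashv r$, I would define $\phi_j(h)\coloneqq a\cdot Th\cdot r\cdot\eta_B$. Naturality of $\eta$ together with $r\cdot Tj=1$ and $a\cdot\eta_X=1$ gives $\phi_j(h)\cdot j=h$; and if $h\leq d'\cdot j$ then $a\cdot Th\cdot r\leq a\cdot Td'$ (using $Tj\cdot r\leq 1$), whence $\phi_j(h)\leq a\cdot Td'\cdot\eta_B=d'$. This is precisely the universal property exhibiting $\phi_j$ as left adjoint to $d\mapsto d\cdot j$ and the latter as a \textsc{rali}, so it is a \kz-lifting operation; its naturality in $j$ reduces to $T\alpha_0\cdot r'=r\cdot T\alpha_1$, which is exactly the defining compatibility of a morphism of $\mathsf{T}$-embeddings (Definition~\ref{df:13}). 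Evaluating this operation at $j=\eta_X$, $h=1_X$ (so $r=\mu_X$) returns $a\cdot\mu_X\cdot\eta_{TX}=a$, confirming that the two object assignments are mutually inverse.

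For (ii), with the closed formula in hand the compatibility of $h\colon X\to X'$ with the lifting operations reads $h\cdot(a\cdot Tg\cdot r\cdot\eta_B)=a'\cdot T(h\cdot g)\cdot r\cdot\eta_B$ for all $\mathsf{T}$-embeddings $j$ and all $g\colon A\to X$. When $h$ is a morphism of algebras this is immediate from $h\cdot a=a'\cdot Th$; conversely, specialising to $j=\eta_X$ and $g=1_X$ collapses the two sides to $h\cdot a$ and $a'\cdot Th$, forcing $h\cdot a=a'\cdot Th$. Thus compatible morphisms are exactly algebra morphisms, the functor is full and faithful in the \Ord-enriched sense, and together with (i) and the order matching this yields the asserted isomorphism onto the fiber $(\mathsf{T}\text-\mathrm{Emb})^{\pitchfork_{\mathkz}}_1$. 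The main thing to get right — and the only genuinely new step — is the recognition of $\eta_X$ as the universal $\mathsf{T}$-embedding against which injectivity is tested, paired with the explicit lifting operation $\phi_j(h)=a\cdot Th\cdot r\cdot\eta_B$; once these are identified, every remaining verification is a routine manipulation of the monad, algebra, and lax-idempotency axioms.
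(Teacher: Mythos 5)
Your proof is correct, and its core step is the same as the paper's: recognise that $\eta_A\colon A\to TA$ is a $\mathsf{T}$-embedding (via $T\eta_A\dashv\mu_A$, using lax idempotency) and extract the candidate algebra structure $a$ as the \kz-filler of the square with left edge $\eta_A$ and right edge $A\to 1$. The two arguments part ways in the supporting steps, in three places. First, to see that this filler is an algebra structure, the paper proves $a\dashv\eta_A$ directly by comparing two fillers of a second square into $TA\to 1$ (equipped with the lifting operation coming from the free algebra structure on $TA$): the \kz-chosen filler there is the identity and $\eta_A\cdot a$ is another filler, whence $1_{TA}\leq\eta_A\cdot a$; you instead observe that $a\cdot\eta_A=1$ exhibits $A$ as a retract of $TA$ and invoke Lemma~\ref{l:14}, which is slightly more economical and avoids the auxiliary square. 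Second, the paper never constructs, inside this proof, the lifting operation that an algebra has against embeddings: that direction is packaged into the ``obvious functor'' of the statement, which is obtained from Proposition~\ref{prop:3} together with the contravariant adjunction of Theorem~\ref{thm:5}; your explicit formula $\phi_j(h)=a\cdot Th\cdot r\cdot\eta_B$ --- the left-extension formula of Definition~\ref{df:2}~(\ref{item:8}) transported along the embedding's right adjoint $r$ --- rebuilds it by hand, and your verifications of the \textsc{rali} axioms (left inverse plus the adjunction $\phi_j\dashv(-\cdot j)$) and of naturality in $j$ are exactly what is needed for membership in $(\mathsf{T}\text-\mathrm{Emb})^{\pitchfork_{\mathkz}}$. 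Third, you carry out the fully-faithfulness check (your part (ii), by specialising compatibility to $j=\eta_X$ and $g=1_X$), which the paper explicitly leaves to the reader. In short, your route is more self-contained, trading the paper's reuse of Proposition~\ref{prop:3} and Theorem~\ref{thm:5} for direct computations with the monad and lax-idempotency axioms; both arguments are sound, and the uniqueness of \textsc{rali} structures and of algebra structures that you invoke does legitimately reduce the object-level isomorphism to the two implications you prove.
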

\begin{proof}
  We will show that a morphism $A\to 1$ is in
  $(\mathsf{T}\text-\mathrm{Emb})^{\pitchfork_{\mathkz}}$ if and only if $A$ is
  a $\mathsf{T}$-algebra.

  The components $\eta_X\colon X\to TX$ of the unit of the monad $\mathsf{T}$
  are $\mathsf{T}$-embeddings due to the adjunction
  $T\eta_X\dashv\mu_X$. Furthermore, for any morphism $u\colon X\to Y$, there is
  a morphism $(u,Tu)\colon \eta_X\to\eta_Y$ in $\mathsf{T}\text-\mathrm{Emb}$
  because $Tu\cdot \mu_X=\mu_Y\cdot T^2u$.

  Suppose that $A\to 1$ has a \kz-lifting operation against
  $\mathsf{T}$-embeddings, which provides a diagonal filler to the square
  displayed below.
  \begin{equation}
    \diagram
    A\ar[d]_{\eta_A}\ar@{=}[r]&A\ar[d]\\
    TA\ar[r]\ar@{..>}[ur]^a&1
    \enddiagram
  \end{equation}
  We will show that $a$ is a $\mathsf{T}$-algebra structure.

  It is not hard to verify that the diagonal filler of the square
  \begin{equation}
    \diagram
    {A}
    \ar[r]^-{\eta_A}\ar[d]_{\eta_A}
    &
    {TA}
    \ar[d]^{}
    \\
    {TA}
    \ar[r]_-{}
    &
    {1}
    \enddiagram
  \end{equation}
  is the identity morphism, where $TA$ is equipped with the \kz-lifting
  operation induced by its free $\mathsf{T}$-algebra structure. On the other
  hand, $\eta_A\cdot a$ is another diagonal filler, so there is a inequality
  $1_{TA}\leq \eta_A\cdot a$. Thus, $a\dashv \eta_A$ which is equivalent to
  saying that $a$ is a $\mathsf{T}$-algebra structure on $A$.

  We leave to the reader the verification that the \Ord-functor of the statement
  if full and faithful.
\end{proof}

\begin{cor}
  \label{cor:6}
  In the conditions of Proposition~\ref{prop:6},
  the unit of the component at $\mathsf{T}\text-\mathrm{Emb}$ of the adjunction
  of Theorem~\ref{thm:5}
  \begin{equation}
    \label{eq:140}
    \mathsf{T}\text-\mathrm{Emb}\longrightarrow
    \prescript{\pitchfork_{\mathkz}}{}{\bigl(\mathsf{T}\text-\mathrm{Emb}^{\pitchfork_{\mathkz}}\bigr)}
  \end{equation}
  is an isomorphism.
\end{cor}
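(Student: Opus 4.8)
The plan is to prove that the unit~\eqref{eq:140} is an isomorphism of \Ord-categories by establishing that it is a bijection on objects and then observing that full faithfulness, in the \Ord-enriched sense, comes for free. Both \Ord-categories appearing in~\eqref{eq:140} carry faithful forgetful \Ord-functors into $\C^\two$ that are injective on objects --- a \kz-lifting operation is a family of \textsc{rali} structures and hence unique when it exists --- and whose orderings on hom-objects are those of $\C^\two$; the unit commutes with these forgetful functors. It therefore suffices to identify, as subobjects of $\C^\two$, the objects and morphisms of $\mathsf{T}\text-\mathrm{Emb}$ with those of $\prescript{\pitchfork_{\mathkz}}{}{(\mathsf{T}\text-\mathrm{Emb}^{\pitchfork_{\mathkz}})}$.

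On objects I would argue by two inclusions. If $f$ is a $\mathsf{T}$-embedding then $f$ admits a \kz-lifting operation on the left against every object $g$ of $\mathsf{T}\text-\mathrm{Emb}^{\pitchfork_{\mathkz}}$, simply because each such $g$ lifts against every $\mathsf{T}$-embedding by the very definition of $(-)^{\pitchfork_{\mathkz}}$; this tautology is the unit~\eqref{eq:140} and gives one inclusion. For the converse I would invoke Proposition~\ref{prop:6}, which exhibits $\mathsf{T}\text-\mathrm{Alg}/1$ as the fibre of $\cod$ over $1$ inside $\mathsf{T}\text-\mathrm{Emb}^{\pitchfork_{\mathkz}}$. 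Hence any $f$ that lifts against all of $\mathsf{T}\text-\mathrm{Emb}^{\pitchfork_{\mathkz}}$ lifts in particular against every morphism $A\to 1$ with $A$ a $\mathsf{T}$-algebra; by Proposition~\ref{prop:3} this is exactly the assertion that $f$ is a $\mathsf{T}$-embedding. The two inclusions give the bijection on objects.

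The same two propositions promote this to a bijection on morphisms. A morphism $(h,k)$ of $\prescript{\pitchfork_{\mathkz}}{}{(\mathsf{T}\text-\mathrm{Emb}^{\pitchfork_{\mathkz}})}$ is a morphism of $\C^\two$ compatible with the \kz-lifting operations against all of $\mathsf{T}\text-\mathrm{Emb}^{\pitchfork_{\mathkz}}$; restricting that compatibility to the subcategory $\mathsf{T}\text-\mathrm{Alg}/1$ and reading off the morphism part of the isomorphism of Proposition~\ref{prop:3} shows that $(h,k)$ is already a morphism of $\mathsf{T}$-embeddings. Thus the unit is full, and because it commutes with the faithful forgetful \Ord-functors into $\C^\two$ whose orderings it respects it is automatically faithful and full on inequalities. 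An \Ord-functor over $\C^\two$ that is bijective on objects, full, faithful and full on inequalities is an isomorphism, so~\eqref{eq:140} is an isomorphism.

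The one step carrying genuine content --- and the point I would be most careful about --- is the reverse object inclusion: that lifting against the whole class $\mathsf{T}\text-\mathrm{Emb}^{\pitchfork_{\mathkz}}$ already forces $f$ to be a $\mathsf{T}$-embedding. This is not a formal consequence of the Galois adjunction of Theorem~\ref{thm:5} alone; it relies on the concrete interplay of the two earlier propositions, Proposition~\ref{prop:6} guaranteeing that the algebras over $1$ really do lie in $\mathsf{T}\text-\mathrm{Emb}^{\pitchfork_{\mathkz}}$ (so that they are legitimate test objects) and Proposition~\ref{prop:3} guaranteeing that testing against them alone already detects $\mathsf{T}$-embeddings. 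Conceptually, the whole corollary records that $\mathsf{T}\text-\mathrm{Emb}\cong\prescript{\pitchfork_{\mathkz}}{}{(\mathsf{T}\text-\mathrm{Alg}/1)}$ is itself of the form $\prescript{\pitchfork_{\mathkz}}{}{(-)}$, hence a closed object for the closure operator $\prescript{\pitchfork_{\mathkz}}{}{((-)^{\pitchfork_{\mathkz}})}$ attached to the adjunction of Theorem~\ref{thm:5}, and the unit of that adjunction at a closed object is invertible.
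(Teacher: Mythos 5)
Your proof is correct and follows essentially the same route as the paper: the paper's own proof also restricts the lifting structure along the inclusion of the fibre $\mathsf{T}\text-\mathrm{Emb}^{\pitchfork_{\mathkz}}_1\subset\mathsf{T}\text-\mathrm{Emb}^{\pitchfork_{\mathkz}}$, then applies the isomorphisms of Propositions~\ref{prop:6} and~\ref{prop:3} to land back in $\mathsf{T}\text-\mathrm{Emb}$, and concludes that this composite is inverse to the unit because everything commutes with the forgetful \Ord-functors into $\C^\two$, which are injective on objects and morphisms and full on inequalities. The only difference is presentational: the paper packages the argument as an explicit composite inverse functor, whereas you unpack it as a two-inclusion comparison of subobjects of $\C^\two$ — the mathematical content, including the identification of the genuinely non-formal step, is the same.
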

\begin{proof}
  Continuing with the notation used in Proposition~\ref{prop:6},
  the inclusion of $\mathsf{T}\text-\mathrm{Emb}^{\pitchfork_{\mathkz}}_1$ into
  $\mathsf{T}\text-\mathrm{Emb}^{\pitchfork_{\mathkz}}$ induces an \Ord- functor in
  the opposite direction
  \begin{equation}
    \prescript{\pitchfork_{\mathkz}}{}{\bigl(\mathsf{T}\text-\mathrm{Emb}^{\pitchfork_{\mathkz}}\bigr)}
    \longrightarrow
    \prescript{\pitchfork_{\mathkz}}{}{\bigl(\mathsf{T}\text-\mathrm{Emb}^{\pitchfork_{\mathkz}}_1\bigr)}.
  \end{equation}
  We can form a morphism from right to left, displayed below, where the two
  isomorphisms are those given by the Propositions~\ref{prop:3} and~\ref{prop:6}.
  \begin{equation}
    \mathsf{T}\text-\mathrm{Emb}\xrightarrow{\cong}
    \prescript{\pitchfork_{\mathkz}}{}{\bigl(\mathsf{T}\text-\mathrm{Alg}/1\bigr)}
    \xrightarrow{\cong}
    \prescript{\pitchfork_{\mathkz}}{}{\bigl(\mathsf{T}\text-\mathrm{Emb}^{\pitchfork_{\mathkz}}_1\bigr)}
    \longleftarrow
    \prescript{\pitchfork_{\mathkz}}{}{\bigl(\mathsf{T}\text-\mathrm{Emb}^{\pitchfork_{\mathkz}}\bigr)}
  \end{equation}
  The resulting \Ord-functor
  \begin{equation}
    \label{eq:142}
    \prescript{\pitchfork_{\mathkz}}{}{\bigl(\mathsf{T}\text-\mathrm{Emb}^{\pitchfork_{\mathkz}}\bigr)}\longrightarrow\mathsf{T}\text-\mathrm{Emb}
  \end{equation}
commutes with the forgetful \Ord-functors into $\C^\two$. Since these forgetful
functors are injective on objects and on morphisms, and full on inequalities
between morphisms, we deduce that~\eqref{eq:142} is necessarily an inverse for
the component of the unit of the statement.
\end{proof}

\begin{cor}
  \label{cor:3}
  If $(\mathsf{L},\mathsf{R})$ is a {\normalfont\textsl{\textsc{lofs}}} on an
  \Ord-category with a terminal object, then there is a canonical \Ord-functor
  \begin{equation}
    \label{eq:130}
    \mathsf{L}\text-\mathrm{Coalg}\longrightarrow \mathsf{R}_1\text-\mathrm{Emb}
  \end{equation}
  where $\mathsf{R}_1$ is the \Ord-monad on $\C\cong \C/1$ that is the
  restriction of $\mathsf{R}$. 
\end{cor}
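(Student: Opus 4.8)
The plan is to assemble the functor from two dualities already established together with the contravariant adjunction of Theorem~\ref{thm:5}: on the one hand the isomorphism $\mathsf{R}\text-\mathrm{Alg}\cong\mathsf{L}\text-\mathrm{Coalg}^{\pitchfork_{\mathkz}}$ of Theorem~\ref{thm:9}, which says that an $\mathsf{R}$-algebra is the same as a morphism equipped with a canonical \slkz-lifting operation against $\mathsf{L}$-coalgebras, and on the other hand the isomorphism $\mathsf{R}_1\text-\mathrm{Emb}\cong\prescript{\pitchfork_{\mathkz}}{}{(\mathsf{R}_1\text-\mathrm{Alg}/1)}$ of Proposition~\ref{prop:3}. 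The functor~\eqref{eq:130} will arise by restricting the canonical lifting operation of the \textsc{lofs} to those algebras lying over the terminal object.

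First I would record the two preliminary facts on which Proposition~\ref{prop:3} relies when applied to $\mathsf{T}=\mathsf{R}_1$. Since $\mathsf{R}$ is codomain-preserving, the full sub-\Ord-category $\C/1\subset\C^\two$ of objects $A\to1$ is closed under $R$, so $\mathsf{R}$ restricts to an \Ord-monad $\mathsf{R}_1$ on $\C/1\cong\C$; moreover the witnessing inequality $K(Lf,1)\cdot\pi_f\le1$ from~\eqref{eq:57} for the lax idempotency of $\mathsf{R}$, taken at morphisms $f=(A\to1)$, is exactly the lax idempotency inequality for $\mathsf{R}_1$, so $\mathsf{R}_1$ is again lax idempotent and Proposition~\ref{prop:3} applies. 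Consequently an $\mathsf{R}_1$-algebra is precisely an $\mathsf{R}$-algebra structure on a morphism of the form $A\to1$, and the assignment $A\mapsto(A\to1)$ exhibits $\mathsf{R}_1\text-\mathrm{Alg}/1$ as a full sub-\Ord-category of $\mathsf{R}\text-\mathrm{Alg}$, over $\C^\two$.

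Now compose the inclusion $\mathsf{R}_1\text-\mathrm{Alg}/1\hookrightarrow\mathsf{R}\text-\mathrm{Alg}$ with the isomorphism $\mathsf{R}\text-\mathrm{Alg}\cong\mathsf{L}\text-\mathrm{Coalg}^{\pitchfork_{\mathkz}}$ of Theorem~\ref{thm:9} to obtain an \Ord-functor $\mathsf{R}_1\text-\mathrm{Alg}/1\to\mathsf{L}\text-\mathrm{Coalg}^{\pitchfork_{\mathkz}}$ over $\C^\two$. Under the bijection of Theorem~\ref{thm:5}, taken with $\mathcal{J}=\mathsf{L}\text-\mathrm{Coalg}$ and $\mathcal{I}=\mathsf{R}_1\text-\mathrm{Alg}/1$, a functor $\mathcal{I}\to\mathcal{J}^{\pitchfork_{\mathkz}}$ corresponds to a \slkz-lifting operation of $\mathsf{L}$-coalgebras against algebras $A\to1$ — namely the canonical one of Theorem~\ref{thm:1} restricted along the inclusion — and equivalently to an \Ord-functor $\mathsf{L}\text-\mathrm{Coalg}\to\prescript{\pitchfork_{\mathkz}}{}{(\mathsf{R}_1\text-\mathrm{Alg}/1)}$. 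Postcomposing with the inverse of the isomorphism of Proposition~\ref{prop:3} then yields~\eqref{eq:130}; concretely, it equips an $\mathsf{L}$-coalgebra $f$ with the $\mathsf{R}_1$-embedding structure whose underlying lifting operation is the restriction of the canonical one to algebras over the terminal object.

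The conceptual content is carried entirely by Theorems~\ref{thm:1},~\ref{thm:9} and~\ref{thm:5} and Proposition~\ref{prop:3}, so I expect no genuine obstacle beyond bookkeeping. The one point deserving care is the second paragraph: verifying that the restriction $\mathsf{R}_1$ is lax idempotent and that its algebras are exactly the $\mathsf{R}$-algebras over $1$, since it is this hypothesis, transported across the identification $\C\cong\C/1$, that is needed to invoke Proposition~\ref{prop:3}. Everything else is a formal composition of the cited isomorphisms, using the functoriality of $(-)^{\pitchfork_{\mathkz}}$ and $\prescript{\pitchfork_{\mathkz}}{}{(-)}$ recorded before Theorem~\ref{thm:5}, which also makes the construction canonical.
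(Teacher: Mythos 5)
Your proof is correct and follows essentially the same route as the paper: both build the functor from the inclusion $\mathsf{R}_1\text-\mathrm{Alg}/1\hookrightarrow\mathsf{R}\text-\mathrm{Alg}$, the identification of $\mathsf{R}$-algebras with \kz-liftings against $\mathsf{L}$-coalgebras, and the isomorphism $\prescript{\pitchfork_{\mathkz}}{}{\bigl(\mathsf{R}_1\text-\mathrm{Alg}/1\bigr)}\cong\mathsf{R}_1\text-\mathrm{Emb}$ of Proposition~\ref{prop:3}. The only cosmetic difference is that you transpose across the adjunction of Theorem~\ref{thm:5} starting from Theorem~\ref{thm:9} as stated, whereas the paper invokes the dual isomorphism $\mathsf{L}\text-\mathrm{Coalg}\cong\prescript{\pitchfork_{\mathkz}}{}{\bigl(\mathsf{R}\text-\mathrm{Alg}\bigr)}$ together with the contravariant functoriality of $\prescript{\pitchfork_{\mathkz}}{}{(-)}$; your explicit verification that $\mathsf{R}_1$ is lax idempotent (needed to apply Proposition~\ref{prop:3}) is a hypothesis the paper leaves implicit.
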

\begin{proof}
  The inclusion of
  $\mathsf{R}_1\text-\mathrm{Alg}\hookrightarrow\mathsf{R}\text-\mathrm{Alg}$,
  given by $A\mapsto(A\to 1)$, induces the unlabelled arrow in the following
  string of \Ord-functors over $\C^\two$,
  \begin{equation}
    \mathsf{L}\text-\mathrm{Coalg}\cong
    \prescript{\pitchfork_{\mathkz}}{}{\bigl(\mathsf{R}\text-\mathrm{Alg}\bigr)}
    \longrightarrow
    \prescript{\pitchfork_{\mathkz}}{}{\bigl(\mathsf{R}_1\text-\mathrm{Alg}/1\bigr)}
    \cong
    \mathsf{R}_1\text-\mathrm{Emb}
  \end{equation}
  where the last isomorphism is provided by Proposition~\ref{prop:3}.
\end{proof}
The \Ord-functor of Corollary~\ref{cor:3} may be described more explicitly. If
$f\colon X\to Y$ is an $\mathsf{L}$-coalgebra, then the corresponding
$\mathsf{R}_1$-embedding structure is given by the adjunction $R_1f\dashv
r\colon R_1Y\to R_1X$ where $r$ is the unique morphism of
$\mathsf{R}_1$-algebras that composed with the unit $\eta_Y\colon Y\to R_1Y$
equals the \kz-lifting corresponding to the square displayed below.
\begin{equation}
  \diagram
  {X}
  \ar[r]^-{\eta_X}\ar[d]_{f}
  &
  {R_1A}
  \ar[d]^{!=R(!)}
  \\
  {Y}\ar@{..>}[ur]|{r\cdot\eta_Y}
  \ar[r]_-{!}
  &
  {1}
  \enddiagram
\end{equation}

\section{KZ-reflective LOFSs}
\label{sec:reflective-lofss}
We begin by summarising the most basic definitions of~\cite{MR779198} around
reflective factorisation systems.

An \textsc{ofs} $(\mathscr{E},\mathscr{M})$ (or even a pre-factorisation system,
which is similar to a \textsc{ofs} but without the requirement that each
morphism should be a composition of one in $\mathscr{E}$ followed by one in
$\mathscr{M}$) on a category with a terminal object $\mathcal{C}$, induces a
reflective subcategory of $\mathcal{C}$ formed by those objects $X$ for which $X\to
1$ belongs to $\mathscr{M}$. In the other direction, each reflective subcategory
$\mathcal{B}\subseteq\mathcal{C}$ induces a pre-factorisation system
$(\mathscr{E},\mathscr{M})$ whose $\mathscr{E}$ is formed by all the morphisms
that are orthogonal to each object of $\mathcal{B}$. With an obvious ordering on
reflective subcategories and pre-factorisation systems, these two constructions
form an adjunction (a Galois correspondence). Those pre-factorisation systems
obtained from reflective subcategories are called \emph{reflective}, and are
characterised as those for which $g\cdot f\in \mathscr{E}$ and $g\in\mathscr{E}$
implies $f\in\mathscr{E}$.

In this section we consider the analogous notion of \kz-reflective \textsc{lofs}
and find a characterisation that mirrors the case of \textsc{ofs}s.

\begin{df}
  \label{df:20}
  We say that the $\Ord$-monad $\mathsf{T}$ on $\C$ is \emph{fibrantly
    \slkz-generating} if the forgetful \Ord-functor
  $\mathsf{T}\text-\mathrm{Emb}\to\C^\two$ has a right adjoint (in the
  \Ord-enriched sense).
\end{df}
\begin{prop}
  \label{prop:4}
  Assume that \C\ is a cocomplete and finitely complete \Ord-catego\-ry. Then
  $\mathsf{T}$ is fibrantly \slkz-generating if and only if there exists an
  \Ord-enriched {\normalfont\textsl{\textsc{awfs}}} $(\mathsf{L},\mathsf{R})$
  for which $\mathsf{L}\text-\mathrm{Coalg}\cong\mathsf{T}\text-\mathrm{Emb}$
  over $\C^\two$. Furthermore, this {\normalfont\textsl{\textsc{awfs}}} is lax orthogonal.
\end{prop}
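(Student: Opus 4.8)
The plan is to prove the two implications separately; the reverse one is formal, and all the content is in the forward implication, which also yields the final ``furthermore'' clause. \emph{Reverse implication.} Suppose an \Ord-enriched \textsc{awfs} $(\mathsf{L},\mathsf{R})$ with $\mathsf{L}\text-\mathrm{Coalg}\cong\mathsf{T}\text-\mathrm{Emb}$ over $\C^\two$ is given. The forgetful \Ord-functor $\mathsf{L}\text-\mathrm{Coalg}\to\C^\two$ is comonadic, hence has a right adjoint, namely the cofree-coalgebra functor $f\mapsto(Lf,\Sigma_f)$; transporting this adjunction across the isomorphism shows that $U\colon\mathsf{T}\text-\mathrm{Emb}\to\C^\two$ has a right adjoint, i.e. $\mathsf{T}$ is fibrantly \slkz-generating.

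\emph{Forward implication.} Assume $U$ has a right adjoint $G$. Since $\C$ is finitely complete and $\mathsf{T}\text-\mathrm{Alg}\to\C$ creates limits, $\mathsf{T}\text-\mathrm{Alg}$ is finitely complete and in particular has comma-objects; so Example~\ref{ex:11} and Lemma~\ref{l:5} equip it with the \textsc{lari}--split-opfibration factorisation, with $W\colon\lari(\mathsf{T}\text-\mathrm{Alg})\to(\mathsf{T}\text-\mathrm{Alg})^\two$ comonadic. By Lemma~\ref{l:17}, $U$ is the pullback of $W$ along $(F^{\mathsf{T}})^\two$. The first step is to show $U$ is comonadic, via the \Ord-enriched Beck theorem: $U$ has a right adjoint by hypothesis, and it creates equalisers of $U$-cosplit pairs, because split equalisers are absolute, hence preserved by $(F^{\mathsf{T}})^\two$, so such a pair is carried to a $W$-cosplit pair whose equaliser $W$ creates, and the pullback square transports this creation back to $U$. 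This produces an \Ord-comonad $\mathsf{L}$ on $\C^\two$ with $\mathsf{L}\text-\mathrm{Coalg}\cong\mathsf{T}\text-\mathrm{Emb}$ over $\C^\two$; cocompleteness, through Lemma~\ref{l:11}, additionally makes $\mathsf{T}\text-\mathrm{Emb}$ cocomplete, which is what lets one build the codomain-preserving monad $\mathsf{R}$.

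Two further points feed the dual of Theorem~\ref{thm:11}. First, $\mathsf{L}$ is domain-preserving and satisfies the needed square condition: every identity $1_{X}$ is a $\mathsf{T}$-embedding, and the right-inverse law of the \textsc{lari} $F^{\mathsf{T}}f\dashv r$ makes $(1_{\dom f},f)\colon 1_{\dom f}\to f$ a morphism of $\mathsf{T}$-embeddings, supplying the copoint $\Phi$ with $\dom\Phi=1$ and the factorisation $f=Rf\cdot Lf$. Second, and this is the crux, one must show $\mathsf{L}$ is lax idempotent; granting this, the dual of Theorem~\ref{thm:11} produces $\mathsf{R}$ and exhibits $(\mathsf{L},\mathsf{R})$ as a \textsc{lofs}, so it is lax orthogonal (the distributive law being automatic by Theorem~\ref{thm:12}). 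For the final clause I would note conversely that any \textsc{awfs} with $\mathsf{L}\text-\mathrm{Coalg}\cong\mathsf{T}\text-\mathrm{Emb}$ has all $\mathsf{L}$-coalgebra structures unique, since being a \textsc{lari}, hence a $\mathsf{T}$-embedding, is a property; this uniqueness is exactly what forces $\mathsf{L}$ to be lax idempotent.

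\emph{The main obstacle} is precisely the lax idempotency of $\mathsf{L}$. The naive route---reflecting the defining \slkz\ inequalities back from the lax idempotent comonad on $\mathsf{T}\text-\mathrm{Alg}$ along $(F^{\mathsf{T}})^\two$---fails, because $\mathsf{T}$ is \emph{not} assumed lax idempotent and so $F^{\mathsf{T}}$ need not be locally full. The argument must therefore be intrinsic, using the description of $\mathsf{T}$-embeddings as the morphisms $f$ for which every map $\dom f\to A$ into a $\mathsf{T}$-algebra admits a left extension along $f$ (Proposition~\ref{prop:3}, Corollary~\ref{cor:6}): the coalgebra datum is a universal smallest filler, and this universal property is what forces the inequalities of lax idempotency. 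Concretely, for the cofree coalgebra the condition reduces to $1_{Kf}\le s_{1}\cdot Rf$ with $Rf\cdot s_{1}=1$---that is, to the adjunction $Rf\dashv s_{1}$ in $\C$---which I expect to read off directly from the left-extension property of $Lf$. Once this is secured, either the dual of Theorem~\ref{thm:11} or the combination of Proposition~\ref{prop:1} and Theorem~\ref{thm:12} completes the construction.
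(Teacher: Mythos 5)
Your skeleton through the construction of the \textsc{awfs} matches the paper's own proof: the reverse implication is immediate from comonadicity of the coalgebra forgetful functor; for the forward one, $U\colon\mathsf{T}\text-\mathrm{Emb}\to\C^\two$ is a pullback of the comonadic $\lari(\mathsf{T}\text-\mathrm{Alg})\to\mathsf{T}\text-\mathrm{Alg}^\two$ (Lemma~\ref{l:5}), so by Lemma~\ref{l:4} and the hypothesis of a right adjoint it is comonadic by Beck's theorem; then the square condition~\eqref{eq:69} feeds the dual of Theorem~\ref{thm:11}. Up to there your argument is sound (your appeal to Lemma~\ref{l:11} for where cocompleteness enters is vaguer than the paper's, which uses cocompleteness only in the final step, but that is minor).

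The genuine gap is the step you yourself call the crux: the lax idempotency of $\mathsf{L}$, i.e.\ the ``furthermore'' clause, which is the only part of the statement distinguishing a \textsc{lofs} from a mere \textsc{awfs}. The paper does not reprove this either, but it invokes a precise external result --- the dual of Corollaries~6.9 and~6.10 of \cite{LF:kz-generated-lofs} --- whose hypotheses (the pullback square~\eqref{eq:153}, $\C$ cocomplete, $F^{\mathsf{T}}$ a left adjoint, the comonad $\mathsf{E}$ on $\mathsf{T}\text-\mathrm{Alg}^\two$ existing by finite completeness of $\mathsf{T}\text-\mathrm{Alg}$) are exactly what is available. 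Neither of your two substitutes closes this gap. First, the claim that uniqueness of $\mathsf{L}$-coalgebra structures ``is exactly what forces $\mathsf{L}$ to be lax idempotent'' is asserted without proof, and it is not a valid principle: unique-structure (``property-like'') (co)monads are in general strictly weaker than lax idempotent ones (this is the Kelly--Lack distinction; e.g.\ idempotency of an ordinary monad requires not just uniqueness of algebra structures but also that all morphisms be algebra morphisms), and you give no argument that the \textsc{awfs} setting restores the implication. Second, your ``intrinsic'' route via Proposition~\ref{prop:3} and Corollary~\ref{cor:6} is unavailable: both of those results assume $\mathsf{T}$ is lax idempotent, which is precisely the hypothesis you correctly identified as missing in Proposition~\ref{prop:4} when you rejected the naive route --- the same objection applies verbatim here. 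The closing ``which I expect to read off directly'' concedes that the required inequality $1_{Kf}\leq s_{1}\cdot Rf$ is never actually derived. So the lax orthogonality of the constructed \textsc{awfs} remains unestablished; to complete the proof you would either have to reproduce the argument of \cite{LF:kz-generated-lofs} or find a genuinely new proof, and your proposal contains neither.
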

\begin{proof}
  The implication in one direction is clear; indeed, if
  $\mathsf{T}\text-\mathrm{Emb}$ is isomorphic over $\C^\two$ to
  $\mathsf{L}\text-\mathrm{Coalg}$ then the condition of Definition~\ref{df:20}
  holds.

  Assume that $\mathsf{T}$ is fibrantly \kz-generating.
  The forgetful \Ord-functor
  $\lari(\C)\to \mathsf{T}\text-\mathrm{Alg}^\two$ is comonadic by
  Lemma~\ref{l:5}. The \Ord-functor $\mathsf{T}\text-\mathrm{Emb}\to\C^\two$ is
  a pullback of the comonadic \Ord-functor mentioned, therefore, it satisfies
  all the hypotheses of (the \Ord-enriched version) of Beck's comonadicity
  theorem, except perhaps for the hypothesis of being a left adjoint. Together with
  Definition~\ref{df:20}, we deduce that $\mathsf{T}\text-\mathrm{Emb}$ is
  comonadic over $\C^\two$.

  The \Ord-category of $\mathsf{T}$-embeddings forms part of a horizontally
  ordered double category $\mathsf{T}\text-\mathbb{E}\mathrm{mb}$, as in
  Lemma~\ref{l:17}. We will be able to apply the dual of Theorem~\ref{thm:11} if
  we show the following: if $f$ is a $\mathsf{T}$-embedding, then the square on
  the left is a morphism of $\mathsf{T}$-embeddings $1\to f$. This is equivalent
  to saying that the square on the right is a morphism of \textsc{lari}s
  $1\to F^Tf$, which is easily seen to hold.
  \begin{equation}
    \label{eq:69}
    \diagram
    \cdot\ar[d]_1\ar@{=}[r]&\cdot\ar[d]^f\\
    \cdot\ar[r]^{f}&\cdot
    \enddiagram
    \qquad
    \diagram
    \cdot\ar[d]_1\ar@{=}[r]&\cdot\ar[d]^{F^Tf}\\
    \cdot\ar[r]^{F^Tf}&\cdot
    \enddiagram
  \end{equation}
  We deduce, by a dual form of Theorem~\ref{thm:11}, that
  $\mathsf{T}\text-\mathrm{Emb}$ is $\mathsf{L}\text-\mathrm{Coalg}$ for an
  \textsc{awfs} $(\mathsf{L},\mathsf{R})$.

  It remains to show that this
  \textsc{awfs} is a \textsc{lofs}, for which we appeal to the dual version of
  \cite[Cor.~6.9]{LF:kz-generated-lofs}, which we explain here without
  proof. By definition of $\mathsf{T}\text-\mathrm{Emb}$, there is a pullback
  diagram
  \begin{equation}
    \label{eq:153}
    \diagram
    \mathsf{T}\text-\mathrm{Emb}\ar[r]\ar[d]_U&
    \mathsf{E}\text-\mathrm{Coalg}\ar[d]\\
    \C^\two\ar[r]^-{(F^T)^\two}&
    \mathsf{T}\text-\mathrm{Alg}^\two
    \enddiagram
  \end{equation}
  where \C\ is cocomplete and the free algebra \Ord-functor $F^T$ is a left
  adjoint. The comonad $\mathsf{E}$ on $\mathsf{T}\text-\mathrm{Alg}^\two$ is
  the one of \S \ref{sec:laris-awfss} and exists since \C, and thus
  $\mathsf{T}\text-\mathrm{Alg}$, has finite limits.
  We are in the dual conditions of Corollaries~6.9 and 6.10
  of~\cite{LF:kz-generated-lofs}, which guarantees that the comonad
  corresponding to the comonadic $U$ is lax idempotent.
\end{proof}

\begin{df}
  \label{df:22}
  The \Ord-category of lax idempotent monads on the \Ord-catego\-ry \C, denoted by
  $\mathbf{LIMnd}(\C)$, has morphisms $\mathsf{T}\to\mathsf{S}$ natural
  transformations that are compatible with the multiplication and unit of the
  monads, in the usual manner.

  We will denote by $\mathbf{LIMnd}_{\mathrm{fib}}(\C)$ the full sub-\Ord-category of
  $\mathbf{LIMnd}(\C)$ consisting of those monads that are fibrantly \kz-generating,
  in the sense of Definition~\ref{df:20}.
\end{df}

When \C\ is cocomplete and finitely complete, we have a situation that can be summarised by the following diagram of
\Ord-functors.
\begin{equation}
  \label{eq:117}
  \diagram
  \mathbf{LOFS}(\C)\ar@{^(->}[d]_{(-)\text-\mathrm{Coalg}}\ar[dr]^{\tilde\Phi}&
  \mathbf{LIMnd}_{\mathrm{fib}}(\C)\ar@{^(->}[d]^{I}\ar@{..>}[l]_{\Psi}\\
  \Ord\text-\Cat/\C^\two &
  \mathbf{LIMnd}(\C)\ar[l]_-{\tilde\Psi}
  \enddiagram
\end{equation}
The vertical \Ord-functors are full and faithful, the one on the right being
just an inclusion. The one on the left sends each lax orthogonal \textsc{awfs}
on $\C$ to the \Ord-category $\mathsf{L}\text-\mathrm{Coalg}$ over $\C^\two$.
The \Ord-functor $\tilde\Psi$ sends a lax idempotent monad $\mathsf{T}$ on \C\
to the category $\prescript{\pitchfork_{\kz}}{}{(\mathsf{T}\text-\mathrm{Alg}/1)}$
over $\C^\two$, and has a lifting to an \Ord-functor $\Psi$ that sends a
fibrantly \kz-generating $\mathsf{T}$ to the \textsc{lofs} $(\mathsf{L},\mathsf{R})$
on \C\ that satisfies
$\mathsf{L}\text-\mathrm{Coalg}\cong\mathsf{T}\text-\mathrm{Emb}$ --~see
Proposition~\ref{prop:4}. Finally, $\tilde\Phi$ sends $(\mathsf{L},\mathsf{R})$
to $\mathsf{R}_1$, the restriction of $\mathsf{R}$ to the slice $\C/1\cong\C$.

It will be convenient to use the following relaxed notion of adjunction.
Suppose given a diagram of functors and a
natural transformation, that may be enriched as needed, as displayed.
\begin{equation}
  \label{eq:118}
  \diagram
  \mathcal{A}\ar@/_/[dr]_F\dtwocell<\omit>{^<-5>\theta}&
  \mathcal{B}\ar[d]^I\ar[l]_G\\
  &\mathcal{D}
  \enddiagram
\end{equation}
\begin{df}
\label{df:14}
Following~\cite[\S 2]{MR0222138}, we say that \emph{$\theta$ exhibits $G$ as a
  $I$-right adjoint of $F$}, and \emph{$F$ as a $I$-left adjoint of $G$}
denoted by $F\dashv_IG$, if
\begin{equation}
  \label{eq:119}
  \mathcal{A}(A,G(B))\xrightarrow{F}\mathcal{D}(F(A),FG(B))
  \xrightarrow{\mathcal{D}(1,\theta_B)} \mathcal{D}(F(A),I(B))
\end{equation}
is invertible. 
\end{df}

It is easy to prove that if $I\colon\mathcal{B}\to\mathcal{D}$ is fully
faithful and $\theta$ is an isomorphism, then $G$ is fully faithful.

\begin{thm}
  \label{thm:4}
  In the situation of the diagram~\eqref{eq:117}, the \Ord-functor $\tilde\Phi$
  is a $I$-left adjoint of $\Psi$. Moreover, $\Psi$ is fully faithful.
\end{thm}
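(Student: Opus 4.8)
The plan is to unwind Definition~\ref{df:14} for $\tilde\Phi\dashv_I\Psi$ and reduce the claim to a single pointwise statement. Write $\Psi\mathsf{T}=(\mathsf{L}',\mathsf{R}')$, so that $\mathsf{L}'\text-\mathrm{Coalg}\cong\mathsf{T}\text-\mathrm{Emb}$ over $\C^\two$ by Proposition~\ref{prop:4}. First I would identify the comparison $\theta$ and show it is invertible. By Theorem~\ref{thm:9} we have $\mathsf{R}'\text-\mathrm{Alg}\cong(\mathsf{T}\text-\mathrm{Emb})^{\pitchfork_{\mathkz}}$, and taking the fibre over the terminal object gives $\mathsf{R}'_1\text-\mathrm{Alg}\cong(\mathsf{T}\text-\mathrm{Emb})^{\pitchfork_{\mathkz}}_1\cong\mathsf{T}\text-\mathrm{Alg}$ over $\C$ by Proposition~\ref{prop:6}. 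An isomorphism of Eilenberg--Moore \Ord-categories over $\C$ is induced by an isomorphism of the monads, and this is unique by Lemma~\ref{l:15}; hence $\theta_{\mathsf{T}}\colon\tilde\Phi\Psi\mathsf{T}=\mathsf{R}'_1\to\mathsf{T}$ is a natural isomorphism (naturality being automatic, since any two parallel monad morphisms out of the lax idempotent $\mathsf{R}'_1$ coincide by Lemma~\ref{l:15}). Granting this, the \emph{moreover} clause is immediate: the inclusion $I$ is fully faithful and $\theta$ is invertible, so $\Psi$ is fully faithful by the general fact recorded just before the statement.

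It then remains to check the defining condition of Definition~\ref{df:14}, that the composite
\[
  \mathbf{LOFS}(\C)\big((\mathsf{L},\mathsf{R}),\Psi\mathsf{T}\big)
  \xrightarrow{\tilde\Phi}
  \mathbf{LIMnd}(\C)\big(\mathsf{R}_1,\tilde\Phi\Psi\mathsf{T}\big)
  \xrightarrow{\theta_*}
  \mathbf{LIMnd}(\C)\big(\mathsf{R}_1,\mathsf{T}\big)
\]
is invertible. The decisive simplification is that both ends are preorders with at most one element --- the source by Lemma~\ref{l:16}, the target by Lemma~\ref{l:15} --- and $\theta_*$ is a bijection because $\theta$ is invertible. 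A map between two sets each of cardinality at most one is a bijection exactly when one side is inhabited if and only if the other is, so the whole theorem reduces to the equivalence: there is a morphism of \textsc{lofs}s $(\mathsf{L},\mathsf{R})\to\Psi\mathsf{T}$ if and only if there is a monad morphism $\mathsf{R}_1\to\mathsf{T}$.

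The forward implication is pure functoriality: a morphism of \textsc{lofs}s yields under $\tilde\Phi$ a monad morphism $\mathsf{R}_1\to\mathsf{R}'_1$, which composed with $\theta$ gives $\mathsf{R}_1\to\mathsf{T}$. For the converse --- the substance of the argument --- I would start from a monad morphism $\beta\colon\mathsf{R}_1\to\mathsf{T}$. Restriction of algebras along $\beta$, together with the fact that for a lax idempotent monad carrying an algebra structure is a property (Lemma~\ref{l:14}), realises $\mathsf{T}\text-\mathrm{Alg}$ as a full subcategory of $\mathsf{R}_1\text-\mathrm{Alg}$ over $\C$; passing to slices over $1$ and applying the contravariant $\prescript{\pitchfork_{\mathkz}}{}{(-)}$ together with Proposition~\ref{prop:3} turns this inclusion into an \Ord-functor $\mathsf{R}_1\text-\mathrm{Emb}\to\mathsf{T}\text-\mathrm{Emb}$ over $\C^\two$. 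Composing the canonical \Ord-functor $\mathsf{L}\text-\mathrm{Coalg}\to\mathsf{R}_1\text-\mathrm{Emb}$ of Corollary~\ref{cor:3} with this and with the isomorphism $\mathsf{T}\text-\mathrm{Emb}\cong\mathsf{L}'\text-\mathrm{Coalg}$ produces an \Ord-functor $\mathsf{L}\text-\mathrm{Coalg}\to\mathsf{L}'\text-\mathrm{Coalg}$ over $\C^\two$. Such a functor between coalgebra categories commuting with the forgetful functors is induced by a comonad morphism $\mathsf{L}\to\mathsf{L}'$, which by Proposition~\ref{prop:9} is precisely a morphism of \textsc{lofs}s $(\mathsf{L},\mathsf{R})\to\Psi\mathsf{T}$.

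The main obstacle is this last converse step, namely the passage from an \Ord-functor over $\C^\two$ between the coalgebra categories back to a genuine comonad morphism; one must verify that \emph{every} functor in the chain commutes strictly with the forgetful \Ord-functors into $\C^\two$, so that the standard (enriched) equivalence between functors over the base and comonad morphisms applies and the output is the identity on underlying morphisms. Equivalently one may apply $(-)^{\pitchfork_{\mathkz}}$ and Theorem~\ref{thm:9} to replace this by an \Ord-functor $\mathsf{R}'\text-\mathrm{Alg}\to\mathsf{R}\text-\mathrm{Alg}$ over $\C^\two$ and read off a monad morphism $\mathsf{R}\to\mathsf{R}'$, invoking the contravariant adjunction of Theorem~\ref{thm:5}. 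Care is also needed with the variances: $\beta$ runs $\mathsf{R}_1\to\mathsf{T}$, the induced full inclusion of algebras runs $\mathsf{T}\text-\mathrm{Alg}\hookrightarrow\mathsf{R}_1\text-\mathrm{Alg}$, and $\prescript{\pitchfork_{\mathkz}}{}{(-)}$ reverses it to give the embedding functor in the direction required.
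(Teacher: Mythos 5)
Your proof is correct, and it organises the argument differently from the paper. The paper proves the $I$-adjunction by exhibiting an honest bijection $\mathbf{LIMnd}(\C)(\mathsf{R}_1,\mathsf{T})\cong\mathbf{LOFS}(\C)((\mathsf{L},\mathsf{R}),\Psi(\mathsf{T}))$: it rewrites the left side as \Ord-functors $\mathsf{T}\text-\mathrm{Alg}\to\mathsf{R}_1\text-\mathrm{Alg}$ over $\C$ (as in~\eqref{eq:131}) and the right side as \Ord-functors $H\colon\mathsf{L}\text-\mathrm{Coalg}\to\mathsf{T}\text-\mathrm{Emb}$ over $\C^\two$ (as in~\eqref{eq:129}), sends each $H$ to the composite $\mathsf{T}\text-\mathrm{Alg}/1\to\bigl(\mathsf{T}\text-\mathrm{Emb}\bigr)^{\pitchfork_{\mathkz}}\xrightarrow{H^{\pitchfork_{\mathkz}}}\bigl(\mathsf{L}\text-\mathrm{Coalg}\bigr)^{\pitchfork_{\mathkz}}\cong\mathsf{R}\text-\mathrm{Alg}$, and obtains bijectivity from the universal property of the contravariant adjunction of Theorem~\ref{thm:5}; invertibility of $\theta$, hence full faithfulness of $\Psi$, is deduced only at the end from Proposition~\ref{prop:6}. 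You instead exploit the degeneracy of the situation: both hom-posets have at most one element (Lemmas~\ref{l:16} and~\ref{l:15}), so the comparison map of Definition~\ref{df:14} is invertible as soon as each side is inhabited exactly when the other is, and naturality of $\theta$ comes for free from Lemma~\ref{l:15}. This lets you bypass the uniqueness argument via Theorem~\ref{thm:5} entirely, and it also means you never have to check that your constructed \textsc{lofs} morphism actually maps to the given $\beta$ under $\theta_*\circ\tilde\Phi$ --- uniqueness does that automatically. The price is that you must build the hard direction (monad morphism $\Rightarrow$ \textsc{lofs} morphism) by hand, through the chain restriction-of-algebras, slicing, $\prescript{\pitchfork_{\mathkz}}{}{(-)}$, Proposition~\ref{prop:3}, Corollary~\ref{cor:3}, the comonad-morphism correspondence and Proposition~\ref{prop:9}; note this is the \emph{opposite} direction to the one the paper constructs explicitly, and your treatment of $\theta$ up front essentially inlines the argument of Proposition~\ref{prop:10}. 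Two small remarks: the fullness of $\mathsf{T}\text-\mathrm{Alg}\hookrightarrow\mathsf{R}_1\text-\mathrm{Alg}$ that you claim via Lemma~\ref{l:14} is neither needed nor obvious for strict algebra morphisms --- only the existence of the functor over $\C$ matters; and the correspondence between \Ord-functors over $\C^\two$ and comonad morphisms, which you flag as the main obstacle, is the same standard fact the paper itself invokes without proof in~\eqref{eq:129} and~\eqref{eq:131}, so your argument stands on the same footing as the paper's.
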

\begin{proof}
  We have to exhibit a natural bijection
  \begin{equation}
    \label{eq:120}
    \mathbf{LIMnd}(\C)(\mathsf{R}_1,\mathsf{T})
    \cong
    \mathbf{LOFS}(\C)((\mathsf{L},\mathsf{R}),\Psi(\mathsf{T}))
  \end{equation}
  using our knowledge of the existence of natural isomorphisms
  \begin{equation}
    \label{eq:131}
    \mathbf{LIMnd}(\C)(\mathsf{R}_1,\mathsf{T})
    =
    \mathbf{Mnd}(\C)(\mathsf{R}_1,\mathsf{T})
    \cong
    \Ord\text-\Cat/\C
    \bigl(
    \mathsf{T}\text-\mathrm{Alg}, \mathsf{R}_1\text-\mathrm{Alg}
    \bigr)
  \end{equation}
  \begin{equation}
    \label{eq:129}
    \mathbf{LOFS}((\mathsf{L},\mathsf{R}),\Psi(\mathsf{T}))
    =
    \mathbf{AWFS}((\mathsf{L},\mathsf{R}),\Psi(\mathsf{T}))
    \cong
    \bigl(
    \Ord\text-\Cat/\C^\two
    \bigr)
    \bigl(
    \mathsf{L}\text-\mathrm{Coalg},
    \mathsf{T}\text-\mathrm{Emb}
    \bigr).
  \end{equation}

  Suppose that
  $H\colon\mathsf{L}\text-\mathrm{Coalg}\to\mathsf{T}\text-\mathrm{Emb}$ is an
  \Ord-functor over $\C^\two$. From this data we have to produce a monad
  morphism $\mathsf{R}_1\to\mathsf{T}$, or what is equivalent, an \Ord-functor
  \begin{equation}
    \label{eq:121}
    \mathsf{T}\text-\mathrm{Alg}\longrightarrow\mathsf{R}_1\text-\mathrm{Alg}
  \end{equation}
  where the notation on the right means the \Ord-category of
  $\mathsf{R}$-algebras with co\-domain~$1$. We can use $H$, the adjunction between
  $\prescript{\pitchfork_{\mathkz}}{}{(-)}$ and $(-)^{\pitchfork_{\mathkz}}$,
  and Theorem~\ref{thm:9} to
  define an \Ord-functor over $\C^\two$
  \begin{equation}
    \label{eq:122}
    \mathsf{T}\text-\mathrm{Alg}/1\longrightarrow
    \bigl(
    \prescript{\pitchfork_{\mathkz}}{}{\bigl(\mathsf{T}\text-\mathrm{Alg}/1\bigr)}
    \bigr)^{\pitchfork_{\mathkz}}
    \cong
    \bigl(\mathsf{T}\text-\mathrm{Emb}\bigr)^{\pitchfork_{\mathkz}}
    \xrightarrow{H^{\pitchfork_{\mathkz}}}
    \bigl(\mathsf{L}\text-\mathrm{Coalg}\bigr)^{\pitchfork_{\mathkz}}
    \cong
    \mathsf{R}\text-\mathrm{Alg}
  \end{equation}
  that assigns to each $\mathsf{T}$-algebra $A$ an $\mathsf{R}$-algebra of the
  form $A\to 1$. This is the $\Ord$-functor~\eqref{eq:121} we seek.

  In addition, the adjunction between
  $\prescript{\pitchfork_{\mathkz}}{}{(-)}$ and $(-)^{\pitchfork_{\mathkz}}$
  implies that for any $N\colon \mathsf{T}\text-\mathrm{Alg}/1\to
  \mathsf{R}\text-\mathsf{Alg}$ over $\C^\two$ there exists a unique $H\colon
  \mathsf{L}\text-\mathrm{Coalg}\to\mathsf{T}\text-\mathrm{Emb}$ over $\C^\two$
  such that \eqref{eq:122}~equals $N$. This means that we have established the
  necessary bijection.

  If $(\mathsf{L},\mathsf{R})=\Psi(\mathsf{T})$, the counit $\theta$ of the
  $I$-adjunction,
  \begin{equation}
    \label{eq:126}
    \diagram
    \mathbf{LOFS}(\C)\ar@/_/[dr]_{\tilde\Phi}&
    \mathbf{LIMnd}_{\mathrm{fib}}(\C)\ar@{^(->}[d]^I
    \dtwocell<\omit>{^<4>\theta}\ar[l]_-\Psi\\
    &\mathbf{LIMnd}(\C)
    \enddiagram
  \end{equation}
  has component at $\mathsf{T}$ the morphism of monads
  \begin{equation}
    \theta_{\mathsf{T}}\colon\tilde\Phi\Psi(\mathsf{T}) \longrightarrow
    \mathsf{T}
  \end{equation}
  corresponding in the construction of the previous paragraphs to the
  \Ord-functor $H$ that is the isomorphism
  $\mathsf{L}\text-\mathrm{Coalg}\cong\mathsf{T}\text-\mathrm{Emb}$. It follows
  from~\eqref{eq:122} that $\theta_{\mathsf{T}}$ is an isomorphism provided that
  \begin{equation}
    \label{eq:127}
    \mathsf{T}\text-\mathrm{Alg}/1\longrightarrow
    \bigl(\mathsf{T}\text-\mathrm{Emb}\bigr)^{\pitchfork_{\mathkz}}
  \end{equation}
  is an isomorphism, which was proved in Proposition~\ref{prop:6}. As mentioned
  above the present theorem, the invertibility of $\theta$ implies that $\Psi$
  is fully faithful.
\end{proof}


\begin{df}
  \label{df:23}
  We call a \textsc{lofs} \emph{\slkz-reflective} if it is isomorphic to one of the
  form $\Psi(\mathsf{T})$, for a fibrantly \kz-generating lax idempotent monad
  $\mathsf{T}$.
\end{df}
\begin{prop}
  \label{prop:10}
  For a reflective {\normalfont\textsl{\textsc{lofs}}} $(\mathsf{L},\mathsf{R})$ on an \Ord-category
  with terminal object, there is an isomorphism
  $\mathsf{L}\text-\mathrm{Coalg}\cong \mathsf{R}_1\text-\mathrm{Emb}$ over
  $\C^\two$ and $(\mathsf{L},\mathsf{R})\cong\Psi(\mathsf{R}_1)$.
\end{prop}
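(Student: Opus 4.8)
The plan is to read the statement off the $I$-adjunction $\tilde\Phi\dashv_I\Psi$ of Theorem~\ref{thm:4}, whose counit will do almost all the work. By Definition~\ref{df:23}, to say that $(\mathsf{L},\mathsf{R})$ is \slkz-reflective is exactly to fix a fibrantly \kz-generating lax idempotent monad $\mathsf{T}$ together with an isomorphism $(\mathsf{L},\mathsf{R})\cong\Psi(\mathsf{T})$ in $\mathbf{LOFS}(\C)$. Recall also that, by the construction of $\Psi$ (Proposition~\ref{prop:4}), the \textsc{lofs} $\Psi(\mathsf{S})$ is characterised by $\mathsf{L}\text-\mathrm{Coalg}\cong\mathsf{S}\text-\mathrm{Emb}$ over $\C^\two$, and that $\tilde\Phi(\mathsf{L},\mathsf{R})=\mathsf{R}_1$. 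So the whole proposition reduces to identifying $\mathsf{R}_1$ with $\mathsf{T}$.

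First I would apply $\tilde\Phi$ to the reflectivity isomorphism: since functors preserve isomorphisms, $\mathsf{R}_1=\tilde\Phi(\mathsf{L},\mathsf{R})\cong\tilde\Phi\Psi(\mathsf{T})$ in $\mathbf{LIMnd}(\C)$. The counit $\theta_{\mathsf{T}}\colon\tilde\Phi\Psi(\mathsf{T})\to\mathsf{T}$ of the $I$-adjunction is an isomorphism, as established in the proof of Theorem~\ref{thm:4} (this is precisely where Proposition~\ref{prop:6} is used). Composing gives an isomorphism of lax idempotent monads $\mathsf{R}_1\cong\mathsf{T}$; in particular $\mathsf{R}_1$ is itself fibrantly \kz-generating, so $\Psi(\mathsf{R}_1)$ is defined. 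Now $\Psi$ is a functor, hence $\Psi(\mathsf{R}_1)\cong\Psi(\mathsf{T})\cong(\mathsf{L},\mathsf{R})$, which is the second assertion. For the first, I would pass from this isomorphism of \textsc{lofs}s to $\mathsf{L}$-coalgebras: a morphism of \textsc{lofs}s carries a comonad morphism $\mathsf{L}\to\mathsf{L}'$ (Proposition~\ref{prop:9}), which induces an \Ord-functor on coalgebras over $\C^\two$, and for an isomorphism this functor is invertible over $\C^\two$; combined with the defining property $\mathsf{L}'\text-\mathrm{Coalg}\cong\mathsf{R}_1\text-\mathrm{Emb}$ of $\Psi(\mathsf{R}_1)$ this yields $\mathsf{L}\text-\mathrm{Coalg}\cong\mathsf{R}_1\text-\mathrm{Emb}$ over $\C^\two$.

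The argument is essentially formal, the substance having been deposited in Theorem~\ref{thm:4} (and thereby in Propositions~\ref{prop:3} and~\ref{prop:6}). The only point I expect to require genuine care is the transport of isomorphisms along the codomain-$\C^\two$ structure: one must check that an isomorphism $\mathsf{R}_1\cong\mathsf{T}$ of monads really induces an isomorphism $\mathsf{R}_1\text-\mathrm{Emb}\cong\mathsf{T}\text-\mathrm{Emb}$ over $\C^\two$, which follows because the pullback square of Definition~\ref{df:13} is functorial in the monad. Mercifully, all the morphisms in sight are uniquely determined --~monad morphisms out of lax idempotent monads by Lemma~\ref{l:15}, and morphisms of \textsc{lofs}s by Lemma~\ref{l:16}~-- so no compatibility or coherence diagram has to be checked by hand; for the same reason one may instead give a self-contained argument using only the terminal-object hypothesis, by combining Theorem~\ref{thm:9} with Proposition~\ref{prop:6} to obtain $\mathsf{R}_1\text-\mathrm{Alg}\cong\mathsf{T}\text-\mathrm{Alg}$ over $\C$ and hence $\mathsf{R}_1\cong\mathsf{T}$.
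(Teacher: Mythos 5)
Your proposal is correct, and its closing sentence is in fact the paper's own proof. The paper never invokes Theorem~\ref{thm:4}: it unfolds the hypothesis $(\mathsf{L},\mathsf{R})\cong\Psi(\mathsf{T})$ into $\mathsf{L}\text-\mathrm{Coalg}\cong\mathsf{T}\text-\mathrm{Emb}$ over $\C^\two$, applies Theorem~\ref{thm:9} to identify $\mathsf{R}\text-\mathrm{Alg}$ with $\mathsf{L}\text-\mathrm{Coalg}^{\pitchfork_{\mathkz}}$, passes to the fibres over the terminal object, and concludes by Proposition~\ref{prop:6} that $\mathsf{R}_1\text-\mathrm{Alg}=\mathsf{R}\text-\mathrm{Alg}_1\cong\mathsf{T}\text-\mathrm{Emb}^{\pitchfork_{\mathkz}}_1\cong\mathsf{T}\text-\mathrm{Alg}$ over $\C$, whence $\mathsf{R}_1\cong\mathsf{T}$ as monads; the two claimed isomorphisms then follow as in your last step. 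Your primary route instead reads the result off the $I$-adjunction: $\mathsf{R}_1=\tilde\Phi(\mathsf{L},\mathsf{R})\cong\tilde\Phi\Psi(\mathsf{T})\cong\mathsf{T}$ via the invertible counit $\theta_{\mathsf{T}}$.

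The reason the paper argues directly, rather than quoting Theorem~\ref{thm:4} as a black box, is a hypothesis mismatch you should treat as more than a side remark. Theorem~\ref{thm:4} is stated in the situation of diagram~\eqref{eq:117}, whose standing hypotheses are that \C\ be cocomplete and finitely complete (this is what makes $\Psi$, and hence the adjunction, available via Proposition~\ref{prop:4}); Proposition~\ref{prop:10} assumes only a terminal object. So your main argument establishes the proposition only under those stronger hypotheses, and it is precisely your fallback --~Theorem~\ref{thm:9} combined with Proposition~\ref{prop:6}, neither of which needs anything beyond a terminal object~-- that proves the statement as given. What the formal route buys, when \C\ is cocomplete and finitely complete, is a clean conceptual reading: \kz-reflective \textsc{lofs}s are exactly the objects at which the counit of $\tilde\Phi\dashv_I\Psi$ is invertible, so that $\mathsf{R}_1$ recovers the generating monad by pure functoriality. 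Your remaining steps --~transporting $(-)\text-\mathrm{Emb}$ along $\mathsf{R}_1\cong\mathsf{T}$ via the pullback square of Definition~\ref{df:13}, and converting the isomorphism of \textsc{lofs}s into an isomorphism of coalgebra \Ord-categories over $\C^\two$ via Proposition~\ref{prop:9}~-- are sound, and the uniqueness results (Lemmas~\ref{l:15} and~\ref{l:16}) do, as you say, dispense with any coherence checking.
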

\begin{proof}
  Suppose that $(\mathsf{L},\mathsf{R})\cong\Psi(\mathsf{T})$ for a lax idempotent
  monad $\mathsf{T}$.
  By hypothesis,
  $\mathsf{L}\text-\mathrm{Coalg}\cong\mathsf{T}\text-\mathrm{Emb}$ for an
  \Ord-monad $\mathsf{T}$ on $\C^\two$. On the other hand,
  $\mathsf{R}\text-\mathrm{Alg}\cong\mathsf{L}\text-\mathrm{Coalg}^{\pitchfork_{\mathkz}}$
  for any \textsc{lofs}, as we saw in Theorem~\ref{thm:9}. Therefore,
  \begin{equation}
    \label{eq:143}
    \mathsf{R}_1\text-\mathrm{Alg}=
    \mathsf{R}\text-\mathrm{Alg}_1\cong
    \mathsf{T}\text-\mathrm{Emb}^{\pitchfork_{\mathkz}}_1\cong
    \mathsf{T}\text-\mathrm{Alg}
  \end{equation}
  where the subscript $1$ denotes the fiber of the various categories fibered
  over $\C$ via the codomain functor. The last isomorphism of the sequence is
  the one provided by Proposition~\ref{prop:6}. Since the isomorphism
  $\mathsf{R}_1\text-\mathrm{Alg}\cong \mathsf{R}\text-\mathrm{Alg}$ constructed
  is over $\C$, we obtain an isomorphism between $\mathsf{R}_1$ and $\mathsf{T}$.
\end{proof}

\begin{notation}
\label{not:E,M}
In this section we will denote by $(\mathsf{E},\mathsf{M})$ the \textsc{lofs} on
\C\ whose $\mathsf{E}$-coalgebras are \textsc{lari}s in \C\ and whose
$\mathsf{M}$-algebras are split opfibrations in \C.
\end{notation}
\begin{df}
  \label{df:24}
We will refer to those \textsc{lofs}s $(\mathsf{L},\mathsf{R})$ that admit a morphism
$(\mathsf{E},\mathsf{M})\to(\mathsf{L},\mathsf{R})$ as
\emph{sub-{\normalfont\textsl{\textsc{lari lofs}}s}}. If such morphism exists,
it is unique.
\end{df}
Not all \textsc{lofs}s are sub-\textsc{lari}. For example, the initial
\textsc{awfs} (the one that factors a morphism $f$ as $f=Rf\cdot Lf$ with
$Lf=1_{\dom(f)}$ and $Rf=f$) is orthogonal and, thus, lax orthogonal. Coalgebras
for the associated comonad are the invertible morphisms in \C. It is clear that
not every \textsc{lari} is an isomorphism, so this \textsc{lofs} is not
sub-\textsc{lari}.
\begin{prop}
  \label{prop:5}
  {\normalfont\slkz}-reflective {\normalfont\textsl{\textsc{lofs}s}} are
  sub-{\normalfont\textsl{\textsc{lari}}}.
\end{prop}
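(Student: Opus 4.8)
The plan is to unwind both sides into categories of (co)algebras over $\C^\two$ and then write down the comparison directly. Since $(\mathsf{L},\mathsf{R})$ is \slkz-reflective we may take $(\mathsf{L},\mathsf{R})=\Psi(\mathsf{T})$ for a fibrantly \kz-generating lax idempotent monad $\mathsf{T}$, so that Proposition~\ref{prop:4} provides an isomorphism $\mathsf{L}\text-\mathrm{Coalg}\cong\mathsf{T}\text-\mathrm{Emb}$ over $\C^\two$; on the other side Lemma~\ref{l:5} gives $\mathsf{E}\text-\mathrm{Coalg}\cong\lari(\C)$ over $\C^\two$. Because the \Ord-functor $(-)\text-\mathrm{Coalg}\colon\mathbf{LOFS}(\C)\to\Ord\text-\Cat/\C^\two$ appearing in~\eqref{eq:117} is full and faithful, exhibiting a morphism of \textsc{lofs}s $(\mathsf{E},\mathsf{M})\to(\mathsf{L},\mathsf{R})$ reduces to exhibiting a single \Ord-functor $\lari(\C)\to\mathsf{T}\text-\mathrm{Emb}$ over $\C^\two$.

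The one substantive observation is that every \textsc{lari} in \C\ is a $\mathsf{T}$-embedding. Indeed, the free-algebra \Ord-functor $F^{\mathsf{T}}\colon\C\to\mathsf{T}\text-\mathrm{Alg}$ is a locally monotone left adjoint and hence preserves adjunctions: if $f\dashv g$ with $g\cdot f=1$, then $F^{\mathsf{T}}f\dashv F^{\mathsf{T}}g$ and $F^{\mathsf{T}}g\cdot F^{\mathsf{T}}f=F^{\mathsf{T}}(g\cdot f)=1$, so $F^{\mathsf{T}}f$ is a \textsc{lari} in $\mathsf{T}\text-\mathrm{Alg}$ — which is exactly the statement that $f$ is a $\mathsf{T}$-embedding (Definitions~\ref{df:13} and~\ref{df:19}). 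I would therefore send a \textsc{lari} $f$ (with right adjoint $g$) to the $\mathsf{T}$-embedding $(f,F^{\mathsf{T}}g)$, and send a morphism of \textsc{lari}s $(h,k)$, which by Definition~\ref{df:16} satisfies $h\cdot g=g'\cdot k$, to the same square $(h,k)$ of $\C^\two$; applying $F^{\mathsf{T}}$ to $h\cdot g=g'\cdot k$ gives precisely the compatibility $F^{\mathsf{T}}h\cdot F^{\mathsf{T}}g=F^{\mathsf{T}}g'\cdot F^{\mathsf{T}}k$ demanded of a morphism of $\mathsf{T}$-embeddings. Since this assignment is the identity on the underlying $\C^\two$-data and the orders on both sides are those of $\C^\two$, it is evidently a well-defined \Ord-functor over $\C^\two$.

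Transporting along the two isomorphisms above yields an \Ord-functor $\mathsf{E}\text-\mathrm{Coalg}\to\mathsf{L}\text-\mathrm{Coalg}$ over $\C^\two$, and full-faithfulness of $(-)\text-\mathrm{Coalg}$ then delivers the required morphism $(\mathsf{E},\mathsf{M})\to(\mathsf{L},\mathsf{R})$; its uniqueness is already recorded in Definition~\ref{df:24} (and follows from Lemma~\ref{l:16}). I do not expect a genuine obstacle here: the only content is the preservation of \textsc{lari}s by $F^{\mathsf{T}}$, everything else being bookkeeping through the established equivalences. As an alternative I would instead feed $(\mathsf{E},\mathsf{M})$ into the $I$-adjunction $\tilde\Phi\dashv_I\Psi$ of Theorem~\ref{thm:4}, which identifies $\mathbf{LOFS}(\C)((\mathsf{E},\mathsf{M}),\Psi(\mathsf{T}))$ with $\mathbf{LIMnd}(\C)(\tilde\Phi(\mathsf{E},\mathsf{M}),\mathsf{T})$; since $\tilde\Phi(\mathsf{E},\mathsf{M})=\mathsf{M}_1$ is the identity monad (because $f\downarrow 1_1\cong\dom f$ makes every $A\to 1$ trivially a split opfibration), this hom-set is the singleton given by the unit of $\mathsf{T}$, yielding existence and uniqueness at once. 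The mild point to check in this alternative is the computation $\mathsf{M}_1\cong\mathrm{Id}$.
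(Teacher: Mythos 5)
Your main argument is correct and is essentially the paper's own proof: both reduce the existence of the (unique) morphism $(\mathsf{E},\mathsf{M})\to(\mathsf{L},\mathsf{R})$ to exhibiting an \Ord-functor $\lari(\C)\to\mathsf{T}\text-\mathrm{Emb}$ over $\C^\two$, which exists because $F^{\mathsf{T}}$ preserves \textsc{lari}s --- the paper obtains the functor from the pullback square defining $\mathsf{T}\text-\mathrm{Emb}$, while you write it out explicitly as $f\mapsto(f,F^{\mathsf{T}}g)$, which is the same thing. Two small remarks: the preservation of adjunctions needs only that $F^{\mathsf{T}}$ is locally monotone (the ``left adjoint'' clause is superfluous, as any \Ord-functor preserves \textsc{lari}s, which is exactly what the paper says), and your alternative route via Theorem~\ref{thm:4} together with the computation $\mathsf{M}_1\cong\mathrm{Id}$ is a valid, genuinely different derivation that additionally delivers uniqueness at once, though the paper does not take it.
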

\begin{proof}
  By definition, $\mathsf{L}\text-\mathrm{Coalg}$ is isomorphic over $\C^\two$ to
  $\mathsf{T}\text-\mathrm{Emb}$, for a certain $\mathsf{T}$. We have to show
  that there exists a (unique) \Ord-functor
  \begin{equation}
    \label{eq:124}
    \lari(\C)\longrightarrow \mathsf{T}\text-\mathrm{Emb}
  \end{equation}
  over $\C^\two$. By definition of $\mathsf{T}\text-\mathrm{Emb}$ as a pullback
  (see Definition~\ref{df:13}) it suffices to exhibit a commutative square
  \begin{equation}
    \diagram
    {\lari(\C)}
    \ar[r]^-{}\ar[d]_{}
    &
    {\lari(\mathsf{T}\text-\mathrm{Alg})}
    \ar[d]^{}
    \\
    {\C^\two}
    \ar[r]_-{F^{\mathsf{T}}}
    &
    {\mathsf{T}\text-\mathrm{Alg}^\two}
    \enddiagram
  \end{equation}
  where the vertical arrows are the obvious forgetful \Ord-functors.
  The \Ord-functor $F^{\mathsf{T}}$ obviously induces another
  $\lari(\C)\to\lari(\mathsf{T}\text-\mathrm{Alg})$ that makes the diagram
  commutative, since any \Ord-functor preserves \textsc{lari}s.
\end{proof}

\begin{df}
\label{df:25}
We shall be interested in \textsc{lofs} $(\mathsf{L},\mathsf{R})$ that satisfy
the following cancellation properties:
\begin{itemize}
\item {If $g$ and $g\cdot f$ are $\mathsf{L}$-coalgebras, then $f$ is an
    $\mathsf{L}$-coalgebra.}
\item If, in the following diagram, $g$, $g'$, $g\cdot f$ and $g'\cdot f'$ are
  $\mathsf{L}$-coalgebras and $(v,w)$ and $(u,w)$ are morphisms of
  $\mathsf{L}$-coalgebras, then $(u,v)$ is a morphism of
  $\mathsf{L}$-coalgebras.
  \begin{equation}
    \xymatrixrowsep{.5cm}
    \diagram
    \cdot\ar[d]_f\ar[r]^u&\cdot\ar[d]^{f'}\\
    \cdot\ar[r]^v\ar[d]_g&\cdot\ar[d]^{g'}\\
    \cdot\ar[r]^-w&\cdot
    \enddiagram
  \end{equation}
\end{itemize}
We call these \textsc{lofs}s \emph{cancellative}.
\end{df}
The definition of cancellative \textsc{lofs} regards being a \textsc{lari} as a
property. As a result, it does not extend from \Ord-categories to 2-categories
without modification.
\begin{ex}
  \label{ex:7}
  For \textsc{lofs}s that are \textsc{ofs}s on a category, or in other words,
  when both the comonad and the monad of the \textsc{lofs}s are idempotent, the
  second condition of the definition above is superfluous. Therefore,
  cancellative \textsc{ofs}s are precisely the reflective \textsc{ofs}, as shown
  in~\cite[Thm.~2.3]{MR779198}. This is the result that we will generalise in
  Theorem~\ref{thm:7}.
\end{ex}
\begin{lemma}
  \label{l:13}
  The {\normalfont\textsl{\textsc{lofs}}} $(\mathsf{E},\mathsf{\mathsf{M}})$
  is cancellative.
\end{lemma}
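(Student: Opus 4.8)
The plan is to reduce both clauses of cancellativity to elementary manipulations of adjunctions, using the identification from Lemma~\ref{l:5} and Example~\ref{ex:13} of $\mathsf{E}$-coalgebras with \textsc{lari}s and of morphisms of $\mathsf{E}$-coalgebras with morphisms of \textsc{lari}s in the sense of Definition~\ref{df:16}. Throughout I would use that a \textsc{lari} $f$ carries a right adjoint $f^{*}$ with $f^{*}\cdot f=1$ and $f\cdot f^{*}\leq 1$, and that such a right adjoint is unique.

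For the first clause, suppose $f\colon A\to B$ and $g\colon B\to C$ are composable with $g$ and $g\cdot f$ \textsc{lari}s. I would show that $f$ is a \textsc{lari} with right adjoint $f^{*}=(gf)^{*}\cdot g$. The unit identity is immediate, since $f^{*}\cdot f=(gf)^{*}\cdot g\cdot f=(gf)^{*}\cdot(gf)=1_{A}$. The counit inequality $f\cdot f^{*}\leq 1_{B}$ is the one point requiring care: composing on the left with $g$ and using the counit $(gf)\cdot(gf)^{*}\leq 1_{C}$ gives $g\cdot f\cdot f^{*}=(gf)\cdot(gf)^{*}\cdot g\leq g$, and then composing on the left with $g^{*}$ and using $g^{*}\cdot g=1_{B}$ (valid because $g$ is a \textsc{lari}) cancels $g$ and yields $f\cdot f^{*}\leq 1_{B}$. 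Hence $f\dashv f^{*}$ exhibits $f$ as a \textsc{lari}.

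For the second clause, I would first invoke the first clause to conclude that $f$ and $f'$ are \textsc{lari}s, with right adjoints $f^{*}=(gf)^{*}\cdot g$ and $(f')^{*}=(g'f')^{*}\cdot g'$. It then remains to check that $(u,v)\colon f\to f'$ is a morphism of \textsc{lari}s; the commuting square $f'\cdot u=v\cdot f$ is given, so the only content is the right-adjoint compatibility $(f')^{*}\cdot v=u\cdot f^{*}$. This follows from the computation $(f')^{*}\cdot v=(g'f')^{*}\cdot g'\cdot v=(g'f')^{*}\cdot w\cdot g=u\cdot (gf)^{*}\cdot g=u\cdot f^{*}$, where the second equality uses $g'\cdot v=w\cdot g$ (from $(v,w)$ being a morphism of \textsc{lari}s $g\to g'$) and the third uses $(g'f')^{*}\cdot w=u\cdot (gf)^{*}$ (from $(u,w)$ being a morphism of \textsc{lari}s $g\cdot f\to g'\cdot f'$).

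The only genuinely non-formal step is the counit inequality in the first clause, where the left-cancellation of $g$ is legitimate precisely because $g$ is a \textsc{lari}; everything else is bookkeeping with the explicit formulae for the right adjoints and with the two defining equations of a morphism of \textsc{lari}s.
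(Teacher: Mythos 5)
Your proof is correct and follows essentially the same route as the paper's: you take the same candidate right adjoint $f^{*}=(gf)^{*}\cdot g$, verify the unit identity and establish the counit inequality by composing with $g$ and then cancelling it via $g^{*}\cdot g=1$, and your string of equalities for the morphism clause is the paper's computation $u\cdot t\cdot g=t'\cdot w\cdot g=t'\cdot g'\cdot v$ read in reverse. The only difference is presentational: you make explicit the left-cancellation step that the paper states as an unjustified equivalence.
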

\begin{proof}
  Recall that $\mathsf{E}$-coalgebras are the same as \textsc{lari}s. Suppose
  that $f$ and $g$ are composable morphisms and that $g\dashv r$ and
  $(g\cdot f)\dashv t$ are \textsc{lari} structures. Defining $s=t\cdot g$, we
  have that $s\cdot f=t\cdot g\cdot f=1$. It remains to prove that
  $f\cdot s=f\cdot t\cdot g\leq 1$, which is equivalent to
  $g\cdot f\cdot t\cdot g\leq g$, and this inequality holds since
  $g\cdot f\cdot t\leq 1$.
  \begin{equation}
    \label{eq:112}
    \xymatrixcolsep{2cm}
    \diagram
    \bullet
    \ar@<-13pt>@{<-}[dd]_{t}
    \ar@<-7.5pt>@{}[dd]|{\vdash}
    \ar[d]|f
    \ar[r]^u&
    \bullet
    \ar@<13pt>@{<-}[dd]^{t'}
    \ar@<7.5pt>@{}[dd]|{\dashv}
    \ar[d]|{f'}\\
    \bullet
    \ar[d]|g
    \ar@{<-}@<13pt>[d]^r
    \ar@{}@<7.5pt>[d]|{\dashv}
    \ar[r]^v&
    \bullet
    \ar[d]|{g'}
    \ar@{<-}@<-13pt>[d]_{r'}
    \ar@{}@<-7.5pt>[d]|{\vdash}
    \\
    \bullet
    \ar[r]^w&
    \bullet
    \enddiagram
  \end{equation}
  Now suppose given morphisms of \textsc{lari}s
  $(u,w)\colon g\cdot f\to g'\cdot f'$ and $(v,w)\colon g\to g'$, as
  depicted. We have to show that $(u,v)\colon f\to f'$ is a morphism of
  \textsc{lari}s, ie that $u\cdot t\cdot g=t'\cdot g'\cdot v$,
  which holds by the following string of equalities
  \begin{equation}
    u\cdot t\cdot g = t'\cdot w \cdot g = t'\cdot g'\cdot v
  \end{equation}
  completing the proof.
\end{proof}

\begin{thm}
  \label{thm:7}
  For a sub-{\normalfont\textsl{\textsc{lari lofs}}} $(\mathsf{L},\mathsf{R})$
  on a finitely complete \Ord-category, the following
  statements are equivalent:
  \begin{enumerate}
  \item\label{item:32} It is cancellative.
  \item\label{item:33} It is reflective.
  \end{enumerate}
\end{thm}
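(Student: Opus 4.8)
The plan is to deduce the equivalence from a single statement: that the canonical comparison \Ord-functor $\mathsf{L}\text-\mathrm{Coalg}\to\mathsf{R}_1\text-\mathrm{Emb}$ of Corollary~\ref{cor:3} is an isomorphism over $\C^\two$, where $\mathsf{R}_1$ is the restriction of $\mathsf{R}$ to $\C\cong\C/1$ (which is again lax idempotent). On the one hand, Proposition~\ref{prop:10} tells us that every \kz-reflective \textsc{lofs} enjoys exactly this isomorphism. On the other hand, once the isomorphism is established the cofree right adjoint $\C^\two\to\mathsf{L}\text-\mathrm{Coalg}$, $f\mapsto Lf$, transports to a right adjoint of $\mathsf{R}_1\text-\mathrm{Emb}\to\C^\two$, so $\mathsf{R}_1$ is fibrantly \kz-generating (Definition~\ref{df:20}) and $(\mathsf{L},\mathsf{R})\cong\Psi(\mathsf{R}_1)$ is \kz-reflective (Definition~\ref{df:23}, Proposition~\ref{prop:4}). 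Thus \eqref{item:32} and \eqref{item:33} are precisely the two halves of ``the comparison is an isomorphism,'' enhanced with cancellation.

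For \eqref{item:33}$\Rightarrow$\eqref{item:32}, I would use Proposition~\ref{prop:10} to identify $\mathsf{L}$-coalgebras with $\mathsf{R}_1$-embeddings over $\C^\two$ and then pull the two cancellation properties of \textsc{lari}s (Lemma~\ref{l:13}) back along the free \Ord-functor $F^{\mathsf{R}_1}\colon\C\to\mathsf{R}_1\text-\mathrm{Alg}$. By Lemma~\ref{l:8} a morphism is an $\mathsf{R}_1$-embedding exactly when $R_1f$ is a \textsc{lari}; since $R_1$ is a functor, $R_1(g\cdot f)=R_1g\cdot R_1f$, and the first cancellation of \textsc{lari}s gives $R_1f$ a \textsc{lari} whenever $R_1g$ and $R_1g\cdot R_1f$ are, i.e.\ $f$ is an $\mathsf{L}$-coalgebra whenever $g$ and $g\cdot f$ are. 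Applying $F^{\mathsf{R}_1}$ to the diagram of Definition~\ref{df:25} and invoking the second part of Lemma~\ref{l:13} handles the morphism cancellation in the same manner, using that $F^{\mathsf{R}_1}$ preserves \textsc{lari}s and their morphisms and that the forgetful \Ord-functor $\mathsf{R}_1\text-\mathrm{Alg}\to\C$ reflects them (Lemma~\ref{l:8}).

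The substance is \eqref{item:32}$\Rightarrow$\eqref{item:33}, where the comparison of Corollary~\ref{cor:3} exists but must be shown invertible; as both forgetful \Ord-functors into $\C^\two$ are injective on objects and morphisms and full on inequalities, and the comparison commutes with them, it suffices to prove surjectivity on objects and on morphisms. For objects, given an $\mathsf{R}_1$-embedding $f\colon X\to Y$, the morphism $R_1f$ is a \textsc{lari}, hence an $\mathsf{L}$-coalgebra because sub-\textsc{lari}ness supplies a functor $\lari(\C)\to\mathsf{L}\text-\mathrm{Coalg}$ (Definition~\ref{df:24}); the unit $\eta_X$ coincides with $L(X\to 1)$ and so is a (cofree) $\mathsf{L}$-coalgebra, whence $\eta_Y\cdot f=R_1f\cdot\eta_X$ is a composite of $\mathsf{L}$-coalgebras and therefore itself one (Proposition~\ref{prop:1}); the first cancellation property applied to $\eta_Y$ and $\eta_Y\cdot f$ then yields that $f$ is an $\mathsf{L}$-coalgebra. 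For morphisms, I would take $(h,k)\colon f\to f'$ a morphism of $\mathsf{R}_1$-embeddings between $\mathsf{L}$-coalgebras and feed the second cancellation property the rows $f,f'$ on top, $k$ in the middle, and $g=\eta_Y$, $g'=\eta_{Y'}$ at the bottom: the bottom morphism $(k,R_1k)\colon\eta_Y\to\eta_{Y'}$ and the composite morphism $(h,R_1k)\colon\eta_Y\cdot f\to\eta_{Y'}\cdot f'$ are both morphisms of $\mathsf{L}$-coalgebras --~the latter being the vertical composite in $\mathsf{L}\text-\mathbb{C}\mathrm{oalg}$ (Corollary~\ref{cor:4}) of $(h,R_1h)\colon\eta_X\to\eta_{X'}$ with $(R_1h,R_1k)\colon R_1f\to R_1f'$, where the second is a morphism of \textsc{lari}s transported through sub-\textsc{lari}ness~-- so cancellation delivers $(h,k)$ as a morphism of $\mathsf{L}$-coalgebras.

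The hard part is this last, morphism-level argument. The delicate choices are to take the bottom row of the cancellation square to be the reflection units $\eta_Y,\eta_{Y'}$, which is exactly what makes $g\cdot f$ and $g'\cdot f'$ visibly $\mathsf{L}$-coalgebras, and to recognise $(h,R_1k)$ as a genuine morphism of those composite coalgebras by factoring it through the middle component $R_1h$ and composing in the double category $\mathsf{L}\text-\mathbb{C}\mathrm{oalg}$. Making this rigorous relies on the uniqueness of $\mathsf{L}$-coalgebra structures (the dual of Theorem~\ref{thm:10}), so that ``being an $\mathsf{L}$-coalgebra'' is an unambiguous property and all the structures entering the double-category composition are the canonical ones; and on the translation, via Lemma~\ref{l:8} and the sub-\textsc{lari} functor, of ``$(h,k)$ is a morphism of $\mathsf{R}_1$-embeddings'' into ``$(R_1h,R_1k)$ is a morphism of \textsc{lari}s $R_1f\to R_1f'$.''
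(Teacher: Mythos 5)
Your proof is correct and follows essentially the same route as the paper's: one direction via the cancellation properties of \textsc{lari}s (Lemma~\ref{l:13}) transported through the monad, and the converse by showing the comparison \Ord-functor of Corollary~\ref{cor:3} is an isomorphism, with the same object-level step (compose with the cofree coalgebras $L!$, i.e.\ the units of $\mathsf{R}_1$, then apply the first cancellation property) and the same morphism-level step (decompose $(h,R_1k)$ as a vertical composite in $\mathsf{L}\text-\mathbb{C}\mathrm{oalg}$, then apply the second cancellation property). The only slip is citing Proposition~\ref{prop:1} for the fact that a composite of $\mathsf{L}$-coalgebras is an $\mathsf{L}$-coalgebra --- that is the general \textsc{awfs} composition property from \cite{MR3393453} quoted in the proof of Proposition~\ref{prop:1}, equivalently the vertical composition of Corollary~\ref{cor:4} --- but this does not affect the argument.
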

\begin{proof}
  When $\mathsf{L}\text-\mathrm{Coalg}$ is isomorphic to
  $\mathsf{T}\text-\mathrm{Emb}$ for some lax idempotent $\mathsf{T}$, it always
  satisfies the cancellation properties of Definition~\ref{df:25} since \textsc{lari}s do: if
  $g$ and $g\cdot f$ are $\mathsf{T}$-embeddings, ie if $Tg$ and
  $T(g\cdot f)=Tg\cdot Tf$ are \textsc{lari}s, then $Tf$ is a \textsc{lari},
  which is to say that $f$ is a $\mathsf{T}$-embedding; and similarly for
  morphisms. See Lemma~\ref{l:13}.

  Conversely, suppose that $(\mathsf{L},\mathsf{R})$ is cancellative
  (Definition~\ref{df:25}) and
  there is a morphism of \textsc{awfs}s
  $(\mathsf{E},\mathsf{M})\to(\mathsf{L},\mathsf{R})$, or equivalently, there is
  an \Ord-functor $\lari(\C)\to\mathsf{L}\text-\mathrm{Coalg}$ over
  $\C^\two$. We shall show that the \Ord-functor
  $\mathsf{L}\text-\mathrm{Coalg}\to\mathsf{R}_1\text-\mathrm{Emb}$ of
  Corollary~\ref{cor:3} is an isomorphism, so
  $(\mathsf{L},\mathsf{R})\cong\Psi(\mathsf{R}_1)$ is reflective.

  If $f\colon X\to Y$ is an $\mathsf{R}_1$-embedding, then consider
  the following commutative diagram.
  \begin{equation}
    \diagram
    X\ar[d]_{L!}\ar[r]^f&Y\ar[d]^{L!}\\
    R_1X\ar[d]\ar[r]^-{R_1f}&R_1Y\ar[d]\\
    1\ar@{=}[r]&1
    \enddiagram
  \end{equation}
  The morphisms $L!$ are cofree $\mathsf{L}$-coalgebras while $R_1f$ is a
  \textsc{lari} and therefore an $\mathsf{L}$-coalgebra. So, $L!\cdot f$ is an
  $\mathsf{L}$-coalgebra and $f$ is an $\mathsf{L}$-coalgebra by the
  cancellation hypothesis. This means that each $\mathsf{R}_1$-embedding is an
  $\mathsf{L}$-coalgebra, and all that remains to prove is that morphisms of
  $\mathsf{R}_1$-embeddings are morphisms of $\mathsf{L}$-coalgebras.

  Let $(u,v)\colon f\to f'$ be a morphism of $\mathsf{R}_1$-embeddings, so
  $(R_1u,R_1v)\colon R_1f\to R_1f'$ is a morphism of \textsc{lari}s, and,
  therefore, a morphisms of $\mathsf{L}$-coalgebras. It follows that $(u,R_1v)$,
  depicted on the left below, is a morphism of $\mathsf{L}$-coalgebras.
  \begin{equation}
    \label{eq:111}
    \xymatrixcolsep{1.5cm}
    \diagram
    X\ar[d]_{L!}\ar[r]^u&
    X'\ar[d]^{L!}
    \\
    R_1X
    \ar[d]_{R_1f}
    \ar[r]^-{R_1u}
    &
    R_1X'
    \ar[d]^{R_1f'}
    \\
    R_1Y\ar[r]^-{R_1v}&
    R_1Y'
    \enddiagram
    \qquad
    =
    \qquad
    \diagram
    X\ar[r]^u\ar[d]_f&
    X'\ar[d]^{f'}\\
    Y\ar[r]^v\ar[d]_{L!}&
    Y'\ar[d]^{L!}\\
    R_1Y\ar[r]^-{R_1v}&
    R_1Y'
    \enddiagram
  \end{equation}
  On the other hand, $(v,R_1v)$ is a morphism of $\mathsf{L}$-coalgebras, being
  the image under $L$ of the morphism $(v,1)\colon (Y\to 1)\to(Y'\to 1)$. By the
  second part of Definition~\ref{df:25}, we deduce that $(u,v)$ is a morphism of
  $\mathsf{L}$-coalgebras, as required. This shows that
  $\mathsf{L}\text-\mathrm{Coalg}\to\mathsf{R}_1\text-\mathrm{Emb}$ is an
  isomorphism, completing the proof.
\end{proof}

\section{Simple adjunctions}
\label{sec:simple-monads}
In \S \ref{sec:simple-reflections} we saw that a reflection $\mathsf{T}$ on \C\ is simple
if and only if $\mathsf{T}\text-\mathrm{Iso}\to\C^\two$ is comonadic. In this section we
generalise that result in three directions. First, we work with \Ord-enriched
categories, \Ord-enriched functors and so on. Secondly, the 2-dimensional aspect
introduced by the enrichment over \Ord\ allows us to substitute isomorphisms by
\textsc{lari}s and $T$-isomorphisms by $\mathsf{T}$-embeddings. Thirdly, even
though \S \ref{sec:simple-reflections} speaks of reflections, the
constructions therein only need an adjunction (not necessarily a reflection) and this is
the framework we choose.


\begin{df}
  \label{df:8}
  Let $S\dashv G\colon\mathcal{B}\to\C$ be an adjunction between locally monotone
  functors on \Ord-categories, of which we require $\C$ to have pullbacks and
  $\mathcal{B}$ to have comma-objects. We can always construct a monad
  $\mathsf{R}$ on $\C^\two$ by considering the comma-object $Kf=GSf\downarrow
  \eta_Y$ and defining $Rf\colon Kf\to Y$ as the second projection.
  \begin{equation}
    \label{eq:68}
    \diagram
    X\ar[dr]|{Lf}\ar@/^12pt/[drr]^{\eta_X}\ar@/_12pt/[ddr]_f&&\\
    &Kf\ar[r]^{q_f}\ar[d]_{Rf}&
    GSX\ar[d]^{GSf}\\
    &Y\ar[r]_{\eta_Y}\ar@{}[ur]|\geq&GSY
    \enddiagram
  \end{equation}
  The \Ord-functorial factorisation $f=Rf\cdot Lf$ has an associated locally
  monotone copointed endofunctor $\Phi\colon L\Rightarrow 1$, where the component
  $\Phi_f$ is provided by the commutative square displayed.
  \begin{equation}
    \label{eq:82}
    \diagram
    {\cdot}
    \ar@{=}[r]^-{}\ar[d]_{Lf}
    &
    {\cdot}
    \ar[d]^{f}
    \\
    {\cdot}
    \ar[r]^-{Rf}
    &
    {\cdot}
    \enddiagram
  \end{equation}
\end{df}
We continue with the notation of previous sections, where
$(\mathsf{E},\mathsf{M})$ denotes the \textsc{lofs} whose
$\mathsf{E}$-coalgebras are the \textsc{lari}s.
\begin{rmk}
  \label{rmk:7}
  The comma-square of Definition~\ref{df:8} can be obtained by pulling back
  along $\eta_Y$ the image under $G$ of the projection $M(Sf)\colon Sf\downarrow
  SY\to SY$.
  \begin{equation}
    \diagram
    Kf\ar[d]_{Rf}\ar[r]\ar@{}[dr]|{\mathrm{pb}}&
    G(Sf\downarrow SY)\ar[d]|{G(MSf)}\ar[r]\ar@{}[dr]|\geq&
    GSX\ar[d]^{GSf}\\
    Y\ar[r]_-{\eta_Y}&GSY\ar@{=}[r]&GSY
    \enddiagram
  \end{equation}
\end{rmk}
\begin{lemma}
  \label{l:6}
  There is a pullback square of locally monotone endofunctors of $\C^\two$, as
  depicted on the left. There is a pullback of \Ord-categories, as depicted on
  the right.
  \begin{equation}
    \label{eq:83}
    \diagram
    {L}
    \ar@{}[dr]|{\mathrm{pb}}
    \ar[r]^-{}\ar[d]_{\Phi}
    &
    {G^\two {E}S^\two}
    \ar[d]^{G^\two\Phi^{{E}} S^\two}
    \\
    {1_{\C^\two}}
    \ar[r]^-{\eta^\two}
    &
    {G^\two S^\two}
    \enddiagram
    \qquad
    \diagram
    (L,\Phi)\text-\mathrm{Coalg}
    \ar[r]^-{}\ar[d]_{U}\ar@{}[dr]|{\mathrm{pb}}
    &
    {({E},\Phi^{{E}})\text-\mathrm{Coalg}}
    \ar[d]^{}
    \\
    {\C^\two}
    \ar[r]_-{S^\two}
    &
    {\mathcal{B}^\two}
    \enddiagram
  \end{equation}
\end{lemma}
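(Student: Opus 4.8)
The plan is to verify the left-hand square pointwise and then deduce the right-hand square from it by transposing along the adjunction $S^\two\dashv G^\two$.

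First I would prove that the left square is a pullback of locally monotone endofunctors of $\C^\two$. Since limits of endofunctors are computed pointwise, it suffices to check that for each $f\colon X\to Y$ the square of its components is a pullback in $\C^\two$, and pullbacks in $\C^\two$ are computed separately on domains and on codomains. The component of $G^\two\Phi^{E}S^\two$ at $f$ is $G^\two(\Phi^{E}_{Sf})=(1_{GSX},\,G(MSf))$. On domains the right-hand leg is $G(1_{SX})=1_{GSX}$, so, one leg of the cospan being an identity, the domain pullback is simply $X=\dom(Lf)$. On codomains the cospan is $Y\xrightarrow{\eta_Y}GSY\xleftarrow{G(MSf)}G(Sf\downarrow SY)$, which is exactly the pullback exhibiting $Kf=\cod(Lf)$ in Remark~\ref{rmk:7}. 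Hence the left square is a pullback.

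Next I would pass from this pullback to the pullback of coalgebra categories, just as in the passage from the pullback of copointed endofunctors to the pullback square of coalgebra categories in the proof of Lemma~\ref{l:1}. Concretely, I would use the $\Ord$-enriched adjunction $S^\two\dashv G^\two$ induced by $S\dashv G$, with unit $\eta^\two$. By the universal property of the left pullback, an $(L,\Phi)$-coalgebra structure on $f$, i.e.\ a section $s$ of $\Phi_f$, is the same as a morphism $b\colon f\to G^\two E S^\two f$ in $\C^\two$ with $(G^\two\Phi^{E}S^\two)_f\cdot b=\eta^\two_f$: indeed the coalgebra axiom $\Phi_f\cdot s=1_f$ forces the $1_{\C^\two}$-leg of $s$ to be the identity, leaving only the data of $b$ subject to this compatibility. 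Transposing $b$ across $S^\two\dashv G^\two$ yields $\hat b\colon Sf\to E(Sf)$ in $\mathcal{B}^\two$, and the compatibility equation transposes to the splitting condition $\Phi^{E}_{Sf}\cdot\hat b=1_{Sf}$, that is, to an $(E,\Phi^{E})$-coalgebra structure on $Sf$. As the adjunction is locally monotone, this correspondence is an isomorphism of ordered hom-sets and is natural in $f$, so it also matches morphisms and inequalities of coalgebras on the two sides. This produces the isomorphism $(L,\Phi)\text-\mathrm{Coalg}\cong\C^\two\times_{\mathcal{B}^\two}(E,\Phi^{E})\text-\mathrm{Coalg}$ over $\C^\two$, which is precisely the assertion that the right square is a pullback of \Ord-categories.

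The step I expect to be the main obstacle is the bookkeeping in the transposition: one must check both that the coalgebra axiom pins down the $1_{\C^\two}$-leg of $s$ as the identity, so that no information is lost in replacing $s$ by $b$, and that the pullback compatibility transposes exactly to the counit/splitting condition for $\Phi^{E}$ rather than to a weaker inequality. Once the underlying bijection on objects is in place, the enriched part—fullness and faithfulness on inequalities—follows routinely from the local monotonicity of $S^\two\dashv G^\two$ together with the fact that the forgetful \Ord-functors to $\C^\two$ are injective on objects and full on inequalities.
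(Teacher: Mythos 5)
Your proposal is correct and takes essentially the same route as the paper: the left square is checked componentwise exactly as the paper does it (the domain component is trivially a pullback since $\dom E=1$ and $\dom\Phi^{E}=1$, and the codomain component is precisely the pullback square of Remark~\ref{rmk:7}). The only divergence is in the second half, where the paper simply cites a well-known fact about pullbacks of coalgebra categories, while you supply its proof directly by transposing coalgebra structures along $S^\two\dashv G^\two$ --- a correct and complete filling-in of that citation rather than a different argument.
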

\begin{proof}
  In order to obtain a pullback square as on the left hand side of the
  statement, we need to give two pullback squares: one corresponding to the
  domain component and another corresponding to the codomain component. We define
  the domain component of $L\to G^\two ES^\two$ to be the unit
  $\eta\colon 1\to GS$; this is possible since $\dom E=1$. The resulting has
  horizontal morphisms both equal to $\eta$ and vertical morphisms equal to the
  identity, since $\dom\Phi^E=1$. This square is manifestly a pullback.  The
  codomain component we choose is the pullback square of Remark~\ref{rmk:7}.

  The fact that there is a pullback of $\Ord$-functors as on the right hand side
  of the statement follows easily, and it is a well-known fact (see, eg,
  \cite[Prop.~9.2]{MR589937}).
\end{proof}
As a consequence of the previous lemma, the pullback square in~\eqref{eq:141}
that defines ${S}\text-\mathrm{Emb}$
factors as two pullback squares, as depicted.
\begin{equation}
  \label{eq:66}
  \diagram
  S\text-\mathrm{Emb}\ar[d]\ar[r]\ar@{}[dr]|{\mathrm{pb}}&
  \mathsf{E}\text-\mathrm{Coalg}\ar[d]^{\cong}\\
  (L,\Phi)\text-\mathrm{Coalg}\ar[r]\ar@{}[dr]|{\mathrm{pb}}\ar[d]&
  ({E},\Phi^{{E}})\text-\mathrm{Coalg}\ar[d]\\
  \C^\two\ar[r]^{S^\two}&
  \mathcal{B}^\two
  \enddiagram
\end{equation}
The isomorphism
$\mathsf{E}\text-\mathrm{Coalg}\cong(E,\Phi^E)\text-\mathrm{Coalg}$, which is
just the inclusion, was exhibited in Lemma~\ref{l:5}.
The \Ord-functor $S\text-\mathrm{Emb}\to(L,\Phi)\text-\mathrm{Coalg}$ is an
isomorphism, being the pullback of an isomorphism. The remark that follows describes this
functor and its inverse in more explicit terms.
\begin{rmk}
  \label{rmk:5}
  Suppose that $f\colon X\to Y$ has a structure of $(L,\Phi)$-coalgebra, given by
  $(1,s)\colon f\to Lf$, where $s\colon Y \to Kf$. This structure corresponds
  bijectively to an $r_f\colon SY\to SX$ in $\mathcal{B}$ with $r_f\cdot Sf=1$
  and $Sf\cdot r_f\leq 1$, in a way that can be explicitly described:
  $r_f\colon SY\to SX$ is the morphism whose transpose under the adjunction
  $S\dashv G$ is $q_f\cdot s\colon Y\to Kf\to GSX$, ie
  \begin{equation}
    \label{eq:85}
    r_f=\big(SY\xrightarrow{Ss}SKf\xrightarrow{Sq_f}SGSX\xrightarrow{\varepsilon_{SX}}SX\big).
  \end{equation}
  and
  \begin{equation}
    \label{eq:86}
    Rf\cdot s=1\qquad q_f\cdot s=\big(Y\xrightarrow{\eta_Y}GSY\xrightarrow{Gr_f}GSX\big).
  \end{equation}
\end{rmk}
\begin{df}
  \label{df:10}
  We say that the adjunction $S\dashv G$ is
  \emph{simple} (or \emph{simple with respect to $(\mathsf{E},\mathsf{M})$}) if,
  for each $f\colon X\to Y$ in \C,
  the morphism $Lf$ has an $S$-embedding structure given by
  \begin{equation}
    \label{eq:77}
    \big(SX\xrightarrow{SLf}SKf\big)\dashv
    \big(SKf\xrightarrow{Sq_f}SGSX\xrightarrow{\varepsilon_{SX}}SX\big).
  \end{equation}
  where $\varepsilon$ is the counit of $S\dashv G$.
  This amounts to the existence of the inequality $SLf\cdot\varepsilon_{SX}\cdot Sq_f\leq 1$.
\end{df}
The following theorem is an analogue to the characterisation of simple reflections
of \S \ref{sec:simple-reflections}.
\begin{thm}
  \label{thm:2}
  The following statements are equivalent.
  \begin{enumerate}
  \item \label{item:11} The adjunction $S\dashv G$ is simple.
  \item \label{item:10} The locally monotone forgetful functor $U\colon
    S\text-\mathrm{Emb}\to\C^\two$ has a right adjoint and the induced comonad
    has underlying functor $L$ and counit $\Phi\colon L\Rightarrow 1_{\C^\two}$.
  \item \label{item:9} The locally monotone copointed endofunctor $\Phi\colon
    L\Rightarrow 1_{\C^\two}$ admits a comultiplication $\Sigma\colon L\Rightarrow L^2$
    making $\mathsf{L}=(\mathsf{L},\Phi,\Sigma)$ into a comonad whose category of coalgebras is
    isomorphic to $S\text-\mathrm{Emb}$ over $\C^\two$.
\end{enumerate}
\end{thm}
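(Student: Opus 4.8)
The plan is to use the unconditional isomorphism $S\text-\mathrm{Emb}\cong(L,\Phi)\text-\mathrm{Coalg}$ over $\C^\two$ recorded in~\eqref{eq:66}, together with the explicit dictionary of Remark~\ref{rmk:5}, to convert all three statements into assertions about the copointed endo-\Ord-functor $(L,\Phi)$. Under this dictionary an $(L,\Phi)$-coalgebra structure $(1,s)$ on $f\colon X\to Y$ is exactly a \textsc{lari} structure $Sf\dashv r_f$ on $Sf$, with $r_f=\varepsilon_{SX}\cdot Sq_f\cdot Ss$. I will prove the cycle $\eqref{item:11}\Rightarrow\eqref{item:10}\Rightarrow\eqref{item:9}\Rightarrow\eqref{item:11}$. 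A fact used repeatedly is that $\varepsilon_{SX}\cdot Sq_f$ is always a right inverse of $SLf$: since $q_f\cdot Lf=\eta_X$, one has $\varepsilon_{SX}\cdot Sq_f\cdot SLf=\varepsilon_{SX}\cdot S\eta_X=1$ by the triangle identity. Thus simplicity, i.e.\ the inequality $SLf\cdot\varepsilon_{SX}\cdot Sq_f\leq 1$ of Definition~\ref{df:10}, is precisely the statement that this canonical right inverse is a \textsc{lari} right adjoint, so that each $Lf$ carries the $S$-embedding structure so described.

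For $\eqref{item:11}\Rightarrow\eqref{item:10}$, assuming simplicity the assignment $f\mapsto Lf$ equipped with the structure of Definition~\ref{df:10} lands in $S\text-\mathrm{Emb}$, giving an \Ord-functor $R'\colon\C^\two\to S\text-\mathrm{Emb}$ that lifts $L$. I claim $R'$ is right adjoint to $U$ with counit $\Phi$. The counit component at $f$ is $\Phi_f=(1,Rf)\colon Lf\to f$, and given an $S$-embedding $(g,r_g)$ and a square $(h,k)\colon Ug\to f$ in $\C^\two$, the candidate transpose is the functorial-factorisation lift $(h,K(h,k)\cdot s_g)\colon g\to Lf$, where $s_g$ is the coalgebra structure of $g$; composing with $\Phi_f$ returns $(h,k)$ because $Rf\cdot K(h,k)=k\cdot Rg$ and $Rg\cdot s_g=1$. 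The only point to verify is that this lift respects the $S$-embedding structures, which is a routine computation through Remark~\ref{rmk:5} using naturality of $\eta$ and $\varepsilon$; bijectivity and the ordering condition are then immediate, yielding the \Ord-enriched adjunction $U\dashv R'$.

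For $\eqref{item:10}\Rightarrow\eqref{item:9}$, an \Ord-adjunction $U\dashv R'$ with $UR'=L$ and counit $\Phi$ generates an \Ord-comonad $\mathsf{L}=(L,\Phi,\Sigma)$ on $\C^\two$, with $\Sigma$ the image under $U$ of the unit of the adjunction. What remains for~\eqref{item:9} is that the comparison \Ord-functor $S\text-\mathrm{Emb}\to\mathsf{L}\text-\mathrm{Coalg}$ be an isomorphism, equivalently that every $(L,\Phi)$-coalgebra satisfy the comonad coassociativity axiom. Here I transport along~\eqref{eq:66}: an $(L,\Phi)$-coalgebra on $f$ is an $(E,\Phi^{E})$-coalgebra on $Sf$, and by Lemma~\ref{l:5}\eqref{item:18} every $(E,\Phi^{E})$-coalgebra is already an $\mathsf{E}$-coalgebra, hence coassociative; a diagram chase through the pullback~\eqref{eq:66}, whose forgetful legs are faithful and injective on objects, then promotes this to coassociativity of the corresponding $\mathsf{L}$-coalgebra. \emph{This is the main obstacle of the proof}: without the coincidence of copointed-endofunctor coalgebras and genuine comonad coalgebras for the \textsc{lari} comonad $\mathsf{E}$, the \Ord-category of $S$-embeddings need not be comonadic, and it is exactly Lemma~\ref{l:5}\eqref{item:18} that supplies the missing coassociativity for free.

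Finally, for $\eqref{item:9}\Rightarrow\eqref{item:11}$, the cofree coalgebra on $f$ is $(Lf,\Sigma_f)$, which corresponds under~\eqref{eq:66} to an $S$-embedding structure on $Lf$; in particular $SLf$ is a \textsc{lari}. It remains to identify its right adjoint with $\varepsilon_{SX}\cdot Sq_f$. Writing $\Sigma_f=(1,\sigma_f)$ and applying the formula of Remark~\ref{rmk:5} to the coalgebra $Lf$, the associated \textsc{lari} right adjoint is $\varepsilon_{SX}\cdot Sq_{Lf}\cdot S\sigma_f$, and the counit and comultiplication laws of the comonad (which reduce $q_{Lf}\cdot\sigma_f$ to $q_f$, mirroring the identities $r_{Ef}\cdot\sigma_f=r_f$ and $MEf\cdot\sigma_f=1$ of Example~\ref{ex:11}) collapse this to $\varepsilon_{SX}\cdot Sq_f$. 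This is exactly the structure of Definition~\ref{df:10}, so $S\dashv G$ is simple, closing the cycle.
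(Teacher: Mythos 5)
Your cycle reproduces the paper's own arguments at two of its three edges: your $\eqref{item:11}\Rightarrow\eqref{item:10}$ is in substance the paper's explicit construction of the right adjoint $f\mapsto(Lf,\varepsilon_{SX}\cdot Sq_f)$, and your $\eqref{item:9}\Rightarrow\eqref{item:11}$ is the paper's counit-axiom computation reducing $q_{Lf}\cdot\sigma_f$ to $q_f$; both are fine. The genuine gap is in $\eqref{item:10}\Rightarrow\eqref{item:9}$, at the word ``promotes''. The isomorphism~\eqref{eq:66} (and the pullback of Lemma~\ref{l:6} behind it) is a statement about \emph{copointed endofunctors and their coalgebras only}: it matches $(L,\Phi)$-coalgebras with $(E,\Phi^{E})$-coalgebras, but carries no information about comultiplications. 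Coassociativity of an $(L,\Phi)$-coalgebra $(f,s)$ is the equation $\sigma_f\cdot s=K(1,s)\cdot s$ in $\C$, involving the comultiplication $\Sigma$ of the abstractly given adjunction of~\eqref{item:10}, whereas Lemma~\ref{l:5} gives an equation involving $\sigma^{E}_{Sf}$ in $\mathcal{B}$; no amount of faithfulness of the forgetful legs turns the second into the first, because faithful functors reflect equalities between morphisms already known to correspond, and nothing at this stage identifies the image of $\sigma_f$ with $\sigma^{E}_{Sf}$. Unwinding via Remark~\ref{rmk:5} and the comma-object, the coassociativity of $(g,t)$ amounts to the identity $r_g=r_{Lg}\cdot St$ between \textsc{lari} right adjoints, and this is \emph{not} a formal property of \textsc{lari}s: if $a\colon\two\to\{0\le m\le 1\}$ is the inclusion of the endpoints and $b$ collapses $m$ to the top, then $a$ and $b\cdot a=1_{\two}$ are \textsc{lari}s, yet $r_a(m)=0\neq 1=(r_{b\cdot a}\cdot b)(m)$; in general one only gets $r_a\leq r_{b\cdot a}\cdot b$. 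What rules this out here is precisely $r_{Lg}=\varepsilon_{SX}\cdot Sq_g$, i.e.\ simplicity, which is statement~\eqref{item:11} --- unavailable at this point of your cycle, so the chase is circular.

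The paper closes this direction by a different mechanism: Lemma~\ref{l:5} makes the forgetful functor from \textsc{lari}s comonadic, Lemma~\ref{l:4} transfers the equaliser-creation condition of Beck's Theorem~\ref{thm:3} along the pullback defining $S\text-\mathrm{Emb}$, so a right adjoint forces $U$ to be comonadic, and the hypothesis in~\eqref{item:10} that the induced comonad has underlying copointed endofunctor $(L,\Phi)$ then gives~\eqref{item:9} at once. If you want to keep your Beck-free strategy, the step can be repaired, but not by Lemma~\ref{l:5}: either observe that your $\eqref{item:9}\Rightarrow\eqref{item:11}$ computation uses only the comonad axioms, hence already proves $\eqref{item:10}\Rightarrow\eqref{item:11}$, after which simplicity yields the wanted coassociativity via $r_{Lg}\cdot St=\varepsilon_{SX}\cdot Sq_g\cdot St=\varepsilon_{SX}\cdot SGr_g\cdot S\eta=r_g$; or note that, since \textsc{lari} structures are unique, the bijection~\eqref{eq:66} makes $(L,\Phi)$-coalgebra structures unique, so every such structure coincides with the comparison structure $U\Psi_g$ coming from the unit of the adjunction, and these are coassociative for purely formal reasons.
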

\begin{proof}
  Clearly (\ref{item:9}) implies (\ref{item:10}). The opposite implication holds
  if $U$ is comonadic, which is if it has a right adjoint, by Beck's
  Theorem~\ref{thm:3} and Lemma~\ref{l:4}, showing that (\ref{item:10}) implies
  (\ref{item:9}).

  Let us now prove that (\ref{item:9}) implies~(\ref{item:11}). Let $f\colon
  X\to Y$ be a morphism of \C. The comultiplication $\Sigma_f\colon Lf\to L^2f$
  is of the form $\Sigma_f=(1,\sigma_f)$ for $\sigma_f\colon Kf\to KLf$. One of
  the counit axioms of the comonad says
  \begin{equation}
    \label{eq:81}
    1=\big(Kf\xrightarrow{\sigma_f}KLf\xrightarrow{K(1,Rf)}Kf\big)
  \end{equation}
  and upon composing with the projection $q_f\colon Kf\to GSX$ we have
  \begin{equation}
    \label{eq:84}
    q_f=q_f\cdot K(1,Rf)\cdot \sigma_f=q_{Lf}\cdot \sigma_f=Gr_{Lf}\cdot \eta_{X}
  \end{equation}
  where we have used, first the definition of $K$ as a comma-object
  (Definition~\ref{df:8}), and then the fact that $\sigma_f$ is an
  $(L,\Phi)$-coalgebra structure on $Lf$ together with the explicit description
  of the isomorphism $S\text-\mathrm{Emb}\cong(L,\Phi)\text-\mathrm{Coalg}$
  (Remark~\ref{rmk:5}); as before, $r_{Lf}\colon SKf\to SX$ denotes the right
  adjoint retract that endows $Lf$ with an $S$-embedding structure. By
  adjointness, the equality~\eqref{eq:84} is equivalent to $r_{Lf}=
  \varepsilon_{SX}\cdot Sq_f$, which is precisely saying that $S\dashv G$ is
  simple.

  Finally, we prove that (\ref{item:11}) implies~(\ref{item:10}). For each $g\colon X\to Y$, the
  morphism $Lg\colon X\to Kg$ has an $S$-embedding structure, given by
  \begin{equation}
    \label{eq:87}
    r_{Lg}=\varepsilon_{SX}\cdot Sq_g\colon SKg\longrightarrow SX.
  \end{equation}
  This defines a functor $J\colon\C^\two\to S\text-\mathrm{Emb}$, since the
  image of any morphism $(h,k)\colon f\to g$ is compatible with the right
  adjoints $r_{Lf}$ and $r_{Lg}$. To wit,
  \begin{equation}
    \label{eq:92}
    r_{Lg}\cdot SK(h,k)=
    \varepsilon_{SZ}\cdot Sq_g\cdot SK(h,k)=\varepsilon_{SZ}\cdot SGSh\cdot Sq_f
    =Sh\cdot\varepsilon_{SX}\cdot Sq_f=
    Sh\cdot r_{Lf}.
  \end{equation}
  It is clear that $J$ is a locally monotone functor. We shall show that it
  is a right adjoint to the forgetful functor $U\colon S\text-\mathrm{Emb}\to\C^\two$.

  Given an $S$-embedding $(f,r_f)$ in \C, consider its associated
  $(L,\Phi)$-coalgebra structure, as described in Remark~\ref{rmk:5}:
  \begin{equation}
    \label{eq:88}
    (1,s_f)\colon (f,r_f)\longrightarrow (Lf,r_{Lf})\qquad
    \diagram
    X\ar@{=}[r]\ar[d]_f&X\ar[d]^{Lf}\\
    Y\ar[r]^-{s_f}&Kf
    \enddiagram
  \end{equation}
  where $s_f$ is defined by the equalities
  \begin{equation}
    \label{eq:89}
    Rf\cdot s_f=1_X\qquad
    q_f\cdot s_f=Gr_{f}\cdot \eta_{Y}\colon Y\to GSY\to GSX.
  \end{equation}
  If we equip $Lf$ with the $S$-embedding structure $r_{Lf}$ of~\eqref{eq:87},
  then $(1,s_f)$ becomes a morphism in $S\text-\mathrm{Emb}$, since
  \begin{equation}
    \label{eq:90}
    r_{Lf}\cdot Ss_f= \varepsilon_{SX}\cdot Sq_f\cdot Ss_f= \varepsilon_{SX}\cdot
    SGr_f\cdot S\eta_Y=r_f\cdot \varepsilon_{SY}\cdot S\eta_{Y}=r_f.
  \end{equation}
  Furthermore, \eqref{eq:88}~are the components of a natural transformation
  $\Psi\colon 1_{S\text-\mathrm{Emb}}\Rightarrow JU$.
  To see this, if $(h,k)\colon f\to g$ is a morphism in $S\text-\mathrm{Emb}$, where
  $g\colon Z\to W$, we have to show the equality $K(h,k)\cdot s_f=s_g\cdot
  k$. This holds since we have
  \begin{multline}
    \label{eq:91}
    q_g\cdot K(h,k)\cdot s_f= GSh\cdot q_f\cdot s= GSh\cdot Gr_f\cdot\eta_Y=\\=
    Gr_g\cdot GSk\cdot\eta_Y= Gr_g\cdot \eta_W\cdot k=q_g\cdot s_g\cdot k
  \end{multline}
  \begin{equation}
    Rg\cdot K(h,k)\cdot s_f=k\cdot Rf\cdot s=k=
    Rg\cdot s_g\cdot k.\label{eq:93}
  \end{equation}
  To complete the proof, we show that the transformation $\Psi$ with
  components~\eqref{eq:88} is the unit of an adjunction $U\dashv J$ with counit
  $\Phi\colon JU=L\Rightarrow 1_{\C^\two}$. The triangle identity
  $\Phi_{U(f,r_f)}\cdot U\Psi_f=1$ holds, since it amounts to $Rf\cdot
  s_f=1$. The other triangle identity, $J\Phi_f\cdot \Psi_{Jf}=1$, requires a
  bit more of work. The morphism of $S$-embeddings $\Psi_{Jf}$ has the form
  $(1,\sigma_f)\colon Lf\to L^2f$, and is defined by $RLf\cdot \sigma_f=1$ and
  \begin{equation}
    q_{Lf}\cdot \sigma_f=\big(Kf\xrightarrow{q_f}GSX\big).
    \label{eq:94}
  \end{equation}
  The morphism $J\Psi_f$ equals $(1,K(1,Rf))$, so
  the triangular equality translates into
  $ K(1,Rf)\cdot\sigma_f=1$. Both sides are equal to $Rf$ upon composing with
  $Rf$, so it remains to show that $q_{f}\cdot K(1,Rf)\cdot\sigma_f=q_{f}$. This
  equality follows easily from what we already know about $\sigma_f$.
  \begin{equation}
    \label{eq:96}
    q_{f}\cdot K(1,Rf)\cdot\sigma_f= q_{Lf}\cdot\sigma_f=q_f.
  \end{equation}
  This completes the proof of the statement~(\ref{item:10}), and so, the
  proof of the theorem.
\end{proof}
\begin{thm}[Beck]
  \label{thm:3}
  A functor $U\colon \mathcal{T}\to\mathcal{A}$ is comonadic if and only if
  \begin{enumerate}
  \item \label{item:12} It has a right adjoint.
  \item \label{item:14} $U$ creates equalisers of parallel pairs of morphisms in
    $\mathcal{T}$ whose image under $U$ has an absolute equaliser in $\mathcal{A}$.
  \end{enumerate}
\end{thm}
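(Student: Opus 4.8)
The plan is to recognise this as the formal dual of Beck's monadicity theorem: $U$ is comonadic exactly when $U^{\mathrm{op}}\colon\mathcal{T}^{\mathrm{op}}\to\mathcal{A}^{\mathrm{op}}$ is monadic, and under this duality equalisers of $U$-absolute pairs become coequalisers of $U^{\mathrm{op}}$-absolute pairs, so that the statement reduces to the monadic case. I shall nonetheless indicate the direct argument, since it is this that I would write out. Write $R$ for the right adjoint, with unit $\eta\colon 1_{\mathcal{T}}\Rightarrow RU$ and counit $\varepsilon\colon UR\Rightarrow 1_{\mathcal{A}}$; these assemble into a comonad $\mathbb{G}=(G,\varepsilon,\delta)$ on $\mathcal{A}$ with $G=UR$ and $\delta=U\eta R$. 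Let $K\colon\mathcal{T}\to\mathbb{G}\text-\mathrm{Coalg}$ be the comparison functor, $KT=(UT,U\eta_T)$, so that comonadicity of $U$ is by definition the assertion that $K$ is an equivalence.

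For the forward direction, assume $U$ is comonadic and identify $\mathcal{T}$ with $\mathbb{G}\text-\mathrm{Coalg}$, with $U$ the underlying-object functor. Condition~\eqref{item:12} holds, the right adjoint being the cofree functor $R$. For~\eqref{item:14}, let $p,q$ be coalgebra morphisms whose underlying pair in $\mathcal{A}$ has an absolute equaliser $e\colon E\to A$. Absoluteness makes $Ge$ the equaliser of $Gp,Gq$, so the composite of the structure map with $e$ factors uniquely through $Ge$ to give a coalgebra structure on $E$; one checks, again using absoluteness, that $e$ then becomes a coalgebra morphism exhibiting $E$ as the equaliser in $\mathbb{G}\text-\mathrm{Coalg}$. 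This is the standard creation of absolute equalisers by categories of coalgebras.

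For the converse I construct a pseudo-inverse to $K$. Given a coalgebra $(A,\gamma\colon A\to URA)$, consider the parallel pair $\eta_{RA},R\gamma\colon RA\rightrightarrows RURA$ in $\mathcal{T}$. I claim its image under $U$ carries a split, hence absolute, equaliser with equalising map $\gamma\colon A\to URA$: taking $s=\varepsilon_A$ and $t=\varepsilon_{URA}$, the four split-equaliser identities are precisely the counit axiom $\varepsilon_A\cdot\gamma=1$, the coassociativity $UR\gamma\cdot\gamma=U\eta_{RA}\cdot\gamma$, the triangle identity $\varepsilon_{URA}\cdot U\eta_{RA}=1$, and the naturality square $\varepsilon_{URA}\cdot UR\gamma=\gamma\cdot\varepsilon_A$. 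By~\eqref{item:14} the functor $U$ therefore creates an equaliser $T_{(A,\gamma)}\to RA$ lying over $\gamma$; the assignment $(A,\gamma)\mapsto T_{(A,\gamma)}$ is functorial by the uniqueness clause of creation and provides the desired inverse up to isomorphism.

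The main obstacle is to confirm that this assignment really inverts $K$ on both sides and naturally. On one side one must read off that $UT_{(A,\gamma)}\cong A$ recovers the structure $\gamma$, which follows by transporting the created equaliser through $U$ and comparing with the split equaliser above. On the other side, for $T\in\mathcal{T}$ one applies the construction to $KT=(UT,U\eta_T)$ and must see that $\eta_T\colon T\to RUT$ already exhibits $T$ as the equaliser of $\eta_{RUT},RU\eta_T$; this is the naturality of $\eta$ at $\eta_T$, and the uniqueness in~\eqref{item:14} forces the equaliser to be $T$ itself. Organising these identifications into natural isomorphisms, rather than mere objectwise ones, is where the bookkeeping concentrates.
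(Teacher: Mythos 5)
The paper offers no proof of this statement: it is quoted, with attribution to Beck, as the classical (co)monadicity theorem and then used as a black box (in Theorem~\ref{thm:2} via Lemma~\ref{l:4}). So there is no in-paper argument to compare yours against, and I can only assess your proposal on its own terms.

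Your route is the standard one and its skeleton is correct. The forward direction (creation of equalisers of $U$-absolute pairs by categories of coalgebras) is fine, and in the converse you identify the split-equaliser data correctly: for a coalgebra $(A,\gamma)$ the fork $\gamma\colon A\to URA$ over the pair $U\eta_{RA}, UR\gamma\colon URA\rightrightarrows URURA$ is split by $s=\varepsilon_A$ and $t=\varepsilon_{URA}$, the four identities being exactly the counit axiom, coassociativity, the triangle identity and naturality of $\varepsilon$ at $\gamma$. The one place where your sketch has a genuine gap, and not mere bookkeeping, is the claim that $UT_{(A,\gamma)}\cong A$ ``recovers the structure $\gamma$ \dots by transporting the created equaliser through $U$ and comparing with the split equaliser''. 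That comparison only yields $UT_{(A,\gamma)}=A$ and $Ue=\gamma$ for the created cone $e\colon T_{(A,\gamma)}\to RA$; what must be shown is that the coalgebra structure of $KT_{(A,\gamma)}$, namely $U\eta_{T_{(A,\gamma)}}$, equals $\gamma$, and this does not follow from comparing equalisers (one cannot, for instance, cancel $R\gamma$ in the naturality square to conclude $\eta_{T_{(A,\gamma)}}=e$). The standard repair is the adjunction bijection for $U\dashv R$: both $e$ and $\eta_{T_{(A,\gamma)}}$ are morphisms $T_{(A,\gamma)}\to RA$ whose transposes are $\varepsilon_A\cdot Ue=\varepsilon_A\cdot\gamma=1_A$ (counit axiom) and $\varepsilon_{UT_{(A,\gamma)}}\cdot U\eta_{T_{(A,\gamma)}}=1_A$ (triangle identity), so $e=\eta_{T_{(A,\gamma)}}$ and hence $U\eta_{T_{(A,\gamma)}}=\gamma$ on the nose. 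With that inserted, the rest is as routine as you claim: your treatment of the other composite is correct (naturality of $\eta$ at $\eta_T$ exhibits $\eta_T$ as a fork whose $U$-image is the given split equaliser, so uniqueness of creation forces the created equaliser to be $\eta_T$ itself), and functoriality plus $ULh=h$ for a coalgebra morphism $h$ follow from the universal property together with the fact that $\gamma'$ is a split monomorphism.
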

\begin{lemma}
  \label{l:4}
  In a pullback diagram of functors, as displayed, $U$ satisfies
  condition~(\ref{item:14}) of Beck's
  Theorem~\ref{thm:3} if $V$ does so.
  \begin{equation}
    \label{eq:78}
    \diagram
    \mathcal{T}\ar[r]^Q\ar[d]_U&\mathcal{S}\ar[d]^V\\
    \mathcal{A}\ar[r]^-S&\mathcal{B}
    \enddiagram
  \end{equation}
\end{lemma}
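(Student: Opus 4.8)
The plan is to compute with the explicit pullback $\mathcal{T}=\mathcal{A}\times_{\mathcal{B}}\mathcal{S}$, whose objects are pairs $(a,x)$ with $Sa=Vx$ and whose morphisms $(a,x)\to(a',x')$ are pairs $(\alpha,\xi)$ with $\alpha\colon a\to a'$ in $\mathcal{A}$, $\xi\colon x\to x'$ in $\mathcal{S}$ and $S\alpha=V\xi$; the functors $U$ and $Q$ are the two projections. Given a parallel pair $u,v\colon t_0\rightrightarrows t_1$ in $\mathcal{T}$ with $t_i=(a_i,x_i)$, I would write $u=(u_0,u_1)$ and $v=(v_0,v_1)$ for their components, so that $Uu=u_0$, $Uv=v_0$ and $Su_0=Vu_1$, $Sv_0=Vv_1$. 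Assuming $u_0,v_0$ admit an absolute equaliser $e\colon a\to a_0$ in $\mathcal{A}$, I must produce a unique lift of $e$ to $\mathcal{T}$ and show it is an equaliser of $u,v$, which is exactly condition~(\ref{item:14}) of Theorem~\ref{thm:3} for $U$.

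First I would push the equaliser down to $\mathcal{B}$. Since $e$ is absolute it is preserved by $S$, so $Se\colon Sa\to Sa_0=Vx_0$ is an equaliser of $Su_0=Vu_1$ and $Sv_0=Vv_1$; moreover $Se$ is again absolute, because for any functor $W$ the arrow $WSe=(WS)e$ is an equaliser. Hence the pair $u_1,v_1$ in $\mathcal{S}$ has $V$-image admitting the absolute equaliser $Se$ in $\mathcal{B}$, and the hypothesis that $V$ satisfies condition~(\ref{item:14}) applies: there is a unique $\bar e\colon x\to x_0$ in $\mathcal{S}$ with $V\bar e=Se$ and $Vx=Sa$, and $\bar e$ is an equaliser of $u_1,v_1$. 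Setting $t=(a,x)$ (legitimate, as $Sa=Vx$) and $\tilde e=(e,\bar e)\colon t\to t_0$ (legitimate, as $Se=V\bar e$) gives a lift of $e$ with $U\tilde e=e$; its uniqueness along $U$ is immediate, since any other lift $(e,\bar e')$ satisfies $V\bar e'=Se=V\bar e$ over the same objects, forcing $\bar e'=\bar e$ by the uniqueness clause for $V$.

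Finally I would verify that $\tilde e$ equalises $u,v$. It is a fork because each component is, namely $u_0\cdot e=v_0\cdot e$ and $u_1\cdot\bar e=v_1\cdot\bar e$. For the universal property, take $(\beta,\chi)\colon (b,y)\to t_0$ with $u\cdot(\beta,\chi)=v\cdot(\beta,\chi)$, i.e. $u_0\cdot\beta=v_0\cdot\beta$ and $u_1\cdot\chi=v_1\cdot\chi$; the equaliser properties of $e$ and $\bar e$ yield unique $\beta_0\colon b\to a$ and $\chi_0\colon y\to x$ with $e\cdot\beta_0=\beta$ and $\bar e\cdot\chi_0=\chi$. The main obstacle is to check that $(\beta_0,\chi_0)$ is a morphism of $\mathcal{T}$, that is $S\beta_0=V\chi_0$, and this is where the fact that an equaliser is monic enters: from $S\beta=V\chi$ we get $Se\cdot S\beta_0=S\beta=V\chi=V\bar e\cdot V\chi_0=Se\cdot V\chi_0$, so cancelling the monomorphism $Se=V\bar e$ gives $S\beta_0=V\chi_0$. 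Thus $(\beta_0,\chi_0)$ factors $(\beta,\chi)$ through $\tilde e$, uniquely because each component factorisation is unique, so $\tilde e$ is an equaliser of $u,v$ lifting the absolute equaliser $e$. This shows that $U$ creates such equalisers, establishing condition~(\ref{item:14}) for $U$.
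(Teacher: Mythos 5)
Your proof is correct. The paper actually states Lemma~\ref{l:4} without any proof (it is treated as a standard fact), so there is nothing to compare against; what you give is the expected direct verification: work in the explicit pullback $\mathcal{A}\times_{\mathcal{B}}\mathcal{S}$, observe that $Se$ is again an \emph{absolute} equaliser because $W(Se)=(WS)e$ for any $W$, apply the creation hypothesis on $V$ to the pair $(u_1,v_1)$, and check that the lifted fork is an equaliser. The key step, and the place where absoluteness genuinely pays off, is the one you isolate: cancelling the monomorphism $Se=V\bar e$ to see that the induced factorisation $(\beta_0,\chi_0)$ is actually a morphism of the pullback category. One small reading note on your uniqueness step: the creation property of $V$ determines the \emph{pair} consisting of the domain object and the arrow over $(Sa,Se)$, not merely the arrow over a fixed object; read this way, your ``forcing $\bar e'=\bar e$'' also forces the domain $x'$ to equal $x$, which is exactly what the (strict) creation condition for $U$ requires.
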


\begin{rmk}
\label{rmk:9}
Even if $U\colon S\text-\mathrm{Emb}\to\C^\two$ is comonadic, the requirement
that the associated comonad has underlying copointed endofunctor $(L,\Phi)$
is necessary for Theorem~\ref{thm:2} to hold. This can be seen at the same time
as exploring what the theorem means in the case that the \Ord-categories \C\ and
$\mathcal{B}$ are ordinary categories. In this case, a \textsc{lari} in
$\mathcal{B}$ is an isomorphism, so $S\text-\mathrm{Emb}$ is the full
subcategory $S\text-\mathrm{Emb}\subset\C^\two$ of morphisms inverted by $S$. It
may very well be the case that $S\text-\mathrm{Emb}\subset\C^\two$ is a coreflective
subcategory while the adjunction $S\dashv G$ is not simple. For example, if \C\
has finite limits and intersection of all strong
monomorphisms~\cite[Thm.~3.3]{MR779198}.
\end{rmk}

\section{Simple monads}
\label{sec:simple-monads-1}

\begin{df}
  \label{df:12}
  Let $\C$ be an \Ord-category that admits comma-objects and pullbacks. A monad
  $\mathsf{T}=(T,\eta,\mu)$ on \C\ whose functor part $T$ is locally
  monotone (ie, \Ord-enriched) is \emph{simple} if the free $\mathsf{T}$-algebra adjunction is
  simple in the sense of Definition~\ref{df:10}.
  \begin{equation}
    \label{eq:71}
    \diagram
    \C\ar@<6pt>[r]^{F^\mathsf{T}}\ar@<-6pt>@{<-}[r]_{U^\mathsf{T}}
    \ar@{}[r]|-\bot&
    \mathsf{T}\text-\mathrm{Alg}
    \enddiagram
  \end{equation}
\end{df}
Explicitly, $\mathsf{T}$ is simple when, for each $f\colon X\to Y$ in \C, the
morphism $F^\mathsf{T}(L^\mathsf{T}f)$ is a right adjoint of $\varepsilon^\mathsf{T}_{F^\mathsf{T}X}\cdot F^\mathsf{T}q^\mathsf{T}_f$,
with these morphisms defined by the following diagram, where the square is
a comma-object.
\begin{equation}
  \label{eq:74}
  \diagram
  X\ar[dr]|{Lf}\ar@/^12pt/[drr]^{\eta_X}\ar@/_12pt/[ddr]_{f}&
  &\\
  &Kf\ar[d]|{Rf}\ar@{}[dr]|{\geq}\ar[r]^-{q_f}&
  TX\ar[d]^{Tf}\\
  &Y\ar[r]_-{\eta_Y}&
  TY
  \enddiagram
\end{equation}

We will be specially interested in simple monads that are lax idempotent.
\begin{lemma}
  \label{l:7}
  A lax idempotent \Ord-monad $\mathsf{T}$ on \C\ is simple if
  and only if there is an adjunction $T(Lf)\dashv \mu_X\cdot Tq_f$, where
  $\mu_X$ is the multiplication of $\mathsf{T}$.
\end{lemma}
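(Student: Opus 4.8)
The plan is to unwind Definition~\ref{df:10} for the free-algebra adjunction and then read off the adjunction in \C. In the notation of Definition~\ref{df:8} I take $S=F^{\mathsf{T}}$ and $G=U^{\mathsf{T}}$, so that $GS=T$, the unit $\eta$ is the unit of the monad, and the comma-object $Kf$ with its projections $q_f\colon Kf\to TX$ and $Rf\colon Kf\to Y$ is exactly the one displayed in~\eqref{eq:74}. The first thing I would record is that the counit of $F^{\mathsf{T}}\dashv U^{\mathsf{T}}$ at a free algebra is the multiplication, $\varepsilon_{F^{\mathsf{T}}X}=\mu_X$; hence the retract prescribed in Definition~\ref{df:10}, namely $\varepsilon_{SX}\cdot Sq_f$, is the algebra morphism whose underlying map in \C\ is $\mu_X\cdot Tq_f$.

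Next I would invoke Lemma~\ref{l:8}: because $\mathsf{T}$ is lax idempotent, $\mathsf{T}$-embeddings coincide with $T$-embeddings, so an $S$-embedding structure on $Lf$ in $\mathsf{T}\text-\mathrm{Alg}$ amounts to a \textsc{lari} structure on $T(Lf)$ in \C\ itself, and, adjoints in an \Ord-category being unique, the two right-adjoint retracts correspond under $U^{\mathsf{T}}$. Thus $\mathsf{T}$ is simple precisely when, for every $f$, the morphism $T(Lf)$ is a \textsc{lari} in \C\ whose \textsc{rali} is $\mu_X\cdot Tq_f$. This is the step that genuinely uses the lax idempotency hypothesis, transporting the defining condition from $\mathsf{T}\text-\mathrm{Alg}$ down to \C.

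It then remains to identify ``\textsc{lari} with prescribed \textsc{rali}'' with the asserted adjunction, the point being that one of the two adjunction inequalities is automatic. Reading $q_f\cdot Lf=\eta_X$ off~\eqref{eq:74} and using the unit law $\mu_X\cdot T\eta_X=1$, I compute
\[
  (\mu_X\cdot Tq_f)\cdot T(Lf)=\mu_X\cdot T(q_f\cdot Lf)=\mu_X\cdot T\eta_X=1_{TX},
\]
so the retraction identity holds for free and the unit inequality $1\leq(\mu_X\cdot Tq_f)\cdot T(Lf)$ is in fact an equality. Hence the existence of the \textsc{lari} structure reduces to the single counit inequality $T(Lf)\cdot\mu_X\cdot Tq_f\leq 1$, which is exactly the assertion that $T(Lf)\dashv\mu_X\cdot Tq_f$. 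Running these equivalences in both directions gives the lemma.

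I expect the only delicate point to be bookkeeping rather than substance: correctly pinning down the counit component $\varepsilon_{F^{\mathsf{T}}X}=\mu_X$ and the identification of $\varepsilon_{SX}\cdot Sq_f$ with $\mu_X\cdot Tq_f$, and being careful that Lemma~\ref{l:8} is precisely what licenses reading the $S$-embedding condition as a \textsc{lari} in \C\ (equivalently, that $\mu_X\cdot Tq_f$ is a genuine morphism of $\mathsf{T}$-algebras, which in any case follows from naturality of $\mu$ and associativity). Once these identifications are fixed, the equivalence is the two-line computation above together with Definition~\ref{df:10}.
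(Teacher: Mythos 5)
Your proof is correct and is essentially the paper's argument: in both, the forward direction is just an application of the forgetful \Ord-functor $U^{\mathsf{T}}$ (valid for any \Ord-monad), the unit/retraction identity is observed to be automatic, and lax idempotency enters only in the converse, through local fullness of $U^{\mathsf{T}}$. The only difference is packaging: the paper reflects the inequality $TLf\cdot\mu_X\cdot Tq_f\leq 1$ back up to $\mathsf{T}\text-\mathrm{Alg}$ directly, since local fullness means $U^{\mathsf{T}}$ reflects inequalities, whereas you route the same fact through Lemma~\ref{l:8} and uniqueness of adjoints (together with, implicitly, faithfulness of $U^{\mathsf{T}}$ to identify the resulting right adjoint with $\varepsilon_{F^{\mathsf{T}}X}\cdot F^{\mathsf{T}}q_f$), which is equivalent but slightly more roundabout bookkeeping.
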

\begin{proof}
  The simplicity of $\mathsf{T}$ is the existence of an inequality
  \begin{equation}
    \label{eq:104}
    F^\mathsf{T}Lf\cdot \varepsilon_{F^\mathsf{T}X}\cdot F^\mathsf{T}q_f \leq 1.
  \end{equation}
  Applying the forgetful \Ord-functor $U^\mathsf{T}$ one obtains
  \begin{equation}
    \label{eq:105}
    TLf\cdot \mu_X\cdot Tq_f\leq 1
  \end{equation}
  and thus the adjunction of the statement. All this holds for a general
  \Ord-monad $\mathsf{T}$. If $\mathsf{T}$ is lax idempotent, the forgetful
  \Ord-functor $U^\mathsf{T}\colon\mathsf{T}\text-\mathrm{Alg}\to\C$ is locally full
  and in particular it reflects inequalities between morphisms. It follows that
  \eqref{eq:105}~implies~\eqref{eq:104}.
\end{proof}
\begin{cor}\label{cor:new}
A lax idempotent \Ord-monad $\mathsf{T}$ on \C\ is simple if
  and only if $TLf\cdot q_f\leq\eta_{Kf}$.
\end{cor}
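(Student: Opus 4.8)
The plan is to derive the corollary directly from Lemma~\ref{l:7}, which already reduces simplicity of a lax idempotent $\mathsf{T}$ to the existence of the adjunction $TLf\dashv\mu_X\cdot Tq_f$ in \C. My first step would be to observe that the \emph{unit} half of this adjunction is automatic, so that the adjunction is equivalent to its counit inequality alone. Indeed, the comma-object diagram~\eqref{eq:74} gives $q_f\cdot Lf=\eta_X$, whence
\[
(\mu_X\cdot Tq_f)\cdot TLf=\mu_X\cdot T(q_f\cdot Lf)=\mu_X\cdot T\eta_X=1_{TX}
\]
by the unit axiom $\mu\cdot T\eta=1$. Thus $TLf\dashv\mu_X\cdot Tq_f$ holds if and only if the counit inequality $TLf\cdot\mu_X\cdot Tq_f\leq 1_{TKf}$ holds, and it remains to prove that this is equivalent to $TLf\cdot q_f\leq\eta_{Kf}$.

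For the implication from $TLf\cdot q_f\leq\eta_{Kf}$ to the counit inequality, I would use naturality of $\mu$ at $Lf\colon X\to Kf$, namely $TLf\cdot\mu_X=\mu_{Kf}\cdot T^2Lf$, to rewrite
\[
TLf\cdot\mu_X\cdot Tq_f=\mu_{Kf}\cdot T^2Lf\cdot Tq_f=\mu_{Kf}\cdot T(TLf\cdot q_f).
\]
Since $T$ is locally monotone and composition is monotone, the hypothesis $TLf\cdot q_f\leq\eta_{Kf}$ gives $\mu_{Kf}\cdot T(TLf\cdot q_f)\leq\mu_{Kf}\cdot T\eta_{Kf}=1_{TKf}$, again by $\mu\cdot T\eta=1$; this is exactly the counit inequality.

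For the converse, I would first record the identity $\mu_X\cdot Tq_f\cdot\eta_{Kf}=q_f$, which follows from naturality of $\eta$ at $q_f$ (yielding $Tq_f\cdot\eta_{Kf}=\eta_{TX}\cdot q_f$) together with the other unit axiom $\mu\cdot\eta T=1$, i.e.\ $\mu_X\cdot\eta_{TX}=1_{TX}$. Consequently, composing the counit inequality on the right with $\eta_{Kf}$ and using monotonicity of composition,
\[
TLf\cdot q_f=\bigl(TLf\cdot\mu_X\cdot Tq_f\bigr)\cdot\eta_{Kf}\leq 1_{TKf}\cdot\eta_{Kf}=\eta_{Kf},
\]
as required. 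This closes both directions. I do not expect any genuine obstacle here: the whole argument is a matter of tracking the two unit axioms of $\mathsf{T}$ and the naturality squares of $\eta$ and $\mu$. The one point deserving care is the opening reduction, namely recognising that the unit half of the adjunction in Lemma~\ref{l:7} is forced to be an equality, so that simplicity collapses to the single inequality that is then manipulated.
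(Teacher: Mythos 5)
Your proof is correct. It follows the paper's skeleton up to the decisive step: both arguments pass through Lemma~\ref{l:7}, reduce the adjunction $TLf\dashv\mu_X\cdot Tq_f$ to its counit inequality $TLf\cdot\mu_X\cdot Tq_f\leq 1$ (the unit being an equality, exactly as in the paper's~\eqref{eq:105}), and both use the naturality rewriting $TLf\cdot\mu_X\cdot Tq_f=\mu_{Kf}\cdot T(TLf\cdot q_f)$. Where you part ways is in how the equivalence of this counit inequality with $TLf\cdot q_f\leq\eta_{Kf}$ is justified. The paper disposes of it in one line by recognising $\mu_{Kf}\cdot T(TLf\cdot q_f)$ as the \emph{left extension} of $TLf\cdot q_f$ along $\eta_{Kf}$, invoking the lax-idempotency characterisation of Definition~\ref{df:2}~(\ref{item:8}), and then applying the universal property of left extensions against the identity. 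You instead verify both directions by hand: one direction from local monotonicity of $T$ and the unit law $\mu\cdot T\eta=1$, the other from naturality of $\eta$ and $\mu\cdot\eta T=1$. Your route is more elementary and, as a by-product, shows something slightly stronger: the equivalence
\begin{equation}
  \bigl(TLf\cdot\mu_X\cdot Tq_f\leq 1\bigr)
  \iff
  \bigl(TLf\cdot q_f\leq\eta_{Kf}\bigr)
\end{equation}
holds for \emph{any} \Ord-monad, lax idempotency being needed only to identify simplicity with the displayed counit inequality in the first place (Lemma~\ref{l:7}). What the paper's packaging buys, conversely, is conceptual brevity and a statement that generalises verbatim to extensions along $\eta_{Kf}$ against arbitrary comparison morphisms, not just the identity; your specialisation to the identity is precisely what lets you bypass lax idempotency in that step.
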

\begin{proof}
By lax idempotency of $\mathsf{T}$, the left extension of $TLf\cdot q_f:Kf\to
TKf$ along $\eta_{Kf}$ is $\mu_{Kf}\cdot T(TLf\cdot q_f)=TLf\cdot \mu_X\cdot
Tq_f$; see Definition~\ref{df:2}~(\ref{item:8}). Therefore, \eqref{eq:105} holds
if, and only if, $TLf\cdot q_f\leq \eta_{Kf}$.
\end{proof}
Putting together Theorem~\ref{thm:2} and Definition~\ref{df:20}, we have:
\begin{cor}
  \label{cor:1}
  Simple lax idempotent monads on \Ord-categories with comma-objects are fibrantly generating.
\end{cor}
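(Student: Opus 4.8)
The plan is to read the statement off from Theorem~\ref{thm:2} after translating the two hypotheses into its language. Write $S=F^{\mathsf{T}}$ and $G=U^{\mathsf{T}}$ for the free-algebra adjunction $F^{\mathsf{T}}\dashv U^{\mathsf{T}}\colon\mathsf{T}\text-\mathrm{Alg}\to\C$. By Definition~\ref{df:12}, saying that the lax idempotent monad $\mathsf{T}$ is simple is \emph{exactly} saying that this adjunction is simple in the sense of Definition~\ref{df:10}, i.e. condition~(\ref{item:11}) of Theorem~\ref{thm:2} holds for $S\dashv G$. Moreover, by Definition~\ref{df:19} the \Ord-category $\mathsf{T}\text-\mathrm{Emb}$ over $\C^\two$ is by definition $F^{\mathsf{T}}\text-\mathrm{Emb}=S\text-\mathrm{Emb}$, with the same forgetful \Ord-functor. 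So everything reduces to applying Theorem~\ref{thm:2} to $S\dashv G$ and rewriting $S\text-\mathrm{Emb}$ as $\mathsf{T}\text-\mathrm{Emb}$.

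First I would check that the free-algebra adjunction lies within the scope of Definitions~\ref{df:8} and~\ref{df:10}, so that Theorem~\ref{thm:2} is applicable. The construction of the functorial factorisation $f=Rf\cdot Lf$ uses the comma-object $Kf$ of Definition~\ref{df:8} (see diagram~\eqref{eq:74}), which exists in $\C$ directly by the comma-object hypothesis; and the comonad $\mathsf{E}$ on $\mathsf{T}\text-\mathrm{Alg}$ appearing in Lemma~\ref{l:6}, which underlies the proof of Theorem~\ref{thm:2}, requires $\mathsf{T}\text-\mathrm{Alg}$ to admit comma-objects. This last clause is the only point needing an argument: since $U^{\mathsf{T}}$ is monadic it creates all limits that $\C$ admits, comma-objects among them, so $\mathsf{T}\text-\mathrm{Alg}$ inherits its comma-objects from $\C$. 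This is precisely why the single assumption that $\C$ has comma-objects suffices, with no separate pullback hypothesis on $\C$.

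With the hypotheses in place, I would apply the implication (\ref{item:11})$\Rightarrow$(\ref{item:10}) of Theorem~\ref{thm:2} to the simple adjunction $S\dashv G$: it yields that the forgetful \Ord-functor $U\colon S\text-\mathrm{Emb}\to\C^\two$ admits a right adjoint (in fact the induced comonad has underlying functor $L$ and counit $\Phi$, though only the right adjoint is needed here). Rewriting $S\text-\mathrm{Emb}$ as $\mathsf{T}\text-\mathrm{Emb}$, this says that the forgetful \Ord-functor $\mathsf{T}\text-\mathrm{Emb}\to\C^\two$ has a right adjoint, which is exactly the defining property of $\mathsf{T}$ being fibrantly \slkz-generating in Definition~\ref{df:20}. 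This completes the argument.

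There is no genuine obstacle: the proof is a gluing of definitions by Theorem~\ref{thm:2}. The one step that demands care, and which I would spell out, is the verification in the second paragraph that $\mathsf{T}\text-\mathrm{Alg}$ carries the comma-objects required to instantiate Theorem~\ref{thm:2} for the free-algebra adjunction; everything else is pure bookkeeping. I would also note in passing that lax idempotency of $\mathsf{T}$ is not used in this deduction — simplicity alone gives fibrant generation — but it is retained in the statement because it is the setting in which the resulting right adjoint is subsequently exploited (for instance in Proposition~\ref{prop:4}, where it ensures the associated \textsc{awfs} is lax orthogonal).
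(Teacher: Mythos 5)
Your proposal is correct and follows essentially the same route as the paper, whose entire proof of Corollary~\ref{cor:1} consists of the phrase ``putting together Theorem~\ref{thm:2} and Definition~\ref{df:20}'' --- precisely the reduction you carry out via Definitions~\ref{df:12} and~\ref{df:19}. The extra details you supply (that $\mathsf{T}\text-\mathrm{Alg}$ inherits comma-objects because the monadic forgetful functor creates them, so Theorem~\ref{thm:2} indeed applies, and that lax idempotency is not actually used in this deduction) are left implicit by the paper but are accurate.
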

This means that, if $\C$ has comma-objects, each simple lax idempotent monad
$\mathsf{T}$ induces a \textsc{lofs} $(\mathsf{L},\mathsf{R})$ with
$\mathsf{L}\text-\mathrm{Coalg}$ isomorphic to $\mathsf{T}\text-\mathrm{Emb}$
over $\C^\two$.

\begin{prop}
  \label{prop:2}
  The monad $\mathsf{P}$ on \Ord\ described in Example~\ref{ex:3} is simple.
\end{prop}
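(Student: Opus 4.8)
The plan is to invoke Corollary~\ref{cor:new}, which characterises simplicity of a lax idempotent \Ord-monad $\mathsf{T}$ by the single inequality $TLf\cdot q_f\leq\eta_{Kf}$. Since $\mathsf{P}$ was already shown to be lax idempotent in Example~\ref{ex:3}, all that remains is to verify this inequality for $\mathsf{P}$ and an arbitrary monotone map $f\colon X\to Y$.

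First I would unwind the comma-object factorisation of Definition~\ref{df:8} in the case $GS=P$. The comma-object $Kf=Pf\downarrow\eta_Y$ becomes
\begin{equation*}
  Kf=\{(V,y)\in P(X)\times Y : f_*(V)\subseteq\darr y\}
  =\{(V,y) : V\in P(X),\ y\in Y,\ \forall x\in V.\ f(x)\leq y\},
\end{equation*}
with order inherited from $P(X)\times Y$, projections $q_f(V,y)=V$ and $Rf(V,y)=y$, and left factor $Lf$ determined by $q_f\cdot Lf=\eta_X$ and $Rf\cdot Lf=f$, so that $Lf(x)=(\darr x,f(x))$.

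With these explicit descriptions the inequality reduces to an elementary containment of down-closed subsets. For $(V,y)\in Kf$ one computes $P(Lf)(q_f(V,y))=(Lf)_*(V)=\bigcup_{x\in V}\darr(Lf(x))$, which must be shown to lie inside $\eta_{Kf}(V,y)=\darr(V,y)$. Taking an element $(W,z)$ of $(Lf)_*(V)$, there is some $x\in V$ with $W\subseteq\darr x$ and $z\leq f(x)$; then $W\subseteq\darr x\subseteq V$ because $V$ is down-closed and $x\in V$, while $z\leq f(x)\leq y$ because membership of $(V,y)$ in $Kf$ forces $f(x)\leq y$. Hence $(W,z)\in\darr(V,y)$, giving $P(Lf)\cdot q_f\leq\eta_{Kf}$ and, by Corollary~\ref{cor:new}, the simplicity of $\mathsf{P}$.

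The only genuine work lies in the first step: correctly isolating the membership condition defining $Kf$ and the formula for $Lf$ for the concrete down-set monad. Once that is done, the two defining properties of a pair $(V,y)\in Kf$ --~that $V$ be down-closed and that $f(x)\leq y$ for all $x\in V$~-- are precisely the two facts needed to conclude, so I do not anticipate any real obstacle beyond this bookkeeping.
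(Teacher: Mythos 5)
Your proposal is correct and follows essentially the same route as the paper: both invoke Corollary~\ref{cor:new}, use the same explicit descriptions of $Kf$ and $Lf(x)=(\darr x,f(x))$, and reduce the inequality $(Lf)_*\cdot q_f\leq\eta_{Kf}$ to the two defining facts that $V$ is down-closed and that $f(x)\leq y$ for $x\in V$. The only cosmetic difference is that the paper first transposes the containment across the adjunction $(Lf)_*\dashv(Lf)^*$ before checking elements, whereas you verify the inclusion $(Lf)_*(V)\subseteq\darr(V,y)$ directly; the underlying verification is identical.
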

\begin{proof}
  The proof uses Corollary~\ref{cor:new}, for which we shall need the description of the
  comma-object $Kf$ of~\eqref{eq:74} as
  \begin{equation}
    Kf=
    \bigl\{(W,y)\in P(X)\times Y: f_*(W)\subseteq\mathnormal\downarrow y\bigr\}
    =
    \bigl\{(W,y)\in P(X)\times Y: W\subseteq f^{*}(\mathnormal\downarrow y)\bigr\}
  \end{equation}
  and of the morphism $Lf\colon X\to Kf$ as $Lf(x)=(\mathnormal\downarrow
  x,f(x))$.

  We must show that
  \begin{equation}
    (Lf)_*\cdot q_f\leq \eta_{Kf}.
  \end{equation}
  Evaluating on $(W,y)\in Kf$, we have
  \begin{equation}
    (Lf)_*\cdot q_f(W,y)=(Lf)_*(W) \subseteq \eta_{Kf}(W,y)=\mathnormal\downarrow(W,y)
  \end{equation}
  if and only if
  \begin{equation}
    W\subseteq (Lf)^*\bigl(\mathnormal\downarrow(W,y)\bigr)
    =\bigl\{x\in X:(\mathnormal\downarrow x,f(x))\leq (W,y)\bigr\}.
  \end{equation}
  This last inequality always holds, since, for $w\in W$, the inclusion
  $\mathnormal\downarrow w\subseteq W$ always holds, and $f(w)\leq y$, because
  $f_*(W)\subseteq\mathnormal\downarrow y$.
\end{proof}
For each morphism $f\colon X\to Y$ there is a ``comparison'' morphism
\begin{equation}
  \kappa\colon T(Tf\downarrow\eta_Y)\longrightarrow T^2f\downarrow T\eta_Y
\end{equation}
induced by the
universal property of comma-objects. More explicitly, $\kappa$ is a morphism, as
displayed in the diagram below, unique with the property of making the triangles
\fbox{1} and \fbox{2} commutative.
\begin{equation}
\xymatrix@C=22pt@R=8pt{TX\ar[rrrrrr]^{T\eta_X}\ar[dddd]_{Tf}\ar[rd]|-{TLf}&&&&&&T^2X\ar[dddd]^{T^2f}\\
&TKf\ar[rrrrru]|(0.6){Tq_f}_(0.4){\fbox{1}}\ar[dddl]|(0.6){TRf}\ar@{}@<7pt>[ddl]^(0.3){\fbox{2}}\ar[dr]^\kappa\\
&&T^2f\downarrow T\eta_Y\ar[rrrruu]_{p_f}\ar[ddll]^{p_Y}\ar@{}[rrrrdd]|{\geq}\\
\\
TY\ar[rrrrrr]_{T\eta_Y}&&&&&&TTY}
\end{equation}
\begin{prop}
  \label{prop:12}
  A lax idempotent \Ord-monad $\mathsf{T}$ is simple provided that, for every
  $f$ and $u:Kf\to TKf$, $u\leq\eta_{Kf}$ whenever
  $\kappa\cdot u\leq \kappa\cdot\eta_{Kf}$, where $\kappa$ is the comparison
  morphism $TKf\to T^2f\downarrow T\eta_Y$.
\end{prop}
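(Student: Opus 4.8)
The plan is to reduce the statement to the criterion of Corollary~\ref{cor:new}: since $\mathsf{T}$ is lax idempotent, it is simple precisely when the single inequality $TLf\cdot q_f\leq\eta_{Kf}$ holds between morphisms $Kf\to TKf$, for every $f$. The hypothesis of the proposition is designed exactly to produce an inequality $u\leq\eta_{Kf}$ from the corresponding inequality after applying $\kappa$, so I would instantiate it at $u=TLf\cdot q_f$. Thus everything comes down to verifying the premise
\begin{equation*}
  \kappa\cdot(TLf\cdot q_f)\leq\kappa\cdot\eta_{Kf},
\end{equation*}
from which $TLf\cdot q_f\leq\eta_{Kf}$, and hence simplicity, follows at once.

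To check this premise I would exploit that both sides are morphisms $Kf\to T^2f\downarrow T\eta_Y$ into a comma-object. By the universal property of comma-objects recalled in \S\ref{sec:order-enrich-limits}, an inequality between two such morphisms can be tested after composing with the two projections $p_f$ and $p_Y$. Using the defining triangles \fbox{1} and \fbox{2} of the comparison morphism, that is, $p_f\cdot\kappa=Tq_f$ and $p_Y\cdot\kappa=TRf$, the premise becomes the conjunction of $Tq_f\cdot TLf\cdot q_f\leq Tq_f\cdot\eta_{Kf}$ and $TRf\cdot TLf\cdot q_f\leq TRf\cdot\eta_{Kf}$.

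For the $p_f$-component, functoriality of $T$ together with $q_f\cdot Lf=\eta_X$ rewrites the left-hand side as $T\eta_X\cdot q_f$, while naturality of $\eta$ rewrites the right-hand side $Tq_f\cdot\eta_{Kf}$ as $\eta_{TX}\cdot q_f$; the required inequality $T\eta_X\cdot q_f\leq\eta_{TX}\cdot q_f$ then follows by precomposing the lax idempotency relation $T\eta\leq\eta T$ of Definition~\ref{df:2}\,(\ref{item:6}) with $q_f$. For the $p_Y$-component, functoriality of $T$ and $Rf\cdot Lf=f$ rewrite the left-hand side as $Tf\cdot q_f$, while naturality of $\eta$ rewrites $TRf\cdot\eta_{Kf}$ as $\eta_Y\cdot Rf$; the resulting inequality $Tf\cdot q_f\leq\eta_Y\cdot Rf$ is nothing but the comma inequality defining $Kf=Tf\downarrow\eta_Y$, so it holds automatically. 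The argument is thus purely diagrammatic once Corollary~\ref{cor:new} is invoked; the only steps requiring attention are keeping the orientations of the inequalities straight and matching each comparison triangle to the correct projection. The pleasant point, and the reason the condition is natural, is that the codomain component of the premise is exactly the defining comma inequality, so the sole substantive input is the lax idempotency inequality $T\eta\leq\eta T$.
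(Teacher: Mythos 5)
Your proposal is correct and follows essentially the same route as the paper's proof: instantiate the hypothesis at $u=TLf\cdot q_f$ (as dictated by Corollary~\ref{cor:new}), verify the premise componentwise via the projections $p_f$ and $p_Y$ using the triangles $p_f\cdot\kappa=Tq_f$ and $p_Y\cdot\kappa=TRf$, with the two components reducing respectively to $T\eta_X\cdot q_f\leq\eta_{TX}\cdot q_f$ and the defining comma inequality $Tf\cdot q_f\leq\eta_Y\cdot Rf$, and then conclude by the two-dimensional universal property of the comma-object. The paper's argument is exactly this, written as two chains of equalities and inequalities.
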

\begin{proof}
From
\begin{gather}
  p_f\cdot\kappa\cdot TLf\cdot q_f=T\eta_X\cdot q_f\leq\eta_{TX}\cdot
  q_f=p_f\cdot \kappa\cdot\eta_{Kf}
  \\
  p_Y\cdot \kappa\cdot TLf\cdot q_f=Tf\cdot
  q_f\leq \eta_Y\cdot Rf=p_Y\cdot\kappa\cdot\eta_{Kf}
\end{gather}
and the definition of comma-object one has $\kappa\cdot TLf\cdot
q_f\leq\kappa\cdot\eta_{Kf}$, and the conclusion follows from the hypothesis and
Corollary~\ref{cor:new}.
\end{proof}
For example, the above proposition applies in the cases when $\kappa$ is a full morphism.

\section{Submonads of simple monads}
\label{sec:subm-simple-monads}
The aim of the present section is to provide easy criteria that will allow us to
recognise simple submonads of simple lax idempotent monads. These results will
be later used in Corollary~\ref{cor:new3} of \S \ref{sec:filter-monads}.

\begin{lemma}
  \label{l:10}
  Let $\mathsf{T}$ be an \Ord-monad.
  If $\mathsf{T}$ is lax idempotent, then $\mathsf{T}$-embeddings
  are full if and only if the components of the unit $X\to TX$ are full.
\end{lemma}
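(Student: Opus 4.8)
For a lax idempotent $\Ord$-monad $\mathsf{T}$, I must show that $\mathsf{T}$-embeddings are full morphisms (in the sense of Definition~\ref{df:15}) if and only if each unit component $\eta_X\colon X\to TX$ is full.

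Let me recall the relevant facts. A $\mathsf{T}$-embedding is a morphism $f$ such that $F^{\mathsf{T}}f$ has a $\textsc{lari}$ structure in $\mathsf{T}\text-\mathrm{Alg}$ (Definition~\ref{df:13}, Definition~\ref{df:19}). Since $\mathsf{T}$ is lax idempotent, Lemma~\ref{l:8} tells me that $\mathsf{T}$-embeddings coincide with $T$-embeddings, i.e. with morphisms $f$ such that $Tf$ is a $\textsc{lari}$ in $\C$. A key structural fact I expect to lean on is the observation from the proof of Proposition~\ref{prop:6}: each unit component $\eta_X$ is itself a $\mathsf{T}$-embedding, because of the adjunction $T\eta_X\dashv\mu_X$.

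Let me sketch each direction.

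**Plan.** First, for the easy direction, suppose all $\mathsf{T}$-embeddings are full. Since each $\eta_X$ is a $\mathsf{T}$-embedding (by the adjunction $T\eta_X\dashv\mu_X$ as noted in Proposition~\ref{prop:6}), it is full, giving one implication immediately. Let me emphasize this is essentially a one-line argument.

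For the converse, assume every $\eta_X$ is full, and let $f\colon X\to Y$ be a $\mathsf{T}$-embedding; I want to show $f$ is full. Since $f$ is a $\mathsf{T}$-embedding, $Tf\dashv r$ is a $\textsc{lari}$ for some $r\colon TY\to TX$ with $r\cdot Tf=1_{TX}$ and $Tf\cdot r\leq 1_{TY}$. The core idea is that $Tf$, being a $\textsc{lari}$ (hence a left adjoint that is split monic), is a full morphism: if $Tf(a)\leq Tf(b)$ then applying $r$ and using $r\cdot Tf=1$ gives $a\leq b$. So $Tf$ is full. Then I want to transport fullness of $Tf$ down to fullness of $f$ using the naturality square $\eta_Y\cdot f=Tf\cdot\eta_X$ together with fullness of the units. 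Concretely, to test fullness of $f$ in the enriched sense of Definition~\ref{df:15}(3), I take $Z\in\C$ and morphisms $u,v\colon Z\to X$ with $f\cdot u\leq f\cdot v$, and I must deduce $u\leq v$. Post-composing with $\eta_Y$ preserves the inequality, so $\eta_Y\cdot f\cdot u\leq\eta_Y\cdot f\cdot v$; rewriting via naturality gives $Tf\cdot\eta_X\cdot u\leq Tf\cdot\eta_X\cdot v$. Fullness of $Tf$ (as a morphism, tested against $Z$) then yields $\eta_X\cdot u\leq\eta_X\cdot v$, and finally fullness of $\eta_X$ gives $u\leq v$, as required.

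**Anticipated obstacle.** The main subtlety I expect is being careful about \emph{which} notion of fullness is in play and ensuring the chain of implications respects it. Definition~\ref{df:15} gives three variants (full monotone map, locally full functor, full morphism in an $\Ord$-category), and here ``full'' for morphisms means $\C(Z,f)$ is full for every $Z$ (Definition~\ref{df:15}(3)). So every step above must be read as an inequality between elements of appropriate hom-posets $\C(Z,-)$, not just between specific maps. In particular, the claim ``$Tf$ is full as a morphism'' needs the $\textsc{lari}$ retraction $r$ to interact with arbitrary test objects $Z$: given $g,h\colon Z\to TX$ with $Tf\cdot g\leq Tf\cdot h$ in $\C(Z,TY)$, post-composing with $r$ and using $r\cdot Tf=1$ and the monotonicity of composition gives $g\leq h$, so $\C(Z,Tf)$ is indeed full for every $Z$. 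Once I confirm that the retraction argument is uniform in $Z$ and that fullness of $\eta_X$ and $\eta_Y$ is likewise the morphism-level notion, the proof is a routine diagram chase and the two implications close cleanly.
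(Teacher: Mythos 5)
Your proof is correct and follows essentially the same route as the paper: the easy direction uses that each $\eta_X$ is a $\mathsf{T}$-embedding (via $T\eta_X\dashv\mu_X$), and the converse uses the naturality square $\eta_Y\cdot f=Tf\cdot\eta_X$, fullness of the \textsc{lari} $Tf$, and fullness of $\eta_X$ to cancel down to fullness of $f$. Your version merely unpacks elementwise (against a test object $Z$) what the paper states as ``a composite of full morphisms is full, and if $\eta_Y\cdot f$ is full then $f$ is full.''
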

\begin{proof}
  By definition of lax idempotent monad, the unit components
  $\eta_X\colon X\to TX$ are $\mathsf{T}$-embeddings, and, hence, they are full
  provided that $\mathsf{T}$-embeddings are full.

  Conversely, suppose that $f\colon X\to Y$ is a $\mathsf{T}$-embedding. Then,
  $\eta_Y\cdot f=Tf\cdot \eta_X$ is full, being a composition of the
  \textsc{lari} $Tf$ and the full morphism $\eta_X$. Therefore, $f$ is full.
\end{proof}

\begin{prop}
  \label{prop:8}
  Suppose that $\varphi\colon \mathsf{S}\to \mathsf{T}$ is a monad morphism between
  \Ord-monads and that its components $\varphi_X$ are
  $\mathsf{T}$-embeddings. If $\mathsf{T}$ is lax idempotent and the components of the unit $\eta_X:X\to TX$ are full, then $\mathsf{S}$ is lax idempotent, with full unit components $e_X:X\to SX$.
\end{prop}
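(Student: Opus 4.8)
The plan is to verify two things separately: that the unit components $e_X\colon X\to SX$ of $\mathsf{S}$ are full, and that $\mathsf{S}$ is lax idempotent, where for the latter I will check the condition $Se\leq eS$, i.e.\ condition~(\ref{item:6}) of Definition~\ref{df:2} applied to $\mathsf{S}$, which asks that $Se_X\leq e_{SX}$ for every $X$. Throughout I will use the elementary cancellation principle that if a composite $v\cdot u$ is full then its first factor $u$ is full: from $u\cdot a\leq u\cdot b$ one gets $v\cdot u\cdot a\leq v\cdot u\cdot b$ by monotonicity of composition, whence $a\leq b$.

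First I would settle full-ness of the relevant units. By the unit axiom of a monad morphism, $\varphi_Y\cdot e_Y=\eta_Y$ for every $Y$, and $\eta_Y$ is full by hypothesis; the cancellation principle then makes both $e_Y$ and $\varphi_Y$ full. This already gives the asserted full-ness of the unit of $\mathsf{S}$. (Full-ness of each $\varphi_Y$ also follows from Lemma~\ref{l:10}, since the $\varphi_Y$ are $\mathsf{T}$-embeddings and $\mathsf{T}$ is lax idempotent with full unit.)

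Next I would establish $Se_X\leq e_{SX}$. Since $\varphi_{SX}$ is full, this is equivalent to $\varphi_{SX}\cdot Se_X\leq\varphi_{SX}\cdot e_{SX}$. Naturality of $\varphi$ at $e_X$ gives $\varphi_{SX}\cdot Se_X=Te_X\cdot\varphi_X$, while the unit axiom gives $\varphi_{SX}\cdot e_{SX}=\eta_{SX}$, so it suffices to prove $Te_X\cdot\varphi_X\leq\eta_{SX}$. Here I would use that $T\varphi_X$ is full: because $\varphi_X$ is a $\mathsf{T}$-embedding, $F^{\mathsf{T}}\varphi_X$ is a \textsc{lari} in $\mathsf{T}\text-\mathrm{Alg}$, and applying the locally monotone forgetful functor $U^{\mathsf{T}}$ exhibits $T\varphi_X$ as a \textsc{lari} in \C, hence full. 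Thus $Te_X\cdot\varphi_X\leq\eta_{SX}$ is in turn equivalent to $T\varphi_X\cdot Te_X\cdot\varphi_X\leq T\varphi_X\cdot\eta_{SX}$, and both sides simplify by functoriality and naturality of $\eta$:
\begin{equation*}
  T\varphi_X\cdot Te_X\cdot\varphi_X=T(\varphi_X\cdot e_X)\cdot\varphi_X=T\eta_X\cdot\varphi_X,\qquad T\varphi_X\cdot\eta_{SX}=\eta_{TX}\cdot\varphi_X.
\end{equation*}
So the desired inequality reduces to $T\eta_X\cdot\varphi_X\leq\eta_{TX}\cdot\varphi_X$, which follows by precomposing with $\varphi_X$ the lax-idempotency inequality $T\eta_X\leq\eta_{TX}$ of $\mathsf{T}$ (condition~(\ref{item:6}) of Definition~\ref{df:2}). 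Unwinding the two full-ness cancellations then yields $Se_X\leq e_{SX}$, so $\mathsf{S}$ is lax idempotent.

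The only place where the $\mathsf{T}$-embedding hypothesis is used in an essential way — beyond the mere full-ness of $\varphi$ — is the full-ness of $T\varphi_X$, and this is the step I expect to be the crux. Plain full-ness of $\varphi_X$ would not suffice, since $T$ need not preserve full morphisms; what saves the argument is that $T$ does preserve the \textsc{lari} witnessing that $\varphi_X$ is a $\mathsf{T}$-embedding. Everything else is naturality, functoriality, and the cancellation principle for full morphisms.
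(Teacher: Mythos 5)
Your proof is correct and takes essentially the same route as the paper's: the paper's single chain $\varphi_{TX}\cdot S\varphi_X\cdot Se_X=T\eta_X\cdot\varphi_X\leq\eta_{TX}\cdot\varphi_X=\varphi_{TX}\cdot S\varphi_X\cdot e_{SX}$, cancelled against the full composite $T\varphi_X\cdot\varphi_{SX}=\varphi_{TX}\cdot S\varphi_X$, is exactly your two-step cancellation (first by $T\varphi_X$, then by $\varphi_{SX}$) packaged into one, and fullness of $e_X$ is obtained identically by cancelling in $\eta_X=\varphi_X\cdot e_X$. The only slip is the claim that cancellation in $\varphi_Y\cdot e_Y=\eta_Y$ also makes $\varphi_Y$ full --- cancellation yields only the first factor $e_Y$ --- but your parenthetical appeal to Lemma~\ref{l:10} correctly supplies the fullness of $\varphi_Y$, so nothing in the argument is broken.
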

\begin{proof}
That $\mathsf{S}$ is lax idempotent follows from the following calculations and fullness of $T\varphi_X\cdot\varphi_{SX}=\varphi_{TX}\cdot S\varphi_X$:
\[\varphi_{TX}\cdot S\varphi_X\cdot Se_X=T\eta_X\cdot \varphi_X\leq \eta_{TX}\cdot\varphi_X=\varphi_{TX}\cdot e_{TX}\cdot \varphi_X=\varphi_{TX}\cdot S\varphi_X\cdot e_{SX}.\]
Moreover, with $\eta_X=\varphi_X\cdot e_X$ full, also $e_X$ is full.
\end{proof}

We say that a morphism $f\colon X\to Y$ is a \emph{pullback-stable $\mathsf{T}$-embedding}
if the pullback of $f$ along any morphism into $Y$ is a
$\mathsf{T}$-embedding.
\begin{thm}
  \label{thm:8}
  Suppose that $\varphi\colon\mathsf{S}\to\mathsf{T}$ is a monad morphism
  between \Ord-monads whose components are pullback-stable
  $\mathsf{T}$-embeddings, and that $\mathsf{T}$-embeddings are full. If $\mathsf{T}$ is lax idempotent, then
  $\mathsf{S}$ is simple whenever $\mathsf{T}$ is so.
\end{thm}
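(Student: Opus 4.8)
The plan is to reduce the simplicity of $\mathsf{S}$ to that of $\mathsf{T}$ by comparing the two comma-object factorisations through the monad morphism $\varphi$. First I would record that $\mathsf{S}$ is lax idempotent with full unit components $e_X\colon X\to SX$: this is precisely Proposition~\ref{prop:8}, whose hypotheses hold here because each $\varphi_X$ is a $\mathsf{T}$-embedding and $\mathsf{T}$-embeddings are full (equivalently, by Lemma~\ref{l:10}, the units $\eta_X$ are full). Having $\mathsf{S}$ lax idempotent allows me to invoke the criterion of Corollary~\ref{cor:new}: it suffices to prove the inequality $S(L^S f)\cdot q^S_f\leq e_{K^S f}$, where $K^S f=Sf\downarrow e_Y$ and $q^S_f$, $L^S f$ are the data of the $\mathsf{S}$-factorisation~\eqref{eq:74}.

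Next I would introduce the comparison morphism $\bar\varphi_f\colon K^S f\to K^T f$ between the $\mathsf{S}$- and $\mathsf{T}$-comma-objects, induced by $\varphi_X$, $\varphi_Y$ and the unit compatibility $\eta_Y=\varphi_Y\cdot e_Y$; it is characterised by $q^T_f\cdot\bar\varphi_f=\varphi_X\cdot q^S_f$ and $R^T f\cdot\bar\varphi_f=R^S f$, whence one checks at once that $L^T f=\bar\varphi_f\cdot L^S f$. The key structural claim is that the square with sides $q^S_f$, $\bar\varphi_f$, $\varphi_X$, $q^T_f$ is a pullback. Granting this, $\bar\varphi_f$ is a pullback of $\varphi_X$ along $q^T_f\colon K^T f\to TX$, and therefore a $\mathsf{T}$-embedding, since by hypothesis $\varphi_X$ is a \emph{pullback-stable} $\mathsf{T}$-embedding.

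With $\bar\varphi_f$ a $\mathsf{T}$-embedding, $T\bar\varphi_f$ is a \textsc{lari}, so it has a right adjoint $g$ with $g\cdot T\bar\varphi_f=1$. I would then take the $\mathsf{T}$-simplicity inequality $T(L^T f)\cdot q^T_f\leq\eta_{K^T f}$ of Corollary~\ref{cor:new}, rewrite $T(L^T f)=T\bar\varphi_f\cdot T(L^S f)$, compose with $g$ on the left and with $\bar\varphi_f$ on the right, and use the naturality $\eta_{K^T f}\cdot\bar\varphi_f=T\bar\varphi_f\cdot\eta_{K^S f}$ to obtain $T(L^S f)\cdot q^T_f\cdot\bar\varphi_f\leq\eta_{K^S f}$. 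The left-hand side equals $\varphi_{K^S f}\cdot S(L^S f)\cdot q^S_f$ (by naturality of $\varphi$ and the identity $q^T_f\cdot\bar\varphi_f=\varphi_X\cdot q^S_f$), while the right-hand side is $\varphi_{K^S f}\cdot e_{K^S f}$; since $\varphi_{K^S f}$ is a $\mathsf{T}$-embedding and hence full, cancelling it yields $S(L^S f)\cdot q^S_f\leq e_{K^S f}$, which is exactly the desired criterion.

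The hard part will be the pullback claim of the second paragraph. Proving it amounts to matching the two-dimensional universal properties of the comma object $Sf\downarrow e_Y$ and of the pullback of $q^T_f$ along $\varphi_X$: a cone $(a\colon W\to SX,\ b\colon W\to Y)$ into the latter comes equipped with $Tf\cdot\varphi_X\cdot a\leq\eta_Y\cdot b$, that is $\varphi_Y\cdot Sf\cdot a\leq\varphi_Y\cdot e_Y\cdot b$, and one recovers the comma-object inequality $Sf\cdot a\leq e_Y\cdot b$ precisely by reflecting along the full morphism $\varphi_Y$. This is the single place where the fullness of $\mathsf{T}$-embeddings is indispensable, and checking that the resulting bijection of cones also preserves and reflects the ordering (once more via fullness of the components of $\varphi$) is the step that requires genuine care.
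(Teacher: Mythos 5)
Your proposal is correct and takes essentially the same route as the paper's proof: the comparison morphism $\overline{\varphi}_f\colon K_Sf\to K_Tf$ sitting in a pullback square over $\varphi_X$, pullback-stability to conclude $\overline{\varphi}_f$ is a $\mathsf{T}$-embedding, the simplicity criterion of Corollary~\ref{cor:new}, and then the same chain of (in)equalities followed by cancellation of full morphisms. The only differences are presentational: the paper cancels the single full composite $T\overline{\varphi}_f\cdot\varphi_{K_Sf}$ in one step where you cancel $T\overline{\varphi}_f$ (via its retraction) and $\varphi_{K_Sf}$ (via fullness) separately, and it asserts the pullback square and the lax idempotency of $\mathsf{S}$ without the explicit justifications (fullness of $\varphi_Y$, Proposition~\ref{prop:8}) that you rightly supply.
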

\begin{proof}
Let us denote the unit of $\mathsf{S}$ by $e\colon 1\Rightarrow S$, and  the
\Ord-functorial factorisations obtained from $\mathsf{S}$ and
$\mathsf{T}$ following the construction of the comma-object~\eqref{eq:74}, respectively, by
\begin{equation}
  \label{eq:144}
  \bigl(X\xrightarrow{L_Sf}K_Sf\xrightarrow{R_Sf}Y\bigr)
  =
  \bigl(X\xrightarrow{f}Y\bigr)
  =
  \bigl(X\xrightarrow{L_Tf}K_Tf\xrightarrow{R_Tf}Y\bigr)
\end{equation}
Consider the following diagram
where $K_Tf=Tf\downarrow \eta_Y$, $K_Sf=Sf\downarrow e_Y$, and $L_Tf=\overline{\varphi}_f\cdot L_Sf$, and note that $\fbox{1}$ is a pullback.
\[\xymatrix@C=15pt@R=10pt{X\ar[rrr]^{e_X}\ar[dddd]_f\ar[rd]^{L_Sf}&&&SX\ar[rrr]^{\varphi_X}&&&TX\ar[dddd]^{Tf}\\
&K_Sf\ar[rru]_{t_f}\ar[rd]^{\overline{\varphi}_f}\ar[dddl]^{R_Sf}\ar@{}[rrrrru]|{\fbox{1}}\\
&&K_Tf\ar[rrrruu]_{q_f}\ar[ddll]^{R_Tf}\ar@{}[rrrrdd]|{\geq}\\
\\
Y\ar[rrr]_{e_Y}&&&SY\ar[rrr]_{\varphi_Y}&&&TY}\]
By Corollary \ref{cor:new} to conclude that $S$ is simple it is enough to show that $SL_Sf\cdot t_f\leq e_{K_Sf}$. And this inequality follows from the following calculations, using the fullness of $T\overline{\varphi_f}\cdot \varphi_{K_Sf}$.
\begin{multline}
  T\overline{\varphi}_f\cdot \varphi_{K_Sf}\cdot SL_Sf\cdot t_f=
  T\overline{\varphi}_f\cdot TL_Sf\cdot \varphi_X\cdot t_f=
  T\overline{\varphi}_f\cdot TL_Sf\cdot
  q_f\cdot\overline{\varphi}_f
  \leq
  \\
  \leq
  \eta_{Kf}\cdot\overline{\varphi}_f=
  T\overline{\varphi}_f\cdot\varphi_{K_Sf}\cdot e_{K_Sf}\qedhere
\end{multline}
\end{proof}

\begin{cor}
  \label{cor:2}
  Suppose that $\varphi\colon\mathsf{S}\to\mathsf{T}$ is a monad morphism between
  \Ord-monads whose components are $\mathsf{T}$-embeddings, and where
  $\mathsf{T}$ is lax idempotent and simple, with full unit components $X\to TX$. Then:
  \begin{enumerate}
  \item $\mathsf{S}$ is lax idempotent and simple, with full unit components $X\to SX$;
  \item every $\mathsf{S}$-embedding is a $\mathsf{T}$-embedding;
  \item $\mathsf{S}$-embeddings are full.
  \end{enumerate}
\end{cor}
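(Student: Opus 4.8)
The plan is to read off the three assertions from the results already established about the comparison $\varphi$, assembling rather than recomputing. For part~(1), the lax idempotency of $\mathsf{S}$ together with the fullness of its unit components $e_X\colon X\to SX$ is exactly the conclusion of Proposition~\ref{prop:8}: its hypotheses hold here, since the $\varphi_X$ are $\mathsf{T}$-embeddings, $\mathsf{T}$ is lax idempotent, and the $\eta_X\colon X\to TX$ are full. Simplicity of $\mathsf{S}$ is then precisely Theorem~\ref{thm:8}, whose fullness hypothesis (\,$\mathsf{T}$-embeddings are full\,) is supplied by Lemma~\ref{l:10} from the fullness of the unit, and which, because $\mathsf{T}$ is assumed simple, gives that $\mathsf{S}$ is simple.

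Part~(3) I would obtain by a second application of Lemma~\ref{l:10}, this time to $\mathsf{S}$: by part~(1) the monad $\mathsf{S}$ is lax idempotent with full unit components, so the lemma yields at once that $\mathsf{S}$-embeddings are full. Part~(2) is the only clause not read off verbatim from an earlier statement, and the mechanism there is restriction of algebras along $\varphi$. The monad morphism $\varphi\colon\mathsf{S}\to\mathsf{T}$ turns every $\mathsf{T}$-algebra $(A,a\colon TA\to A)$ into an $\mathsf{S}$-algebra $(A,\,a\cdot\varphi_A\colon SA\to A)$ on the \emph{same} underlying object. By the description of embeddings used in Proposition~\ref{prop:3}, a morphism $f\colon X\to Y$ is an $\mathsf{S}$-embedding exactly when $\C(f,A)\colon\C(Y,A)\to\C(X,A)$ carries a \textsc{rali} structure for every $\mathsf{S}$-algebra $A$, and the identical characterisation (now for $\mathsf{S}$ lax idempotent, by part~(1)) describes $\mathsf{T}$-embeddings in terms of $\mathsf{T}$-algebras. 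Since $\C(f,A)$ depends only on $f$ and the underlying object $A$, and every $\mathsf{T}$-algebra is in particular an $\mathsf{S}$-algebra, the \textsc{rali} structures guaranteed for all $\mathsf{S}$-algebras specialise to the ones required for all $\mathsf{T}$-algebras; hence each $\mathsf{S}$-embedding is a $\mathsf{T}$-embedding.

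The point that will need care is the simplicity clause of part~(1): Theorem~\ref{thm:8} is phrased for components that are \emph{pullback-stable} $\mathsf{T}$-embeddings, so the plan is to read the hypothesis of the corollary in that intended strength. It is worth recording why the fullness needed inside the proof of Theorem~\ref{thm:8} is in any case available: in an $\Ord$-category the pullback of a full morphism is again full (if $\varphi\colon A\to C$ is full, then in the pullback $B\times_C A$ the projection onto $B$ reflects the order, since $b\leq b'$ forces $q(b)\leq q(b')$, whence $\varphi(a)\leq\varphi(a')$ and so $a\leq a'$), so the comparison $\overline{\varphi}_f\colon K_Sf\to K_Tf$ inherits fullness from the components $\varphi_X$. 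The role of pullback-stability is then solely to promote $\overline{\varphi}_f$ to a $\mathsf{T}$-embedding, so that $T\overline{\varphi}_f$ is a \textsc{lari}, making $T\overline{\varphi}_f\cdot\varphi_{K_Sf}$ full and licensing the cancellation that reduces the simplicity of $\mathsf{S}$ (via Corollary~\ref{cor:new}) to that of $\mathsf{T}$. I expect everything else to be a direct citation, with this reconciliation the one place demanding attention.
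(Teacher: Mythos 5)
The sticking point is exactly the one you flagged: the simplicity clause of (1). Your plan to ``read the hypothesis of the corollary in that intended strength'', i.e.\ to add pullback-stability of the components $\varphi_X$, does not prove the corollary as stated --- it proves a statement with strictly stronger hypotheses. You have correctly located the role of pullback-stability in Theorem~\ref{thm:8}: it is what makes $\overline{\varphi}_f$ a $\mathsf{T}$-embedding, so that $T\overline{\varphi}_f$ is a \textsc{lari} and $T\overline{\varphi}_f\cdot\varphi_{K_Sf}$ is full; and you are right that fullness of $\overline{\varphi}_f$ itself (as a pullback of the full morphism $\varphi_X$) is of no help, because $T$ need not preserve fullness. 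But no argument is offered for obtaining that fullness from the corollary's actual hypotheses, and the obvious attempts fail: part (2) says nothing about $\overline{\varphi}_f$ (nothing exhibits it as an $\mathsf{S}$-embedding), and the available \textsc{lari} cancellation goes the wrong way (knowing that $L_Tf=\overline{\varphi}_f\cdot L_Sf$ is a $\mathsf{T}$-embedding gives no information about $\overline{\varphi}_f$). For what it is worth, the paper's own proof is equally silent here: it disposes of (1) by citing Proposition~\ref{prop:8}, which yields lax idempotency and fullness of the unit of $\mathsf{S}$ but says nothing about simplicity, and it never invokes Theorem~\ref{thm:8} nor accounts for its pullback-stability hypothesis. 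So you have put your finger on a genuine lacuna --- one shared by the paper --- but your proposal, as written, does not close it.

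On (2) and (3) your arguments are correct but differ from the paper's, and the difference is worth recording. For (2) the paper does not go through algebra restriction and Proposition~\ref{prop:3} at all; it argues by cancellation: $e_X$ is a $\mathsf{T}$-embedding because both $\eta_X=\varphi_X\cdot e_X$ and $\varphi_X$ are; for an $\mathsf{S}$-embedding $f$ the morphism $Sf$ is a \textsc{lari} in $\C$ (Lemma~\ref{l:8}, using lax idempotency of $\mathsf{S}$ from (1)), hence a $\mathsf{T}$-embedding; then $e_Y$ and $e_Y\cdot f=Sf\cdot e_X$ are $\mathsf{T}$-embeddings, and cancellation gives that $f$ is one. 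Besides being more elementary, this avoids an extra hypothesis your route imports: Proposition~\ref{prop:3} assumes that $\C$ has a terminal object, which the corollary does not. (Your idea --- every $\mathsf{T}$-algebra is an $\mathsf{S}$-algebra via $\varphi$, and the natural \textsc{rali} structures restrict --- can be made terminal-object-free by arguing with~\eqref{eq:139} and Yoneda directly, keeping track of the naturality of the \textsc{rali} structures, rather than by citing Proposition~\ref{prop:3}.) Your (3), Lemma~\ref{l:10} applied to $\mathsf{S}$, is perfectly fine; the paper instead deduces it from (2) together with fullness of $\mathsf{T}$-embeddings, and either way it is a one-liner.
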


\begin{proof}
(1) follows from Proposition \ref{prop:8}, while (3) follows directly from (2) and our assumptions. To show (2), first note that the unit components $e_X:X\to SX$ are $\mathsf{T}$-embeddings since both $\eta_X=\varphi_X\cdot e_X$ and $\varphi_X$ are. Now let $f:X\to Y$ be an $\mathsf{S}$-embedding. As a \textsc{lari}, $Sf$ is a $\mathsf{T}$-embedding, and so is $f$ because both $e_Y$ and $e_Y\cdot f=Sf\cdot e_X$ are $\mathsf{T}$-embeddings.
\end{proof}

\section{Filter monads}
\label{sec:filter-monads}

In this section we exhibit \textsc{awfs}s on the \Ord-category of \Tzero\
topological spaces arising from simple lax idempotent \Ord-monads.
These factorisations were constructed in \cite{MR2927175}.

As mentioned in Example \ref{ex:2} each \Tzero\ topological space $X$ carries an order given by
\begin{equation}
  \label{eq:158}
  x\leq y \text{ if and only if } y\in\overline{\{x\}}
\end{equation}
--~this is the opposite of what is usually called the \emph{specialisation order}. This
induces an order structure on each hom-set $\mathbf{Top}_0(X,Y)$ by defining $f\leq
g$ if $f(x)\leq g(x)$, for all $x\in X$, making $\mathbf{Top}_0$ into an
\Ord-enriched category.

A comma-object $f\downarrow g$ in $\mathbf{Top}_0$ can be described as
the subspace of $X\times Y$ defined by the subset $\{(x,y)\in X\times
Y\colon{}f(x)\leq g(y)\}$.
\begin{equation}
  \label{eq:149}
  \diagram
  f\downarrow g \ar[r]^-{d_1}\ar[d]_{d_0}
  \drtwocell<\omit>{'\leq}
  &
  Y\ar[d]^g
  \\
  X\ar[r]_-f
  &
  Z
  \enddiagram
\end{equation}

Denote by $\mathsf{F}\colon{}\mathbf{Top}_0\to\mathbf{Top}_0$ the filter
monad. If $X$ is a \Tzero\ space, $FX$ is the set of filters of open sets of $X$,
with topology generated by the subsets $U^\varhash=\{\varphi\in
FX:U\in\varphi\}$, where $U\in\mathcal{O}(X)$. The (opposite of the)
specialisation order on $FX$ results in the opposite of the inclusion of
filters. In particular, $FX$ is a poset. If $f\colon X\to Y$ is continuous, then $Ff$ is defined by
$Ff(\varphi)=\{V\in\mathcal{O}(Y):f^{-1}(V)\in\varphi\}$. The unit of the
monad has components $\eta_X\colon X\to FX$, where $\eta_X(x)$ is the
\emph{principal filter} generated by $x$, that is
$\eta_X(x)=\{U\in\mathcal{O}(X):x\in U\}$.
The multiplication of the monad has
components $\mu_X\colon F^2X\to FX$, given by
$\mu_X(\Theta)=\{U\in\mathcal{O}(X):U^\varhash\in\Theta\}$.

Observe that $\eta_X$ is a full morphism. It is in fact an \emph{embedding} meaning a topological embedding, in the usual sense:
a continuous function that is an homeomorphism onto its image, where the latter
is equipped with the subspace topology.

It was shown in \cite{MR0367013}
that the category of algebras for this monad is
isomorphic to the category whose objects are continuous lattices
\cite{MR0404073} and morphisms poset maps that preserve directed sups and
arbitrary infs. Our choice of the (opposite of the) specialisation order on
spaces, which is the opposite of the order used in \cite{MR0367013}, grants a
few comments as a way of avoiding confusion. A space $X\in\mathbf{Top}_0$ has an
$\mathsf{F}$-algebra structure precisely when the opposite of the poset
$(X,\leq)$ is a continuous lattice, where $\leq$ is the order
\eqref{eq:158}. The topology of the space $X$ can be recovered as the Scott
topology of the continuous lattice $(X,\leq)^{\mathrm{op}}$. A morphism of
$\mathsf{F}$-algebras $f\colon X\to Y$ is a continuous function that preserves
arbitrary suprema, as a poset map $(X,\leq)\to(Y,\leq)$
\cite[Thm. 4.4]{MR0367013}.

The filter monad $\mathsf{F}$ was shown to be lax idempotent in
\cite{MR1718976}, where it is also proved that a continuous function $f$ between
\Tzero\ spaces is an embedding if and only if $Ff$ is a \textsc{lari}. In other
words, $\mathsf{F}$-embeddings are precisely the topological embeddings.

%

%
\begin{thm}
  \label{thm:filtermonad}
  The \Ord-monad $\mathsf{F}$ is simple.
\end{thm}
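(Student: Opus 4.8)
The plan is to apply Corollary~\ref{cor:new}. The filter monad $\mathsf{F}$ is lax idempotent (as recalled just before the statement), so simplicity is equivalent to the single inequality $FLf\cdot q_f\leq\eta_{Kf}$ in the ordered hom-set $\mathbf{Top}_0(Kf,FKf)$, for every continuous $f\colon X\to Y$. First I would make the comma-object $Kf=Ff\downarrow\eta_Y$ explicit. Unwinding the defining condition $Ff(\varphi)\leq\eta_Y(y)$ and remembering that $\leq$ on $FY$ is the \emph{opposite} of inclusion of filters, one obtains
\[
  Kf=\bigl\{(\varphi,y)\in FX\times Y:\ y\in V\in\mathcal{O}(Y)\ \Rightarrow\ f^{-1}(V)\in\varphi\bigr\},
\]
with $q_f$ and $Rf$ the two projections and $Lf(x)=(\eta_X(x),f(x))$, the latter lying in $Kf$ because $Ff(\eta_X(x))=\eta_Y(f(x))$. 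The topology on $Kf$ is inherited from $FX\times Y$, so it admits a subbasis given by the traces on $Kf$ of the sets $U^\varhash\times Y$ (for $U\in\mathcal{O}(X)$) and $FX\times V$ (for $V\in\mathcal{O}(Y)$).

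Next I would unravel the inequality. Using once more that $\leq$ on $FKf$ is reverse inclusion and that the order on homs is pointwise, $FLf\cdot q_f\leq\eta_{Kf}$ says exactly that for each $(\varphi,y)\in Kf$ the neighbourhood filter $\eta_{Kf}(\varphi,y)$ is contained in $FLf(\varphi)=\{W\in\mathcal{O}(Kf):(Lf)^{-1}(W)\in\varphi\}$; equivalently, every open $W\subseteq Kf$ containing $(\varphi,y)$ satisfies $(Lf)^{-1}(W)\in\varphi$. Since $FLf(\varphi)$ is a filter, hence closed under finite intersections and supersets, it suffices to verify this for $W$ ranging over the two kinds of subbasic opens above.

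Both cases are short. For $W=(U^\varhash\times Y)\cap Kf$ one has $(\varphi,y)\in W\iff U\in\varphi$ and $(Lf)^{-1}(W)=U$, so the required membership $U\in\varphi$ is immediate. For $W=(FX\times V)\cap Kf$ one has $(\varphi,y)\in W\iff y\in V$ and $(Lf)^{-1}(W)=f^{-1}(V)$; here the condition $f^{-1}(V)\in\varphi$ is precisely the defining property of $Kf$ applied to the open set $V\ni y$. This last case is the only place where the comma-object structure is genuinely used, and it is the crux: simplicity of $\mathsf{F}$ reduces exactly to the tautology that membership $(\varphi,y)\in Kf$ encodes ``$f^{-1}(V)\in\varphi$ whenever $y\in V$''. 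With both cases settled, $FLf\cdot q_f\leq\eta_{Kf}$ holds and Corollary~\ref{cor:new} yields simplicity. I do not anticipate a genuine obstacle; the only delicate points are keeping track of the reversed filter order and justifying the reduction from arbitrary to subbasic neighbourhoods, both of which are handled by the fact that $FLf(\varphi)$ is a filter.
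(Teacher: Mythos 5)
Your proof is correct, and it takes a more direct route than the paper's. You verify the inequality $FLf\cdot q_f\leq\eta_{Kf}$ of Corollary~\ref{cor:new} head-on: you describe the comma object $Kf=Ff\downarrow\eta_Y$ as a subspace of $FX\times Y$, note that $FLf(\varphi)$ is a filter of open sets (so it suffices to test subbasic neighbourhoods), and settle the two subbasic cases, where the first uses $(Lf)^{-1}\bigl((U^\varhash\times Y)\cap Kf\bigr)=U$ and the second uses precisely the defining condition of membership in $Kf$. The paper instead routes through Proposition~\ref{prop:12}: it computes the comparison morphism $\kappa\colon F(f\downarrow g)\to Ff\downarrow Fg$ for an \emph{arbitrary} comma object and shows that $\kappa$ reflects the order below units, i.e.\ that $\psi_0\leq\eta_X(x)$ and $\psi_1\leq\eta_Y(y)$ force $\varphi\leq\eta_{f\downarrow g}(x,y)$. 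The topological kernel is the same in both arguments --- every neighbourhood of a point of the comma object contains a box neighbourhood $(U\times V)\cap(f\downarrow g)$, whose membership in $\varphi$ follows from the available data --- but the proofs package it differently. Yours is more elementary and self-contained (no comparison morphism, no appeal to Proposition~\ref{prop:12}), at the price of being tailored to the specific comma object $Ff\downarrow\eta_Y$. The paper's version isolates a reusable property of $\mathsf{F}$ (order-reflection of $\kappa$ at units, for all comma objects $f\downarrow g$), which is exactly the shape of hypothesis that Proposition~\ref{prop:12} converts into simplicity, and which applies verbatim to any monad whose comparison morphisms are full.
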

\begin{proof}
We verify the hypothesis of Proposition~\ref{prop:12}.
For any pair of continuous maps $f\colon X\to Z$ and $g\colon Y\to Z$, the
comparison morphism
\begin{equation}
  \kappa\colon{}F(f\downarrow g)\longrightarrow
  F f\downarrow Fg\subset FX\times FY
\end{equation}
sends a filter $\varphi$ on $f\downarrow g$ to the pair of filters
$(\psi_0,\psi_1)$
\begin{equation}
  \label{eq:150}
  \psi_0=\{U\in \mathcal O(X): d_0^{-1}(U)\in\varphi\}
  \qquad
  \psi_1=\{V\in \mathcal O(Y): d_1^{-1}(V)\in\varphi\}
\end{equation}
where $d_0$ and $d_1$ are the projections from $f\downarrow g$ to $X$ and $Y$, respectively.
Given $(x,y)\in f\downarrow g$, recall that its image under the unit is
\begin{equation}
  \label{eq:151}
  \eta_{f\downarrow g}(x,y)=\{W\in \mathcal O(f\downarrow g):(x,y)\in W\}
\end{equation}
We have $(F d_0)\eta_{f\downarrow g}(x,y)=\eta_Xd_0(x,y)=\eta_X(x)$, and similarly,
$(Fd_1)\eta_{f\downarrow g}(x,y)=\eta_Y(y)$.

The hypothesis of Proposition~\ref{prop:12} will be satisfied if we show that
$\kappa\cdot u\leq \kappa\cdot\eta_{f\downarrow g}$ implies $u\leq
\eta_{f\downarrow g}$; or, in terms of filters, if we show that, given $\varphi\in
F(f\downarrow g)$, $(x,y)\in f\downarrow g$ as above, the inequalities $\psi_0\leq \eta_X(x)$ and
$\psi_1\leq \eta_Y(y)$ imply $\varphi\leq \eta_{f\downarrow g}(x,y)$. By
definition of the (opposite) specialisation order, we
need to show the two inclusions
\begin{gather}
  \{U\in\mathcal O(X): d_0^{-1}(U)\in\varphi\}\supseteq \{U\in\mathcal O(X):x\in
  U\}
  \\
  \{V\in\mathcal O(Y):d_1^{-1}(V)\in\varphi\}\supseteq \{V\in\mathcal O(Y):y\in
  V\}
\end{gather}
imply $\varphi\supseteq \{W\in\mathcal O(f\downarrow g):(x,y)\in W\}$.  Given
$x\in U\in\mathcal O(X)$, $y\in V\in\mathcal O(Y)$, then
\begin{equation}
  \label{eq:154}
  (U\times V) \cap( f\downarrow g)=
  d_0^{-1}(U)\cap d_1^{-1}(V)\in\varphi.
\end{equation}
But any neighbourhood $W$ of $(x,y)$ contains another of the form $(U\times
V)\cap(f\downarrow g)$, so $W\in\varphi$, completing the proof.
\end{proof}

Since every principal filter is completely prime, and so in particular prime and
proper, and $\mu_X(\Theta)$ is completely prime (resp. prime, proper) whenever
$\Theta$ is so, the functors $F_1$, $F_\omega$ and $F_\Omega$ that assign to
each space $X$ the space of proper (resp. prime, completely prime) filters are
the functor part of submonads $\mathsf{F}_1$, $\mathsf{F}_\omega$ and
$\mathsf{F}_\Omega$ of the filter monad, with the monad morphisms defined
pointwise by the corresponding embeddings.  Hence, using Corollary~\ref{cor:2},
we can immediately conclude:

\begin{cor}
\label{cor:new3}
The \Ord-monads of proper filters, of prime filters and of completely prime filters are lax idempotent and simple.
\end{cor}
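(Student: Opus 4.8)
The plan is to deduce all three assertions at once by a single application of Corollary~\ref{cor:2}, with $\mathsf{T}=\mathsf{F}$ the full filter monad and $\mathsf{S}$ taken successively to be the submonads $\mathsf{F}_1$, $\mathsf{F}_\omega$ and $\mathsf{F}_\Omega$ of proper, prime and completely prime filters. The paragraph preceding the statement already supplies the data needed to invoke the corollary: the inclusions assemble into monad morphisms $\varphi\colon\mathsf{S}\to\mathsf{F}$ (using that principal filters are completely prime and that $\mu_X$ preserves properness, primeness and complete primeness), the monad $\mathsf{F}$ is lax idempotent by~\cite{MR1718976} and simple by Theorem~\ref{thm:filtermonad}, and its unit components $\eta_X\colon X\to FX$ are full, being topological embeddings.

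The one hypothesis still to verify is that each component $\varphi_X$ is an $\mathsf{F}$-embedding. Since $\mathsf{F}$-embeddings are exactly the topological embeddings~\cite{MR1718976}, I would check that the set of proper (resp.\ prime, completely prime) filters, topologised by the sets $U^\varhash$, carries precisely the subspace topology inherited from $FX$. This is immediate, because a basic open of the subspace is the restriction $U^\varhash\cap SX$ of a basic open of $FX$; hence $\varphi_X$ is a homeomorphism onto its image. Consequently the hypotheses of Corollary~\ref{cor:2} are met and part~(1) of that corollary yields at once that each of $\mathsf{F}_1$, $\mathsf{F}_\omega$ and $\mathsf{F}_\Omega$ is lax idempotent and simple.

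I do not expect any step to present a genuine obstacle; the argument is essentially a bookkeeping verification that the hypotheses of Corollary~\ref{cor:2} hold for $\mathsf{T}=\mathsf{F}$. The only point worth flagging is the route to \emph{simplicity}: topological embeddings are pullback-stable in $\mathbf{Top}_0$, and $\mathsf{F}$-embeddings are full by Lemma~\ref{l:10} together with the fullness of the unit, so the stronger Theorem~\ref{thm:8} also applies directly and guarantees that $\mathsf{S}$ is simple whenever $\mathsf{F}$ is. Thus simplicity is secured even before appealing to the packaged conclusion of Corollary~\ref{cor:2}, and lax idempotency follows from Proposition~\ref{prop:8} as recorded there.
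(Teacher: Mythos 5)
Your proposal is correct and takes essentially the same route as the paper: the paper's entire proof is the observation (in the paragraph preceding the statement) that the inclusions of proper, prime and completely prime filters form monad morphisms into $\mathsf{F}$ whose components are topological (hence $\mathsf{F}$-) embeddings, followed by a single application of Corollary~\ref{cor:2}, exactly as you do. Your additional checks --- that the filter subspaces carry the subspace topology, and that Theorem~\ref{thm:8} applies directly since embeddings are pullback-stable in $\mathbf{Top}_0$ and $\mathsf{F}$-embeddings are full --- merely spell out details the paper leaves implicit.
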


Therefore these monads induce \textsc{lofs}s $(\mathsf{L}_\alpha,\mathsf{R}_\alpha)$, with $\alpha=0,1,\omega,\Omega$ (denoting $\mathsf{F}$ by $\mathsf{F}_0$), with associated weak factorisation systems $(\mathcal{L}_\alpha,\mathcal{R}_\alpha)$, where $\mathcal{L}_0$ is the class of embeddings, $\mathcal{L}_1$ is the class of dense embeddings, $\mathcal{L}_\omega$ is the class of flat embeddings, and $\mathcal{L}_\Omega$ is the class of completely flat embeddings \cite{MR1641443, MR1718976, MR2927175}. Moreover, $\mathcal{R}_\alpha$ is the class of morphisms which are injective with respect to $\mathcal{L}_\alpha$ (see \cite{MR2927175} for details).

\section{Metric spaces}
\label{sec:metric-spaces}

It is an insight of
Bill~Lawvere~\cite{Lawvere:Metricspaces,Lawvere:Metricspaces-reprint} that
metric spaces can be regarded as enriched categories and that, from this point of view,
completeness can be interpreted in terms of ``modules.'' The necessary base of enrichment
is the category of \emph{extended real numbers} \R.

The category \R\ has objects the real non-negative numbers plus an extra object $\infty$,
and has one morphism $\alpha\to\beta$ if and only if $\alpha\geq\beta$; $\infty$ is
an initial object and $0$ a terminal object. One can use the addition of real
numbers to define a symmetric monoidal structure on \R, with the convention that
adding $\infty$ always produces $\infty$. The unit object of this
tensor product is $0$. Furthermore, \R\ is closed, with internal hom
$[\alpha,\beta]$ equal to $\beta-\alpha$ if this difference is non-negative, and
equal to zero otherwise, with the convention that $[\alpha,\infty]=\infty$,
$[\infty,\infty]=0$ and $[\infty,\alpha]=0$.

A small \R-category can be described as a set $A$ with a distance function
$A(-,-)\colon A\times A\to\R$ that satisfies $A(a,a)=0$ for all $a\in A$ and the
triangular inequality. In general, it may very well happen that $A(a,b)=0$
even if $a\neq b$; the distance may not be symmetric, ie $A(a,b)\neq A(b,a)$,
and, the distance between two points may be $\infty$. We regard \R-categories as
generalised metric spaces and think of $A(a,b)\in\R$ as the ``distance'' from
$a$ to $b$.

For example, $\R$ itself is a generalised metric space with distance from
$\alpha$ to $\beta$ given by $[\alpha,\beta]$.

Each generalised metric space $A$ has an \emph{opposite} $A^{\mathrm{op}}$ with
the same points and distance $A^{\mathrm{op}}(a,b)=A(b,a)$.
We will concentrate on \emph{skeletal} generalised metric spaces, ie those
spaces
$A$ for which $A(a,b)=0=A(b,a)$ implies $a=b$. For example, \R\ is skeletal.

\R-enriched functors $f\colon A\to B$  are identified with functions $A\to B$
that are non-expansive: $A(a,b)\geq B(f(a),f(b))$. It is easy to verify that there exists a unique
\R-natural transformation $f\Rightarrow g\colon A\to B$ if and only if
$0=B(f(a),g(a))$ for all $a\in A$.
In this way we obtain an \Ord-category $\mathbf{Met}_{\mathrm{sk}}$ of skeletal generalised
metric spaces, with objects the skeletal  \R-categories, morphisms the \R-functors
and inequality $f\leq g$ between two of them given by the existence of a \R-natural
transformation $f\Rightarrow g$. Observe that $\mathbf{Met}_{\mathrm{sk}}(A,B)$
is not only a preorder but a poset, because $B$ is skeletal.

There is a notion of colimit suited to enriched categories, known as
\emph{weighted colimit} (or \emph{indexed colimit} in older texts);
see~\cite{Kelly:BCECT,Kelly:BCECTrep} for a standard reference. Each family of
weights induces a lax idempotent \Ord-monad on $\mathbf{Met}_{\mathrm{sk}}$
whose algebras are the skeletal generalised metric spaces that admit colimits
with weights in the family (see \cite[Theorems~6.1 and
6.3]{Kelly:MonadicityChosenColim}).
This monad is in fact simple (\S \ref{sec:simple-monads-1}),
as shown in the more general context in~\cite[\S 12]{clementino15:_lax}. It
follows from the theory developed herein that there is a \textsc{lofs} on
$\mathbf{Met}_{\mathrm{sk}}$ whose left morphisms are the embeddings with
respect to that monad and whose fibrant objects are the skeletal generalised
metric spaces that admit all $\Phi$-colimits (see Proposition~\ref{prop:4} and
Corollary~\ref{cor:1}). The rest of the section is occupied by the example of a
particular class of colimits that admit an explicit description.

The class of absolute colimits, ie the weights whose associated colimits are
preserved by any \R-functor whatsoever, generates a simple lax idempotent monad
$\mathsf{Q}$ on $\mathbf{Met}_{\mathrm{sk}}$.
Putting together \cite{Lawvere:Metricspaces}~and~\cite{MR749468} one can give a
description of $\mathsf{Q}$ in terms of Cauchy sequences. 

Cauchy sequences in a skeletal generalised metric space $A$ are defined in the
same way as for classical metric spaces. Two
Cauchy sequences $(a_n)$ and $(b_n)$ are \emph{equivalent} if both $A(a_n,b_n)$
and $A(b_n,a_n)$ have limit $0$.  Denote by $QA$ the set of equivalence classes of
Cauchy sequences in $A$ with distance $QA([a_n],[b_n])=\lim_nA(a_n,b_n)$. It is
not hard to see that $QA$ is a skeletal generalised metric space.

The assignment $A\mapsto QA$ is part of an \Ord-monad $\mathsf{Q}$ on
$\mathbf{Met}_{\mathrm{sk}}$, with unit $A\to QA$ the map that sends $a\in A$ to
the constant sequence on $a$, that we denote by $c_a$.

Convergence of a sequence $(x_n)$ to a point $a$ in generalised metric space $A$
differs from ordinary convergence in metric spaces only in that we have to
require that both $A(a,x_n)$ and $A(x_n,a)$ converge to $0$ in \R. The following
assertions are equivalent for a skeletal generalised metric space $A$: it is an
algebra for $\mathsf{Q}$; the canonical isometry $A\to QA$ has a left adjoint;
$A$ is a retract of a space of the form $QB$; every Cauchy sequence in $A$
converges. Spaces that satisfy these equivalent properties are known as
\emph{Cauchy-complete}.

If $(\mathsf{L}_{Q},\mathsf{R}_{Q})$ is the \kz-reflective~\textsc{lofs} on
$\mathbf{Met}_{\mathrm{sk}}$ generated by $\mathsf{Q}$, the
$\mathsf{L}_Q$-coalgebras, or left maps of the factorisation, are the
$\mathsf{Q}$-embeddings and can be characterised as follows.
\begin{prop}
  \label{prop:11}
  A non-expansive map $f\colon A\to B$ between skeletal geneneralised
  spaces is a $\mathsf{Q}$-embedding if and
  only if it is an isometry and for each $b\in B$ the non-expansive
  function $B(f-,b)\colon A^{\mathrm{op}}\to B$ can be written as
  $B(f-,b)=\lim_n A(-,x_n)$ for a Cauchy sequence $(x_n)$ in $A$.
\end{prop}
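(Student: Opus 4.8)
The plan is to exploit the fact that, since $\mathsf{Q}$ is lax idempotent, a morphism is a $\mathsf{Q}$-embedding exactly when it is a $Q$-embedding (Lemma~\ref{l:8}), i.e.\ exactly when $Qf\colon QA\to QB$ is a \textsc{lari} in $\mathbf{Met}_{\mathrm{sk}}$; so the whole statement reduces to characterising those $f$ for which $Qf$ admits a right adjoint $r$ with $r\cdot Qf=1$. The single most useful observation is that an adjunction between \R-functors in the \Ord-category $\mathbf{Met}_{\mathrm{sk}}$ is the same thing as an \R-enriched adjunction: the inequalities $1\leq r\cdot Qf$ and $Qf\cdot r\leq 1$ are witnessed by \R-natural transformations, and since hom-objects are posets the triangle identities are automatic. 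Hence $Qf\dashv r$ yields the hom-equality
\begin{equation*}
  QB(Qf(\alpha),\beta)=QA(\alpha,r(\beta))\qquad(\alpha\in QA,\ \beta\in QB).
\end{equation*}

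For the forward implication, assume $Qf$ is a \textsc{lari} with $r\cdot Qf=1$. Combining this equation with the non-expansiveness of both $Qf$ and $r$ gives $QA(\alpha,\alpha')=QB(Qf\alpha,Qf\alpha')$, so $Qf$ is an isometry; restricting to constant sequences $\alpha=c_a$, $\alpha'=c_{a'}$ yields $A(a,a')=B(fa,fa')$, i.e.\ $f$ is an isometry. For the module condition, fix $b\in B$ and write $r(c_b)=[x_n]\in QA$. Taking $\alpha=c_a$ and $\beta=c_b$ in the hom-equality, and using $QB(c_{fa},c_b)=B(fa,b)$ together with $QA(c_a,[x_n])=\lim_n A(a,x_n)$, produces $B(fa,b)=\lim_n A(a,x_n)$, which is exactly the asserted presentation of $B(f-,b)$ by a Cauchy sequence.

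For the converse, since $f$ is an isometry the functor $Qf$ is again an isometry, hence fully faithful, so any right adjoint $r$ automatically satisfies $r\cdot Qf=1$ by skeletality of $QA$; thus it suffices to build a right adjoint. By the module hypothesis each $b\in B$ determines a Cauchy class $[x_n^{b}]\in QA$ whose left module $\lim_n A(-,x_n^{b})$ equals $B(f-,b)$, and this class is uniquely determined (the left module of an adjoint pair determines the right one). I would show that $b\mapsto[x_n^{b}]$ is non-expansive $B\to QA$ by a diagonal Cauchy computation resting on $B(fa,b)=\lim_n A(a,x_n^{b})$ and the triangle inequality in \R, and then extend it uniquely to an \R-functor $r\colon QB\to QA$ by the universal property of $QB$ as the free Cauchy completion of $B$ mapping into the Cauchy-complete space $QA$. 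It then remains to verify $Qf\dashv r$, i.e.\ $1\leq r\cdot Qf$ and $Qf\cdot r\leq 1$, which unwinds to inequalities between iterated limits of distances.

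The heart of the proof, and its main obstacle, is this converse direction: manufacturing a globally defined, non-expansive right adjoint $r$ out of the merely pointwise hypothesis. The delicate point is the diagonalisation needed to evaluate the iterated limits $\lim_m\lim_n A(-,x_n^{b_m})$ arising from a Cauchy sequence $(b_m)$ in $B$, namely rewriting such a double limit as a single $\lim_k A(-,z_k)$ for a genuine Cauchy sequence $(z_k)$ in $A$; this is precisely what makes $b\mapsto[x_n^{b}]$ non-expansive and what lets $r$ act coherently on all of $QB$ rather than only on constants. Once this diagonal step is secured, the two adjunction inequalities reduce to the equality $B(fa,b)=\lim_n A(a,x_n^{b})$ and the isometry of $f$, and are routine commutations of limits under the triangle inequality.
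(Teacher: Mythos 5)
Your proposal is correct and takes essentially the same route as the paper: both directions rest on the \Ord-adjunction $Qf\dashv r$ yielding the enriched hom-equality, the forward direction specialises it to constant sequences exactly as the paper does, and your converse (define $r[c_b]=[x_n^b]$ from the hypothesis, extend by Cauchy-completeness of $QA$, and deduce $r\cdot Qf=1$ from injectivity of the isometry $Qf$ via the triangle equation) is the paper's argument. The one point of difference is emphasis rather than substance: the step you single out as the main obstacle — the diagonal/iterated-limit computation — is dispatched in the paper by the observation that, since every Cauchy sequence is a limit of constant ones, it suffices to define $r$ and verify the adjunction equality on constant sequences; equivalently, non-expansiveness of $b\mapsto[x_n^b]$ falls out of the hom-equality itself, since $QA(r[c_b],r[c_{b'}])=\lim_m B(f(x_m^b),b')\leq\lim_m\bigl(B(f(x_m^b),b)+B(b,b')\bigr)=B(b,b')$, with no genuine double-limit diagonalisation needed.
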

\begin{proof}
  First, if $Qf$ has a retract $r$, then $Qf$ is an isometry and thus $f$ is an
  isometry; for,
  $B(f(a),f(a'))=QB(c_{f(a)},c_{f(a')})=QB(Qf(c_a),Qf(c_a'))=QA(c_a,c_{a'})=A(a,a')$.

  If $r$ is moreover a right adjoint of $Qf$, and, for a given $b\in B$, $r(c_b)$ has an associated
  Cauchy sequence $(x_n)$ in $A$, we must have
  \begin{equation}
    \label{eq:137}
    B(f(a),b)=QB\bigl(c_{f(a)},c_b\bigr)=QB\bigl(Qf(c_a),c_b\bigr)=
    QA\bigl(c_a,r(c_b)\bigr)=\lim_nA(a,x_n)
  \end{equation}
  for all $a\in A$.

  Conversely, suppose that $f$ is an isometry and $B(f-,b)=\lim_nA(-,x_n)$. We
  must define an equivalence class of Cauchy sequences $r[b_n]\in QA$ for each
  $[b_n]\in QB$ in a way such that $QB([f(a_n)],[b_n])=QA([a_n],r[b_n])$. Since
  any Cauchy sequence is a limit of constant sequences (eg, $b_n=\lim_n c_{b_n}$),
  it suffices to define $r$ and to verify this equality for constant sequences; ie we
  have to give $r[c_b]\in QA$ such that $B(f(a),b)=QA(c_a,r[c_b])$. Since we know
  that $B(f-,b)=\lim_nA(-,x_n)$, we may set $r[c_b]=[x_n]$ and the equality
  holds. In this way we prove that there is an adjunction $Qf\dashv r\colon QB\to
  QA$. It remains to prove that $r\cdot Qf=1$, but $f$ is an isometry, which
  implies that $Qf$ is an isometry and
  therefore one-to-one, so the equality follows from the adjunction triangle
  equation $Qf\cdot r\cdot Qf=Qf$.
\end{proof}
It follows from the general theory that, given a $\mathsf{Q}$-embedding
$f\colon A\to B$ and a non-expansive function $h\colon A\to C$ into
Cauchy-complete skeletal generalised metric space $C$, there is an extension $d$.
\begin{equation}
  \label{eq:114}
  \xymatrixrowsep{.6cm}
  \diagram
  A\ar[r]^h\ar[d]_f&C\\
  B\ar@{..>}[ur]_d&
  \enddiagram
\end{equation}
Furthermore, Cauchy-complete skeletal generalised metric spaces are precisely
those injective with respect to the $\mathsf{Q}$-embeddings. In terms of
sequences, the extension $d$ is
given by $d(b)=\lim_nh(x_n)$, where $(x_n)$ is a Cauchy sequence in $A$ such
that $B(f-,b)=\lim_nA(-,x_n)$.

\begin{cor}
  \label{cor:5}
  Let $f\colon A\to B$ be a non-expansive function between skeletal
  generalised metric spaces, and assume that $B$ is a metric space. Then,
  $f$ is a $\mathsf{Q}$-embedding if and only if it is a dense isometry.
\end{cor}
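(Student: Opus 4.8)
The plan is to read off both implications directly from the characterisation of $\mathsf{Q}$-embeddings in Proposition~\ref{prop:11}, exploiting the three features that distinguish a genuine metric space $B$ from an arbitrary skeletal generalised metric space: its distance is symmetric, the distance function is continuous, and its metric topology is first countable, so that density of $f(A)$ is equivalent to every point of $B$ being a limit of a sequence drawn from $f(A)$. Throughout I would keep the isometry hypothesis on $f$ in the foreground, since it is what lets me transport Cauchy sequences and their limits between $A$ and $B$.

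For the implication that a dense isometry is a $\mathsf{Q}$-embedding, I would verify the two conditions of Proposition~\ref{prop:11}. The isometry condition is assumed. For the second, fix $b\in B$; by density choose a sequence $(x_n)$ in $A$ with $f(x_n)\to b$ in $B$. A convergent sequence in the metric space $B$ is Cauchy, and since $f$ is an isometry $(x_n)$ is then Cauchy in $A$. Finally, for any $a\in A$, continuity of the distance in $B$ gives $\lim_n A(a,x_n)=\lim_n B(f(a),f(x_n))=B(f(a),b)$, which is exactly the required presentation $B(f-,b)=\lim_n A(-,x_n)$.

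For the converse, suppose $f$ is a $\mathsf{Q}$-embedding. Proposition~\ref{prop:11} yields at once that $f$ is an isometry and, for each $b\in B$, a Cauchy sequence $(x_n)$ in $A$ with $B(f(a),b)=\lim_n A(a,x_n)$ for all $a$. Since $f$ is an isometry, $(f(x_n))$ is Cauchy in $B$, and evaluating the displayed identity at $a=x_m$ gives $B(f(x_m),b)=\lim_n A(x_m,x_n)=\lim_n B(f(x_m),f(x_n))$. The standard fact that a Cauchy sequence in a metric space satisfies $\lim_m\lim_n B(f(x_m),f(x_n))=0$ then forces $B(f(x_m),b)\to 0$; by the symmetry of $B$ this says $f(x_m)\to b$, so $b$ lies in the closure of $f(A)$. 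As $b$ was arbitrary, $f(A)$ is dense, and $f$ is a dense isometry.

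The only real subtlety is that $A$ is asymmetric, so one must track which of $A(a,b)$ and $A(b,a)$ occurs; the computation only ever feeds these asymmetric distances through the symmetric $B$ via the isometry, and it is precisely the symmetry and continuity of $B$ that let the one-sided estimate $B(f(x_m),b)\to 0$ upgrade to genuine convergence and let the limit pass through the distance in the forward direction. I expect no further obstacle, the statement being essentially a translation of Proposition~\ref{prop:11} under the extra hypothesis that $B$ is symmetric.
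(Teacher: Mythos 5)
Your proof is correct and follows essentially the same route as the paper's: both directions are read off from Proposition~\ref{prop:11}, using the isometry to transport Cauchy sequences between $A$ and $B$, the evaluation at $a=x_m$ together with the Cauchy estimate to get $B(f(x_m),b)\to 0$, and the symmetry of the metric on $B$ to upgrade this one-sided estimate to genuine convergence. The only difference is cosmetic (you phrase the Cauchy estimate as an iterated limit where the paper uses an explicit $\varepsilon/2$ argument), so there is nothing to add.
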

\begin{proof}
  If $f$ is a $\mathsf{Q}$-embedding and $b\in B$, there is a Cauchy sequence
  $(x_n)$ in $A$ such that $\lim_nA(-,x_n)=\lim_nB(f-,b)$. Given
  $\varepsilon>0$, there is a $n_0$ such that $A(x_n,x_m)<\varepsilon/2$ if
  $n,m\geq n_0$. Thus,
  for $m\geq n_0$ we have
  \begin{equation}
    \label{eq:113}
    B(f(x_m),b)=\lim_nB(f(x_m),f(x_n))=\lim_nA(x_m,x_n)\leq\varepsilon
    /2<\varepsilon.
  \end{equation}
  It follows that $(f(x_m))$ converges to $b$, and $f$ is dense. Observe that we
  have used that the distance of $B$ is symmetric.

  Conversely, if $f$ is a dense isometry, any $b\in B$ is $\lim_nf(x_n)$ for
  some sequence $(x_n)$ in $A$, which is Cauchy since $f$ preserves distances
  and $(f(x_n))$ converges. Then $B(f(a),b)=\lim_n A(a,x_n)$ for all $a\in A$,
  and Proposition~\ref{prop:11} applies.
\end{proof}

The definition of $QA$ given in terms of Cauchy sequences immediately tells us
that if $A$ is a metric space then $QA$ is a metric space too; ie, its distance
function is symmetric.
We deduce:
\begin{cor}
  The {\normalfont\textsl{\textsc{lofs}}} $(\mathsf{L}_Q,\mathsf{R}_Q)$
  restricts to an {\normalfont\textsl{\textsc{ofs}}} on the category of metric
  spaces. Its left maps are the dense isometries.
\end{cor}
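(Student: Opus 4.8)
The plan is to exploit the fact that on genuine (symmetric) metric spaces the $\Ord$-enrichment collapses to a discrete order, so that the ``smallest diagonal filler'' characterising a \textsc{lofs} is automatically the \emph{unique} diagonal filler, which is precisely orthogonality. First I would record the crucial observation about hom-posets: for parallel morphisms $f,g\colon A\to B$ in $\mathbf{Met}_{\mathrm{sk}}$ with $B$ a metric space, the inequality $f\le g$ means $0=B(f(a),g(a))$ for all $a\in A$ (the existence of an $\R$-natural transformation $f\Rightarrow g$), and since $B$ is symmetric this equals $0=B(g(a),f(a))$, i.e. $g\le f$; as $\mathbf{Met}_{\mathrm{sk}}(A,B)$ is a poset, $f\le g$ already forces $f=g$. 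Thus every hom-poset with codomain a metric space is discrete, and in particular all hom-posets of the full subcategory of metric spaces are discrete.

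Next I would check that the \textsc{lofs} genuinely \emph{restricts}, i.e. that the factorisation of a morphism $f\colon A\to B$ between metric spaces stays inside the full subcategory of metric spaces. Using the construction of the middle object as the comma-object $Kf=Qf\downarrow\eta_B$ (Definition~\ref{df:8} together with the description of comma-objects in \S\ref{sec:metric-spaces}), it suffices to note that $QA$, $QB$ and $B$ are all metric spaces — the symmetry of $QA$ for a symmetric $A$ was recorded immediately above the statement — and that $Kf$ is a subspace of the product $QA\times B$, hence again symmetric. Therefore $L_Qf\colon A\to Kf$ and $R_Qf\colon Kf\to B$ are morphisms of metric spaces, so the functorial factorisation $f=R_Qf\cdot L_Qf$ restricts to the subcategory of metric spaces.

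With these two points in place the argument is immediate. The restricted structure is an $\Ord$-enriched \textsc{awfs} in which, by the first paragraph, every relevant hom-poset is discrete; hence the lax idempotent monad $\mathsf{R}_Q$ (equivalently the comonad $\mathsf{L}_Q$) becomes idempotent when restricted, since in a discrete order the inequalities $1\le \eta R\cdot\Pi$ of lax idempotency together with the unit law exhibit the multiplication as an isomorphism. By Example~\ref{ex:5} an \textsc{awfs} with idempotent monad is an \textsc{ofs}. (Equivalently, and more directly: if $d$ is the canonical least filler of a lifting square and $d'$ is any other filler, then $d\le d'$ in a discrete hom-poset forces $d=d'$, so fillers are unique.) Finally the left class is identified using results already established: the left maps of $(\mathsf{L}_Q,\mathsf{R}_Q)$ are the $\mathsf{Q}$-embeddings (see the discussion preceding Proposition~\ref{prop:11}, and Proposition~\ref{prop:4}, Corollary~\ref{cor:1}), and Corollary~\ref{cor:5} says that a non-expansive map into a metric space is a $\mathsf{Q}$-embedding exactly when it is a dense isometry; hence the left maps of the restricted \textsc{ofs} are precisely the dense isometries.

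The main obstacle I anticipate is not conceptual but a matter of bookkeeping: ensuring that the comma-object, and hence the factorisation, computed in $\mathbf{Met}_{\mathrm{sk}}$ coincides with the one computed inside the subcategory of metric spaces. This reduces to checking that the full inclusion of metric spaces preserves and reflects the finite limits (products and the subspace used to form $Kf$) out of which the factorisation is built, which follows once one knows that $\mathsf{Q}$, products, and subspaces all preserve symmetry of the distance. This is routine, so no serious difficulty is expected.
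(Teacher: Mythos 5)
Your proof is correct and takes essentially the same route the paper intends: the paper deduces the corollary directly from the observation that $Q$ preserves symmetry of distances together with Corollary~\ref{cor:5}, leaving implicit exactly the points you spell out (hom-orders over symmetric skeletal codomains are discrete, the comma-object factorisation $Kf=Qf\downarrow\eta_B$ stays inside metric spaces, and lax idempotency plus a discrete order forces idempotency, hence an \textsc{ofs} by Example~\ref{ex:5}). There is no gap; your write-up simply makes the paper's terse ``we deduce'' explicit.
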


\appendix
\section{Accessible AWFSs}
\label{sec:accessible-awfss}

In \S \ref{sec:reflective-lofss} we characterised those \textsc{lofs}s
``fibrantly generated'' by a lax idempotent monad. In this section we explore
what more can be said in the case when the base \Ord-category is locally
presentable and all the monads and comonads involved are accessible. We confine
our discussion to this appendix, as we will assume familiarity with the basic
theory of accessible and locally presentable categories,
for which the standard references are~\cite{MR1031717} and~\cite{MR1294136}. 

We start with a result about ordinary (instead of enriched) accessible
\textsc{awfs}s. These are \textsc{awfs} whose comonad and monad are accessible
functors; in fact, it suffices that only one of them should be
accessible. See~\cite{MR3393453} for details.
\begin{prop}
  \label{prop:15}
  Let $F$ be a left adjoint functor between a locally presentable category $\C$
  and an accessible category $\mathcal{A}$, and $(\mathsf{G},\mathsf{S})$ be an accessible
  {\normalfont\textsl{\textsc{awfs}}} on $\mathcal{A}$. Given the following pullback
  of double categories
  \begin{equation}
    \label{eq:152}
    \diagram
    {\mathbb{L}}\ar[r]\ar[d]
    &
    {\mathsf{G}}\text-\mathbb{C}\mathrm{oalg}
    \ar[d]^{}
    \\
    {\Sq(\C)}
    \ar[r]^-{\Sq(F)}
    &
    {\Sq(\mathcal{A})}
    \enddiagram
  \end{equation}
  there exists an accessible {\normalfont\textsl{\textsc{}}}
  $(\mathsf{L},\mathsf{R})$ on \C\ such that
  $\mathsf{L}\text-\mathbb{C}\mathrm{oalg}\cong\mathbb{L}$ over $\Sq(\C)$ and
  the vertical category of $\mathbb{L}$ is locally presentable.
\end{prop}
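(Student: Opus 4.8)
The plan is to reduce everything to a comonadicity statement and then invoke the general (non-lax-idempotent) recognition theorem for \textsc{awfs}s. Write $F\dashv W$ for the given adjunction, so that $F^\two\dashv W^\two\colon\mathcal{A}^\two\to\C^\two$, and let $V\colon\mathsf{G}\text-\mathrm{Coalg}\to\mathcal{A}^\two$ be the comonadic forgetful functor underlying $\mathsf{G}$. Taking arrow parts of the double-category pullback~\eqref{eq:152} exhibits the arrow part $U\colon\mathbb{L}_1\to\C^\two$ of $\mathbb{L}\to\Sq(\C)$ as the pullback of $V$ along $F^\two$, with projection $Q\colon\mathbb{L}_1\to\mathsf{G}\text-\mathrm{Coalg}$. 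The goal is to show that $U$ is comonadic with accessible induced comonad $\mathsf{L}$, that $\mathbb{L}_1$ is locally presentable, and finally that the companion condition needed for recognition holds; then~\cite[Thm.~6]{MR3393453} (the analogue of Theorem~\ref{thm:11} without the lax idempotency clause) produces the \textsc{awfs} $(\mathsf{L},\mathsf{R})$ with $\mathsf{L}\text-\mathbb{C}\mathrm{oalg}\cong\mathbb{L}$.

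First I would show, in the manner of Lemma~\ref{l:11}, that $U$ creates all small colimits and hence $\mathbb{L}_1$ is cocomplete. Given a diagram $D$ in $\mathbb{L}_1$, its image $UD$ has a colimit in the cocomplete category $\C^\two$; since $F^\two$ is cocontinuous, $F^\two(\operatorname{colim}UD)$ is a colimit of $F^\two UD=VQD$ in $\mathcal{A}^\two$. Although $\mathcal{A}^\two$ need not be cocomplete, this particular colimit exists, so the fact that the forgetful functor of a comonad creates colimits lets me lift it uniquely to a colimit of $QD$ in $\mathsf{G}\text-\mathrm{Coalg}$; this lift assembles with $\operatorname{colim}UD$ into a colimit of $D$ in $\mathbb{L}_1$. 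For accessibility I would note that $\C^\two$ is locally presentable, that $\mathcal{A}^\two$ and $\mathsf{G}\text-\mathrm{Coalg}$ are accessible (the latter since the coalgebras of the accessible comonad $\mathsf{G}$ form an accessible category), and that $F^\two$ and $V$ are accessible; as the $2$-category of accessible categories is closed under pseudo-pullbacks and $V$ is an isofibration, $\mathbb{L}_1$ is accessible. A cocomplete accessible category is locally presentable, which already yields the final clause of the statement.

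Next I would deduce comonadicity of $U$ via Beck's Theorem~\ref{thm:3}. The right adjoint exists because $U$ is a colimit-preserving functor between locally presentable categories, and such functors are left adjoints. The remaining Beck condition~(\ref{item:14}), creation of equalisers of pairs with absolute equalisers, transfers from $V$ to its pullback $U$ by the dual of Lemma~\ref{l:4}, and $V$ is comonadic. Hence $U$ is comonadic, and the induced comonad $\mathsf{L}=UU^{R}$ is accessible, being a composite of accessible functors (the right adjoint $U^{R}$ between locally presentable categories is accessible).

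Finally I would verify the companion condition of the recognition theorem. A vertical arrow of $\mathbb{L}$ over $f\colon X\to Y$ is a lift of $F^\two f$ to a $\mathsf{G}$-coalgebra, and the square that must lie in the image of $\mathbb{U}$ is the counit square $(1,f)\colon 1_{\dom f}\to f$, with identity top edge and bottom edge $f$. Its image under $\Sq(F)$ is the corresponding square over $F^\two f$, which lies in $\mathsf{G}\text-\mathbb{C}\mathrm{oalg}$ because that genuine coalgebra double category satisfies the condition; since $\mathbb{L}$ is the pullback, the square lifts to $\mathbb{L}$. With comonadicity and this condition in hand, the recognition theorem supplies the \textsc{awfs} $(\mathsf{L},\mathsf{R})$ with $\mathsf{L}\text-\mathbb{C}\mathrm{oalg}\cong\mathbb{L}$ over $\Sq(\C)$, and accessibility of $\mathsf{L}$ makes the \textsc{awfs} accessible. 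The main obstacle is the colimit-creation step of the second paragraph: because $\mathcal{A}$ is only assumed accessible, cocompleteness of $\mathbb{L}_1$ genuinely relies on $F$ being a left adjoint, which is precisely what guarantees that the colimits needed in $\mathcal{A}^\two$ actually exist and are preserved.
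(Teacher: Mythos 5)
Your proof is correct and follows essentially the same route as the paper's: both reduce the statement to strict comonadicity of the pullback functor $U\colon\mathbb{L}_1\to\C^\two$, obtained by transferring Beck's equaliser condition along the pullback (Lemma~\ref{l:4}) and by producing the right adjoint from creation of colimits (pullback of a comonadic functor along the cocontinuous $F^\two$), accessibility of the pullback via Makkai--Par\'e, hence local presentability, and the adjoint functor theorem. The only divergence is bookkeeping: the paper invokes \cite[Prop.~4]{MR3393453}, for which comonadicity alone suffices, whereas you route through the recognition theorem \cite[Thm.~6]{MR3393453} and consequently also verify the identity-square condition, which, as you note, pulls back from $\mathsf{G}\text-\mathbb{C}\mathrm{oalg}$.
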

\begin{proof}
  If suffices to prove that the functor $U\colon \mathcal L\to\C^\two$ is
  comonadic (see~\cite[Prop.~4]{MR3393453}). By the dual version of Lemma~\ref{l:4}, it
  suffices to show that it has a left adjoint. Being the pullback of a functor that
  creates colimits (indeed, comonadic) along a cocontinuous functor, $U$ creates
  colimits too, so $\mathcal L$ is cocomplete and $U$ cocontinuous. On the other
  hand, $\mathcal{L}$ is accessible, being the limit of a diagram of accessible
  categories and accessible functors (see~\cite[Thm.~5.1.6]{MR1031717}). It follows that $\mathcal{L}$
  is locally presentable, and therefore the cocontinuous functor $U$ is a left
  adjoint.
\end{proof}

\begin{df}
\label{df:9}
\Ord-enriched categories or functors will be called accessible or
locally presentable if their underlying (ordinary) categories or functors are
so. An \textsc{awfs} $(\mathsf{L},\mathsf{R})$ on an accessible \Ord-category is
accessible if one of the following equivalent conditions holds: the
endofunctor $L$ is accessible; the endofunctor $R$ is accessible; the category
of $\mathsf{L}$-coalgebras is accessible; the category of $\mathsf{R}$-algebras
is accessible.
\end{df}

In what follows we maintain the terminology and notations of \S
\ref{sec:reflective-lofss}.
Split opfibrations in an \Ord-category with comma-objects \C\ are the algebras
for the monad $\mathsf{M}$ on $\C^\two$ given by $M(f)=(f\downarrow 1)$ (see
Notation~\ref{not:E,M}).
\begin{lemma}
  \label{l:19}
  Split opfibrations in \Ord-categories are full morphisms.
\end{lemma}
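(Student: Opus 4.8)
The plan is to unwind Definition~\ref{df:15} and then exploit the internal adjunction that the split opfibration structure supplies, keeping the test object $Z$ arbitrary throughout (rather than specialising to points, which is the right form of the statement in a general \Ord-category \C). By Definition~\ref{df:15}, to say that a split opfibration $g\colon X\to Y$ is full is to say that for every object $Z$ and every pair $u,v\colon Z\to X$ one has the implication $g\cdot u\leq g\cdot v\Rightarrow u\leq v$. So I fix such $u,v$ with $g\cdot u\leq g\cdot v$ and aim to produce $u\leq v$. Recall (Examples~\ref{ex:6} and~\ref{ex:11}) that the split opfibration structure on $g$ is an $\mathsf{M}$-algebra structure $p\colon Kg\to X$, where $Kg=g\downarrow 1_Y$ is the comma-object with projections $d_0\colon Kg\to X$ and $d_1\colon Kg\to Y$ satisfying $g\cdot d_0\leq d_1$, and where the left factor $Eg\colon X\to Kg$ satisfies $d_0\cdot Eg=1_X$ and $d_1\cdot Eg=g$. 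Since $\mathsf{M}$ is lax idempotent (Example~\ref{ex:13}), Definition~\ref{df:2}~(\ref{item:4}) together with Proposition~\ref{prop:1} identify this structure with an internal adjunction $p\dashv Eg$ for which $p\cdot Eg=1_X$, $1_{Kg}\leq Eg\cdot p$, and $g\cdot p=d_1$.

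The first concrete step is to build a comparison morphism into the comma-object. Because $g\cdot u\leq g\cdot v=1_Y\cdot(g\cdot v)$, the pair $(u,\,g\cdot v)$ satisfies the hypothesis of the universal property of $Kg=g\downarrow 1_Y$ recorded in \S\ref{sec:order-enrich-limits}, so there is a unique $w\colon Z\to Kg$ with $d_0\cdot w=u$ and $d_1\cdot w=g\cdot v$. Comparing $w$ with $Eg\cdot u$ along the two projections, namely $d_0\cdot Eg\cdot u=u=d_0\cdot w$ and $d_1\cdot Eg\cdot u=g\cdot u\leq g\cdot v=d_1\cdot w$, the order part of the universal property of the comma-object yields $Eg\cdot u\leq w$. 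Applying the monotone map $p$ and using $p\cdot Eg=1_X$ then gives the first half of the argument, $u=p\cdot Eg\cdot u\leq p\cdot w$; here $p\cdot w$ is precisely the opcartesian lift of $u$ to the height $g\cdot v$.

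The step I expect to be the main obstacle is the reverse estimate $p\cdot w\leq v$, which together with the above would close the proof through $u\leq p\cdot w\leq v$. The naive route is circular: by the adjunction $p\dashv Eg$ one has $p\cdot w\leq v$ if and only if $w\leq Eg\cdot v$, and the comma universal property reduces the latter back to $d_0\cdot w=u\leq v$, which is exactly the conclusion sought. The genuine content must therefore be extracted from the \emph{minimality} of the opcartesian lift encoded in $p$, namely that $p\cdot w$ is the least morphism $z\colon Z\to X$ satisfying $u\leq z$ and $g\cdot z=g\cdot v$. Pinning down a competitor $z$ for which both of these properties can be verified \emph{without} presupposing $u\leq v$ — so that minimality forces $p\cdot w\leq v$ — is the delicate point, and it is exactly where the argument must use the split opfibration structure beyond the bare factorisation $g=d_1\cdot Eg$; I would expect this step to require the most care, since the factor $Eg$ alone is full (being a section) while $d_1=Mg$ is not.
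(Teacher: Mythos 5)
Your first half is correct and in fact coincides with the paper's own opening move: the paper proves Lemma~\ref{l:19} by first applying the representables $\C(Z,-)$, which preserve comma-objects and hence carry a split opfibration in \C\ to a split opfibration of posets, and then arguing pointwise; your generalised-element construction of $w\colon Z\to Kg$ with $d_0\cdot w=u$, $d_1\cdot w=g\cdot v$, and the estimate $u=p\cdot Eg\cdot u\leq p\cdot w$, is exactly the pointwise step ``there is an opcartesian lifting $x\leq\tilde y$ with $p(\tilde y)=p(y)$.'' But your proposal then stops: you never establish $p\cdot w\leq v$, and you say so explicitly. As a proof this is a genuine gap --- that inequality is the entire content of the lemma, since $u\leq p\cdot w$ used only the comma-object and the splitting $p\cdot Eg=1$.

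What makes your attempt worth more than a failed proof is that your diagnosis of the obstacle is exactly right, and the obstacle cannot be overcome. The minimality encoded in the algebra structure bounds $p\cdot w$ below only competitors $z$ with $u\leq z$ (and $g\cdot v\leq g\cdot z$), so invoking it with $z=v$ presupposes $u\leq v$, the desired conclusion. The paper's proof simply asserts the corresponding step (``and $\tilde y\leq y$'') with no justification, i.e.\ it uses $y$ as a competitor for the opcartesian lift of $x$ --- precisely the circularity you flagged. In fact no competitor exists in general, because the statement fails as literally stated: free $\mathsf{M}$-algebras are split opfibrations, and already $M(1_\two)=d_1\colon\two^\two\to\two$ is not full in the sense of Definition~\ref{df:15}, since $d_1(1,1)=1=d_1(0,1)$ while $(1,1)\not\leq(0,1)$; likewise any product projection $X\times Y\to Y$ with $X$ containing incomparable elements, or the constant map from a two-element antichain to $1$, is a split opfibration that does not reflect the order (note the paper itself observes that full morphisms are monomorphisms, which projections are not). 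Your closing aside --- that $d_1=Mg$ is not full --- was thus already a counterexample to the lemma, not merely to one proof strategy. The ``delicate point'' you were searching for is not a missing idea but a missing hypothesis: fullness does not follow from the split-opfibration structure alone, and your attempt and the paper's argument founder on the same line.
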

\begin{proof}
  Recall from \S \ref{sec:ord-enrich-categ} that a morphism $p\colon X\to Y$ in
  an \Ord-category $\mathcal{A}$ is full if the monotone morphism
  $\mathcal{A}(Z,p)\colon\mathcal{A}(Z,X)\to\mathcal{A}(Z,Y)$ between posets is
  full in the usual sense. If $p$ is a split opfibration, then
  $\mathcal{A}(Z,p)$ is a split opfibration of posets. Then, it suffices to
  prove that split opfibrations of posets are full. This is an easy
  verification: if $p:X\to Y$ is a split opfibration and $p(x)\leq p(y)$,
  then there is an opcartesian lifting $x\leq \tilde y$ with $p(\tilde y)=p(y)$,
  and $\tilde y\leq y$. Thus $x\leq y$.
\end{proof}

In this section we will make explicit the distinction between \Ord-enriched
categories, functors and monads and their ordinary counterparts by adding to the
latter the subscript $(-)_\circ$; this is the same notation employed
in~\cite{Kelly:BCECT,Kelly:BCECTrep} and elsewhere.

There is a theory of locally finitely presentable enriched categories, developed
in detail in~\cite{Kelly:StructuresFiniteLimits}. Furthermore, much of this
theory carries over to locally presentable categories enriched in a locally
finitely presentable symmetric monoidal closed category (in our case, \Ord).
There will be very few facts about locally presentable \Ord-categories that we
shall need, so we point the reader to~\cite[7.4]{Kelly:StructuresFiniteLimits}
for some guidance about the overall theory.

\begin{df}
\label{df:17}
Let $\kappa$ be a regular cardinal. An object $X$ of a cocomplete \Ord-category
is $\kappa$-presentable if $\C(X,-)\colon\C_\circ\to\Ord$ preserves
$\kappa$-filtered colimits. We say that \C\ is a \emph{locally
  $\kappa$-presentable} \Ord-category if it is cocomplete (in the \Ord-enriched
sense) and has a small full sub-\Ord-category $\mathcal{G}\subseteq\C$
consisting of $\kappa$-presentable objects and such that the associated
``nerve'' functor $\C\to[\mathcal{G}^\mathrm{op},\Ord]$ reflects isomorphisms.
A locally presentable \Ord-category is one that is $\kappa$-presentable for some
$\kappa$.
\end{df}
The first thing we need to mention is that if $\C$ is a locally presentable
\Ord-category, then  it is automatically complete and its underlying category $\C_\circ$ is locally presentable in
the usual sense (with the same accessibility exponent). An \Ord-functor between locally
presentable \Ord-categories is said to be \emph{accessible} when its underlying
functor is accessible in the usual sense; this is because preservation of
conical colimits is just preservation of those colimits by the underlying
functor. An \Ord-monad is accessible if its underlying functor is so. If
$\mathsf{T}$ is an accessible \Ord-monad on the locally presentable
\Ord-category \C, then $\mathsf{T}\text-\mathrm{Alg}$ is locally
presentable.

\begin{rmk}
  \label{rmk:10}
  In locally $\kappa$-presentable category \C, finite limits commute with
  $\kappa$-filtered colimits. In fact all that is necessary is the existence of
  a family of $\kappa$-presentable objects $\{G_i\}$ such that the functors
  $\C(G_i,-)\colon \C_0\to\Ord$ are jointly conservative (ie, a morphism $f$ is
  an isomorphism if each $\C(G_i,f)$ is an isomorphism).
\end{rmk}

\begin{df}
  \label{df:18}
  An \Ord-enriched \textsc{awfs} $(\mathsf{L},\mathsf{R})$ on a locally
  presentable \Ord-category $\C$ is \emph{accessible} if its underlying ordinary
  \textsc{awfs} on the accessible ordinary category $\C_\circ$ is accessible.
\end{df}

\begin{thm}
  \label{thm:14}
  Let \C\ be a locally presentable \Ord-category.
  Then, accessible lax idempotent monads on \C\ are
  fibrantly \slkz-generating. The {\normalfont\textsl{\textsc{lofs}}}
  $\Psi(\mathsf{T})$ generated by an accessible lax idempotent monad $\mathsf{T}$
  is accessible.
\end{thm}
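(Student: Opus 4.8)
The plan is to reduce the statement to the ordinary (unenriched) result of Proposition~\ref{prop:15} and then upgrade to the \Ord-enriched setting by means of the enriched adjoint functor theorem.

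First I would record the ambient facts. Since \C\ is locally presentable its underlying category $\C_\circ$ is locally presentable and \C\ is complete; since $\mathsf{T}$ is an accessible lax idempotent \Ord-monad, $\mathsf{T}\text-\mathrm{Alg}$ is a locally presentable \Ord-category and the free-algebra \Ord-functor $F^{\mathsf{T}}$ is a left adjoint. The \textsc{lari}--opfibration \textsc{awfs} $(\mathsf{E},\mathsf{M})$ of Notation~\ref{not:E,M} exists on $\mathsf{T}\text-\mathrm{Alg}$ (which has comma-objects, being complete), and because $E$, equivalently $K$, is built from comma-objects it is finitary (see \S\ref{sec:order-enrich-limits} and Example~\ref{ex:6}); hence $(\mathsf{E},\mathsf{M})$ is accessible. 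By Lemma~\ref{l:17} the \Ord-category $\mathsf{T}\text-\mathrm{Emb}$ is the pullback~\eqref{eq:141} of $\C^\two\xrightarrow{(F^{\mathsf{T}})^\two}(\mathsf{T}\text-\mathrm{Alg})^\two$ along the comonadic forgetful functor $\lari(\mathsf{T}\text-\mathrm{Alg})\cong\mathsf{E}\text-\mathrm{Coalg}\to(\mathsf{T}\text-\mathrm{Alg})^\two$ of Lemma~\ref{l:5}. Passing to underlying ordinary data, this is precisely the pullback of double categories appearing in the hypothesis of Proposition~\ref{prop:15}, with $F=(F^{\mathsf{T}})_\circ$ and $(\mathsf{G},\mathsf{S})=(\mathsf{E},\mathsf{M})_\circ$. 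That proposition then yields an accessible ordinary \textsc{awfs} $(\mathsf{L}_\circ,\mathsf{R}_\circ)$ on $\C_\circ$ with $\mathsf{L}_\circ\text-\mathrm{Coalg}\cong(\mathsf{T}\text-\mathrm{Emb})_\circ$ over $\C_\circ^\two$, and tells us in addition that $(\mathsf{T}\text-\mathrm{Emb})_\circ$ is locally presentable.

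The main step is to produce an \Ord-enriched right adjoint to the forgetful \Ord-functor $U\colon\mathsf{T}\text-\mathrm{Emb}\to\C^\two$, since fibrant \kz-generation (Definition~\ref{df:20}) is an enriched condition whereas Proposition~\ref{prop:15} supplies only an ordinary one. Here I would argue that $U$ is \Ord-cocontinuous: as $\C$, and hence $\C^\two$, is \Ord-cocomplete and $F^{\mathsf{T}}$ is a left \Ord-adjoint (so preserves all \Ord-colimits), Lemma~\ref{l:11} shows that $U$ creates all \Ord-colimits, so $\mathsf{T}\text-\mathrm{Emb}$ is \Ord-cocomplete and $U$ preserves \Ord-colimits. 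Combining \Ord-cocompleteness with the local presentability of $(\mathsf{T}\text-\mathrm{Emb})_\circ$, the theory of locally presentable enriched categories~\cite{Kelly:StructuresFiniteLimits} shows that $\mathsf{T}\text-\mathrm{Emb}$ is itself a locally presentable \Ord-category. The enriched adjoint functor theorem then applies: an \Ord-cocontinuous \Ord-functor between locally presentable \Ord-categories is a left \Ord-adjoint. Hence $U$ has a right \Ord-adjoint, which is exactly the assertion that $\mathsf{T}$ is fibrantly \kz-generating.

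With fibrant \kz-generation in hand, Proposition~\ref{prop:4} (whose hypotheses hold because \C\ is cocomplete and finitely complete) produces the \textsc{lofs} $\Psi(\mathsf{T})=(\mathsf{L},\mathsf{R})$ with $\mathsf{L}\text-\mathrm{Coalg}\cong\mathsf{T}\text-\mathrm{Emb}$ over $\C^\two$. Its underlying ordinary \textsc{awfs} has the same comonadic forgetful functor $(\mathsf{T}\text-\mathrm{Emb})_\circ\to\C_\circ^\two$ as the accessible $(\mathsf{L}_\circ,\mathsf{R}_\circ)$ of Proposition~\ref{prop:15}; since an \textsc{awfs} is determined up to isomorphism by this functor, the two agree, and by Definition~\ref{df:18} the \Ord-enriched \textsc{awfs} $\Psi(\mathsf{T})$ is accessible. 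I expect the genuinely delicate point to be the transition from the ordinary adjoint to the enriched one: one must be sure that $\mathsf{T}\text-\mathrm{Emb}$ is locally presentable \emph{as an \Ord-category} (not merely that its underlying ordinary category is) and that $U$ preserves tensors as well as conical colimits, so that the enriched adjoint functor theorem is legitimately applicable rather than only its ordinary shadow.
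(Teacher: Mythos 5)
Your first half follows the paper's own route: both arguments realise $\mathsf{T}\text-\mathrm{Emb}$ as the pullback of the comonadic forgetful \Ord-functor $\lari(\mathsf{T}\text-\mathrm{Alg})\to\mathsf{T}\text-\mathrm{Alg}^\two$ along $(F^{\mathsf{T}})^\two$ and feed this into Proposition~\ref{prop:15} to get an accessible \emph{ordinary} \textsc{awfs} with coalgebras $(\mathsf{T}\text-\mathrm{Emb})_\circ$. One caveat already here: you justify accessibility of $(\mathsf{E},\mathsf{M})$ on $\mathsf{T}\text-\mathrm{Alg}$ by citing facts proved for \Ord\ (\S\ref{sec:order-enrich-limits}, Example~\ref{ex:6}), but what is needed is that finite limits commute with $\kappa$-filtered colimits \emph{in} $\mathsf{T}\text-\mathrm{Alg}$; the paper establishes this via Remark~\ref{rmk:10}, applied to the jointly conservative family of $\kappa$-presentable free algebras $\{T(G):G\in\mathcal{G}\}$, and this step is not covered by your references.

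The genuine gap is at the decisive step, the upgrade from the ordinary right adjoint to an \Ord-enriched one, and you concede it yourself: you ``expect the genuinely delicate point'' to be that $\mathsf{T}\text-\mathrm{Emb}$ is locally presentable \emph{as an \Ord-category} and that $U$ preserves tensors, but you establish neither. Since these are exactly the hypotheses of the enriched adjoint functor theorem you invoke, your argument as written only yields the ordinary adjoint, i.e.\ it does not prove fibrant \kz-generation in the sense of Definition~\ref{df:20}, which is an enriched condition. The paper never goes near enriched local presentability: it takes the ordinary right adjoint $W$ furnished by comonadicity and shows directly that the adjunction bijection
\begin{equation}
  \mathsf{T}\text-\mathrm{Emb}(f,Wg)\xrightarrow{U}
  \C^\two(Uf,UWg)\xrightarrow{\C^\two(1,(1,Rg))}\C^\two(Uf,g)
\end{equation}
is an isomorphism of posets, by proving both maps full: the first because inequalities between morphisms of $\mathsf{T}$-embeddings are, by Definition~\ref{df:13}, inequalities in $\C^\two$; the second because $Rg$ is a split opfibration and split opfibrations are full morphisms (Lemma~\ref{l:19}), the fact that every $\mathsf{R}$-algebra is a split opfibration coming from the monad morphism $\mathsf{M}_\circ\to\mathsf{R}$ induced by the canonical functor $\mathsf{E}_\circ\text-\mathrm{Coalg}\to\mathsf{L}\text-\mathrm{Coalg}$. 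This fullness argument is the actual content of the theorem and is what your proposal is missing. (Your worry about tensors is the fillable part: comonadic \Ord-functors create all weighted colimits, so the same pullback argument you use for conical colimits handles them; but the enriched local presentability of $\mathsf{T}\text-\mathrm{Emb}$ is a substantive claim requiring enriched accessibility theory that neither you nor the paper develops, which is presumably why the authors argue as they do.)
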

\begin{proof}
  We have to show that there is an \Ord-enriched \textsc{awfs}
  $(\mathsf{L},\mathsf{R})$ for which
  $\mathsf{L}\text-\mathrm{Coalg}\cong\mathsf{T}\text-\mathrm{Emb}$. We first
  show $\lari(\mathsf{T}\text-\mathrm{Alg})_\circ$ is an accessible
  category. Even though we know that the category
  $\mathsf{T}\text-\mathrm{Alg}_\circ$ is accessible by
  \cite[Thm.~5.1.6]{MR1031717}, it is not enough for our purposes, as our proof
  involves $\Ord$-enriched (co)limits, and we have to argue as follows.

  The existence of limits in the \Ord-category \C\ ensures the same for
  $\mathsf{T}\text-\mathrm{Alg}$. By hypothesis, \C\ is locally
  $\kappa$-presentable and $\mathsf{T}$ preserves $\kappa'$-filtered colimits,
  but we may assume $\kappa=\kappa'$ by raising the accessibility exponent
  (see~\cite{MR1031717}). Then
  $\mathsf{T}\text-\mathrm{Alg}$ has $\kappa$-filtered colimits and the family
  $\{T(G):G\in\mathcal{G}\}$ satisfies the conditions of Remark~\ref{rmk:10}, so
  finite limits commute with $\kappa$-filtered colimits in
  $\mathsf{T}\text-\mathrm{Alg}$ (the latter can be shown to be cocomplete but
  we do not need it here). The comonad $\mathsf{E}$ on
  $\mathsf{T}\text-\mathrm{Alg}^\two$ whose coalgebras are \textsc{lari}s
  (Lemma~\ref{l:5}) was described in \S \ref{sec:laris-awfss} by means of finite
  limits (specifically, comma-objects) and therefore preserves $\kappa$-filtered
  colimits. In particular,
  $\lari(\mathsf{T}\text-\mathrm{Alg})_\circ$ is accessible and comonadic over
  $\mathsf{T}\text-\mathrm{Alg}^\two_\circ$.

  We next show that that there is an accessible ordinary \textsc{awfs}
  $(\mathsf{L},\mathsf{R})$ with an isomorphism of categories
  $\mathsf{L}\text-\mathrm{Coalg}\cong\mathsf{T}\text-\mathrm{Emb}_\circ$ over
  $\C^\two_\circ$ by applying Proposition~\ref{prop:15}, whose hypotheses we now
  verify. We have an accessible \textsc{awfs} $(\mathsf{E},\mathsf{M})$ on
  $\mathsf{T}\text-\mathrm{Alg}_\circ$, by the previous paragraph.
  By definition, $\mathsf{T}\text-\mathrm{Emb}$ is the pullback of
  $\lari(\mathsf{T}\text-\mathrm{Alg})_\circ\to\mathsf{T}\text-\mathrm{Alg}^\two_\circ$
  along
  $(F^{\mathsf{T}})^{\two}_\circ\colon \C^\two_\circ\to
  \mathsf{T}\text-\mathrm{Alg}^\two_\circ$. An application of
  Proposition~\ref{prop:15} produces the required accessible \textsc{awfs} on
  $\C_\circ$.


  All that remains is to show that it is an \Ord-enriched
  \textsc{awfs}, or equivalently, that the comonad $\mathsf{L}$ (whose category of
  coalgebras is $\mathsf{T}\text-\mathrm{Emb}_\circ$) is
  \Ord-enriched. Or, equivalently still, that
  $U\colon \mathsf{T}\text-\mathrm{Emb}\to\C^\two$ has an \Ord-enriched right
  adjoint. We have shown above that the ordinary functor $U_\circ$ has a right
  adjoint, say $W$. All we have to show is that
  the monotone map
  \begin{equation}
    \label{eq:125}
    \mathsf{T}\text-\mathrm{Emb}(f,Wg)\xrightarrow{U}
    \C^\two(Uf,UWg)\xrightarrow{\C^\two(1,(1,Rg))}\C^\two(Uf,g)
  \end{equation}
  is not only an isomorphism of sets but also an isomorphism of posets. This
  amounts to showing that it is a full morphism of posets.
  Before doing so, we need the following observation.

  The functor
  $\mathsf{E}_\circ\text-\mathrm{Coalg}\to \mathsf{L}\text-\mathrm{Coalg}$ that
  expresses the fact that each \textsc{lari} is canonically a
  $\mathsf{T}$-embedding, induces a morphism of \textsc{awfs}
  $(\mathsf{E}_\circ,\mathsf{M}_\circ) \to(\mathsf{L},\mathsf{R})$, and thus a
  morphism of monads $\mathsf{M}_\circ\to\mathsf{R}$; in this argument we have
  used \cite[Prop.~2]{MR3393453} twice. It follows that each
  $\mathsf{R}$-algebra is an $\mathsf{M}$-algebra, ie a split opfibration.

  Returning to~\eqref{eq:125}, the first arrow is full because an inequality
  between morphims of $\mathsf{T}$-embeddings is, by Definition~\ref{df:13}, an
  inequality between them as morphisms in $\C^\two$. The second morphism
  in~\eqref{eq:125} is also full, because $Rg$ is a split opfibration (see the
  previous paragraph) and Lemma~\ref{l:19}. Therefore, $W$ extends to an
  \Ord-enriched adjoint to $U$, completing the proof.
\end{proof}

\begin{thm}
  \label{thm:6}
  If \C\ is a locally presentable \Ord-category, the fully faithful
  \Ord-functor
  \begin{equation}
    \label{eq:123}
    \Psi\colon \mathbf{LIMnd}_{\mathrm{acc}}(\C)
    \longrightarrow
    \mathbf{LOFS}_{\mathrm{acc}}(\C)
  \end{equation}
  exhibits the \Ord-category of accessible lax idempotent monads as a reflective
  full sub-\Ord-category of the category of accessible
  {\normalfont\textsl{\textsc{lofs}}s}.
  Its replete image consists of all cancellative
  sub-{\normalfont\textsl{\textsc{lari lofs}s}} that are accessible.
\end{thm}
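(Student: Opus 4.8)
The plan is to derive Theorem~\ref{thm:6} from the $I$-adjunction of Theorem~\ref{thm:4} together with the accessibility statement of Theorem~\ref{thm:14}, verifying only that both $\Psi$ and $\tilde\Phi$ restrict cleanly to the accessible subcategories. First I would use Theorem~\ref{thm:14}, which yields the inclusion $\mathbf{LIMnd}_{\mathrm{acc}}(\C)\subseteq\mathbf{LIMnd}_{\mathrm{fib}}(\C)$ and shows that $\Psi(\mathsf{T})$ is an accessible \textsc{lofs} whenever $\mathsf{T}$ is an accessible lax idempotent monad; hence $\Psi$ restricts to an \Ord-functor $\mathbf{LIMnd}_{\mathrm{acc}}(\C)\to\mathbf{LOFS}_{\mathrm{acc}}(\C)$, which remains fully faithful by Theorem~\ref{thm:4}.

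Next I would check that $\tilde\Phi$ restricts to an \Ord-functor $\mathbf{LOFS}_{\mathrm{acc}}(\C)\to\mathbf{LIMnd}_{\mathrm{acc}}(\C)$. The target $\mathsf{R}_1=\tilde\Phi(\mathsf{L},\mathsf{R})$ is lax idempotent by the very construction of $\tilde\Phi$ (it lands in $\mathbf{LIMnd}(\C)$), so only accessibility requires an argument. Writing $R_1=\cod\circ R\circ\iota$, where $\iota\colon\C\cong\C/1\hookrightarrow\C^\two$ sends $A$ to $(A\to 1)$, I would note that $\iota$ preserves $\kappa$-filtered colimits: since a $\kappa$-filtered category is connected, the colimit of the constant diagram at the terminal object is again terminal, so $\iota(\operatorname{col}_i A_i)=(\operatorname{col}_i A_i\to 1)$ agrees with the pointwise colimit in $\C^\two$. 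As $\cod$ preserves all colimits and $R$ is accessible, $R_1$ is accessible.

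With both functors restricted, I would specialise the natural isomorphism $\mathbf{LOFS}(\C)((\mathsf{L},\mathsf{R}),\Psi\mathsf{T})\cong\mathbf{LIMnd}(\C)(\mathsf{R}_1,\mathsf{T})$ of Theorem~\ref{thm:4} to accessible objects. Because $\mathbf{LOFS}_{\mathrm{acc}}(\C)$ and $\mathbf{LIMnd}_{\mathrm{acc}}(\C)$ are full subcategories, restricting $(\mathsf{L},\mathsf{R})$ and $\mathsf{T}$ leaves the Hom-objects unchanged, so the relaxed $I$-bijection becomes a genuine adjunction isomorphism $\mathbf{LOFS}_{\mathrm{acc}}(\C)((\mathsf{L},\mathsf{R}),\Psi\mathsf{T})\cong\mathbf{LIMnd}_{\mathrm{acc}}(\C)(\tilde\Phi(\mathsf{L},\mathsf{R}),\mathsf{T})$. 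This exhibits $\tilde\Phi\dashv\Psi$ between the accessible categories, and since $\Psi$ is fully faithful, it realises $\mathbf{LIMnd}_{\mathrm{acc}}(\C)$ as a reflective full sub-\Ord-category of $\mathbf{LOFS}_{\mathrm{acc}}(\C)$.

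For the description of the replete image I would prove both inclusions. By Definition~\ref{df:23} the image consists of the accessible \kz-reflective \textsc{lofs}s; these are sub-\textsc{lari} by Proposition~\ref{prop:5}, and by Theorem~\ref{thm:7}---applicable since a locally presentable \Ord-category is finitely complete---a sub-\textsc{lari} \textsc{lofs} is \kz-reflective exactly when it is cancellative, so every object of the image is an accessible cancellative sub-\textsc{lari} \textsc{lofs}. Conversely, an accessible cancellative sub-\textsc{lari} \textsc{lofs} $(\mathsf{L},\mathsf{R})$ is \kz-reflective by Theorem~\ref{thm:7}, whence $(\mathsf{L},\mathsf{R})\cong\Psi(\mathsf{R}_1)$ by Proposition~\ref{prop:10}, with $\mathsf{R}_1$ accessible by the second step, placing it in the replete image. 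The main obstacle is precisely this restriction step: one must confirm that $\tilde\Phi$ preserves accessibility, which is exactly what lets the relaxed $I$-adjunction of Theorem~\ref{thm:4} collapse to an honest reflection once $I$ is realised as a full inclusion.
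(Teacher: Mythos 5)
Your proposal is correct and follows essentially the same route as the paper's proof: restrict $\Psi$ using Theorem~\ref{thm:14}, observe that $\tilde\Phi(\mathsf{L},\mathsf{R})=\mathsf{R}_1$ remains accessible so the $I$-adjunction of Theorem~\ref{thm:4} collapses to an honest reflection on the accessible full subcategories, and identify the replete image via Proposition~\ref{prop:5}, Theorem~\ref{thm:7} and Proposition~\ref{prop:10} (the paper phrases this last step through invertibility of the unit, which is the same argument). One small slip: since $R$ is codomain-preserving, $\cod\circ R\circ\iota$ is the constant functor at $1$; you mean $\mathsf{R}_1=\dom\circ R\circ\iota$, and your accessibility argument (cocontinuity of the projection plus $\iota$ preserving filtered colimits because filtered categories are connected) goes through verbatim with $\dom$ in place of $\cod$.
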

\begin{proof}
  The \Ord-functor $\Psi$ from $\mathbf{LIMnd}_{\mathrm{fib}}(\C)$ to
  $\mathbf{LOFS}$ restricts to the subcategories of accessible lax idempotent
  monads and accessible \textsc{lofs}s, by Theorem~\ref{thm:14} yielding an \Ord-functor as in the
  statement. We know from Proposition~\ref{prop:5} that $\Psi(\mathsf{T})$ is
  always sub-\textsc{lari}.

  Clearly, the monad $\Phi(\mathsf{L},\mathsf{R})=\mathsf{R}_1$ is accessible if
  $(\mathsf{L},\mathsf{R})$ is an accessible \textsc{awfs}, so we obtain a left
  adjoint $\Phi$ to the fully faithful \Ord-functor $\Psi$ of the statement.
  Its unit
  \begin{equation}
    \varpi\colon
    (\mathsf{L},\mathsf{R})\longrightarrow\Psi\Phi(\mathsf{L},\mathsf{R})=\Psi(\mathsf{R}_1)
  \end{equation}
  is the morphism of \textsc{awfs}s that corresponds to the \Ord-functor that is
  the inclusion of $\mathsf{L}\text-\mathrm{Coalg}$ into
  $\mathsf{R}_1\text-\mathrm{Emb}$, and the former is invertible if and only if
  the latter is so. We may now apply Theorem~\ref{thm:7} to deduce that
  $(\mathsf{L},\mathsf{R})$ is cancellative precisely when the unit $\varpi$ is
  invertible, which is another way of saying that $(\mathsf{L},\mathsf{R})$ is
  in the replete image of $\Phi$.
\end{proof}


\begin{ex}
  \label{ex:15}
  There are accessible monads that are not simple, as exhibited below. This
  means that, even though the monad induces an \textsc{lofs}, it cannot be
  obtained through the methods of \S \ref{sec:simple-monads} and \S
  \ref{sec:simple-monads-1}. One example that involves only ordinary categories,
  which we may regard as locally discrete \Ord-categories, is
  \cite[Example~4.2]{MR779198}, where the monad $D$ on the category of abelian
  groups $\mathbf{Ab}$ is given by $A\mapsto A/2A$ (quotient by $2A=\{2a:a\in
  A\}$). If $f\colon 0\to D(\mathbb{Z})=\mathbb{Z}/2\mathbb{Z}$ is the unique
  possible morphism, then the comma-object $Kf$ is the pullback of $f$ along the
  quotient map $\mathbb{Z}\to\mathbb{Z}/2\mathbb{Z}$. In other words, this
  pullback is the inclusion $2\mathbb{Z}\hookrightarrow{}\mathbb{Z}$. The
  morphism $Lf\colon 0\to 2\mathbb{Z}$ is the unique possible, and $D(Lf)$ is
  not an isomorphism (equivalently, a \textsc{lari}) since $D(2\mathbb{Z})\ncong 0$.

  This example can be modified to show that, for example, the monads on the
  \Ord-categories of (commutative) monoids in \Ord\ that sends a monoid
  $(V,e,\otimes)$ to the coequalizer of the pair of morphisms $V\to V$ that are
  $x\mapsto (x\otimes x)$ and $x\mapsto e$, is not simple. Nonetheless, this
  monad gives rise to a \textsc{lofs}, by Theorem~\ref{thm:6}.
\end{ex}
\bibliographystyle{abbrv}
\bibliography{loawfs.bib}
\end{document}